\documentclass[a4paper,reqno,11pt]{amsart}
\usepackage{mathrsfs}
\usepackage[all]{xy}
\usepackage{txfonts}
\usepackage{amsfonts}
\usepackage{amssymb}
\usepackage{amsfonts,amsmath,amsthm,amssymb,stmaryrd}
\usepackage{hyperref}
\usepackage[numbers,sort&compress]{natbib}

\newtheorem{Theorem}{Theorem}[section]
\newtheorem{Lemma}{Lemma}[section]
\newtheorem{Proposition}{Proposition}[section]
\newtheorem{Remark}{Remark}[section]

\numberwithin{equation}{section}
\usepackage[left=1 in, right=1 in,top=1 in, bottom=1 in]{geometry}

\def\XXint#1#2#3{{\setbox0=\hbox{$#1{#2#3}{\int}$ }
\vcenter{\hbox{$#2#3$ }}\kern-.6\wd0}}

\def\r3{\mathbb{R}^3}

\makeatletter
\@namedef{subjclassname@2020}{\textup{2020} Mathematics Subject Classification}
\makeatother

\allowdisplaybreaks
\begin{document}
\bibliographystyle{plain}

\title[I\MakeLowercase{ncompressible viscoelastic flows}]{D\MakeLowercase{iffusion} \MakeLowercase{wave phenomena and optimal time decay for incompressible viscoelastic flows}}

\author{Shenghan Li}

\address{School of Mathematical Sciences, South China Normal University, Guangzhou, Guangdong 510631, China}
\email[Shenghan Li]{shenghanliscnu@163.com}

\author{Yong Wang}

\address{School of Mathematical Sciences, South China Normal University, Guangzhou, Guangdong 510631, China}
\email[Yong Wang]{wangyongxmu@163.com}

\thanks{Corresponding author: Yong Wang, wangyongxmu@163.com}

\begin{abstract}
Motivated by the work of D. Hoff and K. Zumbrun (Indiana Univ. Math. J. 44: 603-676, 1995), we investigate the diffusion wave phenomena in three-dimensional incompressible viscoelastic flows. By employing the representation formula of the wave equation and the stationary phase methods on the sphere $\mathbb{S}^{d-1}$, we establish $L^p$ decay estimates for the solution over the whole range $1\leq p \leq \infty$, which reveals the hyperbolic nature of the incompressible viscoelastic flows.
\end{abstract}

\keywords{Incompressible viscoelastic flows; Diffusion waves; Decay estimates.}
\subjclass[2020]{76A10; 76D33; 35Q35.}

\maketitle

\section{Introduction}
Viscoelastic fluids are a unique class of materials that exhibit both viscous (liquid-like) and elastic (solid-like) properties simultaneously. This dual behavior sets them apart from simple Newtonian fluids (like water) and purely elastic solids (like a rubber band). Common examples of viscoelastic fluids in daily life, such as polymer solutions, melts, molten plastics, shampoo, hair gel, liquid soaps, biological fluids like blood, mucus, synovial fluid paints and printer inks, have been widely studied in the literature (e.g., \cite{Larson1995}).

In this article, we study the incompressible viscoelastic flows: $(t, x) \in(0,+\infty) \times \mathbb{R}^3$
\begin{align}\label{incompressible viscoelastic flow-202512201217}
\left\{
\begin{array}{lll}
\partial_t u+u \cdot \nabla u+\nabla P=\mu \Delta u+\operatorname{div}(\mathbb{F} \mathbb{F}^{\top}), \\
\partial_t \mathbb{F}+u \cdot \nabla \mathbb{F}=(\nabla u) \mathbb{F}, \\
\operatorname{div}u=0.
\end{array}
\right.
\end{align}
Here, the vector $u(t, x):[0,+\infty) \times \mathbb{R}^3 \rightarrow \mathbb{R}^3$ denotes the velocity field of materials, the scalar function $P(t, x):[0,+\infty) \times \mathbb{R}^3 \rightarrow \mathbb{R}$ denotes the pressure, the matrix $\mathbb{F}(t, x):[0,+\infty) \times \mathbb{R}^3 \rightarrow \mathbb{R}^{3 \times 3}$ stands for the deformation tensor, $\mu>0$ is the viscosity.
We supplement \eqref{incompressible viscoelastic flow-202512201217} with the initial condition
\begin{align}\label{initial-condition-202512201220}
(u, \mathbb{F})(x, t)|_{t=0}=(u_0, \mathbb{F}_0)(x) \rightarrow(\bar{\rho}, 0, \mathbb{I}),\quad |x| \rightarrow \infty.
\end{align}
We enforce the following constraints:
\begin{align}\label{important properties1-202512201228}
\operatorname{div} \mathbb{F}_0^T=0,
\end{align}
\begin{align}\label{important properties2-202512201228}
\operatorname{det} \mathbb{F}_0=1,
\end{align}
\begin{align}\label{important properties3-202512201228}
\mathbb{F}_0^{l k} \nabla_l \mathbb{F}_0^{i j}=\mathbb{F}_0^{l j} \nabla_l \mathbb{F}_0^{i k} .
\end{align}
As established in \cite{Lei-Liu-Zhou2008,Lin-Liu-Zhang2005,Liu-Walkington2001,Temam1979}, the conditions \eqref{important properties1-202512201228}, \eqref{important properties2-202512201228} and \eqref{important properties3-202512201228} are preserved for all $t \geq 0$:
\begin{align}\label{important properties1-202512201235-t}
\operatorname{div} \mathbb{F}^T=0,
\end{align}
\begin{align}\label{important properties2-202512201235-t}
\operatorname{det} \mathbb{F}=1,
\end{align}
\begin{align}\label{important properties3-202512201235-t}
\mathbb{F}^{l k} \nabla_l \mathbb{F}^{i j}=\mathbb{F}^{l j} \nabla_l \mathbb{F}^{i k} .
\end{align}

This paper investigates the diffusion wave phenomena and the large-time behavior of solutions to \eqref{incompressible viscoelastic flow-202512201217}-\eqref{important properties3-202512201228} near the motionless state $(u,\mathbb{F})=(0,\mathbb{I})$,
where $\mathbb{I}$ is the $3 \times 3$ identity matrix, for convenience in the following discussion, we define $S=(u-0,\mathbb{F}-\mathbb{I})\triangleq(u,\mathbb{E})$.

System \eqref{incompressible viscoelastic flow-202512201217} describes the flow of a viscous, incompressible fluid coupled with an elastic material, where the elastic response follows the Hookean linear elasticity $W(\mathbb{F}) = |\mathbb{F}|^2$. The equations consist of the incompressible Navier-Stokes equations coupled to a first-order hyperbolic system for the deformation tensor $\mathbb{F}$. This structure places it within the category of quasilinear parabolic-hyperbolic systems. Further physical details can be found in \cite{Gurtin1981,Lei-Liu-Zhou2008,Lin2012,Lin-Liu-Zhang2005,Sideris-Thomases2005}.

If one ignores the contribution of the deformation tensor $\mathbb{F}$, then \eqref{incompressible viscoelastic flow-202512201217} reduces to the classical incompressible Navier-Stokes equations.
The large-time behavior of solutions to the incompressible Navier-Stokes equations is a classical topic, with important contributions including \cite{Brandolese2004,Fujigaki-Miyakawa2001,Kajikiya-Miyakawa1986,Kato1984,Schonbek1985,Schonbek1986,Schonbek1991,Schonbek1992}. Leray \cite{Leray1934} constructed weak solutions for the incompressible Navier-Stokes equations in $R^3$. Kajikiya and Miyakawa \cite{Kajikiya-Miyakawa1986} derived the decay estimate provided that the initial perturbation $u_0 \in L_\sigma^2\left(\mathbb{R}^n\right) \cap L_\sigma^r \left(\mathbb{R}^n\right)(1 \leqslant r<2)$:
\begin{align}\label{INS-the decay estimate-202512201322}
\|u(t)\|_{L^2\left(\mathbb{R}^n\right)} \leqslant C(1+t)^{-\frac{n}{2}\left(\frac{1}{r}-\frac{1}{2}\right)}, \quad \forall t \geqslant 0.
\end{align}
We refer to \cite{Chen-Pan-Tong2021,Hoff-Zumbrun1995,Matsumura-Nishida1979,Liu-Wang1998,Ponce1985,Wang-Wen2022} for the large time behavior of the solutions for the compressible Navier-Stokes equations.

We next review related works that consider the deformation tensor $\mathbb{F}$, namely, those dealing with incompressible viscoelastic systems. Lin, Liu and Zhang \cite{Lin-Liu-Zhang2005} established local existence and global existence (with small initial data) of classical solutions for the initial value problem \eqref{incompressible viscoelastic flow-202512201217}-\eqref{important properties3-202512201228} see also \cite{Chen-Zhang2006,Lei-Liu-Zhou2008}. There are much important progress on the viscoelastic flows,
we refer the readers to \cite{Ai-Wang2025,Bai-Zhang2023,Chemin-Masmoudi2001,Chen-Wu2018,Gu-Wang-Xie2024,Han-Zi2020,Hu-Wang2010,
Hu-Wang2011,Cai-Lei-Lin-Masmoudi2019,Hu-Lin2016,Hu-Wang2012,Hu-Wang2015,Hu-Zhao2020JDE,Hu-Zhao2020ARMA,Hu-Wu2013,Ishigaki2020,Ishigaki2022,Jiang-Jiang2021,
Jiang-Jiang-Wu2017,Jiang-Wu-Zhong2016,
Lei2010,Lei-Liu-Zhou2007,Lei-Zhou2005,Li-Wei-Yao2016JMP,Lin-Zhang2008,Qian2011,Qian-Zhang2010,Shen-Wang-Wu2025,Tan-Wang-Wu2020, Wang-Shen-Wu-Zhang2022,Wang-Wu2021,Wang-Yang-Yuan2025,Wu-Gao-Tan2017,Wu-Wang2023,Zhang-Fang2012} and the references cited therein. Hu and Wu \cite{Hu-Wu2015} also showed that if the initial perturbation $(u_0, \mathbb{E}_0)$ belongs to $L^1\left(\mathbb{R}^3\right) \cap H^2\left(\mathbb{R}^3\right)$, the $L^2$ decay estimates:
\begin{align}\label{202512261520-decay estimates-Hu-Wu2015}
\left\|\nabla^k (u,\mathbb{E})(t)\right\|_{L^2\left(\mathbb{R}^3\right)} \leq C(1+t)^{-\frac{3}{4}-\frac{k}{2}},
\quad k=0,1.,
\end{align}
via the Fourier splitting method and the Hodge decomposition.

Note that the incompressible viscoelastic system is a parabolic-hyperbolic coupled system. We point out that the above $L^2$ decay estimates \eqref{202512261520-decay estimates-Hu-Wu2015} only exhibit the parabolic properties of the system \eqref{incompressible viscoelastic flow-202512201217}, but they cannot reveal the information on hyperbolic properties.

Crucially, one should observe that:
\begin{align*}
\left\|(u, \tilde{  \mathbb{E} }_c )(t)\right\|_{L^p\left(\mathbb{R}^3\right)} \sim \left\|\mathcal{F}^{-1}\left(e^{-|\xi|^2 t} \cdot e^{i\xi t}\right)\right\|_{L^p\left(\mathbb{R}^3\right)},
\quad
1 \leq p \leq \infty,
\end{align*}
where $\mathcal{F}^{-1}\left( e^{i\xi t}\right) $ is a  fundamental solution of the wave equation.

As a result, the $L^2$ decay estimates (or the pure energy method) might hide the wave information:
\begin{align*} \left\|\mathcal{F}^{-1}\left(e^{-|\xi|^2 t} \cdot e^{i\xi t}\right)\right\|_{L^2\left(\mathbb{R}^3\right)}
=\left\|\mathcal{F}^{-1}\left(e^{-|\xi|^2 t}\right)\right\|_{L^2\left(\mathbb{R}^3\right)}
\sim (1+t)^{-\frac{3}{4}}.
\end{align*}

Due to the diffusion wave phenomena  observed in \cite{Hoff-Zumbrun1995,Hoff-Zumbrun1997}, we shall establish some $L^p(1 \leq p \leq \infty, p \neq 2)$ decay estimates which reflect the hyperbolic aspect of the incompressible viscoelastic fluids \eqref{incompressible viscoelastic flow-202512201217} in the following.

Now we review the known results on the diffusion wave phenomena and the large time behavior of solution of the compressible Navier-Stokes (CNS) equations around a motionless state $(\rho, m)=(\rho,\rho u)=(1,0)$:
\begin{align}\label{NS}
\left\{
\begin{array}{lll}
\partial_t\rho+\operatorname{div} m=0, \\
\partial_t m^j+\operatorname{div}(\frac{m^j m}{\rho})+P(\rho)_{x_j}=\varepsilon \Delta(\frac{m^j}{\rho})+\eta \operatorname{div}(\frac{m}{\rho})_{x_j},\\
(\rho, m)|_{t=0}=(\rho_0, m_0).
\end{array}
\right.
\end{align}
Matsumura and Nishida \cite{Matsumura-Nishida1979,Matsumura-Nishida1980} proved global existence for the Cauchy problem of \eqref{NS}; they also obtained optimal time-decay rates under the condition that the initial perturbation is small in $H^3(\mathbb{R}^3) \cap L^1(\mathbb{R}^3)$:
\begin{align*}
\|\nabla^k(\varrho(t), m(t))\|_{L^2(\mathbb{R}^3)} \leq C(1+t)^{-\frac{3}{4}-\frac{k}{2}},\quad k=0,1,
\end{align*}
where $S(x, t)=(\varrho, m)(x, t)=(\rho-1, \rho u)(x, t)$.
Gao, Li and Yao \cite{Gao-Li-Yao2023} first show that the $N$-th order spatial derivative of global solution of the compressible Navier-Stokes equations tends to zero at the rate $(1+t)^{-\frac{3}{4}-\frac{N}{2}}$ instead of $(1+t)^{-\frac{3}{4}-\frac{N-1}{2}}$ stated in \cite{Wang2012}.
The concept of diffusion waves was introduced to one-dimensional problems by Liu \cite{Liu1985}, through his work on the stability of viscous shocks, rarefactions, and contact discontinuities.
Hoff and Zumbrun \cite{Hoff-Zumbrun1995,Hoff-Zumbrun1997} first studied the multidimensional diffusion wave in the compressible Navier-Stokes equations and obtained the following decay estimates:
\begin{align}\label{Hoff-Zumbrun1995estimates1}
\|S(t)\|_{L^p(\mathbb{R}^n)} \leq \begin{cases}C(1+t)^{-\frac{n}{2}(1-\frac{1}{p})-\frac{n-1}{4}(1-\frac{2}{p})} L(t), & 1 \leq p<2, \\ C(1+t)^{-\frac{n}{2}(1-\frac{1}{p})}, & 2 \leq p \leq \infty,\end{cases}
\end{align}
\begin{align}\label{Hoff-Zumbrun1995estimates2}
\|S(t)-G(t) * S_0\|_{L^p(\mathbb{R}^n)} \leq \begin{cases}C(1+t)^{-\frac{n}{2}(1-\frac{1}{p})-\frac{n}{4}(1-\frac{2}{p})-\frac{1}{2}+\sigma} L(t), & 1 \leq p<2, \\ C(1+t)^{-\frac{n}{2}(1-\frac{1}{p})-\frac{1}{2}}, & 2 \leq p \leq \infty,\end{cases}
\end{align}
\begin{align}\label{Hoff-Zumbrun1995estimates3}
\left\|\left(\begin{array}{c}
\rho(t)-1 \\
M(t)-\mathcal{K}_h(t) * M_{0, i n}
\end{array}\right)\right\|_{L^p{(\mathbb{R}^n)}} \leq C(1+t)^{-\frac{n}{2}(1-\frac{1}{p})-\frac{n-1}{4}(1-\frac{2}{p})},
\quad 2 \leq p \leq \infty,
\end{align}
where $S(t)=(\varrho(t), m(t))=(\rho(t)-1, \rho(t) u(t))$, $L(t)=\log (1+t)$ when $n=2$, and $L(t)=1$ when $n \geq 3$, $\sigma$ is any positive, $G=G(t)$ is the Green function of linearized compressible Navier-Stokes equations, $\mathcal{K}_h=\mathcal{K}_h(t, x)=\mathcal{F}^{-1}(e^{-h|\xi|^2 t})$ denotes the standard heat kernel, $M_{0, \text { in }}=\mathcal{F}^{-1}\{(\mathbb{I}-\frac{\xi \xi^{\top}}{|\xi|^2}) M_0\} $ denotes a divergence-free part of $M_0$. Estimates \eqref{Hoff-Zumbrun1995estimates1} and \eqref{Hoff-Zumbrun1995estimates2} indicated that the nonlinear terms in the Duhamel formula decay faster than the linear ones and are therefore negligible for large time. Consequently, the solution to the Cauchy problem for \eqref{NS} is asymptotically approximated by the solution of its linearized counterpart. Thus, the asymptotic profile of the solution in $L^p(\mathbb{R}^n)$ ($1 \leq p \leq \infty$, $n \geq 2$) is given by
\begin{align}\label{Hoff-Zumbrun1995asymptotic1}
S(t) \sim \underbrace{G(t)*S_0}_{\text {solutions to linearized CNS }}+\underbrace{\cdots}_{\text {nonlinear parts }} \text { in } L^p(\mathbb{R}^n),
\quad
 t \rightarrow \infty;
\end{align}
\begin{align}\label{Hoff-Zumbrun1995asymptotic3}
S(t) \sim \underbrace{G(t)*S_0}_{\text {solutions to linearized CNS }} \text { in } L^p(\mathbb{R}^n),
\quad
 t \rightarrow \infty.
\end{align}
Moreover, according to the results in \cite{Hoff-Zumbrun1995}:
\begin{align}\label{Hoff-Zumbrun1995asymptotic2}
S(t)
\sim \underbrace{G(t)*S_0}_{\text {solutions to linearized CNS}}
\sim \underbrace{\left(\begin{array}{c}
0 \\
\mathcal{K}_h(t) * M_{0, \text { in }}
\end{array}\right)}_{\text {the incompressible part}}+\underbrace{\left(\begin{array}{c}
\rho(t)-1 \\
m(t)-\mathcal{K}_h(t) * m_{0, \text { in }}
\end{array}\right)}_{\text {the diffusion wave part}} \text { in } L^p(\mathbb{R}^n),
\quad
 t \rightarrow \infty.
\end{align}
Estimate \eqref{Hoff-Zumbrun1995asymptotic2} shows that the solution to the linearized compressible Navier-Stokes equations is asymptotically the sum of two components: an incompressible part, decaying at the heat kernel rate in all $L^p(\mathbb{R}^n)$, and a diffusion wave, given by convolving the heat kernel with the fundamental solution of the wave equation. The diffusion wave decays as the heat kernel rate multiplied by $(1+t)^{-\frac{n-1}{4}(1-\frac{2}{p})}$. Consequently, for $p>2$, it decays faster than the heat kernel, making the incompressible part dominant in $L^p(\mathbb{R}^n)$. Conversely, for $1 \leq p < 2$, the diffusion wave decays more slowly and thus dominates. In this regime $1 \leq p < 2$, the perturbation $S(x,t)$ can even grow in certain norms due to diffusion wave phenomena, for instance, $\|S(\cdot, t)\|_{L^1(\mathbb{R}^3)} \sim t^{\frac{1}{2}}$.

Given the results in \cite{Hoff-Zumbrun1995}, it is natural to conjecture that system \eqref{incompressible viscoelastic flow-202512201217} also exhibits diffusion-wave phenomena, now influenced by both sound waves and elastic shear waves. To examine this, we consider its linearization about the equilibrium $(0, \mathbb{I})$:
\begin{align}\label{linearized-system-around-202512201352}
\partial_t S+L S=0.
\end{align}
Let $S=(u-0,\mathbb{F}-\mathbb{I})\triangleq(u,\mathbb{E})$ and $L$ be the linearized operator, given by
\begin{align}\label{linearized operator-202512201356}
L=\left(\begin{array}{ccc}
-\mu \Delta & -\Delta^{-1}  \operatorname{div}\operatorname{curl}\operatorname{div} \\
-\nabla & 0
\end{array}\right).
\end{align}
We then find that the velocity $u=u_s \triangleq \mathcal{F}^{-1}(\hat{\mathcal{P}}(\xi) \hat{u})$ obeys the following linear
parabolic-hyperbolic system:
\begin{align}\label{linear symmetric parabolic-hyperbolic system-202512201402}
\left\{\begin{array}{l}
\partial_t u-\mu \Delta u- \Delta^{-1}  \operatorname{div}\operatorname{curl}\operatorname{div}  \mathbb{E}=0, \\
\partial_t \mathbb{E}- \nabla u=0,
\end{array}\right.
\end{align}
where $\hat{\mathcal{P}}(\xi)=\mathbb{I}-\frac{\xi \xi^{\top} }{|\xi|^2}, \xi \in \mathbb{R}^3$, equivalently, the velocity $u$ satisfies the following strongly damped wave equation:
\begin{align}\label{strongly damped wave equation-202512201409}
\partial_t^2 u- \Delta u-\mu \partial_t \Delta u=0 .
\end{align}

An explicit expression for $u(x, t)$ can be given by the formula:
\begin{align}\label{d-Fourier inverse transform}
u(x, t)
\nonumber
&
=\mathcal{F}^{-1}\{ \hat{u}(\xi, t)\}
=\mathcal{F}^{-1}\{\frac{e^{\lambda_1 t}-e^{\lambda_2 t}}{\lambda_1-\lambda_2}\left[\left(\mathbb{I}-\frac{\xi \xi^{\top}}{|\xi|^2}\right) i \hat{\mathbb{E}}_0 \xi\right]
+\frac{\lambda_1 e^{\lambda_1 t}-\lambda_2 e^{\lambda_2 t}}{\lambda_1-\lambda_2} \hat{u}_0 \}\\
\nonumber
&
=\mathcal{F}^{-1}\{e^{-\frac{\mu}{2}|\xi|^2 t}\frac{\sin (b t)}{b} \left[\left(\mathbb{I}-\frac{\xi \xi^{\top}}{|\xi|^2}\right) i \hat{\mathbb{E}}_0 \xi\right]
+e^{-\frac{\mu}{2}|\xi|^2 t}\left[\cos (b t)-\frac{\mu}{2}|\xi|^2 \frac{\sin (b t)}{b}\right] \hat{u}_0 \}\\
&
\sim
\mathcal{F}^{-1}\{e^{-\frac{\mu}{2}|\xi|^2 t}\hat{\omega}(\xi, t) \left[\left(\mathbb{I}-\frac{\xi \xi^{\top}}{|\xi|^2}\right) i \hat{\mathbb{E}}_0 \xi\right]
+e^{-\frac{\mu}{2}|\xi|^2 t}\left[\hat{\omega}_{ t}(\xi, t)-\frac{\mu}{2}|\xi|^2 \hat{\omega}(\xi, t) \right] \hat{u}_0 \},
 \quad|\xi|  \ll 1,
\end{align}
where $\omega$ is the fundamental solution to the wave equation in $\mathbb{R}^n$, i.e.
\begin{align}
\left\{
\begin{array}{lll}
\partial_{t t}\omega-\beta^2 \Delta \omega  =0, \\ \nonumber
\omega(0)  =0, \\
\omega_t(0)  =\delta,
\end{array}
\right.
\end{align}
where $\delta$ is the Dirac distribution, and that the Fourier transform of $\omega$ is given by
\begin{align*}
\hat{\omega}(\xi, t)=(2 \pi)^{-n / 2} \frac{\sin (\beta|\xi| t)}{\beta|\xi|}, \quad
\hat{\omega}_{ t}(\xi, t)=(2 \pi)^{-n / 2} \cos (\beta|\xi| t).
\end{align*}
Considering the eigenvalues above, we observe that the velocity exhibits distinct behavior in low- and high-frequency regimes: the low-frequency part decays algebraically, whereas the high-frequency part decays exponentially.
For low frequencies, the eigenvalues are
\begin{align}\label{divergence-freeeigenvalues34-202512261906-1}
\lambda_{1,2}=-\frac{\mu}{2}|\xi|^2 \pm i |\xi| \frac{\sqrt{4-\mu^2|\xi|^2}}{2}
\sim  -\frac{\mu}{2}|\xi|^2 \pm i  |\xi| ,\quad|\xi|  \ll 1,
\end{align}
where $\lambda_{1,2}$ are the roots of the equation:
\begin{align}\label{rootsequation34-202512261907-2}
\lambda^2+\mu|\xi|^2 \lambda+|\xi|^2=0.
\end{align}
\eqref{divergence-freeeigenvalues34-202512261906-1} suggests that diffusion wave phenomena can be expected in the low-frequency regime of the velocity, motivated by the observations in \cite{Hoff-Zumbrun1995}.

In contrast to the settings in \cite{Hoff-Zumbrun1995, Kobayashi-Shibata2002}, where the deformation tensor $\mathbb{F}$ is neglected, the solution of the linearized system \eqref{linearized-system-around-202512201352} exhibits different behavior. A key difference stems from the additional hyperbolic aspect arising from the elastic shear wave.

This paper establishes that for an initial perturbation $S_0=\left(u_0, \mathbb{F}_0-\mathbb{I}\right)=\left(u_0, \mathbb{E}_0\right)$ small enough in $L^1 \cap H^3$, the global strong solution obeys the following decay estimates:
\begin{align}\label{DW-incompressible viscoelastic flow-infty-202512201344}
\| (u, \tilde{  \mathbb{E} }_c )(t)\|_{L^p} \leq C(p)(1+t)^{-\frac{3}{2}\left(1-\frac{1}{p}\right)
-\frac{1}{2}\left(1-\frac{2}{p}\right)},
\quad
 1 \leq p \leq \infty,
\end{align}
\begin{align}\label{L1-grow-202512271441}
\|(u,\tilde{  \mathbb{E} }_c)(t)\|_{L^1(\mathbb{R}^3)} \sim (1+t)^{\frac{1}{2}},
\quad
 t \geq 0.
\end{align}
Here $\tilde{  \mathbb{E} }_c\triangleq \mathcal{F}^{-1}(\hat{\mathcal{Q}}(\xi) \hat{ \mathbb{E} })$,
$\hat{\mathcal{Q}}(\xi)=\frac{\xi \xi^{\top} }{|\xi|^2}, \xi \in \mathbb{R}^3$.
This result \eqref{DW-incompressible viscoelastic flow-infty-202512201344}- \eqref{L1-grow-202512271441} reveals the hyperbolic nature of the incompressible viscoelastic fluids and reflects the influence of diffusion wave phenomena on the decay rate of the solution.
Furthermore, this result \eqref{DW-incompressible viscoelastic flow-infty-202512201344} improves the decay rate of the $L^p$ norm of the perturbation $S$ obtained in \cite{Hu-Wu2015} for $p>2$.

The proof of the main result is outlined as follows. The decay of $\|S(t)\|_{L^p}$ for $(1\leq p \leq \infty)$, where $S=(u, \mathbb{E})$, is derived from the integral equation
\begin{align*}
S(t)=e^{-t L} S(0)+\int_0^t e^{-(t-\tau) L} N(S) \mathrm{d} \tau,
\end{align*}
where $N(S)=\left(N_1(S), N_2(S) \right)$ is a nonlinearity.
We decompose $S$ into its low-frequency part $S_1$ and high-frequency part $S_{\infty}$. For the low-frequency component $S_1$, linearized analysis yields the decay estimates in the range $2 \leq p \leq \infty$. The high-frequency part $S_{\infty}$ is controlled via a Poincar¨¦-type inequality in high-frequency space combined with $L^2$ decay estimates for derivatives of all orders. For $1\leq p<2$, the $L^p$ estimate of $S(t)$ follows from the results in \cite{Shibata2000, Kobayashi-Shibata2002}. Moreover, using the pure energy method developed in \cite{Guo-Wang2012, Gao-Li-Yao2023}, we establish the optimal $L^2$ decay rate for higher-order spatial derivatives of the global solution to the incompressible viscoelastic fluids.

This paper is organized as follows. In Section \ref{202512201502-Theorem-proof}, we prove Theorem \ref{diffusion waves-infty-202512201451}. Section \ref{202512201720-Theorem-proof} is devoted to the proof of Theorem \ref{diffusion waves-1-202512201453}. In Sections \ref{Energy Estimates-202512221729} and \ref{202512231847-Negative Sobolev/Besov estimates-202512231853}, we establish the delicate energy estimates and the negative Sobolev/Besov estimates for the solutions, respectively, which constitute the core of the pure energy method. Using these estimates, we complete the proof of Theorem \ref{pure-energy-202512221702}\eqref{202512221715-globalsolution-incompressible viscoelastic flows}-\eqref{pure-energy-202512221703} in Section \ref{202512241526-}. Section \ref{202512241814-Theorem-N-pure-energy-202512221703} then provides the proof of Theorem \ref{pure-energy-202512221702}\eqref{N-pure-energy-202512221703}. Finally, the Appendix \ref{appendix} collects several useful lemmas that are frequently employed in the preceding sections.

\noindent{\bf Notations.}
In this section, we prepare notations and function spaces which will be used throughout the paper. $L^p(1 \leq p \leq \infty)$ denotes the usual Lebesgue space on $\mathbb{R}^3$, and its norm is denoted by $\|\cdot\|_{L^p}$. Similarly $W^{m, p}(1 \leq p \leq \infty, m \in\{0\} \cup \mathbb{N})$ denotes the $m$-th order $L^p$ Sobolev space on $\mathbb{R}^3$, and its norm is denoted by $\|\cdot\|_{W^{m, p}}$. We define $H^m=W^{m, 2}$ for an integer $m \geq 0$.
The inner product of $L^2$ is denoted by
\begin{align*}
(f, g):=\int_{\mathbb{R}^3} f(x) \overline{g(x)} d x, f, g \in L^2 .
\end{align*}
For functions $f=f(x)$ and $g=g(x)$, we denote the convolution of $f$ and $g$ by $f * g$ :
\begin{align*}
(f * g)(x)=\int_{\mathbb{R}^3} f(x-y) g(y) \mathrm{d} y .
\end{align*}
We denote the Fourier transform of a function $f=f(x)$ by $\hat{f}$ or $\mathcal{F} f$ :
\begin{align*}
\hat{f}(\xi)=(\mathcal{F} f)(\xi)=\frac{1}{(2 \pi)^{\frac{3}{2}}} \int_{\mathbb{R}^3} f(x) e^{-i \xi \cdot x} \mathrm{~d} x
\quad
(\xi \in \mathbb{R}^3).
\end{align*}
The Fourier inverse transform is denoted by $\mathcal{F}^{-1}$ :
\begin{align*}
(\mathcal{F}^{-1} f)(x)=\frac{1}{(2 \pi)^{\frac{3}{2}}} \int_{\mathbb{R}^3} f(\xi) e^{i \xi \cdot x} \mathrm{~d} \xi
\quad
(x \in \mathbb{R}^3).
\end{align*}
We use $C$ for a generic positive constant, and denote $A \leq C B$ by $A \lesssim B$. Furthermore, we say $A \approx B$ if $A \lesssim B$ and $B \lesssim A$.

We introduce the strain tensor defined by $\mathbb{E}=\mathbb{F}-\mathbb{I}$,
then the original system \eqref{incompressible viscoelastic flow-202512201217} can be re-written in terms of $(u, \mathbb{E}) : (t, x) \in(0,+\infty) \times \mathbb{R}^n$
\begin{align}\label{incompressible viscoelastic flow-E-202512201224}
\left\{
\begin{array}{lll}
\partial_t u-\mu \Delta u+\nabla P-\operatorname{div}\mathbb{E}=\operatorname{div}(\mathbb{E} \mathbb{E}^{\top})-u \cdot \nabla u, \\
\partial_t \mathbb{E}-\nabla {u}=(\nabla u) \mathbb{E}-u \cdot \nabla \mathbb{E}, \\
\operatorname{div}u=0.
\end{array}
\right.
\end{align}
We supplement \eqref{incompressible viscoelastic flow-E-202512201224} with the initial condition
\begin{align}\label{initial-condition-202512201225}
(u, \mathbb{E})(x, t)|_{t=0}=(u_0, \mathbb{E}_0)(x) \rightarrow(0, 0),
\quad
|x| \rightarrow \infty.
\end{align}
Here $g_j, j=1,2$, denote the nonlinear terms;
\begin{align*}
& g_1=\operatorname{div}(\mathbb{E} \mathbb{E}^{\top})-u \cdot \nabla u, \\
& g_2=(\nabla u) \mathbb{E}-u \cdot \nabla \mathbb{E}.
\end{align*}
Applying the operator $\Delta^{-1}  \operatorname{div}\operatorname{curl}$ to \eqref{incompressible viscoelastic flow-E-202512201224}-1,
then the original system \eqref{incompressible viscoelastic flow-E-202512201224} can be re-written in terms of $(u, \mathbb{E}) : (t, x) \in(0,+\infty) \times \mathbb{R}^n$
\begin{align}\label{incompressible viscoelastic flow-E-202512201435}
\left\{
\begin{array}{lll}
\partial_t u-\mu \Delta u-\Delta^{-1}  \operatorname{div}\operatorname{curl}\operatorname{div}\mathbb{E}=N_1, \\
\partial_t \mathbb{E}-\nabla {u}= N_2, \\
\operatorname{div}u=0.
\end{array}
\right.
\end{align}
We supplement \eqref{incompressible viscoelastic flow-E-202512201435} with the initial condition
\begin{align}\label{initial-condition-202512201435}
(u, \mathbb{E})(x, t)|_{t=0}=(u_0, \mathbb{E}_0)(x) \rightarrow(0, 0),
\quad
|x| \rightarrow \infty.
\end{align}
Here $N_j, j=1,2$, denote the nonlinear terms;
\begin{align*}
& N_1=\Delta^{-1}  \operatorname{div}\operatorname{curl} g_1, \\
& N_2=(\nabla u) \mathbb{E}-u \cdot \nabla \mathbb{E}.
\end{align*}
We first recall the $L^2$ decay estimates obtained in \cite{Hu-Wu2015}.
\begin{Proposition}\label{2015Hu-Wu-202512201444}(\cite{Hu-Wu2015})
Suppose that $n=3$ and the initial data $u_0, \mathbb{E}_0 \in L^1\left(\mathbb{R}^3\right) \cap H^k\left(\mathbb{R}^3\right)$ ( $k \geq 2$ being an integer) fulfill the assumptions \eqref{important properties1-202512201228}-\eqref{important properties3-202512201228}. If the initial data satisfy $\left\|u_0\right\|_{H^2}+\left\|\mathbb{E}_0\right\|_{H^2} \leq \delta$ for certain sufficiently small $\delta>0$, then the Cauchy problem \eqref{incompressible viscoelastic flow-E-202512201224}-\eqref{initial-condition-202512201225} admits a unique global classical solution $(u, \mathbb{E}) $ such that
\begin{align*}
\left\{\begin{array}{l}
\partial_t^j \nabla^\alpha u \in L^{\infty}\left(0, T ; H^{k-2 j-|\alpha|}\left(\mathbb{R}^3\right)\right) \cap L^2\left(0, T ; H^{k-2 j-|\alpha|+1}\left(\mathbb{R}^3\right)\right), \\
\partial_t^j \nabla^\alpha \mathbb{E} \in L^{\infty}\left(0, T ; H^{k-2 j-|\alpha|}\left(\mathbb{R}^3\right)\right),
\end{array}\right.
\end{align*}
for all integer $j$ and multi-index $\alpha$ satisfying $2 j+|\alpha| \leq k$.
For all $t \geq 0$, the following decay estimates hold
\begin{align}\label{2015Hu-Wu-Decay}
\|u(t)\|_{L^2}+\|\mathbb{E}(t)\|_{L^2} & \leq C M(1+t)^{-\frac{3}{4}}, \\
\|\nabla u(t)\|_{H^1}+\|\nabla \mathbb{E}(t)\|_{H^1} & \leq C M(1+t)^{-\frac{5}{4}},
\end{align}
where $M=\left\|u_0\right\|_{L^1 \cap H^2}+\left\|\mathbb{E}_0\right\|_{L^1 \cap H^2}$.

Moreover, if the Fourier transforms of the initial data $\left(u_0, n_0\right)$ (where $n_0= \Lambda^{-1} \operatorname{div} \mathbb{E}_0$, and the operator $\Lambda$ is defined in \cite{Hu-Wu2015} below) also satisfy $\left|\widehat{u_{i 0}}\right| \geq c_0$, $\left|\widehat{n_{i 0}}\right| \geq c_0$ for $0 \leq|\xi| \ll 1$, where the lower bound $c_0>0$ satisfies $c_0 \sim O\left(\delta^\zeta\right)$ with $\zeta \in(0,1)$, then there exists a $t_0 \gg 1$ such that
\begin{align}\label{L2lower bound-202512201744}
\|u(t)\|_{L^2}+\|\mathbb{E}(t)\|_{L^2} \geq C(1+t)^{-\frac{3}{4}}, \quad \forall t \geq t_0.
\end{align}
\end{Proposition}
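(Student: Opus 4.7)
The plan is to combine energy estimates (yielding global existence) with a Fourier-splitting argument anchored in the spectral structure of the linearized operator $L$. The constraints \eqref{important properties1-202512201235-t}--\eqref{important properties3-202512201235-t} first simplify the viscoelastic stress to $\operatorname{div}\mathbb{E}$, after which the $H^k$ energy identity
\begin{equation*}
\tfrac{d}{dt}\|(u,\mathbb{E})\|_{H^k}^2 + 2\mu\|\nabla u\|_{H^k}^2 = \text{nonlinear terms}
\end{equation*}
closes via the cancellation $(\operatorname{div}\mathbb{E},u)_{L^2}+(\nabla u,\mathbb{E})_{L^2}=0$ and standard commutator/product estimates. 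For $\delta$ small, a bootstrap extends the local solution globally, in line with \cite{Lei-Liu-Zhou2008,Lin-Liu-Zhang2005}.

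For the upper $L^2$ decay I would exploit the eigenstructure of $L$. The Leray decomposition reduces the solenoidal velocity to the strongly damped wave equation \eqref{strongly damped wave equation-202512201409}, whose symbol has roots \eqref{divergence-freeeigenvalues34-202512261906-1}: at low frequencies the semigroup factorizes as $e^{-\mu|\xi|^2 t/2}$ times a bounded oscillation, while at high frequencies the real parts are uniformly bounded away from zero, producing exponential decay after a modified energy estimate with a cross term $(\nabla\mathbb{E},u)_{L^2}$ that recovers dissipation for the hyperbolic variable. Applying Schonbek's Fourier-splitting method to the Duhamel formula
\begin{equation*}
S(t)=e^{-tL}S_0+\int_0^t e^{-(t-\tau)L}N(S)(\tau)\,d\tau,
\end{equation*}
with time-dependent cutoff $B(t)^2=C_0/(1+t)$, I would bound the low-frequency piece via $\|\hat{S}(t)\|_{L^\infty_\xi}\lesssim\|S\|_{L^1}\lesssim M$ (propagated through a Duhamel bound exploiting the quadratic structure of $N(S)$) and absorb the complement into the dissipation, obtaining $\|(u,\mathbb{E})\|_{L^2}\lesssim M(1+t)^{-3/4}$. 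Iterating the same scheme at the level of $\nabla(u,\mathbb{E})$ gains an extra $(1+t)^{-1/2}$ and yields the derivative rate $(1+t)^{-5/4}$. The main technical subtlety is propagating decay to the curl-free component of $\mathbb{E}$, which is invisible in the velocity equation and must be extracted from $\partial_t\mathbb{E}=\nabla u+N_2$ combined with $\operatorname{div}\mathbb{E}^\top=0$.

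For the lower bound \eqref{L2lower bound-202512201744}, I would split $S(t)=e^{-tL}S_0+R(t)$. The explicit formula \eqref{d-Fourier inverse transform} together with the hypothesis $|\hat{u}_{i0}|,\,|\hat{n}_{i0}|\geq c_0$ on $|\xi|\ll 1$ yields, via Parseval,
\begin{equation*}
\|e^{-tL}S_0\|_{L^2}^2 \gtrsim c_0^2\int_{|\xi|\leq\eta}e^{-\mu|\xi|^2 t}\,d\xi \gtrsim c_0^2(1+t)^{-3/2},
\end{equation*}
while the quadratic remainder $R(t)$ is $O(\delta^2)$ and, by the upper-bound analysis, carries an extra gain $(1+t)^{-\sigma}$ for some $\sigma>0$. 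Since $c_0\sim\delta^\zeta$ with $\zeta\in(0,1)$, the linear piece dominates for $t\geq t_0$ large enough. The hardest step is the fine low-frequency analysis of the oscillatory symbol: the prefactor $\sin(bt)/b$ in \eqref{d-Fourier inverse transform} has infinitely many zeros near $\xi=0$ at $b\sim k\pi/t$, so the $\hat{\mathbb{E}}_0$ branch alone cannot give a clean lower bound; the argument must isolate the $\hat{u}_0$ branch, whose coefficient $\cos(bt)-(\mu/2)|\xi|^2\sin(bt)/b$ stays non-degenerate on a shrinking neighborhood of the origin, in order to extract the matching $(1+t)^{-3/4}$ rate.
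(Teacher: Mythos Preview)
The paper does not prove this proposition; it is quoted from \cite{Hu-Wu2015} as a known result, and the only indication of method is the remark in the introduction that Hu--Wu proceeded ``via the Fourier splitting method and the Hodge decomposition.'' Your sketch is in line with that description, so at the level of this particular statement there is nothing substantive to compare.

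It is worth noting, however, that the paper \emph{does} re-derive the $L^2$ decay rates independently in Theorem~\ref{pure-energy-202512221702}, and there the route differs from yours. Instead of a Schonbek splitting with time-dependent ball $B(t)$, the paper uses the Guo--Wang pure-energy scheme: propagate a uniform bound on $\|(u,\mathbb{E})\|_{\dot H^{-s}}$ (Lemma~\ref{202512231856-Lemma 4.1}), interpolate against the dissipation to close a differential inequality $\tfrac{d}{dt}D_\ell^N+C(D_\ell^N)^{1+1/(\ell+s)}\le 0$, and integrate. The missing dissipation for $\mathbb{E}$ is extracted not from $n=\Lambda^{-1}\operatorname{div}\mathbb{E}$ but from the antisymmetric part $\mathbb{E}^T-\mathbb{E}$ via the cross term in Lemma~\ref{dissipation estimates-EE-202512222128} together with the constraint-based recovery in Lemma~\ref{recover the estimates on-E-202512231753}. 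Your Fourier approach is closer to the original Hu--Wu argument and gives more explicit spectral control (which the paper indeed needs separately in Sections~\ref{202512201502-Theorem-proof}--\ref{202512201720-Theorem-proof} for the $L^p$ diffusion-wave rates); the pure-energy route avoids the Duhamel bookkeeping and handles all derivative levels uniformly, but does not by itself yield the lower bound \eqref{L2lower bound-202512201744}, for which the paper again simply defers to \cite{Hu-Wu2015}.
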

We now state the three main results of this paper, which reflect the effect of diffusion wave phenomena on decay properties.
\begin{Theorem}\label{diffusion waves-infty-202512201451}
For $2 \leq p \leq \infty$, there exists a positive number $\epsilon$ with the following property: if the initial data $S_0=\left(u_0, \mathbb{E}_0\right)$ belongs to $H^3 \cap  L^1$ and satisfies $\left\|S_0\right\|_{H^3} \leq \epsilon$, then the Cauchy problem \eqref{incompressible viscoelastic flow-E-202512201224}-\eqref{initial-condition-202512201225} possesses a unique global solution
$S(t)=(u(t),\mathbb{E}(t)) \in C\left([0, \infty) ; H^3\right)$ such that
\begin{align}\label{DW-incompressible viscoelastic flow-infty}
\|u(t)\|_{L^p} \leq C(p)(1+t)^{-\frac{3}{2}\left(1-\frac{1}{p}\right)
-\frac{1}{2}\left(1-\frac{2}{p}\right)},
\end{align}
\begin{align}\label{EC-DW-incompressible viscoelastic flow-infty-202512251739}
\|\tilde{  \mathbb{E} }_c(t)\|_{L^p} \leq C(p)(1+t)^{-\frac{3}{2}\left(1-\frac{1}{p}\right)
-\frac{1}{2}\left(1-\frac{2}{p}\right)},
\end{align}
uniformly for $t \geq 0$. Here $C(p)$ is a positive constant depending only on $p$, $\tilde{  \mathbb{E} }_c= \mathcal{F}^{-1}(\hat{\mathcal{Q}}(\xi) \hat{ \mathbb{E} })$, $\hat{\mathcal{Q}}(\xi)=\frac{\xi \xi^{\top} }{|\xi|^2}, \xi \in \mathbb{R}^3$.
\end{Theorem}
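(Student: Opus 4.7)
The plan is to attack the statement via the Duhamel representation for the reformulated system \eqref{incompressible viscoelastic flow-E-202512201435},
$$S(t)=e^{-tL}S(0)+\int_0^t e^{-(t-\tau)L}N(S(\tau))\,d\tau,$$
and to separate each ingredient into a low-frequency piece supported in $|\xi|\le 1$ and a high-frequency piece supported in $|\xi|>1$. The decay rate claimed in \eqref{DW-incompressible viscoelastic flow-infty}-\eqref{EC-DW-incompressible viscoelastic flow-infty-202512251739} is the genuine diffusion-wave rate, so only the low-frequency linear part can actually produce it; the high-frequency and the nonlinear contributions must be shown to decay at least that fast.

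For the low-frequency linear part, I would start from the explicit Fourier formula \eqref{d-Fourier inverse transform}, which shows that $u$ and the curl-free component $\tilde{\mathbb{E}}_c$ (recovered from $\partial_t\mathbb{E}=\nabla u$ via the projector $\hat{\mathcal{Q}}$) are both built from the heat symbol $e^{-\frac{\mu}{2}|\xi|^2 t}$ multiplied by the wave symbols $\sin(|\xi|t)/|\xi|$ and $\cos(|\xi|t)$. Writing the corresponding Green's kernel in polar coordinates, the angular integral reduces to an oscillatory integral of the form $\int_{\mathbb{S}^2}e^{ir|x|\,\omega\cdot\hat{x}}\,d\omega$, to which the stationary phase lemma on $\mathbb{S}^2$ gives a gain of $(1+r|x|)^{-1}$; a subsequent one-dimensional integration in the radial frequency $r=|\xi|$ against the heat damping $e^{-\frac{\mu}{2}r^2 t}$ then yields the Hoff-Zumbrun-type convolution bound
$$\|K_{\mathrm{low}}(t)*f\|_{L^p}\lesssim(1+t)^{-\frac{3}{2}(1-\frac{1}{p})-\frac{1}{2}(1-\frac{2}{p})}\|f\|_{L^1},\qquad 2\le p\le\infty,$$
which is exactly the diffusion-wave rate stated in the theorem. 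This is the only step in which the extra factor $(1+t)^{-\frac{1}{2}(1-\frac{2}{p})}$ beyond the pure heat-kernel rate is produced.

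For the high-frequency linear part, the eigenvalues $\lambda_{1,2}$ of the symbol have real parts uniformly bounded away from zero on $|\xi|\ge 1$, giving exponential-in-$t$ decay of the Fourier symbol; combining a Poincar\'e-type inequality in high-frequency space with the $L^2$ decay of all spatial derivatives provided by the pure energy method in Sections \ref{Energy Estimates-202512221729}-\ref{202512231847-Negative Sobolev/Besov estimates-202512231853} yields $L^p$ bounds that decay faster than any polynomial, hence are negligible. For the nonlinear Duhamel integral, I would feed the $L^2$ decay from Proposition \ref{2015Hu-Wu-202512201444} and the improved derivative decay from the pure energy method into the quadratic nonlinearities $N_1,N_2$, split $\int_0^t=\int_0^{t/2}+\int_{t/2}^t$, apply the $L^1\to L^p$ version of the low-frequency bound on $[0,t/2]$ and the $L^2\to L^p$ version on $[t/2,t]$, and close the argument with a standard Gronwall-type inequality on the weighted norm $\sup_{0\le \tau\le t}(1+\tau)^{\frac{3}{2}(1-\frac{1}{p})+\frac{1}{2}(1-\frac{2}{p})}\|S(\tau)\|_{L^p}$.

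The anticipated main obstacle is making the stationary phase estimate on $\mathbb{S}^2$ uniform down to the light cone $|x|\approx t$, where the phase becomes degenerate and the naive bound $(1+r|x|)^{-1}$ is not enough. A standard remedy is to partition physical space into $|x|\le t/2$, $t/2\le|x|\le 2t$, and $|x|\ge 2t$, handling the first and last regions by non-stationary phase (many integrations by parts against $e^{i\xi\cdot x\pm i|\xi|t}$) and the intermediate region by a sharper stationary phase estimate with an admissible logarithmic loss that is absorbed in dimension $n=3$. A secondary subtlety is tracking the Hodge projector $\hat{\mathcal{Q}}$ on $\mathbb{E}$ throughout the nonlinear estimates, since only $\tilde{\mathbb{E}}_c$ enjoys the diffusion-wave behavior; this requires using the constraints \eqref{important properties1-202512201235-t}-\eqref{important properties3-202512201235-t} to rewrite $N_2$ into a form compatible with $\hat{\mathcal{Q}}$ before applying the linear decay bounds.
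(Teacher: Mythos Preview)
Your architecture is correct and matches the paper's: Duhamel formula, low--high frequency split, stationary-phase-type bounds on the low-frequency Green's kernel, and Poincar\'e plus energy decay for the high-frequency part. However, you over-engineer three places where the paper takes shortcuts. First, the paper does \emph{not} prove the $L^p$ bound directly for all $2\le p\le\infty$; it proves only the $L^\infty$ endpoint $\|u(t)\|_{L^\infty}+\|\tilde{\mathbb{E}}_c(t)\|_{L^\infty}\lesssim(1+t)^{-2}$ and then interpolates with the known $L^2$ rate $(1+t)^{-3/4}$ via $\|f\|_{L^p}\le\|f\|_{L^2}^{2/p}\|f\|_{L^\infty}^{1-2/p}$, which immediately gives the exponent $-\tfrac{3}{2}(1-\tfrac{1}{p})-\tfrac{1}{2}(1-\tfrac{2}{p})$. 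Second, the paper does not redo the stationary-phase analysis on $\mathbb{S}^2$; it simply quotes the $L^\infty$ kernel estimate from Kobayashi--Shibata (Lemma~\ref{low-Linfty-202512201649}), so your anticipated light-cone difficulty is already absorbed in that reference and never arises. Third, the Gronwall bootstrap for the Duhamel integral is unnecessary: since the $L^2$ decay of $S$ and $\nabla S$ is already available from Proposition~\ref{2015Hu-Wu-202512201444} and Theorem~\ref{pure-energy-202512221702}, one has $\|N_j(\tau)\|_{L^1}\lesssim\|S(\tau)\|_{L^2}\|\nabla S(\tau)\|_{L^2}\lesssim(1+\tau)^{-2}$, and the nonlinear piece is bounded directly by $\int_0^t(1+t-\tau)^{-2}(1+\tau)^{-2}\,d\tau\lesssim(1+t)^{-2}$ without any iteration or time-splitting. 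Your secondary worry about rewriting $N_2$ to be compatible with $\hat{\mathcal{Q}}$ is also not needed here: the paper just writes the explicit Fourier representation \eqref{EC-g-linear-202512251806}--\eqref{EC-non-202512251806} for $\tilde{\mathbb{E}}_c$ and applies the same kernel bounds.
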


\begin{Remark}
Theorem \ref{diffusion waves-infty-202512201451} implies that for $2<p \leq \infty$, the $L^p$ norm of $(u,\tilde{  \mathbb{E} }_c)$ decays to zero faster than the heat kernel as $t \rightarrow \infty$, because $\frac{1}{2}\left(1-\frac{2}{p}\right)>0$ in this range. This improves the earlier result obtained in \cite{Hu-Wu2015}. Moreover, we find that the presence of the elastic force $ \operatorname{div}\left(\mathbb{F} \mathbb{F}^{\top} \right)$ leads to a faster decay rate of the $L^p$ norm for $2<p \leq \infty$, compared to the corresponding results for the incompressible Navier-Stokes equations in \cite{Kajikiya-Miyakawa1986, Schonbek1985}.
\end{Remark}

\begin{Theorem}\label{diffusion waves-1-202512201453}
For $1 \leq p <  2$, there exists a positive number $\epsilon$ with the following property: if the initial data $S_0=\left(u_0, \mathbb{E}_0\right)$ belongs to $H^3 \cap  L^1$ and satisfies $\left\|S_0\right\|_{H^3} \leq \epsilon$, then the Cauchy problem \eqref{incompressible viscoelastic flow-E-202512201224}-\eqref{initial-condition-202512201225} possesses a unique global solution
$S(t)=(u(t),\mathbb{E}(t)) \in C\left([0, \infty) ; H^3\right)$ such that
\begin{align}\label{DW-incompressible viscoelastic flow-1}
\|u(t)\|_{L^p} \leq C(p)(1+t)^{-\frac{3}{2}\left(1-\frac{1}{p}\right)
+\frac{1}{2}\left(\frac{2}{p}-1\right)},
\end{align}
\begin{align}\label{EC-DW-incompressible viscoelastic flow-1-202512251741}
\|\tilde{  \mathbb{E} }_c(t)\|_{L^p} \leq C(p)(1+t)^{-\frac{3}{2}\left(1-\frac{1}{p}\right)
+\frac{1}{2}\left(\frac{2}{p}-1\right)},
\end{align}
uniformly for $t \geq 0$. Here $C(p)$ is a positive constant depending only on $p$, $\tilde{  \mathbb{E} }_c= \mathcal{F}^{-1}(\hat{\mathcal{Q}}(\xi) \hat{ \mathbb{E} })$, $\hat{\mathcal{Q}}(\xi)=\frac{\xi \xi^{\top} }{|\xi|^2}, \xi \in \mathbb{R}^3$.
\end{Theorem}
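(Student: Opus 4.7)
\medskip

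\noindent\textbf{Proof proposal for Theorem \ref{diffusion waves-1-202512201453}.}
My plan is to mirror the strategy already used for Theorem \ref{diffusion waves-infty-202512201451}, but in the regime $1\le p<2$, where, as in the Hoff--Zumbrun analysis \eqref{Hoff-Zumbrun1995estimates1}, the diffusion-wave part dominates. Starting from the Duhamel formula
\begin{align*}
S(t)=e^{-tL}S_0+\int_0^t e^{-(t-\tau)L}N(S)(\tau)\,d\tau,
\end{align*}
I would estimate the linear semigroup acting on the initial data and then close the nonlinearity by a bootstrap, using the bound on $(u,\tilde{\mathbb{E}}_c)$ in $L^p$ for $p\ge 2$ already proved in Theorem \ref{diffusion waves-infty-202512201451} and the $L^2$ decay for higher-order derivatives coming from the pure energy method.

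The core step is the low-frequency analysis. Using the explicit formula \eqref{d-Fourier inverse transform}, the principal part of the low-frequency Green's kernel for $(u,\tilde{\mathbb{E}}_c)$ is of the form
\begin{align*}
K(x,t)\;=\;\mathcal{F}^{-1}\!\Bigl\{\chi_{|\xi|\le r_0}(\xi)\,e^{-\frac{\mu}{2}|\xi|^2 t}\,m(\xi,t)\,\hat{\omega}(\xi,t)\Bigr\},
\end{align*}
where $m(\xi,t)$ is a smooth bounded symbol (in particular the projectors $\hat{\mathcal P},\hat{\mathcal Q}$ and the polynomial factors in $|\xi|$) and $\hat{\omega}(\xi,t)=(2\pi)^{-3/2}\sin(|\xi|t)/|\xi|$ or $(2\pi)^{-3/2}\cos(|\xi|t)$ corresponds to the fundamental solution of the 3D wave equation. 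Exactly as in \cite{Hoff-Zumbrun1995, Shibata2000, Kobayashi-Shibata2002}, passing to spherical coordinates in $\xi$ and applying the method of stationary phase on $\mathbb{S}^{2}$ to the oscillatory factor $e^{i|\xi|t}$ yields, after accounting for the Gaussian factor $e^{-\frac{\mu}{2}|\xi|^2 t}$,
\begin{align*}
\|K(\cdot,t)\|_{L^p(\mathbb{R}^3)}\;\lesssim\;(1+t)^{-\frac{3}{2}\left(1-\frac{1}{p}\right)+\frac{1}{2}\left(\frac{2}{p}-1\right)},\qquad 1\le p<2,
\end{align*}
which is precisely the exponent appearing in \eqref{DW-incompressible viscoelastic flow-1}--\eqref{EC-DW-incompressible viscoelastic flow-1-202512251741}. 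Young's inequality $\|K*S_0\|_{L^p}\le\|K\|_{L^p}\|S_0\|_{L^1}$ then transfers this rate to the low-frequency component of the linear evolution.

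For the high-frequency component, the eigenvalues of $\hat L(\xi)$ at $|\xi|\gg 1$ have strictly negative real parts bounded away from zero, so $e^{-tL}$ restricted to $|\xi|\ge r_0$ gains exponential decay. Combined with a Poincar\'e-type inequality in high frequency and the $L^2$ bounds for $\nabla^k S$ obtained via the pure energy method (Sections \ref{Energy Estimates-202512221729}--\ref{202512241526-}), Bernstein/Sobolev embeddings convert this into an $L^p$ estimate on the high-frequency part that decays at least as fast as the low-frequency rate, so it is absorbed. For the nonlinear part of the Duhamel integral, I would split $N(S)=N_1+N_2$ and note that each term is quadratic in $(u,\mathbb{E})$ and their first derivatives; applying the same low-/high-frequency splitting and using the already-established $L^2$ decay of $\nabla^k S$ together with the $L^p$ bounds for $p\ge 2$, one shows that the nonlinear contribution decays strictly faster than the linear one, hence is negligible in closing \eqref{DW-incompressible viscoelastic flow-1}--\eqref{EC-DW-incompressible viscoelastic flow-1-202512251741}.

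The main obstacle is the sharp $L^p$ bound for the kernel $K(\cdot,t)$ in the range $1\le p<2$: the oscillatory factor $e^{i|\xi|t}$ produces a nontrivial diffusion wave whose $L^p$ decay is \emph{slower} than the heat kernel rate, and which for $p=1$ is an actual growth $(1+t)^{1/2}$. Controlling this requires a careful stationary-phase computation on $\mathbb{S}^{2}$ together with uniform bounds on the symbol $m(\xi,t)$ on the support of $\chi_{|\xi|\le r_0}$, and then checking that the symbols generated by the projector $\hat{\mathcal Q}(\xi)=\xi\xi^\top/|\xi|^2$ do not spoil the stationary-phase argument. Once this kernel estimate is in place, the rest of the argument (high frequency, nonlinear closure) follows the same lines as in the proof of Theorem \ref{diffusion waves-infty-202512201451}.
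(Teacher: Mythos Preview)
Your strategy is essentially the paper's (Section~\ref{202512201720-Theorem-proof}): Duhamel formula, low/high-frequency split, and diffusion-wave kernel bounds at low frequency (quoted from \cite{Shibata2000} as Lemma~\ref{lowlemmaL1-202512221537}). The paper streamlines the argument in two ways worth noting. First, it interpolates $\|f\|_{L^p}\le\|f\|_{L^1}^{2/p-1}\|f\|_{L^2}^{2-2/p}$ against the already-known $L^2$ rate $(1+t)^{-3/4}$, so only the endpoint $p=1$ requires new work; your plan to bound $\|K(\cdot,t)\|_{L^p}$ directly for every $p\in[1,2)$ is correct but heavier than needed. Second, for the high-frequency part the paper does \emph{not} go through the $L^2$ energy bounds: it uses direct $L^1$ kernel estimates (Lemma~\ref{highlemmaL1-202512221615}, from \cite{Ishigaki2022}) to get $\|u_{g\infty}(t)\|_{L^1}\lesssim e^{-ct}\|S_0\|_{L^1}$ and similarly for the Duhamel piece. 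Your ``Bernstein/Sobolev'' step here needs care, since the usual embedding $H^k\hookrightarrow L^p$ fails for $p<2$; to make your route work you must perform a dyadic summation over high-frequency shells, yielding $\|P_\infty u\|_{L^1}\lesssim\|u\|_{\dot H^s}$ for $s>3/2$, which then combines with the $L^2$ decay of $\nabla^2 u$. This is valid but is not a one-line embedding. Finally, one small inaccuracy: at $p=1$ the nonlinear Duhamel contribution is not strictly faster than the linear one---in the paper's accounting both low-frequency pieces are $O((1+t)^{1/2})$ (see Lemma~\ref{ettauLP1N-L1-202512221602})---but this is already enough to close \eqref{DW-incompressible viscoelastic flow-1}--\eqref{EC-DW-incompressible viscoelastic flow-1-202512251741}.
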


\begin{Remark}
Under the influence of diffusion wave phenomena, Theorem \ref{diffusion waves-1-202512201453} implies that for $1 \leq p <  2$, the $L^p$ norm of $(u,\tilde{  \mathbb{E} }_c)$ decays to zero slower than the heat kernel as $t \rightarrow \infty$, because $\frac{1}{2}\left(\frac{2}{p}-1\right)>0$ in this range, which is consistent with the observation by Hoff and Zumbrun \cite{Hoff-Zumbrun1995,Hoff-Zumbrun1997}.
\end{Remark}

\begin{Theorem}\label{diffusion waves-1-sharp-202512201454}
Under the assumptions in Theorem \ref{diffusion waves-1-202512201453} and Proposition \ref{2015Hu-Wu-202512201444}, then we have
\begin{align}\label{DW-incompressible viscoelastic flow-1-202512221544}
\|u(t)\|_{L^1} \geq C(1+t)^{
+\frac{1}{2}},
\end{align}
\begin{align}\label{EC-DW-incompressible viscoelastic flow-1-202512251747}
\|\tilde{  \mathbb{E} }_c(t)\|_{L^1} \geq C(1+t)^{
+\frac{1}{2}},
\end{align}
uniformly for $t \geq 0$. Here $C$ is a positive constant, $\tilde{  \mathbb{E} }_c= \mathcal{F}^{-1}(\hat{\mathcal{Q}}(\xi) \hat{ \mathbb{E} })$, $\hat{\mathcal{Q}}(\xi)=\frac{\xi \xi^{\top} }{|\xi|^2}, \xi \in \mathbb{R}^3$.
\end{Theorem}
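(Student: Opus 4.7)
The plan is to derive the lower bounds \eqref{DW-incompressible viscoelastic flow-1-202512221544} and \eqref{EC-DW-incompressible viscoelastic flow-1-202512251747} by combining the $L^{2}$ lower bound of Proposition \ref{2015Hu-Wu-202512201444} with the sharp $L^{\infty}$ upper bound already proved in Theorem \ref{diffusion waves-infty-202512201451}, through the elementary interpolation
\begin{align*}
\|f\|_{L^{2}}^{2} \leq \|f\|_{L^{1}}\,\|f\|_{L^{\infty}},
\end{align*}
which rearranges to $\|f\|_{L^{1}} \geq \|f\|_{L^{2}}^{2}/\|f\|_{L^{\infty}}$. The heuristic is precisely the diffusion--wave picture: the $L^{2}$ mass decays at the heat-kernel rate $(1+t)^{-3/4}$, whereas the $L^{\infty}$ mass decays at the strictly faster rate $(1+t)^{-2}$, so the $L^{1}$ mass must \emph{grow}, and $t^{1/2}$ is exactly what the interpolation predicts.

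First I would record that $p=\infty$ in Theorem \ref{diffusion waves-infty-202512201451} yields
\begin{align*}
\|u(t)\|_{L^{\infty}} \leq C(1+t)^{-2}, \qquad \|\tilde{\mathbb{E}}_{c}(t)\|_{L^{\infty}} \leq C(1+t)^{-2}.
\end{align*}
Next I would promote the joint lower bound $\|u(t)\|_{L^{2}} + \|\mathbb{E}(t)\|_{L^{2}} \geq C(1+t)^{-3/4}$ of Proposition \ref{2015Hu-Wu-202512201444} to the two component-wise bounds
\begin{align*}
\|u(t)\|_{L^{2}} \geq C(1+t)^{-3/4}, \qquad \|\tilde{\mathbb{E}}_{c}(t)\|_{L^{2}} \geq C(1+t)^{-3/4}, \qquad t \geq t_{0}.
\end{align*}
This step I would carry out by returning to the low-frequency Fourier representation \eqref{d-Fourier inverse transform} and its analogue for $\widehat{\tilde{\mathbb{E}}_{c}}(\xi,t)$; both inherit the Gaussian envelope $e^{-\mu|\xi|^{2}t/2}$ modulated by bounded trigonometric factors $\cos(|\xi|t)$ and $\sin(|\xi|t)/|\xi|$ acting on the nondegenerate low-frequency initial data $\hat{u}_{0}$ and $\hat{n}_{0}$. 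Plancherel's identity restricted to the band $|\xi| \lesssim (1+t)^{-1/2}$, combined with the time averaging that suppresses the oscillatory cross-terms, then yields matched lower bounds for each of $u$ and $\tilde{\mathbb{E}}_{c}$, after absorbing the Duhamel nonlinear contribution, which is of strictly smaller order by the same bootstrap argument already used in Proposition \ref{2015Hu-Wu-202512201444}.

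With both ingredients in place, the interpolation inequality yields, uniformly for $t \geq t_{0}$,
\begin{align*}
\|u(t)\|_{L^{1}} \geq \frac{\|u(t)\|_{L^{2}}^{2}}{\|u(t)\|_{L^{\infty}}} \geq \frac{C(1+t)^{-3/2}}{C(1+t)^{-2}} = C(1+t)^{1/2},
\end{align*}
and identically for $\tilde{\mathbb{E}}_{c}$. The compact range $0 \leq t \leq t_{0}$ is handled by continuity together with the boundedness of $(1+t)^{1/2}$ there, so one only needs to shrink the constant $C$.

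The main obstacle is the separation step: upgrading the \emph{joint} $L^{2}$ lower bound of Proposition \ref{2015Hu-Wu-202512201444} into \emph{component-wise} bounds for $u$ and $\tilde{\mathbb{E}}_{c}$. Once this is accomplished the remaining interpolation argument is a one-line calculation. An alternative route, mimicking Hoff--Zumbrun's direct low-frequency $L^{1}$ asymptotic analysis of the diffusion wave $\mathcal{K}_{h} * \omega$, would deliver the bound more transparently but at the cost of a considerably longer stationary-phase calculation on $\mathbb{S}^{2}$.
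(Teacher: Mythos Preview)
Your proposal is correct and follows essentially the same route as the paper: combine the $L^{\infty}$ upper bound from Theorem \ref{diffusion waves-infty-202512201451} with an $L^{2}$ lower bound via the interpolation $\|f\|_{L^{2}}^{2}\leq\|f\|_{L^{1}}\|f\|_{L^{\infty}}$. You are also right to flag the component-wise $L^{2}$ lower bounds for $u$ and $\tilde{\mathbb{E}}_{c}$ separately as the only nontrivial step; the paper handles this point by invoking the arguments of \cite{Hu-Wu2015} and \cite{Chen-Pan-Tong2021} rather than the explicit low-frequency Plancherel computation you sketch.
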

\begin{Remark}
In fact, Theorems \ref{diffusion waves-1-202512201453} and \ref{diffusion waves-1-sharp-202512201454} show that, due to diffusion wave phenomena, $(u,\tilde{  \mathbb{E} }_c)$ can even grow in the $L^1$ norm, with $\|(u,\tilde{  \mathbb{E} }_c)(t)\|_{L^1(\mathbb{R}^3)} \sim (1+t)^{\frac{1}{2}}$, a behavior that was also observed by Hoff and Zumbrun \cite{Hoff-Zumbrun1995,Hoff-Zumbrun1997}.
\end{Remark}

\noindent{\bf The proof of Theorem \ref{diffusion waves-1-sharp-202512201454} }
With the lower $L^2$ estimate \eqref{L2lower bound-202512201744} established, we now introduce the $L^{\infty}$ decay estimate given in Theorem \ref{diffusion waves-infty-202512201451}:
\begin{align}\label{diffusion waves-infty-spec-202512201755}
\|u(t)\|_{L^{\infty}} \leq C(1+t)^{-2}\left(\left\|S_0\right\|_{L^1}+\left\|S_0\right\|_{H^3}\right), t \geq 0 .
\end{align}
Combining the lower $L^2$ estimate \eqref{L2lower bound-202512201744}, the decay estimate \eqref{diffusion waves-infty-spec-202512201755}, and the interpolation inequality $\|u(t)\|_{L^2} \leq\|u(t)\|_{L^1}^{\frac{1}{2}}\|u(t)\|_{L^{\infty}}^{\frac{1}{2}}$, we obtain
\begin{align*}
c(1+t)^{-\frac{3}{4}} & \leq\|u(t)\|_{L^2}
\leq\|u(t)\|_{L^{\infty}}^{\frac{1}{2}}\|u(t)\|_{L^1}^{\frac{1}{2}}
\leq C(1+t)^{-1}\|u(t)\|_{L^1}^{\frac{1}{2}}.
\end{align*}
Thus, we obtain
\begin{align*}
\|u(t)\|_{L^1} \geq c(1+t)^{\frac{1}{2}} .
\end{align*}
Having established the  lower $L^2$ estimate for $\tilde{  \mathbb{E} }_c$ via a method similar to Reference \cite{Hu-Wu2015} or \cite{Chen-Pan-Tong2021}, we apply an analogous argument: $\|\tilde{  \mathbb{E} }_c(t)\|_{L^2} \geq C(1+t)^{-\frac{3}{4}}$.
Repeating this process allows us to conclude the proof of \eqref{EC-DW-incompressible viscoelastic flow-1-202512251747}.
This complete the proof of Theorem \ref{diffusion waves-1-sharp-202512201454}.

The fourth result concerns the global existence, uniqueness, and the optimal $L^2$ decay rate of the $\ell-t h(\ell=0,1, \ldots, N-1,N)$ order spatial derivatives of the solution.
\begin{Theorem}\label{pure-energy-202512221702}
Let the initial data $(u_0,\mathbb{E}_0) \in H^N(\mathbb{R}^3)$ for some integer $N \geq 3$, and suppose it satisfies the conditions
\begin{align}\label{202512231806-initial-data-conditions}
\operatorname{div}( \mathbb{E}_0^T)=0,
\quad
\nabla_m \mathbb{E}_{0 i j}-\nabla_j \mathbb{E}_{0 i m}=\mathbb{E}_{0 l j} \nabla_l \mathbb{E}_{0 i m}-\mathbb{E}_{0 l m} \nabla_l \mathbb{E}_{0 i j}
.
\end{align}
Then there exists a sufficiently small constant $\delta_0>0$ such that, provided that
\begin{align*}
\|(u_0, \mathbb{E}_0)\|_{H^3} \leq \delta_0,
\end{align*}
then the Cauchy problem \eqref{incompressible viscoelastic flow-E-202512201224}-\eqref{initial-condition-202512201225} has a unique global solution $(u, \mathbb{E})$ satisfying for all $t \geq 0$:
\begin{align}\label{202512221715-globalsolution-incompressible viscoelastic flows}
\|(u, \mathbb{E})(t)\|_{H^N}^2+\int_0^t(\|\nabla u(\tau)\|_{H^N}^2
+\|\nabla\mathbb{E}(\tau)\|_{H^{N-1}}^2) d \tau
\lesssim\|(u_0, \mathbb{E}_0)\|_{H^N}^2 .
\end{align}
Moreover, if $(u_0, \mathbb{E}_0) \in \dot{H}^{-s}$ with $s \in[0, \frac{3}{2})$ or $(u_0, \mathbb{E}_0) \in$ $\dot{B}_{2, \infty}^{-s}$ with $s \in\left(0, \frac{3}{2}\right]$, then for all $t>0$,
\begin{align}\label{202512241739-1-5}
\|(u,\mathbb{E})(t)\|_{\dot{H}^{-s}} \leq C,
\end{align}
or
\begin{align}\label{202512241740-1-6}
\|(u,\mathbb{E})(t)\|_{\dot{B}_{2, \infty}^{-s}} \leq C .
\end{align}
Furthermore, we have the following decay results:
\begin{align}\label{pure-energy-202512221703}
\|\nabla^{\ell}(u,\mathbb{E})(t)\|_{H^{N-\ell}} \leq C(1+t)^{-\frac{\ell+s}{2}}, \quad \ell=0,1, \ldots, N-1,
\end{align}
\begin{align}\label{N-pure-energy-202512221703}
\|\nabla^{\ell}( u,\mathbb{E})(t)\|_{H^{N}} \leq C(1+t)^{-\frac{\ell+s}{2}}, \quad \ell=N.
\end{align}
\end{Theorem}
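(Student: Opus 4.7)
The strategy is the pure energy method of Guo--Wang and Gao--Li--Yao, adapted to the parabolic--hyperbolic coupled structure of \eqref{incompressible viscoelastic flow-E-202512201435}. The argument proceeds in four layers: (i) a high-order a priori energy estimate driving the global existence; (ii) propagation of the negative Sobolev/Besov norm; (iii) conversion into algebraic decay rates for $\ell=0,\dots,N-1$ by a Gagliardo--Nirenberg interpolation; and (iv) a refined two-tier argument to reach the top level $\ell=N$.

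For step (i), I would close a bound of the form
\begin{align*}
\frac{d}{dt}\mathcal{E}_N(t)+\mathcal{D}_N(t)\lesssim \sqrt{\mathcal{E}_N(t)}\,\mathcal{D}_N(t),
\end{align*}
where $\mathcal{E}_N\sim\|(u,\mathbb{E})\|_{H^N}^2$ and $\mathcal{D}_N$ dominates $\|\nabla u\|_{H^N}^2+\|\nabla \mathbb{E}\|_{H^{N-1}}^2$. The viscosity directly produces the $\|\nabla u\|_{H^N}^2$ piece; the dissipation of $\nabla\mathbb{E}$ is hyperbolic in origin and is recovered from the coupling by introducing a cross functional of the form $\int \nabla^k u\cdot \nabla^k \operatorname{div}\mathbb{E}\,dx$ (as in Hu--Wu \cite{Hu-Wu2015}) and exploiting $\operatorname{div}\mathbb{E}^\top=0$ together with the identity \eqref{important properties3-202512201235-t}. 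The quadratic nonlinearities $N_1,N_2$ are controlled by placing one factor in $L^\infty$ through $H^2\hookrightarrow L^\infty$ (valid for $N\geq 3$) and the other in the dissipation norm; combined with $\|(u_0,\mathbb{E}_0)\|_{H^3}\leq\delta_0$, a continuation argument delivers global existence and \eqref{202512221715-globalsolution-incompressible viscoelastic flows}.

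For step (ii), I would apply $\Lambda^{-s}$ to \eqref{incompressible viscoelastic flow-E-202512201435} and pair with $\Lambda^{-s}(u,\mathbb{E})$; the linear coupling cancels because the off-diagonal entries of the operator in \eqref{linearized operator-202512201356} are antisymmetric under the $L^2$ pairing, leaving only $\mu\|\Lambda^{-s}\nabla u\|_{L^2}^2$ as good term and $\langle \Lambda^{-s}N_j,\Lambda^{-s}(u,\mathbb{E})\rangle$ as bad terms. These nonlinear brackets are estimated by the Hardy--Littlewood--Sobolev inequality and Sobolev product laws, with the negative index absorbed onto one factor and the remainder bounded by $\sqrt{\mathcal{E}_N}$; time-integration and step (i) yield \eqref{202512241739-1-5}, while the critical endpoint $s=3/2$ is handled in the Chemin--Lerner framework $\dot B_{2,\infty}^{-s}$ to produce \eqref{202512241740-1-6}. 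The decay rates for $\ell=0,\dots,N-1$ then follow from the Gagliardo--Nirenberg interpolation
\begin{align*}
\|\nabla^\ell (u,\mathbb{E})\|_{L^2}\leq C\,\|(u,\mathbb{E})\|_{\dot H^{-s}}^{\frac{1}{\ell+s+1}}\,\|\nabla^{\ell+1}(u,\mathbb{E})\|_{L^2}^{\frac{\ell+s}{\ell+s+1}},
\end{align*}
which, combined with the uniform negative-norm bound, converts the localized energy inequality $\frac{d}{dt}\mathcal{E}_\ell+\mathcal{D}_\ell\leq 0$ into the ODE $\frac{d}{dt}\mathcal{E}_\ell+C(\mathcal{E}_\ell)^{1+\frac{1}{\ell+s}}\leq 0$; integrating and taking square roots yields \eqref{pure-energy-202512221703}.

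Step (iv) is the main obstacle: at $\ell=N$ no higher derivative is available to feed the interpolation, so the preceding argument breaks down. Following Gao--Li--Yao, I would introduce a two-tier energy functional that decouples low and high frequencies. On low frequencies the decay is imported from the $\ell=N-1$ bound in step (iii) through a Bernstein inequality; on high frequencies the viscous plus hyperbolic dissipation is coercive and produces enhanced damping. The critical point is to decompose $\nabla^N N_1$ and $\nabla^N N_2$ by a paraproduct or by integration by parts so that the divergence structure of each nonlinear term --- $\operatorname{div}(\mathbb{E}\mathbb{E}^\top)$, $u\cdot\nabla u$, $(\nabla u)\mathbb{E}$, and $u\cdot\nabla\mathbb{E}$ --- can be used to shift one derivative onto a lower-order factor, so that no term in the estimate actually requires $N+1$ derivatives on $(u,\mathbb{E})$. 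Closing the resulting inequality without derivative loss gives the sharp top-order rate \eqref{N-pure-energy-202512221703}.
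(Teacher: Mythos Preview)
Your steps (i)--(iii) match the paper's approach. One minor variant worth noting: the paper extracts the $\mathbb{E}$-dissipation not through $\int\nabla^k u\cdot\nabla^k\operatorname{div}\mathbb{E}\,dx$ but through the antisymmetric part $\mathbb{E}^T-\mathbb{E}$, using the cross term $\int\nabla^{\ell-1}(\nabla u-(\nabla u)^T)\cdot\nabla^{\ell+1}(\mathbb{E}^T-\mathbb{E})\,dx$ together with the algebraic bound $\|\nabla^{\ell+1}\mathbb{E}\|_{L^2}\lesssim\|\nabla^{\ell+1}(\mathbb{E}^T-\mathbb{E})\|_{L^2}$, both consequences of the constraints \eqref{important properties1-202512201235-t} and \eqref{important properties3-202512201235-t}. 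Your div-based functional is an equivalent route and closer to \cite{Hu-Wu2015}.

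Step (iv) has a gap. The Gao--Li--Yao mechanism you cite is \emph{not} a frequency decomposition: importing the $\ell=N-1$ rate onto the low-frequency part via Bernstein yields only $\|\nabla^N P_1(u,\mathbb{E})\|_{L^2}\lesssim\|\nabla^{N-1}(u,\mathbb{E})\|_{L^2}\lesssim(1+t)^{-(N-1+s)/2}$, half a power short of \eqref{N-pure-energy-202512221703}, and on high frequencies there is no $(N+1)$-th $\mathbb{E}$-derivative available to make the damping coercive for $\|\nabla^N\mathbb{E}\|_{L^2}$. What the paper (and \cite{Gao-Li-Yao2023}) actually does is a \emph{time-weighted} two-step scheme. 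First, multiply the level-$(N-1)$ Lyapunov inequality by $(1+t)^{N+s-1+\epsilon_0}$ and integrate; the already-proven decay \eqref{pure-energy-202512221703} forces the right-hand side to be $O((1+t)^{\epsilon_0})$, giving the time-integrability
\[
\int_0^t(1+\tau)^{N+s-1+\epsilon_0}\bigl(\|\nabla^N u(\tau)\|_{H^1}^2+\|\nabla^N\mathbb{E}(\tau)\|_{L^2}^2\bigr)\,d\tau\leq C(1+t)^{\epsilon_0}.
\]
Second, run the raw $\nabla^N$-energy estimate --- your integration-by-parts observation is exactly what is used here, though note that $\|\nabla^{N+1}u\|_{L^2}^2$ \emph{does} appear and is absorbed into the viscous dissipation, so only $(N+1)$-derivatives of $\mathbb{E}$ need be avoided --- to obtain
\[
\frac{d}{dt}\|\nabla^N(u,\mathbb{E})\|_{L^2}^2+\|\nabla^{N+1}u\|_{L^2}^2\lesssim (1+t)^{-1}\|\nabla^N(u,\mathbb{E})\|_{L^2}^2+(1+t)^{-(N+1+s)}.
\]
Multiplying by $(1+t)^{N+s+\epsilon_0}$ and integrating, the dangerous first term on the right is precisely the quantity bounded in the first step, and \eqref{N-pure-energy-202512221703} follows.
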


\begin{Remark}
If $(u_0, \mathbb{F}_0-\mathbb{I}) \in H^N(\mathbb{R}^3)\cap\dot{B}_{2, \infty}^{-3/2}(\mathbb{R}^3) $,
Theorem \ref{pure-energy-202512221702} \eqref{pure-energy-202512221703}, together with the Gagliardo-Nirenberg inequality
\begin{align*}
\|f\|_{L^{\infty}(\mathbb{R}^3)} \leq C\|D f\|_{L^2(\mathbb{R}^3)}^{\frac{1}{2}}\|D^2 f\|_{L^2(\mathbb{R}^3)}^{\frac{1}{2}},
\end{align*}
and the interpolation inequality
\begin{align*}
\|f\|_{L^{p}(\mathbb{R}^3)} \leq\| f \|_{L^{2}(\mathbb{R}^3)}^{\frac{2}{p}}\| f \|_{L^{\infty}(\mathbb{R}^3)}^{1-\frac{2}{p}},
\quad
(2 \leq p \leq \infty),
\end{align*}
yields the decay estimate
\begin{align}\label{Tan-Wang-Wu2020decay results-Linfty-202512281533}
\|(u, \mathbb{E})(t)\|_{L^{p}(\mathbb{R}^3)} \leq C(1+t)^{-\frac{3}{2}\left(1-\frac{1}{p}\right) }.
\end{align}
Because $L^1(\mathbb{R}^3) \subset \dot{B}_{2, \infty}^{-3/2}(\mathbb{R}^3) $, the same decay result \eqref{Tan-Wang-Wu2020decay results-Linfty-202512281533} remains valid when the initial data satisfy $(u_0, \mathbb{E}_0) \in H^N(\mathbb{R}^3)\cap L^1(\mathbb{R}^3)$.
\end{Remark}
\begin{Remark}
Whereas Theorem \ref{diffusion waves-infty-202512201451} \eqref{DW-incompressible viscoelastic flow-infty}-\eqref{EC-DW-incompressible viscoelastic flow-infty-202512251739} reveals the hyperbolic aspect of the incompressible viscoelastic system \eqref{incompressible viscoelastic flow-202512201217}, the decay rate obtained in Theorem \ref{pure-energy-202512221702} \eqref{pure-energy-202512221703}-\eqref{Tan-Wang-Wu2020decay results-Linfty-202512281533} via the pure energy method (i.e., $L^2$-decay estimates) captures only its parabolic nature, even under the same type of initial data.
\end{Remark}


\section{Proof of Theorem \ref{diffusion waves-infty-202512201451} }\label{202512201502-Theorem-proof}
In this section, we prove Theorem \ref{diffusion waves-infty-202512201451}. Proposition \ref{2015Hu-Wu-202512201444} and Theorem \ref{pure-energy-202512221702} already provides the global existence and the $L^2$ decay of higher-order derivatives. Consequently, the proof reduces to establishing the $L^p$ decay estimates for $p \neq 2$.
By the standard interpolation inequality: $\|u(t)\|_{L^p} \leq\|u(t)\|_{L^2}^{\frac{2}{p}}\|u(t)\|_{L^{\infty}}^{1-\frac{2}{p}}(2 \leq p \leq \infty)$, it is sufficient to establish the $L^\infty$ decay rates for $u$ and $\tilde{\mathbb{E}}_c$.

The problem \eqref{incompressible viscoelastic flow-E-202512201435}-\eqref{initial-condition-202512201435} takes the form:
\begin{align}\label{linearized-system-around-non-202512201510}
\left\{\begin{array}{l}
\partial_t S+L S=N, \\
\operatorname{div} u=0, \\
\left.S\right|_{t=0}=S_0.
\end{array}\right.
\end{align}
Here $L$ is the linearized operator given by
\begin{align}\label{linearized operator-202512201511}
L=\left(\begin{array}{ccc}
-\mu \Delta & -\Delta^{-1}  \operatorname{div}\operatorname{curl}\operatorname{div} \\
-\nabla & 0
\end{array}\right),
\quad
N=\left(\begin{array}{l}N_1 \\ N_2 \\ \end{array}\right).
\end{align}

Theorem \ref{diffusion waves-infty-202512201451} is proved by combining Proposition \ref{2015Hu-Wu-202512201444} and Theorem \ref{pure-energy-202512221702} with the following $L^\infty$ decay estimates for $u(t)$ and $\tilde{\mathbb{E}}_c(t)$.

\begin{Lemma}\label{Linfty-pure-energy-202512201519}
The following inequality holds for all $t \geq 0$, provided that there exists a positive number $\delta_0$ such that $\left\|S_0\right\|_{L^1}+\left\|S_0\right\|_{H^3} \leq \delta_0$:
\begin{align}\label{Linfty-pure-energy-decay-202512201523}
\|u(t)\|_{L^{\infty}} \leq C(1+t)^{-2}\left(\left\|S_0\right\|_{L^1}+\left\|S_0\right\|_{H^3}\right),
\end{align}
\begin{align}\label{EC-Linfty-pure-energy-decay-20251251738}
\|\tilde{  \mathbb{E} }_c(t)\|_{L^{\infty}} \leq C(1+t)^{-2}\left(\left\|S_0\right\|_{L^1}+\left\|S_0\right\|_{H^3}\right).
\end{align}
\end{Lemma}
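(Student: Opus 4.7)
The plan is to use the Duhamel formula
\begin{align*}
S(t)=e^{-tL}S_0+\int_{0}^{t}e^{-(t-\tau)L}N(S)(\tau)\,d\tau,
\end{align*}
and to split each component into a low-frequency and a high-frequency piece through a smooth radial cutoff $\chi_1(\xi)$ supported near $\xi=0$, writing $f=f^{L}+f^{H}$ with $f^{L}=\chi_1(D)f$. The sharp rate $(1+t)^{-2}$ will come from the low-frequency part of the linearised semigroup, while the high-frequency part and the nonlinear Duhamel integral are shown to decay at least as fast.

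The main technical step is to prove $\|(e^{-tL}S_0)^{L}\|_{L^\infty}\lesssim (1+t)^{-2}\|S_0\|_{L^1}$, and the analogous bound for $\tilde{\mathbb{E}}_c$. Starting from the explicit representation recorded in the formula labelled ``d-Fourier inverse transform'' in the introduction, and from the analogous expression for $\tilde{\mathbb{E}}_c$ obtained by applying $\hat{\mathcal{Q}}$ in Fourier, I would switch to spherical coordinates $\xi=r\omega$ and use
\begin{align*}
\int_{\mathbb{S}^{2}}e^{ir\omega\cdot x}\,dS(\omega)=4\pi\,\frac{\sin(r|x|)}{r|x|}
\end{align*}
(the stationary phase output on $\mathbb{S}^{2}$) to reduce the three-dimensional Fourier inversion to one-dimensional oscillatory integrals of the form
\begin{align*}
\int_{0}^{\infty}\chi_1(r)\,r^{k}\,e^{-\mu r^{2}t/2}\,e^{\pm irt}\,e^{\pm ir|x|}\,dr.
\end{align*}
Completing the square turns each such integral into an explicit Gaussian with complex argument, from which one reads off that the resulting kernel is concentrated on the wave cone $|x|\approx t$ with amplitude of order $t^{-3/2}$ and Gaussian spread $\sqrt{t}$; the extra $i\xi$ factor standing in front of $\hat{\mathbb{E}}_0$ in the velocity formula, and the symmetric $|\xi|$ factor that arises when one applies $\hat{\mathcal{Q}}$ to $\hat{\mathbb{E}}$, each contribute an additional $t^{-1/2}$ in the $L^\infty$ norm, producing the claimed $(1+t)^{-2}$ rate on $L^{1}$ data.

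For the high-frequency part, $f^{H}$ is spectrally localised in $\{|\xi|\geq c\}$, so a direct Cauchy--Schwarz argument gives $\|f^{H}\|_{L^\infty}\lesssim \|\nabla^{k}f^{H}\|_{L^2}$ for every $k>\tfrac{3}{2}$. Applying this with $k=3$ to both $u$ and $\tilde{\mathbb{E}}_c$, and using the decay $\|\nabla^{3}(u,\mathbb{E})(t)\|_{L^2}\lesssim (1+t)^{-9/4}$ produced by Theorem \ref{pure-energy-202512221702} (with $s=\tfrac{3}{2}$, admissible thanks to the embedding $L^{1}\hookrightarrow \dot{B}^{-3/2}_{2,\infty}(\mathbb{R}^{3})$) already yields a decay strictly faster than $(1+t)^{-2}$, so the high-frequency piece never competes with the low-frequency diffusion wave.

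It remains to control the nonlinear Duhamel contribution. Since $N_{1},N_{2}$ are bilinear in $(u,\mathbb{E},\nabla u,\nabla\mathbb{E})$, the $L^{2}$ decay of $(u,\mathbb{E})$ and $\nabla(u,\mathbb{E})$ supplied by Proposition \ref{2015Hu-Wu-202512201444} gives $\|N(\tau)\|_{L^{1}}\lesssim \|S(\tau)\|_{L^{2}}\|\nabla S(\tau)\|_{L^{2}}\lesssim (1+\tau)^{-2}$. Applying the low-frequency kernel bound of the second paragraph to $e^{-(t-\tau)L}N(\tau)$ and splitting the time integral at $\tau=t/2$ yields
\begin{align*}
\int_{0}^{t}(1+t-\tau)^{-2}(1+\tau)^{-2}\,d\tau\lesssim (1+t)^{-2},
\end{align*}
which closes the bound. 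The \emph{main obstacle} is the sharp pointwise estimate of the low-frequency diffusion-wave kernel: the trivial Hausdorff--Young estimate $\|\widehat{K^{L}}(t)\|_{L^{1}}$ only produces the heat rate $(1+t)^{-3/2}$, and extracting the improved $(1+t)^{-2}$ genuinely requires the spherical integration step together with a careful control of the Gaussian profile on the cone $|x|\approx t$. In addition, the bookkeeping among the projectors $\hat{\mathcal{P}},\hat{\mathcal{Q}}$, the two branches $e^{\lambda_{1}t},e^{\lambda_{2}t}$ of the characteristic equation \eqref{rootsequation34-202512261907-2}, and the nonlinear pieces $N_{1},N_{2}$ must be carried out in parallel so that the same rate is extracted simultaneously for $u$ and for $\tilde{\mathbb{E}}_c$.
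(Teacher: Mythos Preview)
Your proposal is correct and follows essentially the same architecture as the paper: low/high frequency splitting, Duhamel, the $L^{1}$ bound $\|N(\tau)\|_{L^1}\lesssim(1+\tau)^{-2}$ on the nonlinearity, and for the high-frequency piece the Poincar\'e-type reduction $\|u_{\infty}\|_{L^\infty}\lesssim\|\nabla^{3}u\|_{L^2}$ combined with the $L^2$ decay of Theorem~\ref{pure-energy-202512221702}. The paper does not carry out the stationary-phase computation you outline; instead it packages the low-frequency kernel bound as Lemma~\ref{low-Linfty-202512201649} and cites \cite[Theorem~3.1]{Kobayashi-Shibata2002} for the proof. Your spherical-integration sketch is exactly how that reference proceeds, so the substance is the same.

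One small imprecision: you attribute the extra $t^{-1/2}$ in the $\tilde{\mathbb{E}}_c$ estimate to ``the symmetric $|\xi|$ factor that arises when one applies $\hat{\mathcal{Q}}$'', but for the $\hat{\mathbb{E}}_0$-component of $\hat{\tilde{\mathbb{E}}}_{cg}$ (see \eqref{EC-g-linear-202512251806}) the multiplier is $(\lambda_1 e^{\lambda_2 t}-\lambda_2 e^{\lambda_1 t})/(\lambda_1-\lambda_2)$ with \emph{no} additional $\xi$ factor; the improved rate there comes from the structure of this symbol itself (second line of Lemma~\ref{low-Linfty-202512201649}), not from a derivative gain. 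Likewise, the $\hat{u}_0$-component of $\hat{u}_g$ involves $\hat{K}^{11}=(\lambda_1 e^{\lambda_1 t}-\lambda_2 e^{\lambda_2 t})/(\lambda_1-\lambda_2)$, which is a time derivative rather than a $\xi$-derivative of the basic kernel. Once you track these two symbols separately your argument goes through unchanged.
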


Proving Lemma \ref{Linfty-pure-energy-202512201519} requires the following $L^2$ decay estimates for $\nabla^k u(t)$ and $\nabla^k \tilde{\mathbb{E}}_c(t)$ as prerequisites.

\begin{Lemma}\label{L2-decay-estimates-202512251734}
The following inequality holds for all $t \geq 0$, provided that there exists a positive number $\delta_0$ such that $\left\|S_0\right\|_{L^1}+\left\|S_0\right\|_{H^3} \leq \delta_0$:
\begin{align}\label{L^2-decay-estimates-202512251735}
\left\|\nabla^k u(t)\right\|_{L^2} \leq C(1+t)^{-\frac{3}{4}-\frac{k}{2}}\left(\left\|S_0\right\|_{L^1}+\left\|S_0\right\|_{H^3}\right),
\quad
 k=0,1,2,3.
\end{align}
\begin{align}\label{EC-L^2-decay-estimates-202512251735}
\left\|\nabla^k \tilde{  \mathbb{E} }_c(t)\right\|_{L^2} \leq C(1+t)^{-\frac{3}{4}-\frac{k}{2}}\left(\left\|S_0\right\|_{L^1}+\left\|S_0\right\|_{H^3}\right),
\quad
 k=0,1,2,3.
\end{align}
\end{Lemma}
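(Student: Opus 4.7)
The plan is to derive these $L^2$ decay estimates from the Duhamel representation
\begin{align*}
S(t)=e^{-tL}S_0+\int_0^t e^{-(t-\tau)L}N(S)(\tau)\,d\tau,
\end{align*}
by combining a frequency-localized analysis of the linear semigroup with the $L^2$ bounds already available from Proposition \ref{2015Hu-Wu-202512201444} and Theorem \ref{pure-energy-202512221702}. First I would analyze the symbol of $e^{-tL}$ acting on the $\mathcal{P}$-projected and $\mathcal{Q}$-projected components. By the explicit representation \eqref{d-Fourier inverse transform} and the eigenvalue expansion \eqref{divergence-freeeigenvalues34-202512261906-1}, in the low-frequency regime $|\xi|\le r_0$ the Fourier multiplier is bounded pointwise by $Ce^{-c|\xi|^2t}$ modulo a bounded oscillatory factor, while in the high-frequency regime the spectrum of $L$ stays uniformly away from the imaginary axis, giving exponential decay of $e^{-tL}$ on $H^3$. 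Since $\hat{\mathcal{P}}(\xi)$ and $\hat{\mathcal{Q}}(\xi)$ are zero-order Fourier multipliers, the same symbol bounds transfer to $u$ and to $\tilde{\mathbb{E}}_c$, provided one uses the constraint $\operatorname{div}\mathbb{F}^T=0$ from \eqref{important properties1-202512201235-t} to reduce $\hat{\mathcal{Q}}\hat{\mathbb{E}}$ to the scalar wave-type quantity $\Lambda^{-1}\operatorname{div}\mathbb{E}$ that was already analyzed in \cite{Hu-Wu2015}.

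Second, for the linear part I would apply Plancherel together with the $L^1$--$L^\infty$ embedding on the Fourier side to obtain
\begin{align*}
\|\nabla^k[e^{-tL}S_0]_{\text{low}}\|_{L^2}^2 \lesssim \|S_0\|_{L^1}^2\int_{|\xi|\le r_0}|\xi|^{2k}e^{-c|\xi|^2t}\,d\xi \lesssim (1+t)^{-\frac32-k}\|S_0\|_{L^1}^2,
\end{align*}
and the exponential decay on the high-frequency set absorbs the loss of $|\xi|^k$ for $k\le 3$ at the price of $\|S_0\|_{H^3}$. This already yields the claimed bound on $e^{-tL}S_0$ for both $\nabla^k u$ and $\nabla^k\tilde{\mathbb{E}}_c$, in the full range $k=0,1,2,3$.

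Third, for the Duhamel integral I would exploit the structure $N\sim\nabla(u\otimes S)$ together with the a priori decay from Proposition \ref{2015Hu-Wu-202512201444} and Theorem \ref{pure-energy-202512221702}. H\"older's inequality and Sobolev embedding give
\begin{align*}
\|N(S)(\tau)\|_{L^1}\lesssim \|u\|_{L^2}\|\nabla S\|_{L^2}+\|\nabla u\|_{L^2}\|S\|_{L^2}\lesssim \delta_0^2(1+\tau)^{-2},
\end{align*}
and, for $k\le 3$,
\begin{align*}
\|\nabla^{k-1}N(S)(\tau)\|_{L^2}\lesssim \|S\|_{H^3}\|\nabla^k S\|_{L^2}\lesssim \delta_0^2(1+\tau)^{-\frac34-\frac{k}{2}}.
\end{align*}
Splitting the integral into $[0,t/2]$ and $[t/2,t]$ and using the $L^1$ bound on the first half with the linear $L^1$--$L^2$ decay of the semigroup, and shifting one of the derivatives in $N$ onto the semigroup on the second half to use the $L^2$--$L^2$ decay, I would conclude
\begin{align*}
\left\|\nabla^k\int_0^t e^{-(t-\tau)L}N(S)\,d\tau\right\|_{L^2}\lesssim \delta_0^2(1+t)^{-\frac34-\frac{k}{2}},
\end{align*}
which is dominated by the linear contribution for $\delta_0$ small.

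The main obstacle is closing the estimate for $k=2,3$: Proposition \ref{2015Hu-Wu-202512201444} provides the decay rate only for $k=0,1$, so one must invoke Theorem \ref{pure-energy-202512221702} with $s=3/2$ (or equivalently the embedding $L^1\hookrightarrow\dot{B}^{-3/2}_{2,\infty}$) to obtain the preliminary higher-order $L^2$ rates and then feed them back into the Duhamel formula via a bootstrap. A secondary subtlety is isolating the $\mathcal{Q}$-projected component $\tilde{\mathbb{E}}_c$: because the $\mathcal{P}$-projected part of $\mathbb{E}$ decouples from $u$ in the linearization while $\tilde{\mathbb{E}}_c$ is precisely the wave-coupled piece, one must check that the high-frequency Poincar\'e-type inequality and the parabolic dissipation of $u$ combine to give the same $(1+t)^{-3/4-k/2}$ rate for $\tilde{\mathbb{E}}_c$ as for $u$, rather than a slower hyperbolic rate.
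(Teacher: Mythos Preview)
Your Duhamel-based strategy is a valid route, but the paper's proof is a single sentence: ``Lemma \ref{L2-decay-estimates-202512251734} is a direct consequence of Theorem \ref{pure-energy-202512221702}.'' You actually identified the key ingredient yourself when you wrote that one must invoke Theorem \ref{pure-energy-202512221702} with $s=3/2$ via $L^1\hookrightarrow\dot{B}^{-3/2}_{2,\infty}$. The point is that this invocation is not a \emph{preliminary} step to be fed back into a bootstrap --- it is already the full result. Theorem \ref{pure-energy-202512221702} (estimates \eqref{pure-energy-202512221703}--\eqref{N-pure-energy-202512221703} with $N=3$, $s=3/2$) gives directly
\[
\|\nabla^k(u,\mathbb{E})(t)\|_{L^2}\lesssim (1+t)^{-\frac{3}{4}-\frac{k}{2}},\qquad k=0,1,2,3,
\]
and the bound for $\tilde{\mathbb{E}}_c$ follows at once from Plancherel, since $\hat{\mathcal{Q}}(\xi)=\xi\xi^\top/|\xi|^2$ is a bounded $L^2$ Fourier multiplier, so $\|\nabla^k\tilde{\mathbb{E}}_c\|_{L^2}\le\|\nabla^k\mathbb{E}\|_{L^2}$. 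No semigroup analysis, no splitting of the time integral, and no bootstrap are needed.

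Your approach would reconstruct these rates from the spectral representation of $e^{-tL}$ and is in principle self-contained, not relying on the negative-Sobolev energy machinery of Sections \ref{Energy Estimates-202512221729}--\ref{202512241526-}. The paper instead front-loads all of that work into Theorem \ref{pure-energy-202512221702} and then harvests Lemma \ref{L2-decay-estimates-202512251734} for free. The ``secondary subtlety'' you flag about $\tilde{\mathbb{E}}_c$ possibly inheriting a slower hyperbolic rate is a non-issue at the $L^2$ level precisely because Plancherel erases the oscillatory factor; the hyperbolic correction only shows up in $L^p$ for $p\neq 2$, which is the content of Theorems \ref{diffusion waves-infty-202512201451} and \ref{diffusion waves-1-202512201453}.
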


\begin{proof}
Lemma \ref{L2-decay-estimates-202512251734} is a direct consequence of Theorem \ref{pure-energy-202512221702}.
\end{proof}

We next focus our analysis on the linearized problem
\begin{align}\label{linearized problem-202512201530}
\left\{\begin{array}{l}
\partial_t S+L S=0, \\
\operatorname{div} u=0, \\
\left.S\right|_{t=0}=S_0.
\end{array}\right.
\end{align}

The solution to \eqref{linearized problem-202512201530} is given by the semigroup $e^{-tL}$ (generated by $-L$) acting on the initial data: $S(t) = e^{-tL} S_0$.

Applying the Fourier transform in $x$ to $S(t)=e^{-tL}S_0$ allows us to analyze its large time behavior. We thus obtain
\begin{align}\label{linearized problem-Fourier-202512201535}
\left\{\begin{array}{l}
\partial_t \hat{S}+\hat{L}_{\xi} \hat{S}=0, \\
i \xi \cdot \hat{u}=0, \\
\left.\hat{S}\right|_{t=0}=\hat{S}_0,
\end{array}\right.
\end{align}
where
\begin{align*}
\hat{L}_{\xi} \hat{S}=\left(\begin{array}{c}
 \mu |\xi|^2 \hat{u}-i \left(\mathbb{I}-\frac{\xi \xi^{\top}}{|\xi|^2}\right) \hat{\mathbb{E}} \xi \\
-i \hat{u} \xi ^{\top}
\end{array}\right) .
\end{align*}

A direct computation yields the following expression for $e^{-t \hat{L}_{\xi}} \hat{S}_0$.
\begin{Lemma}\label{expression-202512201545}
For $|\xi| \neq 0, \frac{1}{\mu}$, the solution to \eqref{linearized problem-Fourier-202512201535} takes the form
\begin{align}\label{expression-202512201551}
&\left(\begin{array}{l}
\hat{u}(\xi, t) \\
\hat{\mathbb{E}}(\xi, t)
\end{array}\right)=\left(\begin{array}{lll}
\hat{K}^{11}(\xi, t) & \hat{K}^{12}(\xi, t) \\
\hat{K}^{21}(\xi, t) & \hat{K}^{22}(\xi, t) \\
\end{array}\right)\left(\begin{array}{c}
\hat{u}_0(\xi) \\
\hat{\mathbb{E}}_0(\xi)
\end{array}\right) .
\end{align}
Here
\begin{align*}
\hat{K}^{11}(\xi, t)=& \frac{ \lambda_1 e^{\lambda_1 t}- \lambda_2 e^{\lambda_2 t}}{\lambda_1-\lambda_2};
\end{align*}
$\hat{K}^{12}(\xi, t) \hat{\mathbb{E}}_0(\xi)$, $\hat{K}^{21}(\xi, t) \hat{u}_0(\xi) $ and $\hat{K}^{22}(\xi, t) \hat{\mathbb{E}}_0(\xi)  $ are defined by
\begin{align*}
\hat{K}^{12}(\xi, t) \hat{\mathbb{E}}_0(\xi)
=& \frac{  e^{\lambda_1 t}-  e^{\lambda_2 t}}{\lambda_1-\lambda_2}
\left[\left(\mathbb{I}-\frac{\xi \xi^{\top}}{|\xi|^2}\right) i \hat{\mathbb{E}}_0 \xi\right]
;
\end{align*}
\begin{align*}
\hat{K}^{21}(\xi, t) \hat{u}_0(\xi)
=& i \frac{  e^{\lambda_1 t}-  e^{\lambda_2 t}}{\lambda_1-\lambda_2}
\hat{u}_0 \xi^{\top} ;
\end{align*}
\begin{align*}
\hat{K}^{22}(\xi, t) \hat{\mathbb{E}}_0(\xi)
=& \frac{ \lambda_1 e^{\lambda_2 t}- \lambda_2 e^{\lambda_1 t}}{\lambda_1-\lambda_2}
\left[\left(\mathbb{I}-\frac{\xi \xi^{\top}}{|\xi|^2}\right)  \hat{\mathbb{E}}_0 \frac{\xi \xi^{\top}}{|\xi|^2} \right]
+\hat{\mathbb{E}}_0
-\left[\left(\mathbb{I}-\frac{\xi \xi^{\top}}{|\xi|^2}\right)  \hat{\mathbb{E}}_0 \frac{\xi \xi^{\top}}{|\xi|^2} \right]
;
\end{align*}
where $\lambda_j(\xi), j=1,2$, are given by
\begin{align*}
& \lambda_1(\xi)=\frac{- \mu |\xi|^2 + \sqrt{ \mu^2 |\xi|^4-4 |\xi|^2}}{2} ,\\
& \lambda_2(\xi)=\frac{- \mu |\xi|^2 - \sqrt{ \mu^2 |\xi|^4-4 |\xi|^2}}{2} .
\end{align*}

\end{Lemma}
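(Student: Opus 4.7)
The plan is to reduce the first-order Fourier system \eqref{linearized problem-Fourier-202512201535} to a scalar second-order ODE for $\hat{u}$, solve that ODE in closed form using the characteristic roots $\lambda_{1,2}$, and then recover $\hat{\mathbb{E}}$ by direct integration of the second equation. The divergence-free constraint $\xi \cdot \hat{u} = 0$, equivalent to $\hat{\mathcal{P}}(\xi)\hat{u} = \hat{u}$ with $\hat{\mathcal{P}}(\xi) = \mathbb{I} - \xi\xi^{\top}/|\xi|^2$, is what makes the reduction clean.

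Differentiating the first equation of \eqref{linearized problem-Fourier-202512201535} in $t$ and substituting $\partial_t \hat{\mathbb{E}} = i\hat{u}\xi^{\top}$ gives $\partial_t^2 \hat{u} + \mu|\xi|^2 \partial_t \hat{u} - i\hat{\mathcal{P}}(\xi)(i\hat{u}\xi^{\top})\xi = 0$. Using $\xi^{\top}\xi = |\xi|^2$ and $\hat{\mathcal{P}}(\xi)\hat{u} = \hat{u}$, the last term collapses to $|\xi|^2 \hat{u}$, giving the Fourier form of the strongly damped wave equation \eqref{strongly damped wave equation-202512201409}, namely $\partial_t^2 \hat{u} + \mu|\xi|^2 \partial_t \hat{u} + |\xi|^2 \hat{u} = 0$. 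Its characteristic polynomial is exactly \eqref{rootsequation34-202512261907-2}, with roots $\lambda_{1,2}$ as stated, and the hypothesis $|\xi| \neq 0, 1/\mu$ guarantees $\lambda_1 \neq \lambda_2$, so $\{e^{\lambda_1 t}, e^{\lambda_2 t}\}$ is a fundamental system. Writing $\hat{u}(\xi, t) = A e^{\lambda_1 t} + B e^{\lambda_2 t}$ and matching the initial conditions $\hat{u}(0) = \hat{u}_0$ and $\partial_t \hat{u}(0) = -\mu|\xi|^2 \hat{u}_0 + i\hat{\mathcal{P}}(\xi)\hat{\mathbb{E}}_0 \xi$ (the latter read off from the first equation at $t=0$), I would solve the resulting $2\times 2$ linear system for $A, B$. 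The Vi\`ete identities $\lambda_1 + \lambda_2 = -\mu|\xi|^2$ and $\lambda_1\lambda_2 = |\xi|^2$ let me rewrite $-\mu|\xi|^2 - \lambda_j = \lambda_k$ (for $\{j,k\} = \{1,2\}$), and a short linear-algebra computation yields $\hat{u} = \hat{K}^{11}\hat{u}_0 + \hat{K}^{12}\hat{\mathbb{E}}_0$ as stated.

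For $\hat{\mathbb{E}}$, direct integration of the second equation gives $\hat{\mathbb{E}}(\xi, t) = \hat{\mathbb{E}}_0 + i\bigl(\int_0^t \hat{u}(\xi, s)\, ds\bigr)\xi^{\top}$. The $\hat{u}_0$-piece produces $\hat{K}^{21}\hat{u}_0$ immediately from the elementary identity $\int_0^t (\lambda_1 e^{\lambda_1 s} - \lambda_2 e^{\lambda_2 s})/(\lambda_1-\lambda_2)\, ds = (e^{\lambda_1 t} - e^{\lambda_2 t})/(\lambda_1-\lambda_2)$. The $\hat{\mathbb{E}}_0$-piece requires the slightly more delicate primitive
\begin{align*}
\int_0^t \frac{e^{\lambda_1 s} - e^{\lambda_2 s}}{\lambda_1 - \lambda_2}\, ds = \frac{1}{|\xi|^2}\cdot \frac{\lambda_2 e^{\lambda_1 t} - \lambda_1 e^{\lambda_2 t}}{\lambda_1 - \lambda_2} + \frac{1}{|\xi|^2},
\end{align*}
which uses $\lambda_1 \lambda_2 = |\xi|^2$; combining this with the factor $i\cdot i = -1$ and the matrix identity $[\hat{\mathcal{P}}(\xi)\hat{\mathbb{E}}_0 \xi]\xi^{\top}/|\xi|^2 = \hat{\mathcal{P}}(\xi)\hat{\mathbb{E}}_0 \hat{\mathcal{Q}}(\xi)$ recovers precisely the three-term expression for $\hat{K}^{22}\hat{\mathbb{E}}_0$.

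The main obstacle is purely bookkeeping: one has to track the rank-one product $\hat{u}\xi^{\top}$ carefully in $3\times 3$ matrix form, recognise $[\hat{\mathcal{P}}\hat{\mathbb{E}}_0 \xi]\xi^{\top}/|\xi|^2$ as the product $\hat{\mathcal{P}}\hat{\mathbb{E}}_0 \hat{\mathcal{Q}}$ of two orthogonal projectors, and keep the two exponential modes organised symmetrically in $\lambda_1, \lambda_2$ so the final formulas read off cleanly. No genuine analytic difficulty arises; the lemma is, in essence, a direct constant-coefficient ODE computation in the Fourier variable.
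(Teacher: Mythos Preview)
Your proposal is correct and follows essentially the same route as the paper: reduce the Fourier system to the scalar second-order ODE $\partial_t^2 \hat u + \mu|\xi|^2 \partial_t \hat u + |\xi|^2 \hat u = 0$, solve it with the initial data $\hat u_0$ and $\partial_t \hat u(0) = -\mu|\xi|^2 \hat u_0 + i\hat{\mathcal{P}}(\xi)\hat{\mathbb{E}}_0\xi$, then recover $\hat{\mathbb{E}}$ by integrating $\partial_t \hat{\mathbb{E}} = i\hat u\,\xi^{\top}$. If anything, your write-up is more explicit than the paper's (which simply states the initial data and the resulting formulas): your use of the Vi\`ete relations $\lambda_1+\lambda_2=-\mu|\xi|^2$, $\lambda_1\lambda_2=|\xi|^2$ and the primitive identity you display are exactly the hidden computations behind the paper's ``Consequently, we obtain'' step.
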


\begin{proof}
Considering the linearized system \eqref{incompressible viscoelastic flow-E-202512201435}, a series of calculations yields:
\begin{align}\label{divf-202512261701}
\partial_{t t} u-\mu \Delta \partial_t u- \Delta u=0,
\end{align}
where $u(x, t)=\Delta^{-1} \operatorname{div}\operatorname{curl} u(x, t)$.
The Fourier transform of \eqref{divf-202512261701} is
\begin{align}\label{divfF-202512261702}
& \partial_{t t} \hat{d}+\mu|\xi|^2 \partial_t \hat{d}+|\xi|^2 \hat{d}=0.
\end{align}
By direct calculation, we obtain the eigenvalues:
\begin{align}\label{eigenvalues34-202512261703}
\lambda_{1,2}=-\frac{\mu}{2}|\xi|^2 \pm i |\xi| \frac{\sqrt{4-\mu^2|\xi|^2}}{2}.
\end{align}
Here $\lambda_{1,2}$ are the roots of the equation:
\begin{align}\label{roots34--202512261704}
\lambda^2+\mu|\xi|^2 \lambda+\beta^2|\xi|^2=0.
\end{align}
Let us consider the initial value problem for the equation \eqref{divf-202512261701}:
\begin{align}\label{IVPdiv-202512261705}
\left\{
\begin{array}{lll}
\partial_{t t} u-\mu \Delta \partial_t u- \Delta u=0,\\
u_0=u(x, 0), \quad u_1=\left.\partial_t u(x, t)\right|_{t=0},
\end{array}
\right.
\end{align}
with initial data of the form:
\begin{align}\label{initialdiv--202512261706}
\left\{
\begin{array}{lll}
u_0=u(x, 0), \\
u_1=\left.\partial_t u(x, t)\right|_{t=0}=\mu \Delta u_0
+\left(\Delta^{-1} \operatorname{divcurl}\right) \operatorname{div} \mathbb{E}_0.
\end{array}
\right.
\end{align}
Consequently, we obtain
\begin{align*}
\hat{u}(\xi, t)
&=
\frac{ \lambda_1 e^{\lambda_1 t}- \lambda_2 e^{\lambda_2 t}}{\lambda_1-\lambda_2}\hat{u}_0(\xi)
+
\frac{  e^{\lambda_1 t}-  e^{\lambda_2 t}}{\lambda_1-\lambda_2}
\left[\left(\mathbb{I}-\frac{\xi \xi^{\top}}{|\xi|^2}\right) i \hat{\mathbb{E}}_0 \xi\right].
\end{align*}
\begin{align*}
\hat{\mathbb{E}}(\xi, t)
&=\hat{\mathbb{E}}_0(\xi)+i(\int_0^t \hat{w}(\xi, \tau) \mathrm{d} \tau) \xi^{\top}
\\
&=
i \frac{  e^{\lambda_1 t}-  e^{\lambda_2 t}}{\lambda_1-\lambda_2}
\hat{u}_0 \xi^{\top}
+\frac{ \lambda_1 e^{\lambda_2 t}- \lambda_2 e^{\lambda_1 t}}{\lambda_1-\lambda_2}
\left[\left(\mathbb{I}-\frac{\xi \xi^{\top}}{|\xi|^2}\right)  \hat{\mathbb{E}}_0 \frac{\xi \xi^{\top}}{|\xi|^2} \right]
+\hat{\mathbb{E}}_0
-\left[\left(\mathbb{I}-\frac{\xi \xi^{\top}}{|\xi|^2}\right)  \hat{\mathbb{E}}_0 \frac{\xi \xi^{\top}}{|\xi|^2} \right].
\end{align*}
\end{proof}
The solution $S(t)=e^{-t L} S_0$ is thus given by
\begin{align*}
S(t)=e^{-t L} S_0=\mathcal{F}^{-1} e^{-t \hat{L}_{\xi}} \hat{S}_0.
\end{align*}
We will employ the following properties of $\lambda_j(j=1,2)$ to characterize the asymptotic behavior of $S(t)$:
\begin{align*}
\lambda_j(\xi)^2+ \mu |\xi|^2 \lambda_j(\xi)+ |\xi|^2=0,\quad  j=1,2, \\
\lambda_j(\xi) \sim-\frac{\mu}{2}|\xi|^2+i(-1)^{j+1} |\xi|, \quad \text { for }|\xi| \ll 1, \quad j=1,2, \\
\lambda_1(\xi) \sim-\frac{1}{\mu}, \mu_2(\xi) \sim-\mu|\xi|^2, \quad  \text { for }|\xi| \gg 1. \\
\end{align*}

We carry out a low-high frequency decomposition of the solution $S(t)$ for system \eqref{linearized-system-around-non-202512201510}.
Let $\hat{\varphi}_1$ and $\hat{\varphi}_{\infty}$ be $C^{\infty}(\mathbb{R}^3)$ cut-off functions satisfying
\begin{align*}
\hat{\varphi}_1(\xi)=\left\{\begin{array}{ll}
1 & |\xi| \leq \frac{M_1}{2}, \\
0 & |\xi| \geq \frac{M_1}{\sqrt{2}},\\
\end{array}\quad
\hat{\varphi}_1(-\xi)=\hat{\varphi}_1(\xi),\right.\quad
\hat{\varphi}_{\infty}(\xi)=1-\hat{\varphi}_1(\xi),
\quad
where M_1=\frac{1}{\mu}.
\end{align*}
Define $L^2$ operators $P_1$ and $P_{\infty}$ by
\begin{align*}
P_1 u=\mathcal{F}^{-1}\left(\hat{\varphi}_1 \hat{u}\right),
\quad
 P_{\infty} u=\mathcal{F}^{-1}\left(\hat{\varphi}_{\infty} \hat{u}\right) \text { for } u \in L^2 .
\end{align*}

We summarize the key properties of $P_j(j=1, \infty)$.
\begin{Lemma}\label{properties-P-202512201632}
It has the following properties:\\
(i) $P_1+P_{\infty}=I$.\\
(ii) $\partial_x^\alpha P_1=P_1 \partial_x^\alpha$, $\left\|\partial_x^\alpha P_1 f\right\|_{L^2} \leq C_\alpha\|f\|_{L^2}$ for $\alpha \in(\{0\} \cup \mathbb{N})^3$ and $f \in L^2$.\\
(iii) $\partial_x^\alpha P_{\infty}=P_{\infty} \partial_x^\alpha,\left\|\partial_x^\alpha P_{\infty} f\right\|_{L^2} \leq C\left\|\nabla \partial_x^\alpha P_{\infty} f\right\|_{L^2}$ for $\alpha \in(\{0\} \cup \mathbb{N})^3$ with $|\alpha|=k \geq 0$ and $f \in H^{k+1}$.
\end{Lemma}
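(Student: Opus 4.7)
The plan is to reduce each of the three assertions to an elementary statement about the Fourier multipliers $\hat{\varphi}_1$ and $\hat{\varphi}_\infty$ and then apply Plancherel's identity. Since the cut-offs are smooth, their supports are controlled by the explicit radius $M_1=1/\mu$, and $P_1,P_\infty$ are by definition Fourier multiplier operators with these symbols, no genuine analytic obstacle is expected: the whole statement is essentially an exercise in the symbolic calculus of Fourier multipliers.

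First I would dispatch (i), which is immediate from the pointwise identity $\hat{\varphi}_1+\hat{\varphi}_\infty\equiv 1$ on $\mathbb{R}^3$: applying $\mathcal{F}^{-1}$ to $(\hat{\varphi}_1+\hat{\varphi}_\infty)\hat f$ gives $P_1 f+P_\infty f=f$ for every $f\in L^2$. Next, for (ii) I would observe that on the Fourier side both $\partial_x^\alpha P_1$ and $P_1\partial_x^\alpha$ act as multiplication by $(i\xi)^\alpha\hat{\varphi}_1(\xi)$, which yields the commutation relation at once. The $L^2$-bound then follows by setting $C_\alpha:=\|(i\xi)^\alpha\hat{\varphi}_1\|_{L^\infty(\mathbb{R}^3)}$, which is finite because $\hat{\varphi}_1$ is a smooth function compactly supported in $\{|\xi|\leq M_1/\sqrt{2}\}$, and by invoking Plancherel.

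For (iii), the commutativity $\partial_x^\alpha P_\infty=P_\infty\partial_x^\alpha$ follows by the same Fourier computation as in (ii). The only step that uses more than symbolic manipulation is the Poincaré-type inequality, and here the key observation is simply that $\hat{\varphi}_\infty$ vanishes on $\{|\xi|<M_1/2\}$, so on its support one has $1\leq (2/M_1)|\xi|$. Inserting this factor inside the Plancherel expression $\|\partial_x^\alpha P_\infty f\|_{L^2}^2=\int|\xi|^{2|\alpha|}\hat{\varphi}_\infty(\xi)^2|\hat f(\xi)|^2\,d\xi$ immediately produces the claimed control by $(4/M_1^2)\|\nabla\partial_x^\alpha P_\infty f\|_{L^2}^2$, and the hypothesis $f\in H^{k+1}$ with $|\alpha|=k$ is exactly what guarantees that the right-hand side is finite. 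If one had to single out a "main obstacle", it would only be this support/Poincaré observation about $\hat{\varphi}_\infty$, but it is genuinely a one-line remark; consequently the proof should be routine and short.
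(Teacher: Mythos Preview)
Your proposal is correct and follows exactly the approach indicated in the paper: the paper's proof merely states that all three assertions ``follow from standard applications of the Plancherel theorem'' and omits the details, which are precisely the Fourier-multiplier computations and the support/Poincar\'e observation you spell out. Nothing is missing; if anything you have written more than the paper itself does.
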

\begin{proof}
Assertions (i), (ii), and (iii) follow from standard applications of the Plancherel theorem; the proofs are straightforward, and we omit the details.
\end{proof}

The solution $S(t)$ of \eqref{linearized-system-around-non-202512201510} admits the decomposition
\begin{align*}
S(t)=S_1(t)+S_{\infty}(t), S_1(t)=P_1 S(t), S_{\infty}(t)=P_{\infty} S(t).
\end{align*}

It follows that $S_1(t) := (u_1(t),\mathbb{E}1(t))$ and $S\infty(t) := (u_\infty(t),\mathbb{E}_\infty(t))$ are governed by the equations
\begin{align}\label{low-equation-202512201638}
\left\{\begin{array}{l}
S_1(t)=e^{-t L} S_1(0)+\int_0^t e^{-(t-\tau) L} P_1 N(\tau) \mathrm{d} \tau ,\\
\operatorname{div} u_1=0,
\end{array}\right.
\end{align}
and
\begin{align}\label{high-equation-202512201639}
\left\{\begin{array}{l}
\partial_t S_{\infty}+L S_{\infty}=P_{\infty} N ,\\
\operatorname{div} u_{\infty}=0.
\end{array}\right.
\end{align}

Lemma \ref{expression-202512201545} yields explicit expressions for $u$ and $\tilde{  \mathbb{E} }_c$, corresponding to the solutions of \eqref{linearized-system-around-non-202512201510}.
\begin{align}\label{u-linear-nolinear-202512221604}
u(x, t)=u_g(x, t)+u_n(x, t),
\end{align}
\begin{align}\label{u-g-linear-202512201817}
&( \mathcal{F} u_g(x, t) )(\xi ,t)\nonumber
=
\hat{u}_g(\xi, t)\\ \nonumber
&\triangleq
\hat{K}^{11}(\xi, t) \hat{u}_0(\xi)
+ \hat{K}^{12}(\xi, t) \hat{\mathbb{E}}_0(\xi)\\
&=
\frac{ \lambda_1 e^{\lambda_1 t}- \lambda_2 e^{\lambda_2 t}}{\lambda_1-\lambda_2}\hat{u}_0(\xi)
+ \frac{  e^{\lambda_1 t}-  e^{\lambda_2 t}}{\lambda_1-\lambda_2}
\left[\left(\mathbb{I}-\frac{\xi \xi^{\top}}{|\xi|^2}\right) i \hat{\mathbb{E}}_0 \xi\right],
\end{align}
\begin{align}\label{u-non-202512201816}
&( \mathcal{F} u_n(x, t) )(\xi ,t)\nonumber
=
\hat{u}_n(\xi, t)\\ \nonumber
& \triangleq \int_0^t
\hat{K}^{11}(\xi, t-\tau) \hat{N}_1(\xi, \tau)
+ \hat{K}^{12}(\xi, t-\tau) \hat{N}_2(\xi, \tau)
d\tau\\
&= \int_0^t
\frac{ \lambda_1 e^{\lambda_1 (t-\tau) }- \lambda_2 e^{\lambda_2 (t-\tau)}}{\lambda_1-\lambda_2} \hat{N}_1(\xi, \tau)
+
\frac{  e^{\lambda_1 (t-\tau)}-  e^{\lambda_2 (t-\tau)}}{\lambda_1-\lambda_2}
\left[\left(\mathbb{I}-\frac{\xi \xi^{\top}}{|\xi|^2}\right) i \hat{N}_2(\xi, \tau) \xi\right]
d\tau.
\end{align}
\begin{align}\label{EC-linear-nolinear-202512251804}
\tilde{  \mathbb{E} }_c(x, t)=\tilde{  \mathbb{E} }_{cg}(x, t)+\tilde{  \mathbb{E} }_{cn}(x, t),
\end{align}
\begin{align}\label{EC-g-linear-202512251806}
&( \mathcal{F} \tilde{  \mathbb{E} }_{cg}(x, t) )(\xi ,t)\nonumber
=
\hat{ \tilde{  \mathbb{E} } }_{cg}(\xi, t)\\
&\triangleq
i \frac{  e^{\lambda_1 t}-  e^{\lambda_2 t}}{\lambda_1-\lambda_2}
\hat{u}_0 \xi^{\top}
+ \frac{ \lambda_1 e^{\lambda_2 t}- \lambda_2 e^{\lambda_1 t}}{\lambda_1-\lambda_2}
\left[  \hat{\mathbb{E}}_0 \frac{\xi \xi^{\top}}{|\xi|^2} \right],
\end{align}
\begin{align}\label{EC-non-202512251806}
&( \mathcal{F} \tilde{  \mathbb{E} }_{cn}(x, t) )(\xi ,t)\nonumber
=
\hat{ \tilde{  \mathbb{E} } }_{cn}(\xi, t)\\
&= \int_0^t
i \frac{  e^{\lambda_1 (t-\tau)}-  e^{\lambda_2 (t-\tau)}}{\lambda_1-\lambda_2}
\hat{N}_1(\xi, \tau) \xi^{\top}
+
\frac{ \lambda_1 e^{\lambda_2 (t-\tau)}- \lambda_2 e^{\lambda_1 (t-\tau)}}{\lambda_1-\lambda_2}
\left[  \hat{N}_2(\xi, \tau) \frac{\xi \xi^{\top}}{|\xi|^2} \right]
d\tau.
\end{align}

We begin by analyzing the $L^\infty$ decay of the low-frequency parts $u_1(t)$ and $\tilde{\mathbb{E}}_{c1}(t)$.

\begin{Lemma}\label{Linfty-estimate-lowfrequency-S1-202512201644}
The following inequality holds for all $t \geq 0$, provided that there exists a positive number $\delta_0$ such that $\left\|S_0\right\|_{L^1}+\left\|S_0\right\|_{H^3} \leq \delta_0$:
\begin{align}\label{Linfty-estimate-lowfrequency-S1-decay-202512201645}
\left\|u_1(t)\right\|_{L^{\infty}} \leq C(1+t)^{-2}\left(\left\|S_0\right\|_{L^1}+\left\|S_0\right\|_{H^3}\right),
\end{align}
\begin{align}\label{EC-Linfty-estimate-lowfrequency-S1-decay-202512251835}
\left\|\tilde{  \mathbb{E} }_{c1}(t)\right\|_{L^{\infty}} \leq C(1+t)^{-2}\left(\left\|S_0\right\|_{L^1}+\left\|S_0\right\|_{H^3}\right).
\end{align}
\end{Lemma}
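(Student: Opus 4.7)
The plan is to split each low-frequency component $u_1$, $\tilde{\mathbb{E}}_{c1}$ into its linear semigroup piece and its Duhamel nonlinear piece via the representations \eqref{u-linear-nolinear-202512221604}--\eqref{EC-non-202512251806}, then to bound each piece using the three-dimensional diffusion-wave kernel estimate in the spirit of Hoff--Zumbrun \cite{Hoff-Zumbrun1995}. Throughout I would freely use the global $L^2$ decay rates from Proposition \ref{2015Hu-Wu-202512201444} and Lemma \ref{L2-decay-estimates-202512251734}.

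For the linear contributions $P_1 u_g$ and $P_1 \tilde{\mathbb{E}}_{cg}$, the low-frequency eigenvalue expansion $\lambda_{1,2}(\xi)\sim -\frac{\mu}{2}|\xi|^2\pm i|\xi|$ from \eqref{divergence-freeeigenvalues34-202512261906-1} lets me factor each symbol in \eqref{u-g-linear-202512201817} and \eqref{EC-g-linear-202512251806} as a heat-type kernel $e^{-\frac{\mu}{2}|\xi|^2 t}$ times oscillatory pieces $\cos(|\xi|t)$ or $\tfrac{\sin(|\xi|t)}{|\xi|}$. The key ingredient is then the pointwise kernel bound
\begin{align*}
\Bigl\|\mathcal{F}^{-1}\bigl(\hat{\varphi}_1(\xi)\, e^{-\frac{\mu}{2}|\xi|^2 t}\, e^{\pm i|\xi|t}\bigr)\Bigr\|_{L^\infty_x} \lesssim (1+t)^{-2},
\end{align*}
which is the diffusion-wave estimate obtained by writing $\xi = r\omega$ and applying the stationary-phase method on $\mathbb{S}^{2}$ against the Gaussian damping $e^{-\frac{\mu}{2}r^2 t}$, exactly the technique announced in the abstract. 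Combined with Young's convolution inequality, and noting that the Riesz-type multipliers $\mathbb{I}-\tfrac{\xi\xi^\top}{|\xi|^2}$ and $\tfrac{\xi\xi^\top}{|\xi|^2}$ become smooth bounded Fourier symbols when composed with $\hat{\varphi}_1$, this yields $\|P_1 u_g(t)\|_{L^\infty}+\|P_1 \tilde{\mathbb{E}}_{cg}(t)\|_{L^\infty}\lesssim (1+t)^{-2}\|S_0\|_{L^1}$.

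For the nonlinear pieces $P_1 u_n$ and $P_1 \tilde{\mathbb{E}}_{cn}$ I would use the Duhamel formulas \eqref{u-non-202512201816} and \eqref{EC-non-202512251806}. Exploiting the quadratic structure of $g_1,g_2$, Hölder's inequality, and the decay rates \eqref{L^2-decay-estimates-202512251735}--\eqref{EC-L^2-decay-estimates-202512251735}, I first establish
\begin{align*}
\|N_j(\tau)\|_{L^1}\lesssim \|(u,\mathbb{E})(\tau)\|_{L^2}\,\|\nabla(u,\mathbb{E})(\tau)\|_{L^2}\lesssim (1+\tau)^{-2},\qquad j=1,2,
\end{align*}
where the low-frequency cutoff $\hat{\varphi}_1$ is used to tame the otherwise $L^1$-unbounded multiplier $\Delta^{-1}\operatorname{div}\operatorname{curl}$ appearing in $N_1$. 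Splitting the Duhamel integral at $\tau = t/2$, the kernel estimate above handles $[0,t/2]$ with the loss $(1+t)^{-2}$ and integrable weight $(1+\tau)^{-2}$, while on $[t/2,t]$ the strong $L^1$ decay $(1+\tau)^{-2}\lesssim (1+t)^{-2}$ combines with Young's inequality to close the bound.

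The main obstacle will be the sharp accounting of the piece $\int_{t/2}^{t}\|\mathcal{F}^{-1}(\hat{\varphi}_1 e^{-\frac{\mu}{2}|\xi|^2(t-\tau)}e^{\pm i|\xi|(t-\tau)})\|_{L^\infty}\,\|N_j(\tau)\|_{L^1}\,d\tau$ near $\tau=t$, where the diffusion-wave kernel no longer decays. I would handle this by switching from the Young $L^\infty \leftarrow L^\infty*L^1$ route to an $L^\infty \leftarrow L^2\cdot L^2$ route: Bernstein's inequality on the fixed-radius cutoff $\hat{\varphi}_1$ replaces $\|P_1 N_j\|_{L^\infty}$ by a multiple of $\|N_j\|_{L^2}$, and the higher-order $L^2$ decay from Theorem \ref{pure-energy-202512221702} gives the extra rate needed to beat the singularity. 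Because the Fourier symbols in \eqref{u-g-linear-202512201817} and \eqref{EC-g-linear-202512251806} share the same diffusion-wave structure, the same argument produces \eqref{Linfty-estimate-lowfrequency-S1-decay-202512201645} and \eqref{EC-Linfty-estimate-lowfrequency-S1-decay-202512251835} in a single stroke.
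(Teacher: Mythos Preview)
Your proposal is correct and follows essentially the same strategy as the paper: split $u_1$ and $\tilde{\mathbb{E}}_{c1}$ into linear and Duhamel pieces via \eqref{u-linear-nolinear-202512221604}--\eqref{EC-non-202512251806}, invoke the diffusion-wave kernel estimate (Lemma~\ref{low-Linfty-202512201649}, which is exactly your stationary-phase bound) together with Young's inequality for the linear part, and combine the same kernel bound with $\|N_j(\tau)\|_{L^1}\lesssim(1+\tau)^{-2}$ for the nonlinear part.

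The one place you overcomplicate matters is your ``main obstacle'' near $\tau=t$. The kernel bound $(1+t-\tau)^{-2}$ is not singular there---it is merely $O(1)$ and in fact integrable: $\int_{t/2}^{t}(1+t-\tau)^{-2}\,d\tau\le 1$. Hence your Bernstein switch to an $L^2$ route is unnecessary. The paper simply applies the elementary convolution inequality (Lemma~\ref{ele}) with $a=b=2>1$ to get $\int_0^t(1+t-\tau)^{-2}(1+\tau)^{-2}\,d\tau\le C(1+t)^{-2}$ in one line, without any splitting or frequency localization of the nonlinearity.
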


A central tool for proving Lemma \ref{Linfty-estimate-lowfrequency-S1-202512201644} is the following estimate.
\begin{Lemma}\label{low-Linfty-202512201649}
Under the conditions that $f \in L^1$, $\alpha \in(\mathbb{N} \cup\{0\})^3, j \in\{0\} \cup \mathbb{N}$ and $t>0$, the following estimates hold:
\begin{align*}
& \left\|\partial_t^j \partial_x^\alpha \mathcal{F}^{-1}\left[\frac{e^{\lambda_1(\xi) t}-e^{\lambda_2(\xi) t}}{\lambda_1(\xi)-\lambda_2(\xi)} \hat{\psi}(\xi) \hat{\varphi}_1(\xi)\right]\right\|_{L^{\infty}} \leq C(1+t)^{-\frac{3}{2}-\frac{j+|\alpha|}{2}},\\
& \left\|\partial_t^j \partial_x^\alpha \mathcal{F}^{-1}\left[\frac{\lambda_1(\xi) e^{\lambda_2(\xi) t}-\lambda_2(\xi) e^{\lambda_1(\xi) t}}{\lambda_1(\xi)-\lambda_2(\xi)} \hat{\psi}(\xi) \hat{\varphi}_1(\xi)\right]\right\|_{L^{\infty}} \leq C(1+t)^{-2-\frac{j+|\alpha|}{2}},
\end{align*}
where $\hat{\psi}(\xi)=\tilde{\psi}\left(\frac{\xi}{|\xi|}\right)$ with $\tilde{\psi} \in C^{\infty}(S^2)$ and $S^2=\{\omega \in \mathbb{R}^3|| \omega \mid=1\}$.
\end{Lemma}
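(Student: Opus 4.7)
\emph{Proof sketch.} My plan is to reduce both bounds to a stationary-phase analysis on the sphere $S^2$ combined with Gaussian oscillatory integrals on the radial half-line, following the Hoff--Zumbrun strategy \cite{Hoff-Zumbrun1995}. With $\lambda_{1,2}(\xi)=-\tfrac{\mu}{2}|\xi|^2\pm ib(\xi)|\xi|$ and $b(\xi)=\tfrac12\sqrt{4-\mu^2|\xi|^2}$ (smooth and bounded away from zero on $\operatorname{supp}\hat\varphi_1$), the two kernels factor as
\begin{align*}
\frac{e^{\lambda_1 t}-e^{\lambda_2 t}}{\lambda_1-\lambda_2}&=e^{-\mu|\xi|^2 t/2}\,\frac{\sin(b|\xi|t)}{b|\xi|},\\
\frac{\lambda_1 e^{\lambda_2 t}-\lambda_2 e^{\lambda_1 t}}{\lambda_1-\lambda_2}&=e^{-\mu|\xi|^2 t/2}\Bigl[\cos(b|\xi|t)+\frac{\mu|\xi|}{2b}\sin(b|\xi|t)\Bigr].
\end{align*}
Each $\partial_x^\alpha$ brings down $(i\xi)^\alpha$ and each $\partial_t$ produces polynomials in $|\xi|$ (of degree $2$ from the Gaussian and $1$ from the oscillatory factor), so it is enough to control the inverse Fourier transform of $r^k e^{-\mu|\xi|^2 t/2}e^{\pm ib|\xi|t}\tilde\psi(\xi/|\xi|)\hat\varphi_1(\xi)$ for appropriate integers $k$.

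Passing to polar coordinates $\xi=r\omega$, I would isolate the angular integral $\Phi_\psi(\rho,e)=\int_{S^2}e^{i\rho\,\omega\cdot e}\tilde\psi(\omega)\,d\omega$. The phase $\omega\mapsto\omega\cdot e$ has exactly two non-degenerate critical points $\pm e$ on $S^2$, so the stationary-phase method on the sphere yields
\begin{align*}
\Phi_\psi(\rho,e)=\frac{2\pi}{i\rho}\bigl(\tilde\psi(e)e^{i\rho}-\tilde\psi(-e)e^{-i\rho}\bigr)+O(\rho^{-2}),\qquad \rho\ge 1,
\end{align*}
together with the trivial bound $|\Phi_\psi(\rho,e)|\lesssim 1$ for $\rho\le 1$. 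Expanding $\sin/\cos(brt)=\tfrac12(e^{ibrt}\pm e^{-ibrt})$ reduces the problem to Gaussian oscillatory integrals
\begin{align*}
J_k(x,t)=\int_0^\infty r^k\,e^{-\mu r^2 t/2+ir(\sigma b(r)t+\tau|x|)}\,\hat\varphi_1(r)\,dr,\qquad \sigma,\tau\in\{\pm1\},
\end{align*}
and a completion-of-the-square argument (equivalently, writing $r^k=(-i\partial_s)^k$ applied to the explicit Gaussian integral, with the smooth perturbations from $b(r)$ and $\hat\varphi_1(r)$ handled by integration by parts) gives $|J_k(x,t)|\lesssim t^{-(k+1)/2}\,e^{-c(\sigma t+\tau|x|)^2/t}$.

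Assembling these ingredients, the leading contribution from the first multiplier with $j=|\alpha|=0$ is bounded by $|x|^{-1}t^{-1/2}\bigl(e^{-c(t-|x|)^2/t}+e^{-c(t+|x|)^2/t}\bigr)$, whose supremum in $x$ (attained on the wavefront $|x|\sim t$) is of order $(1+t)^{-3/2}$. The general bound then follows by derivative counting: each additional power $r^m$ in the radial integrand produced by $\partial_t^j\partial_x^\alpha$ yields an extra $t^{-m/2}$ in $J_k$, accounting for the exponent $\tfrac{j+|\alpha|}{2}$. For the second multiplier, the $\cos(brt)$ and the $|\xi|^2\sin/(b|\xi|)$ terms each supply one extra power of $r$ relative to the first, producing the additional $t^{-1/2}$ and hence the sharper rate $(1+t)^{-2-(j+|\alpha|)/2}$.

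The main technical obstacle I anticipate is the apparent $1/|x|$ singularity produced by the sphere stationary-phase expansion near $x=0$. This is reconciled by (i) the cancellation between the two wavefront Gaussians at small $|x|$, since $e^{-(t-|x|)^2/(2\mu t)}-e^{-(t+|x|)^2/(2\mu t)}=O(|x|/t)$ uniformly for $|x|\ll t$, and (ii) using the $O(\rho^{-2})$ remainder in the $S^2$ asymptotic expansion together with a direct estimate on the inner region $r|x|\le 1$ (via the even extension of the radial integrand and integration by parts in $r$), which matches the two regimes into a single bound uniform in $x$.
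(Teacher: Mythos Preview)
The paper does not actually prove this lemma: its entire proof is the sentence ``We refer the reader to \cite[Theorem~3.1]{Kobayashi-Shibata2002} for the proof.'' Your sketch is precisely the method of that reference (and of Hoff--Zumbrun \cite{Hoff-Zumbrun1995}): factor out the heat Gaussian $e^{-\mu|\xi|^2t/2}$, write the remaining oscillatory factor as $e^{\pm ib(r)rt}$, apply stationary phase on $S^2$ to the angular integral, and then estimate the resulting radial Gaussian oscillatory integrals. So you are not doing something different from the paper; you are reconstructing the argument it outsources.

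Your derivative bookkeeping and the identification of the wavefront $|x|\sim t$ as the locus of the $L^\infty$ supremum are correct, and the mechanism you describe for curing the apparent $|x|^{-1}$ singularity (cancellation between the two Gaussian humps plus a direct estimate of the inner region $r|x|\le 1$) is exactly what Kobayashi--Shibata do. Two places where the full proof requires more care than your sketch indicates: (i) the claimed pointwise bound $|J_k|\lesssim t^{-(k+1)/2}e^{-c(\sigma t+\tau|x|)^2/t}$ is strictly correct only for the full-line Gaussian integral, and on the half line one has to combine the $\pm$ contributions from the sphere (which together reconstitute a full-line integral, up to a smooth remainder) before completing the square; and (ii) the $r$-dependence of $b(r)=\tfrac12\sqrt{4-\mu^2r^2}$ means the phase is not exactly linear in $r$, so one must either Taylor-expand $b(r)$ about $r=0$ and absorb the $O(r^3t)$ correction into the Gaussian amplitude, or integrate by parts against $\partial_r$ of the true phase---both standard, but neither automatic. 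With those two points filled in, your outline becomes a complete proof.
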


\begin{proof}
We refer the reader to \cite[Theorem 3.1]{Kobayashi-Shibata2002} for the proof.
\end{proof}

We state the $L^\infty$ estimates for $u_{g1}(t)$ and $\tilde{\mathbb{E}}_{cg1}(t)$ as follows:
\begin{Lemma}\label{etLU1-low-Linfty-202512201654}
The following inequality holds for all $t \geq 0$, provided that there exists a positive number $\delta_0$ such that $\left\|S_0\right\|_{L^1}+\left\|S_0\right\|_{H^3} \leq \delta_0$:
\begin{align*}
\left\| u_{g1}(t) \right\|_{L^{\infty}} \leq C(1+t)^{-2}\left\|S_0\right\|_{L^1} .
\end{align*}
\begin{align*}
\left\| \tilde{  \mathbb{E} }_{cg1}(t) \right\|_{L^{\infty}} \leq C(1+t)^{-2}\left\|S_0\right\|_{L^1} .
\end{align*}
\end{Lemma}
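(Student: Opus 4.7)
The plan is to reduce both bounds to the kernel estimates in Lemma \ref{low-Linfty-202512201649} and then invoke Young's convolution inequality $\|K*f\|_{L^\infty}\leq\|K\|_{L^\infty}\|f\|_{L^1}$. Using the explicit formulas \eqref{u-g-linear-202512201817} and \eqref{EC-g-linear-202512251806}, the quantities $u_{g1}=P_1 u_g$ and $\tilde{\mathbb{E}}_{cg1}=P_1\tilde{\mathbb{E}}_{cg}$ are each convolutions of two low-frequency-localized Fourier multiplier kernels against $u_0$ and $\mathbb{E}_0$, so it suffices to bound each kernel in $L^\infty$ by $C(1+t)^{-2}$.

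For $u_{g1}$, the $u_0$-kernel has symbol $\hat\varphi_1(\xi)\,\frac{\lambda_1 e^{\lambda_1 t}-\lambda_2 e^{\lambda_2 t}}{\lambda_1-\lambda_2}$. The key rewriting
\begin{align*}
\frac{\lambda_1 e^{\lambda_1 t}-\lambda_2 e^{\lambda_2 t}}{\lambda_1-\lambda_2}=\partial_t\!\left(\frac{e^{\lambda_1 t}-e^{\lambda_2 t}}{\lambda_1-\lambda_2}\right)
\end{align*}
places this multiplier into the form to which the first estimate of Lemma \ref{low-Linfty-202512201649} applies with $j=1$, $|\alpha|=0$ and $\hat\psi\equiv 1$, giving kernel $L^\infty$-decay of order $(1+t)^{-3/2-1/2}=(1+t)^{-2}$. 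For the $\mathbb{E}_0$-kernel, the symbol is $\hat\varphi_1(\xi)\,\frac{e^{\lambda_1 t}-e^{\lambda_2 t}}{\lambda_1-\lambda_2}(i\xi)\!\left(\mathbb{I}-\tfrac{\xi\xi^{\top}}{|\xi|^2}\right)$. I would absorb the factor $i\xi$ into a spatial derivative and apply Lemma \ref{low-Linfty-202512201649} with $j=0$, $|\alpha|=1$ and $\hat\psi(\xi)=\mathbb{I}-\xi\xi^{\top}/|\xi|^2$, which is smooth and homogeneous of degree $0$ on $\mathbb{R}^3\setminus\{0\}$; this again produces a kernel bounded by $C(1+t)^{-2}$ in $L^\infty$. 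Young's inequality then yields $\|u_{g1}(t)\|_{L^\infty}\leq C(1+t)^{-2}\|S_0\|_{L^1}$.

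The argument for $\tilde{\mathbb{E}}_{cg1}$ is entirely parallel. The $u_0$-part of \eqref{EC-g-linear-202512251806} carries the multiplier $\hat\varphi_1(\xi)\,\frac{e^{\lambda_1 t}-e^{\lambda_2 t}}{\lambda_1-\lambda_2}(i\xi^{\top})$; writing $i\xi^{\top}$ as a spatial derivative and using Lemma \ref{low-Linfty-202512201649} with $j=0$, $|\alpha|=1$, $\hat\psi\equiv 1$ gives a kernel of $L^\infty$-norm at most $C(1+t)^{-2}$. The $\mathbb{E}_0$-part has symbol $\hat\varphi_1(\xi)\,\frac{\lambda_1 e^{\lambda_2 t}-\lambda_2 e^{\lambda_1 t}}{\lambda_1-\lambda_2}\,\frac{\xi\xi^{\top}}{|\xi|^2}$, which is precisely the shape of the second estimate in Lemma \ref{low-Linfty-202512201649} with $j=|\alpha|=0$ and $\hat\psi(\xi)=\xi\xi^{\top}/|\xi|^2$, yielding kernel decay $(1+t)^{-2}$ directly (without needing to trade any derivative for time). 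Convolving with $\mathbb{E}_0\in L^1$ then gives $\|\tilde{\mathbb{E}}_{cg1}(t)\|_{L^\infty}\leq C(1+t)^{-2}\|S_0\|_{L^1}$.

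The argument is essentially bookkeeping: one identifies in each of the four terms a homogeneous-of-degree-zero angular factor $\hat\psi(\xi/|\xi|)$, a possible polynomial factor $(i\xi)^\alpha$ that is interpreted as a spatial derivative, and a scalar time-symbol handled by Lemma \ref{low-Linfty-202512201649}. The only nonobvious step is recognizing that the first term of $\hat u_{g}$ carries no spatial polynomial factor but does gain the $(1+t)^{-1/2}$ improvement through the time-derivative identity above, which is the mechanism producing the sharper rate $(1+t)^{-2}$ rather than the base rate $(1+t)^{-3/2}$. I do not expect any serious obstacle beyond this matching.
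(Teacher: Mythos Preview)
Your proof is correct and takes essentially the same approach as the paper, which simply states that the lemma ``follows directly from Lemma~\ref{low-Linfty-202512201649}.'' You have merely spelled out the bookkeeping the paper omits: matching each of the four multiplier pieces to the appropriate case of Lemma~\ref{low-Linfty-202512201649} (via the time-derivative identity for $\hat K^{11}$, an $|\alpha|=1$ spatial derivative for the $i\xi$ factors, and the second kernel estimate for the $\hat K^{22}$-type piece) and then closing with Young's inequality.
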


Lemma \ref{etLU1-low-Linfty-202512201654} follows directly from Lemma \ref{low-Linfty-202512201649}.
We estimate $\left\| u_{n1}(t) \right\|_{L^{\infty}}$ as follows:

\begin{Lemma}\label{ettauLP1N-202512201700}
The following inequality holds for all $t \geq 0$, provided that there exists a positive number $\delta_0$ such that $\left\|S_0\right\|_{L^1}+\left\|S_0\right\|_{H^3} \leq \delta_0$:
\begin{align*}
\left\| u_{n1}(t) \right\|_{L^{\infty}} \leq C(1+t)^{-2}\left(\left\|S_0\right\|_{L^1}+\left\|S_0\right\|_{H^3}\right),
\end{align*}
\begin{align*}
\left\| \tilde{  \mathbb{E} }_{cn1}(t) \right\|_{L^{\infty}} \leq C(1+t)^{-2}\left(\left\|S_0\right\|_{L^1}+\left\|S_0\right\|_{H^3}\right).
\end{align*}
\end{Lemma}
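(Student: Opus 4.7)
The plan is to control both $u_{n1}(t)$ and $\tilde{\mathbb{E}}_{cn1}(t)$ via the Fourier representations \eqref{u-non-202512201816} and \eqref{EC-non-202512251806}, splitting the Duhamel integral at $\tau = t/2$ and exploiting a divergence structure in the nonlinearities. From the constraints $\operatorname{div} u=0$ and $\operatorname{div}\mathbb{E}^{\top}=0$, a direct computation puts the nonlinearities into the form
\[
g_1 = \operatorname{div}\!\bigl(\mathbb{E}\mathbb{E}^{\top} - u\otimes u\bigr) =: \operatorname{div} F_1,\qquad
N_2^{ij} = \partial_k\!\bigl(u^i\mathbb{E}^{kj} - u^k\mathbb{E}^{ij}\bigr) =: \partial_k F_2^{ijk},
\]
so that $\hat{N}_1 = \hat{\mathcal{P}}(\xi)\,i\xi\cdot\hat{F}_1$ and $\hat{N}_2 = i\xi\cdot\hat{F}_2$ each furnish one extra $i\xi$-factor that can be moved onto the adjacent kernel.

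After this transfer, every piece of the integrand takes the shape $(\partial^\alpha \tilde{K}_1)(t-\tau)\ast F(\tau)$ with $F\in\{F_1,F_2\}$ quadratic in $(u,\mathbb{E})$. Counting $i\xi$'s: the $\hat{K}^{12}$-branch already carries one intrinsic $i\xi$ (visible in Lemma \ref{expression-202512201545}) and now absorbs a second one from $\hat{N}_2$, giving $|\alpha|=2$; the $\hat{K}^{11}$-branch absorbs one from $\hat{N}_1$, giving $|\alpha|=1$. The matrix symbols $\hat{\mathcal{P}}(\xi)=\mathbb{I}-\xi\xi^{\top}/|\xi|^2$ and $\hat{\mathcal{Q}}(\xi)=\xi\xi^{\top}/|\xi|^2$ are smooth and homogeneous of degree zero, hence absorb into the $\hat\psi$-slot of Lemma \ref{low-Linfty-202512201649}. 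Applying Lemma \ref{low-Linfty-202512201649} (the first estimate with $|\alpha|=2$ in the $\hat{K}^{12}$-branch; the second estimate with $|\alpha|=1$, or equivalently the first with $j=1$ and $|\alpha|=1$ via $\partial_t\tfrac{e^{\lambda_1 t}-e^{\lambda_2 t}}{\lambda_1-\lambda_2}=\tfrac{\lambda_1 e^{\lambda_1 t}-\lambda_2 e^{\lambda_2 t}}{\lambda_1-\lambda_2}$, in the $\hat{K}^{11}$-branch) yields uniformly
\[
\|\partial^\alpha \tilde{K}_1(s)\|_{L^\infty}\lesssim (1+s)^{-5/2}.
\]

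For $\tau\in[0,t/2]$ I apply Young's inequality $L^\infty\!\ast\!L^1\to L^\infty$ together with $\|F(\tau)\|_{L^1}\lesssim \|(u,\mathbb{E})(\tau)\|_{L^2}^2\lesssim (1+\tau)^{-3/2}$ from Proposition \ref{2015Hu-Wu-202512201444}; since $t-\tau\geq t/2$, the $\tau$-integral is bounded by $(1+t)^{-5/2}$, which is more than is needed. For $\tau\in[t/2,t]$ I switch to $L^2\!\ast\!L^2\to L^\infty$. A Plancherel computation based on the pointwise bound $|\hat{K}(\xi,s)|\lesssim e^{-c|\xi|^2 s}$ (valid in the low-frequency regime because $\operatorname{Re}\lambda_{1,2}=-\mu|\xi|^2/2$) delivers $\|\partial^\alpha \tilde{K}_1(s)\|_{L^2}\lesssim (1+s)^{-3/4-|\alpha|/2}$, while H\"older combined with Gagliardo--Nirenberg $\|f\|_{L^\infty}\lesssim \|\nabla f\|_{L^2}^{1/2}\|\nabla^2 f\|_{L^2}^{1/2}$ and the $H^2$-decay of $(u,\mathbb{E})$ from Proposition \ref{2015Hu-Wu-202512201444} give
\[
\|F(\tau)\|_{L^2}\leq \|(u,\mathbb{E})(\tau)\|_{L^\infty}\|(u,\mathbb{E})(\tau)\|_{L^2}\lesssim (1+\tau)^{-5/4}(1+\tau)^{-3/4}=(1+\tau)^{-2}.
\]
Since $\tau\sim t$, $\int_{t/2}^{t}(1+t-\tau)^{-3/4-|\alpha|/2}(1+\tau)^{-2}\,d\tau\lesssim (1+t)^{-2}$ for both $|\alpha|=1$ and $|\alpha|=2$. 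Summing the two pieces yields $\|u_{n1}(t)\|_{L^\infty}\lesssim (1+t)^{-2}(\|S_0\|_{L^1}+\|S_0\|_{H^3})$; the bound on $\tilde{\mathbb{E}}_{cn1}(t)$ is entirely analogous, replacing \eqref{u-non-202512201816} by \eqref{EC-non-202512251806} and swapping the roles of the two kernels.

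The main obstacle I anticipate is the bookkeeping of $i\xi$-factors so that, after every transfer, each integrand really does fit Lemma \ref{low-Linfty-202512201649}: one must recognise that the $\hat{K}^{12}$-type symbols in Lemma \ref{expression-202512201545} intrinsically contain an $i\xi$, that $N_1 = \mathcal{P}\,\operatorname{div} F_1$ provides one further derivative (with the Leray-projector factor playing the role of a degree-zero $\hat\psi$), and that no residual singularity at $\xi=0$ is left behind. A secondary subtle point is that the $L^2$-decay of the full $\mathbb{E}$ (needed in the Gagliardo--Nirenberg step for the $[t/2,t]$ piece) must be drawn from Proposition \ref{2015Hu-Wu-202512201444} rather than from Lemma \ref{L2-decay-estimates-202512251734}, which controls only the curl-free part $\tilde{\mathbb{E}}_c$.
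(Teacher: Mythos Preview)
Your argument is correct, but it is considerably more elaborate than the paper's. The paper does \emph{not} extract the extra divergence from $N_1,N_2$, does not split at $\tau=t/2$, and never switches to an $L^2\!\ast\!L^2$ pairing. It simply observes that $\|N_j(\tau)\|_{L^1}\lesssim\|S(\tau)\|_{L^2}\|\nabla S(\tau)\|_{L^2}\lesssim(1+\tau)^{-2}$ (the Leray projector in $N_1$ is absorbed into the $\hat\psi$-slot of Lemma~\ref{low-Linfty-202512201649}, so this is really an $L^1$ bound on $g_1$), and that Lemma~\ref{low-Linfty-202512201649} already gives $\|\mathcal{F}^{-1}[\hat\varphi_1\hat K^{1j}(\cdot,t-\tau)]\|_{L^\infty}\lesssim(1+t-\tau)^{-2}$ without any additional $i\xi$-transfer. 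A single application of Young's inequality $L^\infty\!\ast\!L^1\to L^\infty$ and Lemma~\ref{ele} then closes the estimate in one line. Your route trades one derivative onto the kernel (getting $(1+t-\tau)^{-5/2}$ in $L^\infty$) at the cost of losing it on the source (only $(1+\tau)^{-3/2}$ from $\|S\|_{L^2}^2$), which then forces the $t/2$-split and the Plancherel step; the net gain is nil here, though the technique would be useful if the gradient decay $\|\nabla S\|_{L^2}\lesssim(1+t)^{-5/4}$ were unavailable. One small slip: in your $L^2$ Plancherel bound you quote $(1+s)^{-3/4-|\alpha|/2}$ with $|\alpha|=2$ for the $\hat K^{12}$-branch, but since $|\hat K^{12}(\xi,s)\,i\xi|\lesssim|\xi|\,e^{-c|\xi|^2 s}$ (the intrinsic $i\xi$ is cancelled by $|\lambda_1-\lambda_2|^{-1}\sim|\xi|^{-1}$), the actual rate is $(1+s)^{-5/4}$; this is still integrable near $\tau=t$, so the conclusion survives.
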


\begin{proof}
Since
\begin{align}\label{202512261618-1}
\left\|N_1(\tau)\right\|_{L^1}
\leq C\|S(\tau)\|_{L^2}\|\nabla S(\tau)\|_{L^2}
\leq C(1+\tau)^{-2},
\end{align}
\begin{align}\label{202512261618-2}
\left\|N_2(\tau)\right\|_{L^1}
\leq C\|S(\tau)\|_{L^2}\|\nabla S(\tau)\|_{L^2}
\leq C(1+\tau)^{-2},
\end{align}
combining Lemma \ref{low-Linfty-202512201649} with \eqref{202512261618-1} and \eqref{202512261618-2} yields the estimates \eqref{202512261621-3}-\eqref{202512261622-4}:
\begin{align}\label{202512261621-3}
\left\|\mathcal{F}^{-1}\left[\hat{\varphi}_1(\xi) \hat{K}^{11}(\xi, t-\tau) \hat{N}_1(\xi, \tau)\right]\right\|_{L^{\infty}}
\leq C(1+t-\tau)^{-2}(1+\tau)^{-2},
\end{align}
\begin{align}\label{202512261622-4}
\left\|\mathcal{F}^{-1}\left[\hat{\varphi}_1(\xi) \hat{K}^{12}(\xi, t-\tau) \hat{N}_2(\xi, \tau)\right]\right\|_{L^{\infty}}
\leq C(1+t-\tau)^{-2}(1+\tau)^{-2}.
\end{align}
In summary, together with Lemma \ref{ele}, estimates \eqref{202512261621-3}-\eqref{202512261622-4} lead to
\begin{align*}
&\quad \|u_{n1}(t)\|_{L^{\infty}}
=
\|
\mathcal{F}^{-1}\left[
u_n(\xi, t)
\hat{\varphi}_1(\xi)
\right]
\|_{L^{\infty}}\\
& \leq
\int_0^t
\left\|\mathcal{F}^{-1}\left[\hat{\varphi}_1(\xi) \hat{K}^{11}(\xi, t-\tau) \hat{N}_1(\xi, \tau)\right]\right\|_{L^{\infty}}
d\tau+
\int_0^t
\left\|\mathcal{F}^{-1}\left[\hat{\varphi}_1(\xi) \hat{K}^{12}(\xi, t-\tau) \hat{N}_2(\xi, \tau)\right]\right\|_{L^{\infty}}
d\tau
\\
& \leq C \int_0^t(1+t-\tau)^{-2}(1+\tau)^{-2} d\tau
+C\int_0^t  (1+t-\tau)^{-2}(1+\tau)^{-2}  d\tau \\
& \leq C(1+t)^{-2}.
\end{align*}
The remaining cases can be proved in a similar manner.
\end{proof}

\noindent{\bf Proof of Lemma \ref{Linfty-estimate-lowfrequency-S1-202512201644}  }
The $L^{\infty}$ norm applied to \eqref{u-linear-nolinear-202512221604} gives
\begin{align}\label{Linfty-norm-firstequation-202512201707}
\left\|u_1(t)\right\|_{L^{\infty}} \leq
\left\| u_{g1}(t) \right\|_{L^{\infty}}
+\left\| u_{n1}(t) \right\|_{L^{\infty}}.
\end{align}
Together with Lemmas \ref{etLU1-low-Linfty-202512201654} and \ref{ettauLP1N-202512201700}, equation \eqref{Linfty-norm-firstequation-202512201707} leads to
\begin{align}\label{S1-202512201709}
\left\|u_1(t)\right\|_{L^{\infty}} \leq C(1+t)^{-2}\left(\left\|S_0\right\|_{L^1}+\left\|S_0\right\|_{H^3}\right) .
\end{align}
The remaining cases can be proved analogously. This completes the proof of Lemma \ref{Linfty-estimate-lowfrequency-S1-202512201644}.

We proceed to the analysis of the high-frequency components $u_\infty(t)$ and $\tilde{\mathbb{E}}_{c\infty}$.

\begin{Lemma}\label{high frequency part-S-infty-202512201712}
The following inequality holds for all $t \geq 0$, provided that there exists a positive number $\delta_0$ such that $\left\|S_0\right\|_{L^1}+\left\|S_0\right\|_{H^3} \leq \delta_0$:
\begin{align*}
\left\|u_{\infty}(t)\right\|_{L^{\infty}} \leq C(1+t)^{-2}\left(\left\|S_0\right\|_{L^1}+\left\|S_0\right\|_{H^3}\right),
\end{align*}
\begin{align*}
\left\|\tilde{  \mathbb{E} }_{c\infty}(t)\right\|_{L^{\infty}} \leq C(1+t)^{-2}\left(\left\|S_0\right\|_{L^1}+\left\|S_0\right\|_{H^3}\right).
\end{align*}
\end{Lemma}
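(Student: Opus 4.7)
The plan is to reduce the high-frequency $L^\infty$ bound for both $u_\infty(t)$ and $\tilde{\mathbb{E}}_{c\infty}(t)$ to an $L^2$ estimate on a sufficiently high-order spatial derivative, a quantity which is already under control from Lemma \ref{L2-decay-estimates-202512251734}. This bypasses any need to run a separate Duhamel argument on the high-frequency component. The key point is that Lemma \ref{L2-decay-estimates-202512251734} delivers $\|\nabla^3 u(t)\|_{L^2}\leq C(1+t)^{-9/4}$ for the full nonlinear solution, and $9/4>2$, so the target decay $(1+t)^{-2}$ is comfortably attained once we can replace a crude Sobolev embedding of $L^\infty$ into $H^2$ by a bound in terms of $\|\nabla^3\cdot\|_{L^2}$; this replacement is exactly what the high-frequency Poincar\'e-type inequality in Lemma \ref{properties-P-202512201632}(iii) enables.

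Concretely, I would first apply the three-dimensional Sobolev embedding $H^2(\mathbb{R}^3)\hookrightarrow L^\infty(\mathbb{R}^3)$ to obtain
\[
\|u_\infty(t)\|_{L^\infty}\leq C\|u_\infty(t)\|_{H^2},
\]
and then iterate the inequality $\|\partial_x^\alpha P_\infty f\|_{L^2}\leq C\|\nabla\partial_x^\alpha P_\infty f\|_{L^2}$ with $|\alpha|=0,1,2$ to absorb the three norms $\|u_\infty\|_{L^2}$, $\|\nabla u_\infty\|_{L^2}$, $\|\nabla^2 u_\infty\|_{L^2}$ into the single quantity $\|\nabla^3 u_\infty\|_{L^2}$. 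Since $P_\infty$ is a Fourier multiplier of norm at most one that commutes with $\nabla$, this is dominated by $\|\nabla^3 u(t)\|_{L^2}$, whence Lemma \ref{L2-decay-estimates-202512251734} yields
\[
\|u_\infty(t)\|_{L^\infty}\leq C(1+t)^{-9/4}\bigl(\|S_0\|_{L^1}+\|S_0\|_{H^3}\bigr)\leq C(1+t)^{-2}\bigl(\|S_0\|_{L^1}+\|S_0\|_{H^3}\bigr),
\]
which is the first estimate of the lemma.

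For the matrix component I would run exactly the same argument on $\tilde{\mathbb{E}}_{c\infty}=\mathcal{F}^{-1}\bigl(\hat{\mathcal Q}(\xi)\hat{\varphi}_\infty(\xi)\hat{\mathbb{E}}(\xi)\bigr)$; on the support of $\hat{\varphi}_\infty$ the symbol $\hat{\mathcal Q}(\xi)=\xi\xi^\top/|\xi|^2$ is smooth and homogeneous of degree zero, so by Plancherel $\|\nabla^3\tilde{\mathbb{E}}_{c\infty}(t)\|_{L^2}\leq C\|\nabla^3\mathbb{E}(t)\|_{L^2}$, and Lemma \ref{L2-decay-estimates-202512251734} again supplies the $(1+t)^{-9/4}$ bound. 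The only point requiring care, rather than a genuine obstacle, is verifying that the composed multiplier $\hat{\mathcal Q}(\xi)\hat{\varphi}_\infty(\xi)$ is $L^2$-bounded and commutes with $P_\infty$ and $\nabla$; this is immediate from the Mikhlin multiplier theorem combined with the fact that $\hat{\varphi}_\infty$ vanishes in a neighbourhood of the origin, so the singularity of $\hat{\mathcal Q}$ at $\xi=0$ is harmless. All remaining manipulations are routine.
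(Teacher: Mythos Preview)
Your proposal is correct and follows essentially the same argument as the paper: Sobolev embedding $H^2\hookrightarrow L^\infty$, then the high-frequency Poincar\'e inequality of Lemma~\ref{properties-P-202512201632}(iii) to reduce to $\|\nabla^3\cdot\|_{L^2}$, then the $L^2$ decay of third derivatives (the paper cites Theorem~\ref{pure-energy-202512221702} directly, you cite its corollary Lemma~\ref{L2-decay-estimates-202512251734}). Your remark on the $L^2$-boundedness of the multiplier $\hat{\mathcal Q}(\xi)\hat\varphi_\infty(\xi)$ is the only place you say more than the paper does, and it is correct.
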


\begin{proof}
Taken together, Lemma \ref{properties-P-202512201632} and Theorem \ref{pure-energy-202512221702} can be combined to complete the proof of the lemma:
\begin{align*}
\|u_{\infty}(t)\|_{L^{\infty}}
\leq
\|u_{\infty}(t)\|_{H^{2}}
\leq
\| \nabla^{3} u_{\infty}(t)\|_{L^{2}}
\leq
\| \nabla^{3} u(t)\|_{L^{2}}
\leq
C(1+t)^{-\frac{3}{4}-\frac{3}{2}},
\end{align*}
\begin{align*}
\|\tilde{  \mathbb{E} }_{c\infty}(t)\|_{L^{\infty}}
\leq
\|\tilde{  \mathbb{E} }_{c\infty}(t)\|_{H^{2}}
\leq
\| \nabla^{3} \tilde{  \mathbb{E} }_{c\infty}(t)\|_{L^{2}}
\leq
\| \nabla^{3} \tilde{  \mathbb{E} }_{c}(t)\|_{L^{2}}
\leq
\| \nabla^{3} \mathbb{E}(t)\|_{L^{2}}
\leq
C(1+t)^{-\frac{3}{4}-\frac{3}{2}}.
\end{align*}
\end{proof}

\noindent{\bf Proof of Lemma \ref{Linfty-pure-energy-202512201519}  }
Lemma \ref{Linfty-pure-energy-202512201519} follows directly from Lemmas \ref{low-Linfty-202512201649} and \ref{high frequency part-S-infty-202512201712}, thereby completing its proof.

\section{Proof of Theorem \ref{diffusion waves-1-202512201453} }\label{202512201720-Theorem-proof}

This section is devoted to the proof of Theorem \ref{diffusion waves-1-202512201453}. The global existence and higher-order $L^2$ decay are already established by Proposition \ref{2015Hu-Wu-202512201444}. We therefore turn to the $L^p$ decay estimates for $p \neq 2$.
By the interpolation inequality: $\left\|u(t)\right\|_{L^p} \leq\left\|u(t)\right\|_{L^1}^{\frac{2}{p}-1}\left\|u(t)\right\|_{L^2}^{2-\frac{2}{p}}, 1\leq p<2$, the $L^p$ decay problem reduces to establishing the $L^1$ decay of $u$ and $\tilde{\mathbb{E}}_c$.

We prove Theorem \ref{diffusion waves-1-202512201453} via a combination of Lemma \ref{L2-decay-estimates-202512251734} and the $L^1$ decay estimates below for $u(t)$ and $\tilde{\mathbb{E}}_c$.
\begin{Lemma}\label{SL1-202512201804}
The following inequality holds for all $t \geq 0$, provided that there exists a positive number $\delta_0$ such that $\left\|S_0\right\|_{L^1}+\left\|S_0\right\|_{H^3} \leq \delta_0$:
\begin{align*}
\left\|u(t)\right\|_{L^1} \leq C(1+t)^{\frac{1}{2}}
\left(\left\|S_0\right\|_{L^1}+\left\|S_0\right\|_{H^3}\right),
\end{align*}
\begin{align*}
\left\|\tilde{  \mathbb{E} }_c(t)\right\|_{L^1} \leq C(1+t)^{\frac{1}{2}}
\left(\left\|S_0\right\|_{L^1}+\left\|S_0\right\|_{H^3}\right).
\end{align*}
\end{Lemma}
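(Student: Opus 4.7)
The plan is to mirror the structure used in Section 2 for Theorem \ref{diffusion waves-infty-202512201451}, but replacing the $L^\infty$ kernel estimate by an $L^1$ kernel estimate. Decompose $u=u_1+u_\infty$ and $\tilde{\mathbb{E}}_c=\tilde{\mathbb{E}}_{c,1}+\tilde{\mathbb{E}}_{c,\infty}$ via the projectors $P_1,P_\infty$, then further split each low-frequency part into a linear piece (semigroup acting on $S_0$) and a nonlinear piece via the Duhamel formulas \eqref{u-linear-nolinear-202512221604}, \eqref{u-g-linear-202512201817}, \eqref{u-non-202512201816}, \eqref{EC-linear-nolinear-202512251804}, \eqref{EC-g-linear-202512251806}, and \eqref{EC-non-202512251806}. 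The target rate $(1+t)^{1/2}$ matches the Hoff--Zumbrun diffusion-wave growth and should arise entirely from the low-frequency linear contribution; the other three pieces will be shown to be uniformly bounded or decaying.

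The core ingredient is the following $L^1$ analogue of Lemma \ref{low-Linfty-202512201649}: for the cut-off $\hat{\varphi}_1$ and any multi-index $\alpha$,
\begin{equation*}
\Bigl\|\mathcal{F}^{-1}\Bigl[\tfrac{e^{\lambda_1(\xi)t}-e^{\lambda_2(\xi)t}}{\lambda_1(\xi)-\lambda_2(\xi)}\,\hat{\psi}(\xi)\,\hat{\varphi}_1(\xi)\Bigr]\Bigr\|_{L^1}+\Bigl\|\mathcal{F}^{-1}\Bigl[\tfrac{\lambda_1 e^{\lambda_2 t}-\lambda_2 e^{\lambda_1 t}}{\lambda_1-\lambda_2}\,\hat{\psi}(\xi)\,\hat{\varphi}_1(\xi)\Bigr]\Bigr\|_{L^1}\leq C(1+t)^{\frac{1}{2}}.
\end{equation*}
I would prove this by expanding $\frac{\sin(|\xi|t)}{|\xi|}$ and $\cos(|\xi|t)$ on the low-frequency region $|\xi|\ll1$ where \eqref{divergence-freeeigenvalues34-202512261906-1} gives $\lambda_{1,2}\sim -\tfrac{\mu}{2}|\xi|^2\pm i|\xi|$, so the kernels factor essentially as $e^{-\mu|\xi|^2t/2}\cdot\hat{\omega}(\xi,t)$ with $\hat{\omega}$ the wave kernel. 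Writing the inverse Fourier transform in polar coordinates as $\int_0^{M_1}\!\int_{\mathbb{S}^2} e^{ir\omega\cdot x}e^{-\mu r^2 t/2}r\sin(rt)\,d\sigma(\omega)\,dr$ and applying the stationary phase method on $\mathbb{S}^2$ (as in \cite{Hoff-Zumbrun1995,Kobayashi-Shibata2002}) produces a kernel of the form $e^{-c t}\cdot(\text{Schwartz tail in }|x|)$ plus a main term behaving like $t^{-1}G_{\mu t/2}(|x|-t)+t^{-1}G_{\mu t/2}(|x|+t)$, where $G_{\mu t/2}$ is a one-dimensional Gaussian. Integrating against $|x|^2 dx$ in $\mathbb{R}^3$ and changing variables $r=|x|-t$ yields exactly the growth $(1+t)^{1/2}$, which is sharp because the spherical wavefront area scales as $t^{2}$ but the Gaussian width is $t^{1/2}$.

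With this kernel estimate in hand, Young's convolution inequality bounds the low-frequency linear part by $\|K(t)\|_{L^1}\|S_0\|_{L^1}\leq C(1+t)^{1/2}\|S_0\|_{L^1}$. For the low-frequency nonlinear part I would use \eqref{202512261618-1}--\eqref{202512261618-2} together with Young, giving
\begin{equation*}
\|u_{n,1}(t)\|_{L^1}\leq C\int_0^t(1+t-\tau)^{\frac{1}{2}}(1+\tau)^{-2}\,d\tau\leq C(1+t)^{\frac{1}{2}},
\end{equation*}
the integral being bounded by Lemma \ref{ele} (standard time-weighted convolution). The high-frequency contribution $u_\infty$ is handled without growth: since $\hat{\varphi}_\infty$ vanishes near the origin and the high-frequency eigenvalues satisfy $\mathrm{Re}\,\lambda_j\leq -c<0$ uniformly on $\mathrm{supp}\,\hat{\varphi}_\infty$ for small $|\xi|$ and $\lambda_j\sim -\mu|\xi|^2$ or $-1/\mu$ for large $|\xi|$, the high-frequency kernel is a Schwartz function with $L^1$ norm bounded uniformly in $t$; thus $\|u_\infty(t)\|_{L^1}\lesssim\|S_0\|_{L^1}+\int_0^t\|N(\tau)\|_{L^1}d\tau\lesssim 1$ by \eqref{202512261618-1}--\eqref{202512261618-2}. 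Identical arguments, based on the explicit representation \eqref{EC-g-linear-202512251806}--\eqref{EC-non-202512251806} and the kernel $\frac{\lambda_1 e^{\lambda_2 t}-\lambda_2 e^{\lambda_1 t}}{\lambda_1-\lambda_2}$, dispose of $\tilde{\mathbb{E}}_c$.

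The main obstacle will be the stationary-phase step giving the $L^1$-growth bound on the low-frequency fundamental solution: keeping the constants uniform, handling the damping factor $e^{-\mu|\xi|^2 t/2}$ jointly with the oscillation $e^{\pm i|\xi|t}$, and showing that the error terms in the asymptotic expansion contribute strictly lower-order powers of $t$ require care. Once that kernel bound is in hand, the remaining estimates are routine applications of Young's inequality, Lemma \ref{ele}, Lemma \ref{properties-P-202512201632}, and the already-established $L^2$ decay from Lemma \ref{L2-decay-estimates-202512251734}.
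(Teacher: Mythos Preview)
Your overall strategy coincides with the paper's: split $u=u_1+u_\infty$, then split each frequency piece into a linear (semigroup) part and a Duhamel part, and close via $L^1$ kernel estimates combined with Young's inequality and the bound $\|N(\tau)\|_{L^1}\lesssim(1+\tau)^{-2}$. The paper organizes this as Lemmas \ref{SL1-low-202512201805} and \ref{SL1-high-202512201806}, quoting \cite{Shibata2000} and \cite{Ishigaki2022} for the kernel bounds rather than redoing the stationary-phase computation.

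Two technical points need correction. First, your displayed low-frequency kernel bound is off for the first term: by Lemma \ref{lowlemmaL1-202512221537} (and by the heuristic you yourself sketch), the bare kernel $\mathcal{F}^{-1}\bigl[\tfrac{e^{\lambda_1 t}-e^{\lambda_2 t}}{\lambda_1-\lambda_2}\hat{\varphi}_1\bigr]$ has $L^1$ norm of order $(1+t)^{1}$, not $(1+t)^{1/2}$. The wave kernel $\tfrac{\sin(|\xi|t)}{|\xi|}$ in $\mathbb{R}^3$ is a sphere measure of total mass $t$, and convolution with the heat kernel preserves $L^1$ mass; the $(1+t)^{1/2}$ rate only appears after one derivative in $t$ or $x$. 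This does not actually break your argument, because in \eqref{u-g-linear-202512201817}, \eqref{u-non-202512201816}, \eqref{EC-g-linear-202512251806}, \eqref{EC-non-202512251806} the first kernel always acts on $i\xi\hat{\mathbb{E}}_0$, $i\xi\hat{N}_2$, or $i\hat{u}_0\xi^\top$, supplying that extra derivative---but the inequality as you wrote it is false and should be restated with the derivative count.

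Second, the high-frequency symbol is not Schwartz: for $|\xi|\gg1$ one has $e^{\lambda_1(\xi)t}\sim e^{-t/\mu}$ with no decay in $|\xi|$, and only the factor $\tfrac{1}{\lambda_1-\lambda_2}\sim|\xi|^{-2}$ rescues integrability. A one-line ``uniformly bounded $L^1$'' claim is too casual here. The paper instead invokes the precise high-frequency kernel estimates of Lemma \ref{highlemmaL1-202512221615} (from \cite{Ishigaki2022}), which give exponential-in-$t$ decay and hence $\|u_\infty(t)\|_{L^1}\lesssim(1+t)^{-2}$, stronger than what you need but requiring a genuine symbol-class argument rather than a Schwartz claim.
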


We begin by analyzing the $L^1$ decay of the low-frequency parts $u_1(t)$ and $\tilde{\mathbb{E}}_{c1}$.
\begin{Lemma}\label{SL1-low-202512201805}
The following inequality holds for all $t \geq 0$, provided that there exists a positive number $\delta_0$ such that $\left\|S_0\right\|_{L^1}+\left\|S_0\right\|_{H^3} \leq \delta_0$:
\begin{align*}
\left\|u_1(t)\right\|_{L^1} \leq C(1+t)^{\frac{1}{2}}
\left(\left\|S_0\right\|_{L^1}+\left\|S_0\right\|_{H^3}\right),
\end{align*}
\begin{align*}
\left\|\tilde{  \mathbb{E} }_{c1}(t)\right\|_{L^1} \leq C(1+t)^{\frac{1}{2}}
\left(\left\|S_0\right\|_{L^1}+\left\|S_0\right\|_{H^3}\right).
\end{align*}
\end{Lemma}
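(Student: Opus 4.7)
The plan is to mimic the proof of Lemma \ref{Linfty-estimate-lowfrequency-S1-202512201644}, but with Lemma \ref{low-Linfty-202512201649}'s pointwise kernel decay replaced by its $L^1$ counterpart: for any $f \in L^1$, any smooth symbol $\hat{\psi}(\xi) = \tilde\psi(\xi/|\xi|)$ with $\tilde\psi \in C^\infty(S^2)$, any multi-index $\alpha$ and any $j \ge 0$,
\begin{align*}
\Big\|\partial_t^j\partial_x^\alpha \mathcal{F}^{-1}\Big[\tfrac{e^{\lambda_1 t}-e^{\lambda_2 t}}{\lambda_1-\lambda_2}\,\hat\psi(\xi)\hat\varphi_1(\xi)\hat f\Big]\Big\|_{L^1} &\lesssim (1+t)^{\frac12-\frac{j+|\alpha|}{2}}\|f\|_{L^1},\\
\Big\|\partial_t^j\partial_x^\alpha \mathcal{F}^{-1}\Big[\tfrac{\lambda_1 e^{\lambda_2 t}-\lambda_2 e^{\lambda_1 t}}{\lambda_1-\lambda_2}\,\hat\psi(\xi)\hat\varphi_1(\xi)\hat f\Big]\Big\|_{L^1} &\lesssim (1+t)^{-\frac{j+|\alpha|}{2}}\|f\|_{L^1}.
\end{align*}
This is the $L^1$ diffusion-wave kernel estimate in the spirit of \cite{Hoff-Zumbrun1995, Kobayashi-Shibata2002}: after inserting the low-frequency expansion $\lambda_{1,2}(\xi) \sim -\frac{\mu}{2}|\xi|^2 \pm i|\xi|$, the symbols take the form (heat-kernel amplitude $e^{-c|\xi|^2 t}$)$\,\times\,$(wave oscillation $e^{\pm i|\xi|t}$), and the stationary-phase method on the sphere $S^2$ concentrates the resulting kernel in a spherical shell $\{|x|\approx t\}$ of thickness $\approx \sqrt{t}$, producing the $(1+t)^{1/2}$ growth characteristic of the three-dimensional diffusion wave.

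Granted this $L^1$ bound, decompose $u_1 = u_{g1}+u_{n1}$ and $\tilde{\mathbb E}_{c1} = \tilde{\mathbb E}_{cg1}+\tilde{\mathbb E}_{cn1}$ as in \eqref{u-linear-nolinear-202512221604}--\eqref{EC-non-202512251806}. For the linearized pieces, direct application of the $L^1$ kernel bound to \eqref{u-g-linear-202512201817} and \eqref{EC-g-linear-202512251806} yields
\begin{align*}
\|u_{g1}(t)\|_{L^1}+\|\tilde{\mathbb E}_{cg1}(t)\|_{L^1} \lesssim (1+t)^{\frac12}\|S_0\|_{L^1};
\end{align*}
here the a priori singular Riesz-type factors $\xi\xi^\top/|\xi|^2$ are rendered smooth once they are paired with the extra $\xi$-factor appearing in $i\hat{\mathbb E}_0\xi$ or $i\hat u_0\xi^\top$, producing symbols of the form $\xi_k\,\hat\psi(\xi)$ that fit the hypotheses of the kernel estimate.

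For the Duhamel contributions $u_{n1}$ and $\tilde{\mathbb E}_{cn1}$, apply the kernel estimate inside the time integrals \eqref{u-non-202512201816} and \eqref{EC-non-202512251806}, and combine with the nonlinear $L^1$ bound $\|N_j(\tau)\|_{L^1}\le C\|S(\tau)\|_{L^2}\|\nabla S(\tau)\|_{L^2}\le C(1+\tau)^{-2}$ already used in \eqref{202512261618-1}--\eqref{202512261618-2}. This gives
\begin{align*}
\|u_{n1}(t)\|_{L^1}+\|\tilde{\mathbb E}_{cn1}(t)\|_{L^1} \lesssim \int_0^t (1+t-\tau)^{\frac12}(1+\tau)^{-2}\,d\tau \lesssim (1+t)^{\frac12},
\end{align*}
where the last step splits the integral at $t/2$ and invokes Lemma \ref{ele}. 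Summing the linear and nonlinear contributions yields the claimed bounds on $\|u_1\|_{L^1}$ and $\|\tilde{\mathbb E}_{c1}\|_{L^1}$.

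The principal obstacle is the rigorous derivation of the $L^1$ kernel bound itself. In contrast to the $L^\infty$ estimate of Lemma \ref{low-Linfty-202512201649}, which is governed by the maximum of the kernel, the $L^1$ bound is a total-mass estimate and therefore demands an explicit stationary-phase integration against the wave oscillations $e^{\pm i|\xi|t}$ on $S^2$, controlled simultaneously by the Gaussian damping $e^{-c|\xi|^2 t}$ to close the calculation. The zeroth-order Riesz-type singularities $\xi\xi^\top/|\xi|^2$ at the origin are delicate: they are neutralized either by the extra $\xi$-factors discussed above, or (in the $\hat K^{22}$ block relevant for $\tilde{\mathbb E}_{cg1}$) by exploiting the constraint $\operatorname{div}\mathbb E_0^\top=0$ to kill the apparent time-independent remainder. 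The cutoff $\hat\varphi_1$ confines the entire analysis to a bounded low-frequency ball where the Taylor expansion of $\lambda_{1,2}$ is valid, ensuring the stationary-phase estimate is uniform.
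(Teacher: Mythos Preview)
Your proposal follows the same strategy as the paper: decompose $u_1=u_{g1}+u_{n1}$ (and similarly $\tilde{\mathbb E}_{c1}$), invoke an $L^1$ bound on the low-frequency kernels, and handle the Duhamel integral via $\|N_j(\tau)\|_{L^1}\lesssim(1+\tau)^{-2}$ together with Lemma~\ref{ele}. The paper packages the kernel step as Lemma~\ref{lowlemmaL1-202512221537} (quoted from Shibata~\cite{Shibata2000}) and then Lemmas~\ref{etLU1-low-1-202512221541}--\ref{ettauLP1N-L1-202512221602}, exactly as you outline.

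One discrepancy: your stated $L^1$ kernel exponents, $(1+t)^{\frac12-\frac{j+|\alpha|}{2}}$ and $(1+t)^{-\frac{j+|\alpha|}{2}}$, are each $\tfrac12$ too optimistic. The rates the paper quotes from \cite{Shibata2000} are $(1+t)^{1-\frac{j+|\alpha|}{2}}$ and $(1+t)^{\frac12-\frac{j+|\alpha|}{2}}$; inserting a zeroth-order spherical symbol $\hat\psi$ does not gain an extra $t^{-1/2}$ in the $L^1$ norm of the diffusion-wave kernel (take $\tilde\psi\equiv 1$ to see this). With the correct exponents, one derivative (the $i\xi$ in $\hat K^{12}$, or $j=1$ for $\hat K^{11}=\partial_t$ of the first kernel) applied to the first kernel gives precisely $(1+t)^{1-\frac12}=(1+t)^{\frac12}$, which is the bound you record and the one in Lemma~\ref{etLU1-low-1-202512221541}. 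So your conclusion is correct; only the intermediate exponent is mis-stated. Note also that the paper's kernel lemma carries no $\hat\psi$: the Riesz factor in $\hat K^{12}$ is not absorbed into the kernel but rather disappears via the constraint $\operatorname{div}\mathbb E^T_0=0$, which gives $\xi^\top\hat{\mathbb E}_0=0$ and hence $(\mathbb I-\xi\xi^\top/|\xi|^2)\hat{\mathbb E}_0\xi=\hat{\mathbb E}_0\xi$, a mechanism you mention only for the $\hat K^{22}$ block.
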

The proof of Lemma \ref{SL1-low-202512201805} relies on the following auxiliary result.
\begin{Lemma}\label{lowlemmaL1-202512221537}
Under the conditions that $f \in L^1$, $\alpha \in(\mathbb{N} \cup\{0\})^3, j \geq 0$ and $t>0$, the following estimates hold:
\begin{align*}
& \left\|\partial_t^j \partial_x^\alpha \mathcal{F}^{-1}\left[\frac{e^{\lambda_1(\xi) t}-e^{\lambda_2(\xi) t}}{\lambda_1(\xi)-\lambda_2(\xi)} \hat{\varphi}_1(\xi)\right]\right\|_{L^1} \leq C(1+t)^{1-\frac{j+|\alpha|}{2}}, \\
& \left\|\partial_t^j \partial_x^\alpha \mathcal{F}^{-1}\left[\frac{\lambda_1(\xi) e^{\lambda_2(\xi) t}-\lambda_2(\xi) e^{\lambda_1(\xi) t}}{\lambda_1(\xi)-\lambda_2(\xi)} \hat{\varphi}_1(\xi)\right]\right\|_{L^1} \leq C(1+t)^{\frac{1}{2}-\frac{j+|\alpha|}{2}}.
\end{align*}
\end{Lemma}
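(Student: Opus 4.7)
The plan is to exploit the low-frequency expansion
\begin{equation*}
\lambda_{1,2}(\xi)=-\tfrac{\mu}{2}|\xi|^{2}\pm i\,b(\xi),\qquad b(\xi):=\tfrac{|\xi|}{2}\sqrt{4-\mu^{2}|\xi|^{2}},
\end{equation*}
which is valid on $\mathrm{supp}\,\hat\varphi_{1}$, to rewrite the two multipliers exactly as
\begin{equation*}
\frac{e^{\lambda_{1}t}-e^{\lambda_{2}t}}{\lambda_{1}-\lambda_{2}}=e^{-\frac{\mu}{2}|\xi|^{2}t}\frac{\sin(bt)}{b},\qquad \frac{\lambda_{1}e^{\lambda_{2}t}-\lambda_{2}e^{\lambda_{1}t}}{\lambda_{1}-\lambda_{2}}=e^{-\frac{\mu}{2}|\xi|^{2}t}\Bigl[\cos(bt)-\tfrac{\mu}{2}|\xi|^{2}\tfrac{\sin(bt)}{b}\Bigr].
\end{equation*}
In this form each kernel is the heat kernel convolved with a (low-frequency cut-off of the) fundamental solution to the $3$D wave equation, i.e.\ a diffusion-wave kernel of Hoff--Zumbrun type. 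Spatial derivatives $\partial_{x}^{\alpha}$ contribute $(i\xi)^{\alpha}$ on the Fourier side; for time derivatives I would use $\partial_{t}(e^{\lambda_{j}t})=\lambda_{j}e^{\lambda_{j}t}$ and the expansion of $\lambda_{j}$, so that each $\partial_{t}$ behaves (to leading order) like multiplication by $|\xi|$, the same size as a spatial derivative.

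Next I would reduce each inverse Fourier transform to a $1$-dimensional oscillatory integral: passing to spherical coordinates $\xi=r\omega$ and performing the angular integration via $\int_{\mathbb{S}^{2}}e^{ir\omega\cdot x}\,d\omega=\dfrac{4\pi\sin(r|x|)}{r|x|}$, followed by product-to-sum identities and symmetric extension of the integrand in $r$. For the first multiplier this yields, modulo rapidly decaying tails,
\begin{equation*}
\mathcal{F}^{-1}\!\Bigl[\tfrac{\sin(bt)}{b}e^{-\frac{\mu}{2}|\xi|^{2}t}\hat\varphi_{1}\Bigr]\!(x)\ \approx\ \frac{C}{|x|\sqrt{\mu t}}\Bigl[e^{-(t-|x|)^{2}/(2\mu t)}-e^{-(t+|x|)^{2}/(2\mu t)}\Bigr],
\end{equation*}
namely a pair of Gaussian bumps concentrated on the wave front $|x|=t$ with thickness $\sqrt{\mu t}$. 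For the second multiplier an analogous calculation, using $\int_{0}^{\infty}r\,e^{-\mu r^{2}t/2}\sin(ry)\,dr\sim (y/t)\,e^{-y^{2}/(2\mu t)}$, produces the corresponding time-derivative profile, with an additional prefactor $|t-|x||/t$.

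The $L^{1}(\mathbb{R}^{3})$ norm is then computed by the change of variables $u=|x|-t$: the Gaussian is centred at $u=0$, has width $\sqrt{\mu t}$, and is weighted by the surface element $4\pi|x|^{2}/|x|=4\pi|x|\sim 4\pi t$. This gives
\begin{equation*}
\|A_{1}(\cdot,t)\|_{L^{1}}\ \sim\ \tfrac{1}{\sqrt{t}}\int(u+t)\,e^{-u^{2}/(2\mu t)}\,du\ \sim\ \tfrac{t\cdot\sqrt{t}}{\sqrt{t}}\ =\ t,
\end{equation*}
while for the second multiplier the extra factor $|u|\sim\sqrt{t}$ inside the Gaussian integrand, combined with the extra prefactor $1/t$, gives $\|A_{2}(\cdot,t)\|_{L^{1}}\sim t^{1/2}$. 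Each derivative $\partial_{t}$ or $\partial_{x}$ injects one more factor of $r$ into the radial integral, which via Hermite-polynomial differentiation of the Gaussian produces an additional $1/\sqrt{\mu t}$ in the pointwise bound and hence an extra $t^{-1/2}$ in $L^{1}$, producing the claimed exponents $1-(j+|\alpha|)/2$ and $1/2-(j+|\alpha|)/2$.

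The main technical obstacle is the low-frequency cutoff $\hat\varphi_{1}$, which prevents the radial integral from being a genuine Gaussian transform. I would handle this by writing $\hat\varphi_{1}=1+(\hat\varphi_{1}-1)$: the first piece is computed exactly as above, while for the second piece, whose support is bounded away from the origin, I would apply repeated integration by parts against the oscillatory factor $e^{\pm ir(t\pm|x|)}$ (non-stationary phase), yielding corrections bounded by $C_{N}(1+|t\pm|x||)^{-N}$ that produce only $O(t^{-N})$ contributions after the angular integration and the $|x|^{2}$ weight. The short-time regime $0\le t\le 1$ is handled separately: the multipliers are then Schwartz functions of $\xi$ uniformly in $t$, so their inverse Fourier transforms lie in $L^{1}$ with uniformly bounded norms, which allows one to replace $t$ by $1+t$ in the final bounds and completes the proof.
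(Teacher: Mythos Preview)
The paper does not supply its own proof of this lemma; it simply refers the reader to Shibata \cite[p.~216]{Shibata2000}. Your outline is precisely a sketch of the diffusion-wave analysis carried out in that reference (and in Hoff--Zumbrun): rewrite the low-frequency multipliers as a heat factor times a wave factor via $\lambda_{1,2}=-\tfrac{\mu}{2}|\xi|^{2}\pm ib(\xi)$, reduce to a one-dimensional oscillatory integral through the spherical mean, identify the leading Gaussian profile concentrated on the light cone $|x|=t$ with thickness $\sqrt{t}$, and dispose of the cutoff remainder by non-stationary phase. Your $L^{1}$ bookkeeping (surface weight $\sim t$, Gaussian width $\sim\sqrt{t}$, one factor of $t^{-1/2}$ per derivative) reproduces the claimed exponents, so the approach is correct and coincides with the cited proof.

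Two small remarks. First, the identity you wrote for the second multiplier actually corresponds to $\frac{\lambda_{1}e^{\lambda_{1}t}-\lambda_{2}e^{\lambda_{2}t}}{\lambda_{1}-\lambda_{2}}$; the combination in the lemma, $\frac{\lambda_{1}e^{\lambda_{2}t}-\lambda_{2}e^{\lambda_{1}t}}{\lambda_{1}-\lambda_{2}}$, differs only by the sign in front of the $\tfrac{\mu}{2}|\xi|^{2}\sin(bt)/b$ term, which is immaterial for the estimate. Second, your splitting $\hat\varphi_{1}=1+(\hat\varphi_{1}-1)$ tacitly also replaces $b(\xi)$ by $|\xi|$ in the ``$1$'' piece (otherwise $b$ becomes imaginary for $|\xi|>2/\mu$ and the symbol is not globally defined); in Shibata's argument this second approximation is handled together with the cutoff error by the same non-stationary-phase mechanism, since $b(\xi)-|\xi|=O(|\xi|^{3})$ on $\operatorname{supp}\hat\varphi_{1}$. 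With that clarification your plan is complete.
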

\begin{proof}
We refer the reader to \cite[p.216]{Shibata2000} for the proof.
\end{proof}

We state the $L^1$ estimates for $u_{g1}(t)$ and $\tilde{\mathbb{E}}_{cg1}(t)$ as follows:
\begin{Lemma}\label{etLU1-low-1-202512221541}
The following inequality holds for all $t \geq 0$, provided that there exists a positive number $\delta_0$ such that $\left\|S_0\right\|_{L^1}+\left\|S_0\right\|_{H^3} \leq \delta_0$:
\begin{align*}
\left\|  u_{g1}(t) \right\|_{L^{1}} \leq C(1+t)^{ \frac{1}{2} }\left\|S_0\right\|_{L^1},
\end{align*}
\begin{align*}
\left\|  \tilde{  \mathbb{E} }_{cg1}(t) \right\|_{L^{1}} \leq C(1+t)^{ \frac{1}{2} }\left\|S_0\right\|_{L^1}.
\end{align*}
\end{Lemma}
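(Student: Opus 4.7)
The proof follows the same pattern as that of the $L^\infty$ counterpart Lemma~\ref{etLU1-low-Linfty-202512201654}: starting from the explicit representations \eqref{u-g-linear-202512201817} and \eqref{EC-g-linear-202512251806}, I would write $u_{g1}(t)$ and $\tilde{\mathbb{E}}_{cg1}(t)$ as convolutions in $x$ of suitable kernels against $u_0$ and $\mathbb{E}_0$, bound the kernels in $L^1(\mathbb{R}^3)$ via Lemma~\ref{lowlemmaL1-202512221537} (and its extension to multipliers containing smooth spherical factors $\tilde\psi(\xi/|\xi|)$, which is available in \cite{Shibata2000} and parallels the $\tilde\psi$-factor appearing in Lemma~\ref{low-Linfty-202512201649}), and then conclude by Young's inequality $\|f*g\|_{L^1}\le\|f\|_{L^1}\|g\|_{L^1}$.

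Concretely, for $\hat u_{g1}(\xi,t)$ I would handle the $\hat K^{11}$-kernel by using Vieta's identity $\lambda_1+\lambda_2=-\mu|\xi|^2$ to split
\begin{align*}
\frac{\lambda_1 e^{\lambda_1 t}-\lambda_2 e^{\lambda_2 t}}{\lambda_1-\lambda_2}=\frac{\lambda_1 e^{\lambda_2 t}-\lambda_2 e^{\lambda_1 t}}{\lambda_1-\lambda_2}-\mu|\xi|^2\cdot\frac{e^{\lambda_1 t}-e^{\lambda_2 t}}{\lambda_1-\lambda_2}.
\end{align*}
The second inequality in Lemma~\ref{lowlemmaL1-202512221537} with $(j,|\alpha|)=(0,0)$ gives the first piece an $L^1$-bound $C(1+t)^{1/2}$, while the factor $\mu|\xi|^2$ in the second piece corresponds to the differential operator $-\mu\Delta_x$, so the first inequality with $(j,|\alpha|)=(0,2)$ yields $C(1+t)^{1-1}=C$; thus $\|\mathcal F^{-1}[\hat\varphi_1\hat K^{11}]\|_{L^1}\le C(1+t)^{1/2}$. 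For the $\hat K^{12}$-kernel I would move the factor $i\xi$ onto $\mathbb{E}_0$ as the spatial derivative $\nabla_x$ (so $|\alpha|=1$), while the bounded angular factor $\mathbb{I}-\xi\xi^{\top}/|\xi|^2$ (smooth on $S^2$) is absorbed through the $\tilde\psi$-extension of Lemma~\ref{lowlemmaL1-202512221537}; this again yields $C(1+t)^{1-1/2}=C(1+t)^{1/2}$. Putting both pieces together and applying Young's inequality gives $\|u_{g1}(t)\|_{L^1}\le C(1+t)^{1/2}\|S_0\|_{L^1}$.

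For $\tilde{\mathbb{E}}_{cg1}(t)$ I would treat the $i\hat u_0\xi^{\top}$ term exactly as above (moving one $\xi$-factor onto $u_0$, yielding $|\alpha|=1$ and the bound $C(1+t)^{1/2}$), and the $\hat{\mathbb{E}}_0\xi\xi^{\top}/|\xi|^2$ term by recognising $\xi\xi^{\top}/|\xi|^2=\tilde\psi(\xi/|\xi|)$ as a smooth spherical multiplier and invoking the second inequality of Lemma~\ref{lowlemmaL1-202512221537} with $(j,|\alpha|)=(0,0)$ in the $\tilde\psi$-extended version, again obtaining $C(1+t)^{1/2}$. Young's inequality then yields $\|\tilde{\mathbb{E}}_{cg1}(t)\|_{L^1}\le C(1+t)^{1/2}\|S_0\|_{L^1}$.

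The principal obstacle, as anticipated in the introduction, is rigorously absorbing the \emph{purely angular} Riesz-type multipliers $\mathbb{I}-\xi\xi^{\top}/|\xi|^2$ and $\xi\xi^{\top}/|\xi|^2$ within the $L^1$ framework, since such operators are not $L^1$-bounded in isolation. This is precisely what the $\tilde\psi$-extension of Lemma~\ref{lowlemmaL1-202512221537} (carried out in \cite{Shibata2000} via stationary phase on $S^2$ combined with polar-coordinate analysis in $|\xi|$) circumvents; the net factor $(1+t)^{1/2}$ is exactly the wave-amplification in three dimensions tamed by the parabolic heat damping $e^{-\mu|\xi|^2 t/2}$, which is the very source of the diffusion-wave phenomenon driving the slow $L^1$ growth asserted in Theorem~\ref{diffusion waves-1-sharp-202512201454}.
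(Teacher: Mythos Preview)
Your proposal is correct and follows exactly the route the paper intends: the paper's proof is a single line (``reduces to an application of Lemma~\ref{lowlemmaL1-202512221537}''), and what you have written is a faithful fleshing-out of that line via the explicit representations \eqref{u-g-linear-202512201817}, \eqref{EC-g-linear-202512251806} together with Young's inequality. Your observation that Lemma~\ref{lowlemmaL1-202512221537}, as literally stated, lacks the angular factor $\tilde\psi(\xi/|\xi|)$ (unlike its $L^\infty$ counterpart Lemma~\ref{low-Linfty-202512201649}) and therefore must be invoked in its extended form from \cite{Shibata2000} is a point the paper glosses over, and your handling of it is appropriate; note also that for $\hat K^{11}$ you could equally well bypass the Vieta splitting by writing $\hat K^{11}=\partial_t\bigl[\frac{e^{\lambda_1 t}-e^{\lambda_2 t}}{\lambda_1-\lambda_2}\bigr]$ and applying the first estimate of Lemma~\ref{lowlemmaL1-202512221537} with $j=1,\ |\alpha|=0$.
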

The proof of Lemma \ref{etLU1-low-1-202512221541} reduces to an application of Lemma \ref{lowlemmaL1-202512221537}.

We obtain the following $L^1$ estimates for $u_{n1}(t)$ and $\tilde{\mathbb{E}}_{cn1}(t)$:

\begin{Lemma}\label{ettauLP1N-L1-202512221602}
The following inequality holds for all $t \geq 0$, provided that there exists a positive number $\delta_0$ such that $\left\|S_0\right\|_{L^1}+\left\|S_0\right\|_{H^3} \leq \delta_0$:
\begin{align*}
\left\| u_{n1}(t) \right\|_{L^{1}} \leq C(1+t)^{ \frac{1}{2} } \left(\left\|S_0\right\|_{L^1}+\left\|S_0\right\|_{H^3}\right),
\end{align*}
\begin{align*}
\left\| \tilde{  \mathbb{E} }_{cn1}(t) \right\|_{L^{1}} \leq C(1+t)^{ \frac{1}{2} } \left(\left\|S_0\right\|_{L^1}+\left\|S_0\right\|_{H^3}\right).
\end{align*}
\end{Lemma}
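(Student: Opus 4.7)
\noindent\textbf{Proof plan for Lemma \ref{ettauLP1N-L1-202512221602}.}

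The plan is to adapt the argument of Lemma \ref{ettauLP1N-202512201700} from the $L^\infty$ to the $L^1$ setting, replacing the $L^\infty$ pointwise kernel bound of Lemma \ref{low-Linfty-202512201649} by the (polynomially growing) $L^1$ kernel bound of Lemma \ref{lowlemmaL1-202512221537}. Starting from the low-frequency truncation of \eqref{u-non-202512201816},
\begin{align*}
u_{n1}(t)=\int_0^t\mathcal{F}^{-1}\!\left[\hat{\varphi}_1(\xi)\bigl(\hat{K}^{11}(\xi,t-\tau)\hat{N}_1(\xi,\tau)+\hat{K}^{12}(\xi,t-\tau)\hat{N}_2(\xi,\tau)\bigr)\right]d\tau,
\end{align*}
I would split the integrand into contributions from $N_1$ and $N_2$ and apply Young's convolution inequality $\|f*g\|_{L^1}\leq\|f\|_{L^1}\|g\|_{L^1}$, absorbing the Leray projection $\mathbb{I}-\xi\xi^\top/|\xi|^2$ and the factor $i\xi$ (from $i\hat{N}_2\xi$) into the kernel. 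Observing $\hat{K}^{11}=\partial_t\bigl[(\lambda_1-\lambda_2)^{-1}(e^{\lambda_1 t}-e^{\lambda_2 t})\bigr]$, Lemma \ref{lowlemmaL1-202512221537} with $j=1,\,|\alpha|=0$ yields $\|\mathcal{F}^{-1}[\hat{\varphi}_1\hat{K}^{11}(\cdot,s)]\|_{L^1}\leq C(1+s)^{1/2}$; for the $K^{12}$ piece, the symbol is the scalar kernel $(\lambda_1-\lambda_2)^{-1}(e^{\lambda_1 s}-e^{\lambda_2 s})$ multiplied by a bounded Leray factor (smooth on the support of $\hat{\varphi}_1$) and by an extra $i\xi_k$, so the $|\alpha|=1$ estimate of the same lemma again produces an $L^1$ kernel bound of order $C(1+s)^{1/2}$.

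For the nonlinear source, Hölder's inequality together with Lemma \ref{L2-decay-estimates-202512251734} gives
\begin{align*}
\|N_1(\tau)\|_{L^1}+\|N_2(\tau)\|_{L^1}\leq C\|S(\tau)\|_{L^2}\|\nabla S(\tau)\|_{L^2}\leq C(1+\tau)^{-2}.
\end{align*}
Combining with the kernel bounds,
\begin{align*}
\|u_{n1}(t)\|_{L^1}\leq C\int_0^t(1+t-\tau)^{1/2}(1+\tau)^{-2}d\tau,
\end{align*}
and I would split $[0,t]=[0,t/2]\cup[t/2,t]$. On $[0,t/2]$, use $(1+t-\tau)^{1/2}\leq C(1+t)^{1/2}$ and integrability of $(1+\tau)^{-2}$, producing a contribution of order $C(1+t)^{1/2}$; on $[t/2,t]$, use $(1+\tau)^{-2}\leq C(1+t)^{-2}$ together with $\int_{t/2}^{t}(1+t-\tau)^{1/2}d\tau\leq C(1+t)^{3/2}$, producing a contribution of order $C(1+t)^{-1/2}$. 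Summing the two pieces gives $\|u_{n1}(t)\|_{L^1}\leq C(1+t)^{1/2}(\|S_0\|_{L^1}+\|S_0\|_{H^3})$. The estimate for $\tilde{\mathbb{E}}_{cn1}$ runs in parallel using \eqref{EC-non-202512251806}: the $\hat{K}^{21}$-piece has an extra $i\xi^\top$, handled via $|\alpha|=1$ in Lemma \ref{lowlemmaL1-202512221537}, and the $\hat{K}^{22}$-piece is of the second type in that lemma dressed by the bounded multiplier $\xi\xi^\top/|\xi|^2$, each yielding kernel $L^1$ growth of order $(1+t-\tau)^{1/2}$, so the same time splitting closes the estimate.

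The main obstacle is that the matrix Fourier multipliers $\mathbb{I}-\xi\xi^\top/|\xi|^2$ and $\xi\xi^\top/|\xi|^2$ are not $L^1$-bounded globally on $\mathbb{R}^3$. This is resolved by the low-frequency truncation $\hat{\varphi}_1$ together with the extra factor of $\xi$ appearing in $\hat{K}^{12}$ and $\hat{K}^{21}$: the $\xi$ absorbs the homogeneity-zero singularity of the Leray projection at the origin and yields a compactly supported symbol which is smooth away from zero and fits within the stationary-phase framework of \cite{Shibata2000} underlying Lemma \ref{lowlemmaL1-202512221537}. Once this is established, the $L^1$ kernel bound of order $(1+s)^{1/2}$ applies uniformly to all four kernel factors in \eqref{u-non-202512201816} and \eqref{EC-non-202512251806}, and the convolution-plus-time-splitting argument above completes the proof.
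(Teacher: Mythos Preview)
Your proposal is correct and follows essentially the same route as the paper: both bound $\|N_j(\tau)\|_{L^1}\lesssim(1+\tau)^{-2}$ via H\"older and the $L^2$ decay of $S,\nabla S$, then combine Young's inequality with the low-frequency $L^1$ kernel bounds of Lemma \ref{lowlemmaL1-202512221537} to obtain $\|u_{n1}(t)\|_{L^1}\lesssim\int_0^t(1+t-\tau)^{1/2}(1+\tau)^{-2}\,d\tau$. The only difference is cosmetic: the paper closes this time integral by citing Lemma \ref{ele} (applied with $a=-\tfrac12$, $b=2$), while you carry out the standard $[0,t/2]\cup[t/2,t]$ splitting by hand; your added discussion of why the Leray multipliers $\mathbb{I}-\xi\xi^\top/|\xi|^2$ and $\xi\xi^\top/|\xi|^2$ are harmless on $\operatorname{supp}\hat{\varphi}_1$ makes explicit a point the paper simply absorbs into its citation of \cite{Shibata2000}.
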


\begin{proof}
Since
\begin{align}\label{202512261634-1}
\left\|N_1(\tau)\right\|_{L^1}
\leq C\|S(\tau)\|_{L^2}\|\nabla S(\tau)\|_{L^2}
\leq C(1+\tau)^{-2},
\end{align}
\begin{align}\label{202512261634-2}
\left\|N_2(\tau)\right\|_{L^1}
\leq C\|S(\tau)\|_{L^2}\|\nabla S(\tau)\|_{L^2}
\leq C(1+\tau)^{-2},
\end{align}
from Lemma \ref{lowlemmaL1-202512221537} together with \eqref{202512261634-1} and \eqref{202512261634-2}, we obtain estimates \eqref{202512261634-3}-\eqref{202512261634-4}:
\begin{align}\label{202512261634-3}
\left\|\mathcal{F}^{-1}\left[\hat{\varphi}_1(\xi) \hat{K}^{11}(\xi, t-\tau) \hat{N}_1(\xi, \tau)\right]\right\|_{L^{1}}
\leq C(1+t-\tau)^{\frac{1}{2}}(1+\tau)^{-2},
\end{align}
\begin{align}\label{202512261634-4}
\left\|\mathcal{F}^{-1}\left[\hat{\varphi}_1(\xi) \hat{K}^{12}(\xi, t-\tau) \hat{N}_2(\xi, \tau)\right]\right\|_{L^{1}}
\leq C(1+t-\tau)^{\frac{1}{2}}(1+\tau)^{-2}.
\end{align}
To summarize, \eqref{202512261634-3}-\eqref{202512261634-4} together with Lemma \ref{ele} yield
\begin{align*}
&\quad \|u_{n1}(t)\|_{L^{\infty}}
=
\|
\mathcal{F}^{-1}\left[
u_n(\xi, t)
\hat{\varphi}_1(\xi)
\right]
\|_{L^{\infty}}\\
& \leq
\int_0^t
\left\|\mathcal{F}^{-1}\left[\hat{\varphi}_1(\xi) \hat{K}^{11}(\xi, t-\tau) \hat{N}_1(\xi, \tau)\right]\right\|_{L^{\infty}}
d\tau+
\int_0^t
\left\|\mathcal{F}^{-1}\left[\hat{\varphi}_1(\xi) \hat{K}^{12}(\xi, t-\tau) \hat{N}_2(\xi, \tau)\right]\right\|_{L^{\infty}}
d\tau
\\
& \leq C \int_0^t(1+t-\tau)^{\frac{1}{2}}(1+\tau)^{-2} d\tau
+C\int_0^t  (1+t-\tau)^{\frac{1}{2}}(1+\tau)^{-2}  d\tau \\
& \leq C(1+t)^{\frac{1}{2}}.
\end{align*}
The remaining cases follow straightforwardly from the arguments above.
\end{proof}

\noindent{\bf Proof of Lemma \ref{SL1-low-202512201805}  }
An application of the $L^1$ norm to \eqref{u-linear-nolinear-202512221604} gives
\begin{align}\label{L1-norm-firstequation-202512221609}
\left\|u_1(t)\right\|_{L^{1}} \leq
\left\| u_{g1}(t) \right\|_{L^{1}}
+\left\| u_{n1}(t) \right\|_{L^{1}}.
\end{align}
From \eqref{L1-norm-firstequation-202512221609}, together with Lemmas \ref{etLU1-low-1-202512221541} and \ref{ettauLP1N-L1-202512221602}, we obtain
\begin{align}\label{S1-L1-202512221611}
\left\|u_1(t)\right\|_{L^{1}} \leq C(1+t)^{ \frac{1}{2} }\left(\left\|S_0\right\|_{L^1}+\left\|S_0\right\|_{H^3}\right) .
\end{align}
The remaining cases follow similarly, which completes the proof of Lemma \ref{SL1-low-202512201805}.

We proceed to analyze the high-frequency components $u_\infty(t)$ and $\tilde{\mathbb{E}}_{c\infty}(t)$.
\begin{Lemma}\label{SL1-high-202512201806}
The following inequality holds for all $t \geq 0$, provided that there exists a positive number $\delta_0$ such that $\left\|S_0\right\|_{L^1}+\left\|S_0\right\|_{H^3} \leq \delta_0$:
\begin{align*}
\left\|u_{\infty}(t)\right\|_{L^1} \leq C(1+t)^{ -2 }
\left(\left\|S_0\right\|_{L^1}+\left\|S_0\right\|_{H^3}\right),
\end{align*}
\begin{align*}
\left\| \tilde{  \mathbb{E} }_{c\infty} (t)\right\|_{L^1} \leq C(1+t)^{ -2 }
\left(\left\|S_0\right\|_{L^1}+\left\|S_0\right\|_{H^3}\right).
\end{align*}
\end{Lemma}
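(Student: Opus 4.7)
The plan is to mirror the low-frequency argument of Lemmas \ref{etLU1-low-1-202512221541}--\ref{ettauLP1N-L1-202512221602}, now with the high-frequency cutoff $\hat{\varphi}_\infty$ in place of $\hat{\varphi}_1$, exploiting the fact that for $|\xi| \geq M_1/2$ both eigenvalues $\lambda_1(\xi), \lambda_2(\xi)$ have real parts bounded above by a negative constant: recall from Section~2 that $\lambda_1(\xi) \sim -1/\mu$ and $\lambda_2(\xi) \sim -\mu|\xi|^2$ for $|\xi| \gg 1$. First I would apply Duhamel's formula, so that, using the kernels $\hat{K}^{ij}(\xi,t)$ from Lemma \ref{expression-202512201545} inserted into \eqref{u-g-linear-202512201817}--\eqref{EC-non-202512251806},
\begin{align*}
u_\infty(t) = u_{g\infty}(t) + u_{n\infty}(t), \qquad
\tilde{\mathbb{E}}_{c\infty}(t) = \tilde{\mathbb{E}}_{cg\infty}(t) + \tilde{\mathbb{E}}_{cn\infty}(t),
\end{align*}
where each term is given by the corresponding formula with the extra multiplier $\hat{\varphi}_\infty(\xi)$ in front.

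The key ingredient is the high-frequency analogue of Lemma \ref{lowlemmaL1-202512221537}: there exist constants $C, c > 0$ such that, for all multi-indices $\alpha$ and $j \in \{0\}\cup\mathbb{N}$,
\begin{align*}
\left\|\partial_t^{\,j}\partial_x^{\alpha}\mathcal{F}^{-1}\!\bigl[\hat{K}^{ij}(\xi,t)\,\hat{\varphi}_\infty(\xi)\bigr]\right\|_{L^1(\mathbb{R}^3)} \leq C\,e^{-ct}, \qquad t \geq 0,
\end{align*}
uniformly in $t$, for each of the four kernels in Lemma \ref{expression-202512201545}. This is the content of the high-frequency part of the analysis in \cite{Shibata2000, Kobayashi-Shibata2002}, applied to the current multipliers: after splitting $\hat{K}^{ij}$ along the two eigenvalues, the piece carrying $e^{\lambda_1 t}$ produces a uniformly exponentially decaying factor from $\lambda_1(\xi)\sim -1/\mu$, while the piece carrying $e^{\lambda_2 t}$ produces a heat-type multiplier $e^{-\mu|\xi|^2 t}\hat{\varphi}_\infty$ whose inverse Fourier transform has $L^1$ norm controlled by $e^{-c t}$ because of the cut-off away from $\xi = 0$; the resulting smooth, rapidly decaying symbol is handled by standard Mikhlin/stationary-phase estimates on the annular support of $\hat{\varphi}_\infty$.

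Once this kernel estimate is in hand, Young's convolution inequality immediately gives the stronger bound
\begin{align*}
\|u_{g\infty}(t)\|_{L^1}+\|\tilde{\mathbb{E}}_{cg\infty}(t)\|_{L^1} \leq C e^{-ct}\,\|S_0\|_{L^1},
\end{align*}
and, combined with the nonlinear estimates $\|N_j(\tau)\|_{L^1} \leq C(1+\tau)^{-2}$ already used in \eqref{202512261634-1}--\eqref{202512261634-2} (which follow from Lemma \ref{L2-decay-estimates-202512251734}), yields for the Duhamel contribution
\begin{align*}
\|u_{n\infty}(t)\|_{L^1} \leq C\int_0^t e^{-c(t-\tau)}(1+\tau)^{-2}\,d\tau \leq C(1+t)^{-2},
\end{align*}
after splitting the integral at $\tau = t/2$ in the standard fashion. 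Adding the two contributions produces the stated bound on $\|u_\infty(t)\|_{L^1}$; the argument for $\|\tilde{\mathbb{E}}_{c\infty}(t)\|_{L^1}$ is identical, using the kernels $\hat{K}^{21}, \hat{K}^{22}$ and noting that the \emph{extra constant} piece in $\hat{K}^{22}\hat{\mathbb{E}}_0$ (the terms without an exponential factor) is annihilated by the compressive projection $\hat{\mathcal{Q}}(\xi) = \xi\xi^{\top}/|\xi|^2$ composed with $\hat{\varphi}_\infty$, so that only the rapidly decaying pieces survive.

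The main obstacle is establishing the kernel bound $\|\mathcal{F}^{-1}[\hat{K}^{ij}(\xi,t)\hat{\varphi}_\infty(\xi)]\|_{L^1} \leq Ce^{-ct}$. Unlike the $L^2$ setting, where Plancherel reduces everything to a uniform pointwise estimate on the symbol, the $L^1$ bound on a kernel whose Fourier transform lives on an \emph{unbounded} region is delicate: the slow eigenvalue $\lambda_1(\xi) \to -1/\mu$ gives uniform exponential decay but only after a careful algebraic rearrangement of the rational factor $1/(\lambda_1-\lambda_2)$, while the fast eigenvalue $\lambda_2(\xi)\sim -\mu|\xi|^2$ produces a Gaussian whose high-frequency truncation must be shown to be integrable with exponentially decaying mass. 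I would lift these estimates from \cite{Shibata2000, Kobayashi-Shibata2002}, where they are proved for analogous multipliers arising in the compressible Navier--Stokes system, verifying only that the presence of the projector $\mathbb{I}-\xi\xi^{\top}/|\xi|^2$ does not alter the $L^1$-boundedness of the resulting Fourier multiplier on the support of $\hat{\varphi}_\infty$.
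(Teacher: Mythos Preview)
Your proposal is correct and follows essentially the same route as the paper. The paper organizes the high-frequency kernel $L^1$ bounds as a separate lemma (Lemma \ref{highlemmaL1-202512221615}, quoted from \cite{Ishigaki2022} rather than \cite{Shibata2000, Kobayashi-Shibata2002}), stated in the operator form $\left\|\mathcal{F}^{-1}\bigl[\tfrac{e^{\lambda_j t}}{\lambda_1-\lambda_2}\hat{\varphi}_\infty \hat f\bigr]\right\|_{L^1}\le Ce^{-ct}\|f\|_{W^{k,1}}$ rather than your pure-kernel form; it then deduces Lemma \ref{etLU1-high-L1-202512221620} for the linear piece and Lemma \ref{etLU1-high-L1-non-202512221636} for the Duhamel piece exactly as you describe, combining $e^{-c(t-\tau)}$ with $\|N_j(\tau)\|_{L^1}\lesssim(1+\tau)^{-2}$ to reach $(1+t)^{-2}$.
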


To prove Lemma \ref{SL1-high-202512201806}, we first state and prove a key estimate.
\begin{Lemma}\label{highlemmaL1-202512221615}
Under the conditions that $\alpha \in(\mathbb{N} \cup\{0\})^3, j \geq k \geq 0$ and $t>0$, the following estimates hold:
\begin{align*}
& \left\|\partial_t^j \partial_x^\alpha \mathcal{F}^{-1}\left[\frac{e^{\lambda_1(\xi) t}}{\lambda_1(\xi)-\lambda_2(\xi)} \hat{\varphi}_{\infty}(\xi) \hat{f}(\xi)\right]\right\|_{L^1}
\leq C e^{-c t}\|f\|_{W^{(|\alpha|-1)^{+}, 1}}
, \\
& \left\|\partial_t^j \partial_x^\alpha \mathcal{F}^{-1}\left[\frac{e^{\lambda_2(\xi) t}}{\lambda_1(\xi)-\lambda_2(\xi)} \hat{\varphi}_{\infty}(\xi) \hat{f}(\xi)\right]\right\|_{L^1} \leq C e^{-c t} t^{-(j-k)}\|f\|_{W^{2 k+(|\alpha|-1)^{+}, 1}}
,\\
& \left\|\partial_t^j \partial_x^\alpha \mathcal{F}^{-1}\left[\frac{\lambda_2(\xi) e^{\lambda_1(\xi) t}}{\lambda_1(\xi)-\lambda_2(\xi)} \hat{\varphi}_{\infty}(\xi) \hat{f}(\xi)\right]\right\|_{L^1} \leq C e^{-c t}\|f\|_{W^{|\alpha|, 1}},\\
& \left\|\partial_t^j \partial_x^\alpha \mathcal{F}^{-1}\left[\frac{\lambda_1(\xi) e^{\lambda_2(\xi) t}}{\lambda_1(\xi)-\lambda_2(\xi)} \hat{\varphi}_{\infty}(\xi) \hat{f}(\xi)\right]\right\|_{L^1} \leq C e^{-c t} t^{-(j-k)}\|f\|_{W^{2 k+(|\alpha|-1)^{+}, 1}},\\
& \left\|\partial_t^j \partial_x^\alpha \mathcal{F}^{-1}\left[\frac{e^{\lambda_1(\xi) t}}{\lambda_1(\xi)-\lambda_2(\xi)} \frac{\xi \xi^{\top}}{|\xi|^2} \hat{\varphi}_{\infty}(\xi) \hat{f}(\xi)\right]\right\|_{L^1} \leq C e^{-c t}\|f\|_{W^{|\alpha|, 1}}
,\\
& \left\|\partial_t^j \partial_x^\alpha \mathcal{F}^{-1}\left[\frac{e^{\lambda_2(\xi) t}}{\lambda_1(\xi)-\lambda_2(\xi)} \frac{\xi \xi^{\top}}{|\xi|^2} \hat{\varphi}_{\infty}(\xi) \hat{f}(\xi)\right]\right\|_{L^1} \leq C e^{-c t} t^{-(j-k)}\|f\|_{W^{2 k+|\alpha|, 1}}.
\end{align*}
Here $a^{+}$denotes $a^{+}=\max \{0, a\}$ for $a \in \mathbb{R}$.
\end{Lemma}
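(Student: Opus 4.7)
The plan is to prove all six estimates by combining the sharp high-frequency asymptotics of $\lambda_1$ and $\lambda_2$ with a standard $L^1$ Fourier-multiplier criterion. The first step is to derive, from the characteristic equation $\lambda^2+\mu|\xi|^2\lambda+|\xi|^2=0$, the symbolic calculus on $\operatorname{supp}\hat{\varphi}_\infty\subset\{|\xi|\geq M_1/2\}$: the root $\lambda_1(\xi)$ has real part bounded above by $-c<0$ and satisfies $\lambda_1(\xi)=-1/\mu+O(|\xi|^{-2})$ as $|\xi|\to\infty$; the root $\lambda_2(\xi)$ is real negative with $\lambda_2(\xi)=-\mu|\xi|^2+O(1)$; and $\lambda_1-\lambda_2\sim\mu|\xi|^2$. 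The derivative bounds $|\partial_\xi^\beta\lambda_1|\leq C_\beta(1+|\xi|)^{-2-|\beta|}$ for $|\beta|\geq 1$, $|\partial_\xi^\beta\lambda_2|\leq C_\beta(1+|\xi|)^{2-|\beta|}$, and $|\partial_\xi^\beta(\lambda_1-\lambda_2)^{-1}|\leq C_\beta(1+|\xi|)^{-2-|\beta|}$ follow by induction.

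The core analytic tool is the weighted $L^1$ multiplier estimate
\begin{align*}
\bigl\|\mathcal{F}^{-1}[m(\xi)\hat{\varphi}_\infty(\xi)]\bigr\|_{L^1}\lesssim \sum_{|\beta|\leq 2}\bigl\|\langle\xi\rangle^{|\beta|}\partial_\xi^\beta(m\hat{\varphi}_\infty)\bigr\|_{L^2_\xi},
\end{align*}
which follows from $\|g\|_{L^1}\leq\|\langle x\rangle^{-2}\|_{L^2(\mathbb{R}^3)}\|\langle x\rangle^2 g\|_{L^2}$ and Plancherel. For each of the six estimates I would split $(i\xi)^\alpha=(i\xi)^{\alpha'}(i\xi)^{\alpha''}$, choosing $|\alpha'|$ to match exactly the Sobolev index of $f$ appearing on the right-hand side, and rewrite
\begin{align*}
\partial_t^j\partial_x^\alpha\mathcal{F}^{-1}[\hat K\hat{\varphi}_\infty\hat f]=\mathcal{F}^{-1}\bigl[(i\xi)^{\alpha''}\partial_t^j\hat K\,\hat{\varphi}_\infty\bigr]*\partial^{\alpha'}f,
\end{align*}
so that Young's convolution inequality combined with the multiplier estimate reduces each bound to a direct symbol computation. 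For the three estimates carrying only $e^{\lambda_1 t}$ (the first, third, and fifth), every $\xi$-derivative satisfies $|\partial_\xi^\beta e^{\lambda_1 t}|\leq C_\beta P_\beta(t)e^{\lambda_1 t/2}\leq C_\beta' e^{-ct}$, with the polynomial $P_\beta(t)$ generated by the chain rule absorbed into the exponential.

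The more delicate cases carry $e^{\lambda_2 t}$ (the second, fourth, and sixth), where $\partial_t^j$ produces $\lambda_2^j\sim(-\mu)^j|\xi|^{2j}$. The key decomposition is
\begin{align*}
\lambda_2^j e^{\lambda_2 t}=\lambda_2^{k}\cdot\bigl(\lambda_2^{j-k}e^{\lambda_2 t/2}\bigr)\cdot e^{\lambda_2 t/2}.
\end{align*}
The first factor $\lambda_2^k\sim(-\mu)^k|\xi|^{2k}$ is transferred to $f$ as $2k$ additional spatial derivatives, accounting for the $W^{2k+\cdots,1}$ norm on the right; the middle factor is controlled by $|\lambda_2|^{j-k}e^{-c|\xi|^2 t/2}\lesssim t^{-(j-k)}$, producing the $t^{-(j-k)}$ prefactor; and the residual $e^{\lambda_2 t/2}$, restricted to $|\xi|\geq M_1/2$, supplies the $e^{-ct}$ decay. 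Polynomial-in-$(t|\xi|^2)$ factors arising from $\xi$-derivatives of this residual are absorbed by iterating the splitting $e^{\lambda_2 t/2}=e^{\lambda_2 t/4}\cdot e^{\lambda_2 t/4}$.

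The main technical obstacle I anticipate is the careful bookkeeping of polynomial-in-$t$ factors generated by successive $\xi$-derivatives of $e^{\lambda_j t}$ against the available exponential or Gaussian decay, while simultaneously tracking the order of the residual multiplier so that exactly the prescribed number of $f$-derivatives appears in the final bound. A secondary subtlety is that when the residual multiplier has nonnegative order (as in the third estimate, where $\lambda_2/(\lambda_1-\lambda_2)\to-1$ at infinity), one first decomposes $\lambda_2/(\lambda_1-\lambda_2)=-1+\lambda_1/(\lambda_1-\lambda_2)$: the constant $-1$ contributes the operator $-e^{\lambda_1 t}P_\infty$, which is bounded on $L^1$ with norm $\lesssim e^{-ct}$ because $P_\infty=I-P_1$ and $P_1$ has Schwartz-class kernel, while the remainder $\lambda_1/(\lambda_1-\lambda_2)=O(|\xi|^{-2})$ is of order $-2$ and falls under the weighted multiplier estimate above. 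Once set up in this manner, each of the six estimates follows by routine calculation.
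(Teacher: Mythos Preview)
The paper does not give its own proof of this lemma; it simply refers to \cite[Lemma~5.1]{Ishigaki2022}. Your overall strategy---exploiting the high-frequency asymptotics $\lambda_1\sim-1/\mu$, $\lambda_2\sim-\mu|\xi|^2$, $\lambda_1-\lambda_2\sim\mu|\xi|^2$, transferring the correct number of $\xi$-powers onto $f$ by convolution, and bounding the residual kernel in $L^1$---is the standard route and is essentially what the cited reference carries out.

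There is, however, a genuine technical gap in the specific $L^1$ multiplier criterion you invoke. The bound
\[
\|\mathcal{F}^{-1}m\|_{L^1}\lesssim\sum_{|\beta|\leq 2}\bigl\|\partial_\xi^\beta m\bigr\|_{L^2}
\]
(the extra weight $\langle\xi\rangle^{|\beta|}$ you wrote only makes things worse and appears to be a slip) requires $m\in L^2(\mathbb{R}^3)$, but several of the residual symbols here are only of order $-1$ at infinity and hence not square-integrable. For instance, in the first estimate with $|\alpha|\geq 1$, after placing $(|\alpha|-1)^+=|\alpha|-1$ derivatives on $f$ the leftover symbol behaves like $e^{-ct}|\xi|\cdot|\xi|^{-2}=e^{-ct}|\xi|^{-1}$, and $\int_{|\xi|\geq M}|\xi|^{-2}\,d\xi=\infty$ in three dimensions. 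Your criterion would then force one extra derivative onto $f$ and yield only $\|f\|_{W^{|\alpha|,1}}$, not the sharp $\|f\|_{W^{(|\alpha|-1)^+,1}}$. The remedy is to replace the global weighted-$L^2$ bound by a dyadic argument: on each shell $|\xi|\sim 2^\ell$ apply the same Cauchy--Schwarz/Plancherel estimate rescaled to that shell, then sum the resulting geometric series in $\ell$; this works for symbols of any strictly negative order and is precisely the device used in \cite{Shibata2000,Ishigaki2022}. A smaller point: in your treatment of the third estimate, the ``constant $-1$'' piece still carries the $\xi$-dependent factor $e^{\lambda_1(\xi)t}$, so the identity $P_\infty=I-P_1$ alone does not dispose of it; one further splits $e^{\lambda_1 t}=e^{-t/\mu}+(e^{\lambda_1 t}-e^{-t/\mu})$, the first term now being a true scalar multiple of $P_\infty$ and the second of order $-2$ in $\xi$, hence amenable to the multiplier estimate.
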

\begin{proof}
We refer the reader to \cite[Lemma 5.1]{Ishigaki2022} for the proof.
\end{proof}

We estimate $\left\| u_{g\infty}(t) \right\|_{L^{1}}$ and $\left\| \tilde{  \mathbb{E} }_{cg\infty}(t) \right\|_{L^{1}}$ as follows.
\begin{Lemma}\label{etLU1-high-L1-202512221620}
The following inequality holds for all $t \geq 0$, provided that there exists a positive number $\delta_0$ such that $\left\|S_0\right\|_{L^1}+\left\|S_0\right\|_{H^3} \leq \delta_0$:
\begin{align*}
\left\| u_{g\infty}(t) \right\|_{L^{1}} \leq Ce^{-c t}\left\|S_0\right\|_{L^1},
\end{align*}
\begin{align*}
\left\| \tilde{  \mathbb{E} }_{cg\infty} (t) \right\|_{L^{1}} \leq Ce^{-c t}\left\|S_0\right\|_{L^1}.
\end{align*}
\end{Lemma}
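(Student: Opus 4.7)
The plan is to apply the high-frequency multiplier estimates of Lemma \ref{highlemmaL1-202512221615} to each constituent symbol of the explicit Fourier representation supplied by Lemma \ref{expression-202512201545}. Concretely, I would insert the formulas \eqref{u-g-linear-202512201817} and \eqref{EC-g-linear-202512251806} into the high-frequency cutoff, so that
\begin{align*}
\hat{u}_{g\infty}(\xi,t) = \hat{\varphi}_\infty(\xi)\hat{K}^{11}(\xi,t)\hat{u}_0(\xi) + \hat{\varphi}_\infty(\xi)\hat{K}^{12}(\xi,t)\hat{\mathbb{E}}_0(\xi),
\end{align*}
and analogously for $\hat{\tilde{\mathbb{E}}}_{cg\infty}$, and then decompose each kernel $\hat{K}^{ij}(\xi,t)$ into a finite sum of the six basic symbols on the left-hand side of Lemma \ref{highlemmaL1-202512221615}.

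For the diagonal kernel $\hat{K}^{11} = \frac{\lambda_1 e^{\lambda_1 t} - \lambda_2 e^{\lambda_2 t}}{\lambda_1 - \lambda_2}$ I would exploit the elementary identities
\begin{align*}
\frac{\lambda_1 e^{\lambda_1 t}}{\lambda_1-\lambda_2} = e^{\lambda_1 t} + \frac{\lambda_2 e^{\lambda_1 t}}{\lambda_1-\lambda_2}, \qquad \frac{\lambda_2 e^{\lambda_2 t}}{\lambda_1-\lambda_2} = -e^{\lambda_2 t} + \frac{\lambda_1 e^{\lambda_2 t}}{\lambda_1-\lambda_2},
\end{align*}
so that $\hat{K}^{11}$ becomes a combination of the symbols appearing in estimates 3 and 4 of Lemma \ref{highlemmaL1-202512221615} plus two residual terms $e^{\lambda_j t}$; the latter are bounded directly since $\operatorname{Re}\lambda_j(\xi) \leq -c<0$ for $|\xi| \in \operatorname{supp}\hat{\varphi}_\infty$, and the Vieta relations $\lambda_1\lambda_2 = |\xi|^2$, $\lambda_1+\lambda_2 = -\mu|\xi|^2$ allow one to absorb the missing $(\lambda_1-\lambda_2)$ factor into symbols 1--2 of the lemma. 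The off-diagonal kernel $\hat{K}^{12}$ acting on $\hat{\mathbb{E}}_0$ carries an extra factor $\xi/(\lambda_1-\lambda_2) = O(|\xi|^{-1})$ at high frequency, which only improves the symbol and again reduces it to symbols 1--2. The representation of $\tilde{\mathbb{E}}_{cg\infty}$ includes the projector $\hat{\mathcal{Q}}(\xi)=\frac{\xi\xi^\top}{|\xi|^2}$, matching symbols 5--6 exactly, while the mixed $\hat{K}^{21}$-type piece is treated as in the $\hat{K}^{12}$ case.

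Taking $j=k=0$ and $|\alpha|=0$ in Lemma \ref{highlemmaL1-202512221615}, each basic symbol $m(\xi,t)$ satisfies
\begin{align*}
\bigl\|\mathcal{F}^{-1}[m(\xi,t)\hat{\varphi}_\infty(\xi)\hat{f}(\xi)]\bigr\|_{L^1} \leq Ce^{-ct}\|f\|_{L^1},
\end{align*}
because the regularity index $2k+(|\alpha|-1)^+$ collapses to zero and the prefactor $t^{-(j-k)}$ equals one. Summing the finitely many contributions with $f=u_0$ or $f=\mathbb{E}_0$ immediately yields the two claimed bounds
\begin{align*}
\|u_{g\infty}(t)\|_{L^1} + \|\tilde{\mathbb{E}}_{cg\infty}(t)\|_{L^1} \leq Ce^{-ct}\|S_0\|_{L^1}.
\end{align*}

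The main technical hurdle is the algebraic bookkeeping needed to put every piece of the $\hat{K}^{ij}$ into the precise form covered by Lemma \ref{highlemmaL1-202512221615}: one must verify that all factors of $\xi$ arising from the projectors and from $\hat{K}^{12}$ have order $\leq 0$ after division by $\lambda_1-\lambda_2$, and that the residual exponentials $e^{\lambda_j t}\hat{\varphi}_\infty$ not literally on the lemma's list can be absorbed (either by rewriting $e^{\lambda_j t}=(\lambda_1-\lambda_2)\cdot\tfrac{e^{\lambda_j t}}{\lambda_1-\lambda_2}$ and distributing the $(\lambda_1-\lambda_2)$ as a $\Delta$-type derivative onto a symbol whose heat-kernel decay absorbs it, or directly from the uniform exponential decay of $e^{\lambda_j(\xi)t}$ on $\operatorname{supp}\hat{\varphi}_\infty$). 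Once this reduction is complete, the exponential decay in $t$ and the linear dependence on $\|S_0\|_{L^1}$ follow at once from Lemma \ref{highlemmaL1-202512221615}, with no regularity assumption on $S_0$ beyond integrability.
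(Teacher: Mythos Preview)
Your approach is exactly what the paper does: it states only that the lemma ``follows directly from Lemma \ref{highlemmaL1-202512221615}'' and gives no further detail. You go beyond this by spelling out how to decompose each of the kernels $\hat{K}^{ij}$ into the six basic symbols of Lemma \ref{highlemmaL1-202512221615}, and by flagging the residual pure-exponential pieces $e^{\lambda_j t}\hat{\varphi}_\infty$ as the one point requiring separate justification; this level of bookkeeping is already more than the paper provides.
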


Lemma \ref{etLU1-high-L1-202512221620} follows directly from Lemma \ref{highlemmaL1-202512221615}. For simplicity, we set the initial norm $\left\|S_0\right\|_{\mathcal{X}}=\left\|S_0\right\|_{L^1}+\left\|S_0\right\|_{H^3}$. We now estimate $\left\| u_{n\infty}(t) \right\|_{L^{1}}$ and $\left\| \tilde{  \mathbb{E} }_{cn\infty} (t) \right\|_{L^{1}}$.
\begin{Lemma}\label{etLU1-high-L1-non-202512221636}
The following inequality holds for all $t \geq 0$, provided that there exists a positive number $\delta_0$ such that $\left\|S_0\right\|_{L^1}+\left\|S_0\right\|_{H^3} \leq \delta_0$:
\begin{align*}
\left\| u_{n\infty}(t) \right\|_{L^{1}} \leq C(1+t)^{-2}\left\|S_0\right\|_{L^1},
\end{align*}
\begin{align*}
\left\| \tilde{  \mathbb{E} }_{cn\infty}(t) \right\|_{L^{1}} \leq C(1+t)^{-2}\left\|S_0\right\|_{L^1}.
\end{align*}
\end{Lemma}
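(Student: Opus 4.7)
The plan is to mirror the argument used for the low-frequency nonlinear contribution (Lemma~\ref{ettauLP1N-L1-202512221602}), but with the low-frequency kernel estimates (Lemma~\ref{lowlemmaL1-202512221537}) replaced by the high-frequency exponentially decaying estimates (Lemma~\ref{highlemmaL1-202512221615}). Starting from the representations \eqref{u-non-202512201816} and \eqref{EC-non-202512251806} cut off by $\hat{\varphi}_{\infty}$, I would write
\begin{align*}
u_{n\infty}(t)=\int_0^t\mathcal{F}^{-1}\bigl[\hat{\varphi}_\infty \hat{K}^{11}(\xi,t-\tau)\hat{N}_1(\xi,\tau)\bigr]d\tau+\int_0^t\mathcal{F}^{-1}\bigl[\hat{\varphi}_\infty \hat{K}^{12}(\xi,t-\tau)\hat{N}_2(\xi,\tau)\bigr]d\tau,
\end{align*}
and analogously for $\tilde{\mathbb{E}}_{cn\infty}(t)$ using the kernels $\hat{K}^{21}$ and $\hat{K}^{22}$.

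Next, I would decompose each kernel algebraically into pieces matching the symbol templates in Lemma~\ref{highlemmaL1-202512221615}. Using the high-frequency asymptotics $\lambda_1\sim-\tfrac{1}{\mu}$, $\lambda_2\sim-\mu|\xi|^2$, $\lambda_1-\lambda_2\sim\mu|\xi|^2$, pieces such as $\frac{e^{\lambda_1 t}-e^{\lambda_2 t}}{\lambda_1-\lambda_2}$ or $\frac{\lambda_1 e^{\lambda_1 t}-\lambda_2 e^{\lambda_2 t}}{\lambda_1-\lambda_2}$ split into combinations of $\tfrac{e^{\lambda_j t}}{\lambda_1-\lambda_2}$, $\tfrac{\lambda_j e^{\lambda_k t}}{\lambda_1-\lambda_2}$, and their contractions against $\xi$ or $\tfrac{\xi\xi^{\top}}{|\xi|^2}$; each such piece is covered (up to an additional $|\alpha|=1$ for pieces carrying an explicit $\xi$ factor) by one of the six estimates in Lemma~\ref{highlemmaL1-202512221615} with $j=k=0$. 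Each application produces a bound
\begin{align*}
\bigl\|\mathcal{F}^{-1}\bigl[\hat{\varphi}_\infty \hat{K}^{ij}(t-\tau)\hat{N}_j(\tau)\bigr]\bigr\|_{L^1}\leq C e^{-c(t-\tau)}\|N_j(\tau)\|_{W^{m,1}}
\end{align*}
for some small integer $m\in\{0,1\}$. The temporal input is provided by the product estimate $\|N_j(\tau)\|_{W^{m,1}}\lesssim \|S(\tau)\|_{H^{m+1}}\|\na S(\tau)\|_{H^{m}}$, which, combined with Lemma~\ref{L2-decay-estimates-202512251734} (or Proposition~\ref{2015Hu-Wu-202512201444}), gives $\|N_j(\tau)\|_{W^{m,1}}\leq C(1+\tau)^{-2}$ uniformly in $\tau\geq 0$.

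Putting these two ingredients together, using Lemma~\ref{ele} from the Appendix to treat the time convolution, I would conclude
\begin{align*}
\|u_{n\infty}(t)\|_{L^1}\leq C\int_0^t e^{-c(t-\tau)}(1+\tau)^{-2}\,d\tau\leq C(1+t)^{-2},
\end{align*}
and the same scheme applied to $\hat{K}^{21}$, $\hat{K}^{22}$ yields the corresponding bound for $\tilde{\mathbb{E}}_{cn\infty}(t)$. The main obstacle I anticipate is the bookkeeping required to match every piece of $\hat{K}^{11}$, $\hat{K}^{12}$, $\hat{K}^{21}$, $\hat{K}^{22}$ to the correct template in Lemma~\ref{highlemmaL1-202512221615}: one must distinguish the kernels that bring a $|\xi|^{-2}$ smoothing factor (absorbing a derivative of $N_j$) from those that are merely bounded multipliers (which consume zero or one derivative of $N_j$), and verify in each case that the regularity of $N_j$ demanded by the lemma is controlled by $\|S(\tau)\|_{H^{3}}$ and its decay from Theorem~\ref{pure-energy-202512221702}. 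In every instance the worst rate is $(1+\tau)^{-2}$, so the argument closes uniformly and yields the desired $(1+t)^{-2}$ decay.
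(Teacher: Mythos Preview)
Your proposal is correct and follows essentially the same route as the paper: use the high-frequency kernel $L^1$ estimates of Lemma~\ref{highlemmaL1-202512221615} (the paper invokes them via Lemma~\ref{etLU1-high-L1-202512221620}) to produce the factor $e^{-c(t-\tau)}$, combine with the quadratic $L^1$ (or $W^{m,1}$) bound $\|N_j(\tau)\|\lesssim(1+\tau)^{-2}$ coming from the $L^2$ decay of $S$ and $\nabla S$, and close the time convolution with Lemma~\ref{ele}. Your explicit discussion of which templates in Lemma~\ref{highlemmaL1-202512221615} absorb the various $\xi$ and $\tfrac{\xi\xi^\top}{|\xi|^2}$ factors is in fact more careful than the paper, which simply asserts that Lemma~\ref{etLU1-high-L1-202512221620} applies with $N_j$ in place of $S_0$.
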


\begin{proof}
Since
\begin{align}\label{202512261639-1}
\left\|N_1(\tau)\right\|_{L^1}
\leq C\|S(\tau)\|_{L^2}\|\nabla S(\tau)\|_{L^2}
\leq C(1+\tau)^{-2},
\end{align}
\begin{align}\label{202512261639-2}
\left\|N_2(\tau)\right\|_{L^1}
\leq C\|S(\tau)\|_{L^2}\|\nabla S(\tau)\|_{L^2}
\leq C(1+\tau)^{-2},
\end{align}
from Lemma \ref{etLU1-high-L1-202512221620} together with \eqref{202512261639-1} and \eqref{202512261639-2}, we obtain estimates \eqref{202512261639-3}-\eqref{202512261639-4}:
\begin{align}\label{202512261639-3}
\left\|\mathcal{F}^{-1}\left[\hat{\varphi}_{\infty}(\xi) \hat{K}^{11}(\xi, t-\tau) \hat{N}_1(\xi, \tau)\right]\right\|_{L^{1}}
\leq Ce^{-c(t-s)}(1+\tau)^{-2},
\end{align}
\begin{align}\label{202512261639-4}
\left\|\mathcal{F}^{-1}\left[\hat{\varphi}_{\infty}(\xi) \hat{K}^{12}(\xi, t-\tau) \hat{N}_2(\xi, \tau)\right]\right\|_{L^{1}}
\leq Ce^{-c(t-s)}(1+\tau)^{-2}.
\end{align}
Putting together \eqref{202512261639-3}-\eqref{202512261639-4} and Lemma \ref{ele} leads to
\begin{align*}
&\quad \|u_{n\infty}(t)\|_{L^{\infty}}
=
\|
\mathcal{F}^{-1}\left[
u_n(\xi, t)
\hat{\varphi}_{\infty}(\xi)
\right]
\|_{L^{\infty}}\\
& \leq
\int_0^t
\left\|\mathcal{F}^{-1}\left[\hat{\varphi}_1(\xi) \hat{K}^{11}(\xi, t-\tau) \hat{N}_1(\xi, \tau)\right]\right\|_{L^{\infty}}
d\tau+
\int_0^t
\left\|\mathcal{F}^{-1}\left[\hat{\varphi}_1(\xi) \hat{K}^{12}(\xi, t-\tau) \hat{N}_2(\xi, \tau)\right]\right\|_{L^{\infty}}
d\tau
\\
& \leq C \int_0^te^{-c(t-s)}(1+\tau)^{-2} d\tau
+C\int_0^t  e^{-c(t-s)}(1+\tau)^{-2}  d\tau \\
& \leq C(1+t)^{-2}.
\end{align*}
The proofs of the remaining cases are analogous.
\end{proof}

\noindent{\bf Proof of Lemma \ref{SL1-high-202512201806}  }
An application of the $L^1$ norm to \eqref{u-linear-nolinear-202512221604} gives
\begin{align}\label{L1-norm-firstequation-202512221651}
\left\|u_{\infty}(t)\right\|_{L^{1}} \leq
\left\| u_{g\infty}(t) \right\|_{L^{1}}
+\left\| u_{n\infty}(t) \right\|_{L^{1}}.
\end{align}
Combining \eqref{L1-norm-firstequation-202512221651} with Lemmas \ref{etLU1-high-L1-202512221620} and \ref{etLU1-high-L1-non-202512221636} yields
\begin{align}\label{S1-L1-infty-202512221654}
\left\|u_\infty(t)\right\|_{L^{1}} \leq C(1+t)^{ -2 }\left(\left\|S_0\right\|_{L^1}+\left\|S_0\right\|_{H^3}\right) .
\end{align}
The remaining cases follow similarly, which completes the proof of Lemma \ref{SL1-high-202512201806}.

\noindent{\bf Proof of Lemma \ref{SL1-202512201804}  }
Proof of Lemma \ref{SL1-202512201804} follows directly from Lemma \ref{SL1-low-202512201805} and Lemma \ref{SL1-high-202512201806}, which completes the proof.

\section{Energy Estimates}\label{Energy Estimates-202512221729}
In this section, we aim to derive the necessary energy estimates.
Based on Theorem \ref{pure-energy-202512221702}, we proceed to derive the energy estimates for the solutions of \eqref{incompressible viscoelastic flow-E-202512201224}-\eqref{initial-condition-202512201225}. The first set of estimates concerns $u$ and $\mathbb{E}$.
\begin{Lemma}\label{one type of energy estimates including-202512221737}
Assume that
\begin{align*}
\|(u,\mathbb{E})\|_{H^3} \leq \delta \ll 1,
\end{align*}
and let the assumptions of Theorem \ref{pure-energy-202512221702} hold. Then, for any integer $0 \leq \ell \leq N-1$, we have
\begin{align}\label{one type of energy estimates including-202512221740}
\frac{d}{d t}\left\|\nabla^{\ell}(u,\mathbb{E})\right\|_{L^2}^2+C\left\|\nabla^{\ell+1} u\right\|_{L^2}^2 \lesssim \delta\left\|\nabla^{\ell+1}(u,\mathbb{E})\right\|_{L^2}^2.
\end{align}
\end{Lemma}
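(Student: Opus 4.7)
The plan is to carry out a standard $\nabla^\ell$ energy estimate on the reformulated system \eqref{incompressible viscoelastic flow-E-202512201224}, exploiting the skew-symmetric coupling between the $-\operatorname{div}\mathbb{E}$ term in the $u$-equation and the $-\nabla u$ term in the $\mathbb{E}$-equation to cancel all linear cross-contributions, and then using the smallness hypothesis $\|(u,\mathbb{E})\|_{H^3}\leq\delta$ together with Moser and Sobolev product estimates to absorb the nonlinearities into the parabolic dissipation on the left-hand side.

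\textbf{Step 1 (energy identity).} First, I would apply $\nabla^\ell$ to the $u$-equation, pair with $\nabla^\ell u$ in $L^2$, and integrate by parts. Because $\operatorname{div} u = 0$, the pressure contribution $(\nabla^\ell \nabla P,\nabla^\ell u)$ vanishes, and the viscous term yields $\mu\|\nabla^{\ell+1}u\|_{L^2}^2$. Applying $\nabla^\ell$ to the $\mathbb{E}$-equation and pairing with $\nabla^\ell \mathbb{E}$, then adding the two identities, gives
\begin{align*}
\tfrac{1}{2}\tfrac{d}{dt}\|\nabla^\ell(u,\mathbb{E})\|_{L^2}^2+\mu\|\nabla^{\ell+1}u\|_{L^2}^2 + I_{\mathrm{lin}} = (\nabla^\ell g_1,\nabla^\ell u)+(\nabla^\ell g_2,\nabla^\ell \mathbb{E}),
\end{align*}
where $I_{\mathrm{lin}}=-(\nabla^\ell \operatorname{div}\mathbb{E},\nabla^\ell u)-(\nabla^{\ell+1}u,\nabla^\ell \mathbb{E})$. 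A single integration by parts on the first piece yields $(\nabla^\ell \mathbb{E},\nabla^{\ell+1}u)$, which cancels the second piece exactly; hence $I_{\mathrm{lin}}=0$.

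\textbf{Step 2 (nonlinear pairings).} For the convective parts $-u\cdot\nabla u$ in $g_1$ and $-u\cdot\nabla \mathbb{E}$ in $g_2$, I would use $\operatorname{div} u=0$ and the commutator identity
\begin{align*}
(\nabla^\ell(u\cdot\nabla v),\nabla^\ell v)=([\nabla^\ell,u\cdot\nabla]v,\nabla^\ell v),
\end{align*}
valid for $v=u$ or $v=\mathbb{E}$, and bound the commutator via the Kato--Ponce inequality together with $\|u\|_{L^\infty}+\|\nabla u\|_{L^\infty}\lesssim \|u\|_{H^3}\lesssim \delta$. For $\operatorname{div}(\mathbb{E}\mathbb{E}^\top)$ in $g_1$, I would integrate by parts to obtain $-(\nabla^\ell(\mathbb{E}\mathbb{E}^\top),\nabla^{\ell+1}u)$ and apply the Moser product estimate $\|\nabla^\ell(fg)\|_{L^2}\lesssim \|f\|_{L^\infty}\|\nabla^\ell g\|_{L^2}+\|g\|_{L^\infty}\|\nabla^\ell f\|_{L^2}$, combined with $H^2\hookrightarrow L^\infty$ and the smallness hypothesis, so that the factor $\|\mathbb{E}\|_{L^\infty}\lesssim \delta$ is extracted. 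The term $(\nabla u)\mathbb{E}$ in $g_2$ is handled analogously, and if needed the constraint $\operatorname{div}\mathbb{E}^\top=0$ from \eqref{important properties1-202512201235-t} is exploited to rewrite $\operatorname{div}(\mathbb{E}\mathbb{E}^\top)$ as a quadratic form in $(\mathbb{E},\nabla\mathbb{E})$ with better structure. Young's inequality then splits each contribution into a piece of the form $\epsilon\|\nabla^{\ell+1}u\|_{L^2}^2$, absorbed by the dissipation, plus pieces of the form $C\delta\|\nabla^{\ell+1}(u,\mathbb{E})\|_{L^2}^2$ as in \eqref{one type of energy estimates including-202512221740}.

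\textbf{Main obstacle.} The delicate point is arranging the nonlinear estimates so that the top-order derivative always lands in $\|\nabla^{\ell+1}u\|_{L^2}$ or $\|\nabla^{\ell+1}\mathbb{E}\|_{L^2}$ with a coefficient that is genuinely $O(\delta)$, rather than producing an uncontrolled $\|\nabla^\ell \mathbb{E}\|_{L^2}$ on the right-hand side. This is the reason for preferring the integration-by-parts form $-(\nabla^\ell(\mathbb{E}\mathbb{E}^\top),\nabla^{\ell+1}u)$ for $\operatorname{div}(\mathbb{E}\mathbb{E}^\top)$, and for invoking the constraint \eqref{important properties1-202512201235-t} when reorganizing derivatives in the quadratic terms. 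Once this careful redistribution is performed, summing all nonlinear pieces and absorbing the small-coefficient $\|\nabla^{\ell+1}u\|_{L^2}^2$ terms into the dissipation yields \eqref{one type of energy estimates including-202512221740} for every $0\leq\ell\leq N-1$.
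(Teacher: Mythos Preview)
Your overall strategy is sound and matches the paper's: derive the energy identity at level $\ell$, exploit the skew-symmetric cancellation of the linear coupling $-(\nabla^\ell\operatorname{div}\mathbb{E},\nabla^\ell u)-(\nabla^{\ell+1}u,\nabla^\ell\mathbb{E})=0$, and bound the quadratic nonlinearities by $\delta$ times a dissipative quantity using the smallness $\|(u,\mathbb{E})\|_{H^3}\leq\delta$. Step~1 is exactly right.

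The gap is in Step~2. Your chosen toolkit---Moser product estimates in $L^2\times L^\infty$ and Kato--Ponce commutator bounds paired against $\|\nabla^\ell(u,\mathbb{E})\|_{L^2}$---naturally produces terms of the form $\delta\|\nabla^{\ell}(u,\mathbb{E})\|_{L^2}^2$, not $\delta\|\nabla^{\ell+1}(u,\mathbb{E})\|_{L^2}^2$. For instance, your treatment of $\operatorname{div}(\mathbb{E}\mathbb{E}^\top)$ yields
\[
\bigl|(\nabla^\ell(\mathbb{E}\mathbb{E}^\top),\nabla^{\ell+1}u)\bigr|\lesssim \|\mathbb{E}\|_{L^\infty}\|\nabla^\ell\mathbb{E}\|_{L^2}\|\nabla^{\ell+1}u\|_{L^2}\lesssim\delta\bigl(\|\nabla^\ell\mathbb{E}\|_{L^2}^2+\|\nabla^{\ell+1}u\|_{L^2}^2\bigr),
\]
and the first term on the right is one derivative short of \eqref{one type of energy estimates including-202512221740}. (This is precisely the bound that would prove Lemma~\ref{another type of energy estimates-202512221839-202512231850}, not the present lemma.) You correctly flag this as the main obstacle, but the fixes you propose---further integration by parts, invoking $\operatorname{div}\mathbb{E}^\top=0$---do not by themselves close it: you still need a mechanism to gain one derivative on the tested factor.

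The paper's remedy is a different H\"older split. It places the tested factor $\nabla^\ell(u,\mathbb{E})$ in $L^6$ rather than $L^2$ and uses the critical Sobolev embedding $\|\nabla^\ell f\|_{L^6}\lesssim\|\nabla^{\ell+1}f\|_{L^2}$ to manufacture the extra derivative; the remaining two factors sit in $L^3$ and $L^2$, with the low-order one controlled by $\|\cdot\|_{L^3}\lesssim\|\cdot\|_{H^1}\lesssim\delta$. For $\ell=0$ this is spelled out as
\[
\lesssim\|(u,\mathbb{E})\|_{L^3}\,\|(u,\mathbb{E})\|_{L^6}\,\|\nabla(u,\mathbb{E})\|_{L^2}\lesssim\delta\,\|\nabla(u,\mathbb{E})\|_{L^2}^2,
\]
and the higher-$\ell$ case follows the same $L^3\times L^6\times L^2$ pattern, with Gagliardo--Nirenberg interpolation (Lemma~\ref{A1-N-202512221945}) to balance the intermediate factors. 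Once you replace the $L^\infty\times L^2$ pairing by this split, the rest of your argument goes through unchanged.
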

\begin{proof}
First, we consider the case $\ell = 0$. Multiplying equations $\eqref{incompressible viscoelastic flow-E-202512201224}_1$ and $\eqref{incompressible viscoelastic flow-E-202512201224}_2$ by $u$ and $\mathbb{E}$, respectively, summing the results, and integrating over $\mathbb{R}^3$, we apply integration by parts to obtain
\begin{align}\label{202512221744-1}
& \frac{1}{2} \frac{d}{d t} \int_{\mathbb{R}^3}|u|^2+|\mathbb{E}|^2 d x \nonumber
+\int_{\mathbb{R}^3} \mu|\nabla u|^2 d x\\
= & \int_{\mathbb{R}^3}
(\nabla u \mathbb{E} - u\cdot \nabla \mathbb{E} ): \mathbb{E} d x
+\int_{\mathbb{R}^3}
[\operatorname{div}(\mathbb{E} \mathbb{E}^{\top})
\left.-u \cdot \nabla u \right] u d x.
\end{align}
In the subsequent estimates, summation over repeated indices (Einstein convention) is implied.
By an application of Holder¡¯s inequality as well as the Sobolev inequality, we deduce
\begin{align}\label{202512221809-2}
& \int_{\mathbb{R}^3} \nonumber
(\nabla u \mathbb{E} - u\cdot \nabla \mathbb{E} ): \mathbb{E} d x
+\int_{\mathbb{R}^3}
[\operatorname{div}(\mathbb{E} \mathbb{E}^{\top})
\left.-u \cdot \nabla u \right] u d x \\
\lesssim & \|(u, \mathbb{E})\|_{L^3}\|(u, \mathbb{E})\|_{L^6}\|\nabla(u, \mathbb{E})\|_{L^2}
\lesssim  \delta\|\nabla(u, \mathbb{E})\|_{L^2}^2.
\end{align}
Consequently, \eqref{202512221809-2} immediately yields \eqref{one type of energy estimates including-202512221740} for the case $\ell=0$.
We proceed to examine the range $1 \leq \ell \leq N-1$. Acting with $\nabla^{\ell}$ on $\eqref{incompressible viscoelastic flow-E-202512201224}_1-\eqref{incompressible viscoelastic flow-E-202512201224}_2$, multiplying the resulting identities by
$\nabla^{\ell} u$ and $\nabla^{\ell} \mathbb{E}$, respectively, summing, and integrating over $\mathbb{R}^3$, we obtain upon integration by parts
\begin{align}\label{202512221825-3}
& \frac{1}{2} \frac{d}{d t} \int_{\mathbb{R}^3} \nonumber
\left|\nabla^{\ell} u\right|^2+\left|\nabla^{\ell} \mathbb{E}\right|^2 d x
+\int_{\mathbb{R}^3} \mu\left|\nabla^{\ell+1} u\right|^2 d x  \\ \nonumber
& =\int_{\mathbb{R}^3}
\nabla^{\ell}(\nabla u \mathbb{E}-u \cdot \nabla \mathbb{E}): \nabla^{\ell} \mathbb{E}
+\nabla^{\ell}
[\operatorname{div}(\mathbb{E} \mathbb{E}^{\top})
\left.-u \cdot \nabla u \right]
\cdot \nabla^{\ell} u d x \\
& =J_1+J_2.
\end{align}
Following the same procedure, we arrive at
\begin{align}\label{202512221833-4}
J_1=\int_{\mathbb{R}^3} \nabla^{\ell}(\nabla u \mathbb{E}-u \cdot \nabla \mathbb{E}): \nabla^{\ell} \mathbb{E} d x \lesssim \delta\left\|\nabla^{\ell+1}(u, \mathbb{E})\right\|_{L^2}^2,
\end{align}
\begin{align}\label{202512221834-5}
J_2 & =\int_{\mathbb{R}^3} \nabla^{\ell}
[\operatorname{div}(\mathbb{E} \mathbb{E}^{\top})
\left.-u \cdot \nabla u \right]
\cdot \nabla^{\ell} u d x
 \lesssim \delta\left\|\nabla^{\ell+1}(u, \mathbb{E})\right\|_{L^2}^2.
\end{align}
Having established \eqref{202512221833-4} and \eqref{202512221834-5}, the validity of \eqref{one type of energy estimates including-202512221740} for $1 \leq \ell \leq N-1$ follows.
\end{proof}
Next, we aim to establish a second set of energy estimates for $u$ and $\mathbb{E}$.
\begin{Lemma}\label{another type of energy estimates-202512221839-202512231850}
Assume that
\begin{align*}
\|(u,\mathbb{E})\|_{H^3} \leq \delta \ll 1,
\end{align*}
and let the assumptions of Theorem \ref{pure-energy-202512221702} hold. Then, for any integer $1 \leq \ell \leq N$, we have
\begin{align}\label{another type of energy estimates-202512221839}
\frac{d}{d t}\left\|\nabla^{\ell}(u,\mathbb{E})\right\|_{L^2}^2+C\left\|\nabla^{\ell+1} u\right\|_{L^2}^2 \lesssim \delta\left\|\nabla^{\ell}(u,\mathbb{E})\right\|_{L^2}^2.
\end{align}
\end{Lemma}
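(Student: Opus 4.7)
The plan is to mirror the proof of Lemma \ref{one type of energy estimates including-202512221740}, but to sharpen the estimate of the nonlinear terms so that only $\nabla^{\ell}(u,\mathbb{E})$ (and at worst a piece of $\nabla^{\ell+1}u$ that can be absorbed) appears on the right-hand side. This refinement is what makes the case $\ell = N$ admissible, since no dissipation is available for $\nabla^{N+1}\mathbb{E}$. The first step is to apply $\nabla^{\ell}$ to $\eqref{incompressible viscoelastic flow-E-202512201224}_1$ and $\eqref{incompressible viscoelastic flow-E-202512201224}_2$, pair them against $\nabla^{\ell}u$ and $\nabla^{\ell}\mathbb{E}$ respectively, integrate over $\mathbb{R}^3$, and add. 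The pressure integral vanishes after integration by parts since $\operatorname{div} u = 0$; the viscous term yields the dissipation $\mu\|\nabla^{\ell+1}u\|_{L^2}^2$; and the linear coupling $-\operatorname{div}\mathbb{E}$ paired with $\nabla^{\ell}u$ cancels $-\nabla u$ paired with $\nabla^{\ell}\mathbb{E}$ upon integration by parts. What remains is the pair of nonlinear expressions that are analogous to $J_1$ and $J_2$ in Lemma \ref{one type of energy estimates including-202512221740}, which must now be bounded by $\delta\|\nabla^{\ell}(u,\mathbb{E})\|_{L^2}^2 + \tfrac{\mu}{2}\|\nabla^{\ell+1}u\|_{L^2}^2$.

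For the transport-type terms $\int \nabla^{\ell}(u\cdot\nabla u)\cdot\nabla^{\ell}u\,dx$ and $\int \nabla^{\ell}(u\cdot\nabla\mathbb{E}):\nabla^{\ell}\mathbb{E}\,dx$, I would split off the principal piece using the commutator identity
\begin{align*}
\nabla^{\ell}(u\cdot\nabla f) = u\cdot\nabla\nabla^{\ell}f + [\nabla^{\ell}, u\cdot\nabla]f.
\end{align*}
The principal piece integrates to zero by incompressibility, since $\int u\cdot\nabla\nabla^{\ell}f\cdot\nabla^{\ell}f\,dx = -\tfrac{1}{2}\int(\operatorname{div}u)|\nabla^{\ell}f|^2\,dx = 0$, which is precisely the mechanism that prevents $\nabla^{\ell+1}f$ from appearing on the right-hand side. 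The commutator is controlled by the Kato--Ponce-type inequality $\|[\nabla^{\ell},u\cdot\nabla]f\|_{L^2} \lesssim \|\nabla u\|_{L^{\infty}}\|\nabla^{\ell}f\|_{L^2} + \|\nabla^{\ell}u\|_{L^2}\|\nabla f\|_{L^{\infty}}$, which combined with the Sobolev embedding $H^2 \hookrightarrow L^{\infty}$ and the smallness hypothesis $\|(u,\mathbb{E})\|_{H^3}\leq\delta$ produces a bound of the desired form $\delta\|\nabla^{\ell}(u,\mathbb{E})\|_{L^2}^2$.

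For the remaining quadratic terms $\int\nabla^{\ell}\operatorname{div}(\mathbb{E}\mathbb{E}^{\top})\cdot\nabla^{\ell}u\,dx$ and $\int\nabla^{\ell}((\nabla u)\mathbb{E}):\nabla^{\ell}\mathbb{E}\,dx$, I would first integrate by parts in the former to move one derivative onto $\nabla^{\ell}u$, then expand via the Leibniz rule and apply Moser-type product estimates. The borderline terms are those in which the top derivative falls on $u$, generating products like $\|\mathbb{E}\|_{L^{\infty}}\|\nabla^{\ell+1}u\|_{L^2}\|\nabla^{\ell}\mathbb{E}\|_{L^2}$. Using $\|\mathbb{E}\|_{L^{\infty}}\lesssim \|\mathbb{E}\|_{H^2}\lesssim\delta$ and Young's inequality, these are dominated by $\delta\|\nabla^{\ell}\mathbb{E}\|_{L^2}^2 + \tfrac{\mu}{2}\|\nabla^{\ell+1}u\|_{L^2}^2$, with the latter absorbed into the left-hand side dissipation. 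All intermediate Leibniz contributions are handled by Gagliardo--Nirenberg interpolation between the $H^3$-smallness and the energy norm, again producing terms bounded by $\delta\|\nabla^{\ell}(u,\mathbb{E})\|_{L^2}^2$.

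The principal obstacle is precisely the top-order case $\ell = N$: one must ensure that every appearance of $\nabla^{N+1}\mathbb{E}$ is either eliminated (by the incompressibility cancellation in the transport terms) or converted into a factor of $\nabla^{N+1}u$ paired with a small factor $\mathbb{E}$ or $\nabla^{\leq N}\mathbb{E}$ (absorbable thanks to the viscous dissipation and the smallness of $\delta$). Once this structural bookkeeping is carried out termwise, summing the contributions and choosing $\delta$ small enough to absorb the $\tfrac{\mu}{2}\|\nabla^{\ell+1}u\|_{L^2}^2$ piece completes the proof of \eqref{another type of energy estimates-202512221839}.
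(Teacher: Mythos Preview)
Your proposal is correct and follows essentially the same route as the paper: derive the $\nabla^\ell$-energy identity, split the nonlinear terms into the transport pieces $u\cdot\nabla(\cdot)$ handled via the commutator $[\nabla^\ell,u\cdot\nabla]$, and the product pieces $\operatorname{div}(\mathbb{E}\mathbb{E}^\top)$, $(\nabla u)\mathbb{E}$ handled by one integration by parts followed by Leibniz expansion and Gagliardo--Nirenberg interpolation against the $H^3$-smallness, with the residual $\|\nabla^{\ell+1}u\|_{L^2}^2$ contribution absorbed into the viscous dissipation. The only notable difference is cosmetic: you observe that the principal transport piece $\int u\cdot\nabla\nabla^\ell f\cdot\nabla^\ell f\,dx$ vanishes identically by $\operatorname{div}u=0$, whereas the paper merely bounds it by $\|\nabla u\|_{L^\infty}\|\nabla^\ell\mathbb{E}\|_{L^2}^2$ after integrating by parts (a bound that is in fact zero); and in the commutator estimate you take the exponent pair $(L^2,L^\infty)$ for the second summand, while the paper uses $(L^6,L^3)$ and Sobolev, which produces an extra harmless $\delta\|\nabla^{\ell+1}u\|_{L^2}^2$ term.
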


\begin{proof}
We now consider $1 \leq \ell \leq N$. Acting with $\nabla^{\ell}$ on $\eqref{incompressible viscoelastic flow-E-202512201224}_1-\eqref{incompressible viscoelastic flow-E-202512201224}_2$ and multiplying by $\nabla^{\ell} u$ and $\nabla^{\ell} \mathbb{E}$ respectively, summing, and integrating by parts over $\mathbb{R}^3$ gives
\begin{align}\label{202512221850-K1K2}
& \frac{1}{2} \frac{d}{d t} \int_{\mathbb{R}^3} \nonumber
\left|\nabla^{\ell} u\right|^2+\left|\nabla^{\ell} \mathbb{E}\right|^2 d x
+\int_{\mathbb{R}^3} \mu\left|\nabla^{\ell+1} u\right|^2 d x  \\ \nonumber
& =\int_{\mathbb{R}^3}
\nabla^{\ell}(\nabla u \mathbb{E}-u \cdot \nabla \mathbb{E}): \nabla^{\ell} \mathbb{E}
+\nabla^{\ell}
[\operatorname{div}(\mathbb{E} \mathbb{E}^{\top})
\left.-u \cdot \nabla u \right]
\cdot \nabla^{\ell} u d x \\
& =K_1+K_2+K_3+K_4.
\end{align}
To begin, we estimate the term $K_1$. Applying the Leibniz formula and Holder's inequality yields
\begin{align}\label{202512221850-K1K2-K1}
K_1 & =\int_{\mathbb{R}^3} \nabla^{\ell}
\nonumber
\left(\operatorname{div}(\mathbb{E} \mathbb{E}^{\top})\right) \cdot \nabla^{\ell} u d x \\
& =-\int_{\mathbb{R}^3}
\nonumber
\nabla^{\ell-1}
\left(\operatorname{div}(\mathbb{E} \mathbb{E}^{\top})\right)
\cdot \nabla^{\ell+1} u d x \\
& =-\sum_{0 \leq \kappa \leq \ell-1} C_{\ell-1}^\kappa \int_{\mathbb{R}^3}
\nonumber
\nabla^\kappa \mathbb{E} \nabla^{\ell-\kappa} \mathbb{E} \cdot \nabla^{\ell+1} u d x \\
\nonumber
& \lesssim \sum_{0 \leq \kappa \leq \ell-1}
\left\| \nabla^\kappa \mathbb{E} \nabla^{\ell-\kappa} \mathbb{E} \right\|_{L^2}\left\|
\nabla^{\ell+1} u\right\|_{L^2} \\
\nonumber
& \lesssim\left\|
\mathbb{E} \nabla^{\ell} \mathbb{E} \right\|_{L^2}
\left\|\nabla^{\ell+1} u\right\|_{L^2}
+\sum_{1 \leq \kappa \leq \ell-1}
\left\|\nabla^\kappa \mathbb{E} \nabla^{\ell-\kappa} \mathbb{E}\right\|_{L^2}
\left\|\nabla^{\ell+1} u\right\|_{L^2} \\
& \lesssim \delta\left\|\nabla^{\ell}(\mathbb{E}, \nabla u)\right\|_{L^2}^2
+\sum_{1 \leq \kappa \leq \ell-1}\left\|\nabla^\kappa \mathbb{E} \nabla^{\ell-\kappa} \mathbb{E}\right\|_{L^2}\left\|\nabla^{\ell+1} u\right\|_{L^2}.
\end{align}
Here, $\alpha$ serves to adjust the exponent between the energy and dissipation rates, and is defined by
\begin{align}\label{202512222035-alpha}
& \frac{\kappa}{3}=\left(\frac{\alpha}{3}-\frac{1}{2}\right) \nonumber \times\left(1-\frac{\kappa}{\ell}\right)+\left(\frac{\ell}{3}-\frac{1}{2}\right) \times \frac{\kappa}{\ell} \Rightarrow \\
& \alpha=\frac{3 \ell}{2(\ell-\kappa)} \in\left(\frac{3}{2}, 3\right], \quad \text { since } 1 \leq \kappa \leq \frac{\ell}{2} .
\end{align}
If $1 \leq \kappa \leq\left[\frac{\ell}{2}\right]$, by Lemma \ref{A1-N-202512221945}, then we get
\begin{align}\label{202512222037}
\| \nabla^\kappa \nonumber
& \mathbb{E} \nabla^{\ell-\kappa} \mathbb{E} \|_{L^2} \\ \nonumber
& \lesssim\left\|\nabla^\kappa \mathbb{E}\right\|_{L^{\infty}} \nonumber
\left\|\nabla^{\ell-\kappa} \mathbb{E}\right\|_{L^2} \\
& \lesssim\left\|\nabla^\alpha \mathbb{E}\right\|_{L^2}^{1-\frac{\kappa}{\ell}}
\left\|\nabla^{\ell} \mathbb{E}\right\|_{L^2}^{\frac{\kappa}{\ell}}
\left\|\mathbb{E}\right\|_{L^2}^{\frac{\kappa}{\ell}}
\left\|\nabla^{\ell} \mathbb{E}\right\|_{L^2}^{1-\frac{\kappa}{\ell}}
\lesssim \delta
\left\|\nabla^{\ell} \mathbb{E}\right\|_{L^2},
\end{align}
where $\alpha$ is the same one defined by \eqref{202512222035-alpha}. If $\left[\frac{\ell}{2}\right]+1 \leq \kappa \leq \ell-1$,
we then rewrite the factor to obtain

\begin{align}\label{202512222044-rewrite the factor}
\nonumber
& \left\|\nabla^\kappa \mathbb{E} \nabla^{\ell-\kappa} \mathbb{E}\right\|_{L^2} \\
\nonumber
& \quad \lesssim\left\|\nabla^\kappa \mathbb{E}\right\|_{L^2}
\left\|\nabla^{\ell-\kappa} \mathbb{E}\right\|_{L^{\infty}} \\
& \quad \lesssim
\left\|\mathbb{E}\right\|_{L^2}^{1-\frac{\varsigma}{\epsilon}}
\left\|\nabla^{\ell} \mathbb{E}\right\|_{L^2}^{\hbar}
\left\|\nabla^\alpha \mathbb{E}\right\|_{L^2}^{\star}
\left\|\nabla^{\ell} \mathbb{E}\right\|_{L^2}^{1-\frac{\varsigma}{\ell}}
\lesssim \delta\left\|\nabla^{\ell} \mathbb{E}\right\|_{L^2},
\end{align}
where $\alpha$ is defined by
\begin{align*}
\frac{\ell-k}{3} & =\left(\frac{\alpha}{3}-\frac{1}{2}\right) \times \frac{\kappa}{\ell}+\left(\frac{\ell}{3}-\frac{1}{2}\right) \times\left(1-\frac{\kappa}{\ell}\right) \Rightarrow \\
\alpha & =\frac{3 \ell}{2 \kappa} \leq 3, \quad \text { since } \kappa \geq \frac{\ell+1}{2} .
\end{align*}
By \eqref{202512221850-K1K2-K1}-\eqref{202512222044-rewrite the factor}, we deduce
\begin{align}\label{202512222051}
K_1 \lesssim \delta\left\|\nabla^{\ell}(\mathbb{E}, \nabla u)\right\|_{L^2}^2 .
\end{align}
Following a similar approach to that used for estimating $K_1$ , we get
\begin{align}\label{K2-202512222055}
K_2
=\int_{\mathbb{R}^3} \nabla^{\ell}(\nabla u \mathbb{E})
:
\nabla^{\ell} \mathbb{E} d x
\lesssim
\delta\left\|\nabla^{\ell}(\mathbb{E}, \nabla u)\right\|_{L^2}^2.
\end{align}
\begin{align}\label{K4-202512222121}
K_4:=-\int_{\mathbb{R}^3} \nabla^{\ell}\left(u \cdot \nabla u\right) \cdot \nabla^{\ell} u d x.
\end{align}
Then, for the term $K_3$, employing the commutator notation, we re-express it as
\begin{align}\label{K3-202512222058}
K_3
\nonumber
& :=-\int_{\mathbb{R}^3} \nabla^{\ell}(u \cdot \nabla \mathbb{E})
:
\nabla^{\ell} \mathbb{E} d x \\
& =-\int_{\mathbb{R}^3}\left(u \cdot \nabla \nabla^{\ell} \mathbb{E}
+\left[\nabla^{\ell}, u\right] \cdot \nabla \mathbb{E}\right)
:
\nabla^{\ell} \mathbb{E} d x:=K_{31}+K_{32}.
\end{align}
Integrating by parts yields
\begin{align}\label{integrating by part-K3-202512222102}
K_{31}
\nonumber
& :=-\int_{\mathbb{R}^3} u \cdot \nabla \frac{\left|\nabla^{\ell} \mathbb{E}\right|^2}{2} d x \\
& =\frac{1}{2} \int_{\mathbb{R}^3} \operatorname{div} u\left|\nabla^{\ell} \mathbb{E}\right|^2 d x \lesssim
\|\nabla u\|_{L^{\infty}}
\left\|\nabla^{\ell} \mathbb{E}\right\|_{L^2}^2
\lesssim \delta\left\|\nabla^{\ell} \mathbb{E}\right\|_{L^2}^2.
\end{align}
Applying the commutator estimate from Lemma \eqref{A7-commutator estimates-202512222029} and then Sobolev¡¯s inequality, we obtain the following bound:
\begin{align}\label{commutator estimate-K32-202512222105}
K_{32}
\nonumber
&
\lesssim\left(\|\nabla u\|_{L^{\infty}}
\left\|\nabla^{\ell-1} \nabla \mathbb{E}\right\|_{L^2}
+\left\|\nabla^{\ell} u\right\|_{L^6}
\|\nabla \mathbb{E}\|_{L^3}\right)\left\|
\nabla^{\ell} \mathbb{E}\right\|_{L^2} \\
& \lesssim \delta\left\|\nabla^{\ell}(\mathbb{E}, \nabla u)\right\|_{L^2}^2 .
\end{align}
Combining \eqref{K3-202512222058}¨C\eqref{commutator estimate-K32-202512222105}, we obtain
\begin{align}\label{K31K32-202512222109}
K_3 \lesssim \delta\left\|\nabla^{\ell}(\mathbb{E}, \nabla u)\right\|_{L^2}^2 .
\end{align}
\end{proof}
The third step consists in establishing dissipation estimates for $\mathbb{E}^T-\mathbb{E} $.
\begin{Lemma}\label{dissipation estimates-EE-202512222128}
Assume that
\begin{align*}
\|(u,\mathbb{E})\|_{H^3} \leq \delta \ll 1,
\end{align*}
and let the assumptions of Theorem \ref{pure-energy-202512221702} hold. Then, for any integer $0 \leq \ell \leq N-1$, we have
\begin{align}\label{dissipation estimates-EE-202512222129}
\nonumber
& \frac{d}{d t} \int_{\mathbb{R}^3}
\nabla^{\ell-1}\left(\nabla u-(\nabla u)^T\right) \cdot \nabla^{\ell+1}\left(\mathbb{E}^T-\mathbb{E}\right) d x
+C\left\|\nabla^{\ell+1}\left(\mathbb{E}^T-\mathbb{E}\right)\right\|_{L^2}^2 \\
& \quad
\lesssim
\left\|\nabla^{\ell+1}(u, \nabla u)\right\|_{L^2}^2
+\delta\left\|\nabla^{\ell+1}( \mathbb{E})\right\|_{L^2}^2.
\end{align}
\end{Lemma}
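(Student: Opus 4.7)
Write $W := \nabla u - (\nabla u)^T$ and $F := \mathbb{E}^T - \mathbb{E}$. The cross functional $\int \nabla^{\ell-1}W : \nabla^{\ell+1}F\,dx$ is a Kawashima-type compensating term whose time derivative is designed to supply the dissipation for $F$, which is missing from the straight energy estimates in Lemmas~\ref{one type of energy estimates including-202512221737}--\ref{another type of energy estimates-202512221839-202512231850}. The first step is to prepare two auxiliary evolution equations. Transposing $\eqref{incompressible viscoelastic flow-E-202512201224}_2$ and subtracting gives $\partial_t F = -W + (g_2^T - g_2)$, so $F$ is driven linearly by $W$ with no self-dissipation. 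The preserved constraint $\operatorname{div}\mathbb{E}^T=0$ from \eqref{important properties1-202512201235-t} lets one rewrite $(\operatorname{div}\mathbb{E})_i = \partial_k F_{ki}$; inserting this into $\eqref{incompressible viscoelastic flow-E-202512201224}_1$ and antisymmetrizing in $(i,j)$ (so that $\nabla P$ is annihilated) produces
\begin{align*}
\partial_t W_{ij} = \mu \Delta W_{ij} + \bigl(\partial_i\partial_k F_{kj} - \partial_j\partial_k F_{ki}\bigr) + \bigl(\partial_i g_1^j - \partial_j g_1^i\bigr).
\end{align*}

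Next I would differentiate the functional in time and substitute the two equations. The crucial term is
\begin{align*}
\int \nabla^{\ell-1}\bigl(\partial_i\partial_k F_{kj} - \partial_j\partial_k F_{ki}\bigr)\,(\nabla^{\ell+1}F)_{ij}\,dx,
\end{align*}
which, after integration by parts and using the antisymmetry of $F$, equals $2\|\nabla^{\ell+1}w\|_{L^2}^2 - 2\|\nabla^{\ell}\operatorname{div}w\|_{L^2}^2$ up to lower-order pieces, where $w$ is the Hodge dual vector of the antisymmetric tensor $F$ in $\mathbb{R}^3$. The linearization of constraint \eqref{important properties3-202512201235-t} implies $\epsilon_{abc}\,\partial_a\mathbb{E}_{bc}=0$ modulo quadratics, i.e.\ $\operatorname{div}w=0$ modulo quadratics, so the second term is absorbed into the nonlinear error and one is left with the desired $C\|\nabla^{\ell+1}F\|_{L^2}^2 = C\|\nabla^{\ell+1}(\mathbb{E}^T-\mathbb{E})\|_{L^2}^2$. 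The viscous piece $\mu \int \nabla^{\ell-1}\Delta W : \nabla^{\ell+1}F$ is treated by Cauchy--Schwarz into $\varepsilon\|\nabla^{\ell+1}F\|^2 + C\|\nabla^{\ell+1}\nabla u\|^2$, with $\varepsilon$ absorbed on the left; the contribution from $\partial_t F = -W + \ldots$ in the second time-derivative piece yields, via integration by parts, $\|\nabla^{\ell}W\|^2 \lesssim \|\nabla^{\ell+1}u\|^2$.

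Finally, the nonlinear contributions from $g_1$, $g_2$ and from the quadratic remainder in \eqref{important properties3-202512201235-t} are quadratic in $(u,\mathbb{E})$; by the Leibniz rule, H\"older's inequality, and Gagliardo--Nirenberg interpolation, together with the smallness $\|(u,\mathbb{E})\|_{H^3} \leq \delta$, they contribute at most $\delta\bigl(\|\nabla^{\ell+1}u\|_{L^2}^2 + \|\nabla^{\ell+1}\mathbb{E}\|_{L^2}^2\bigr)$, exactly as in the proofs of the two previous lemmas. This matches the right-hand side of \eqref{dissipation estimates-EE-202512222129} and completes the estimate. I expect the main technical obstacle to be the divergence-to-full-gradient upgrade described above: the direct integration by parts only delivers control of $\operatorname{div}F$, and recovering the full gradient requires an essential use of \eqref{important properties3-202512201235-t} to force the Hodge dual $w$ to be divergence-free, with the quadratic remainder of the constraint carefully absorbed into the $\delta\|\nabla^{\ell+1}\mathbb{E}\|^2$ term on the right.
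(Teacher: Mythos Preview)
Your proposal is correct and follows the same Kawashima-type cross-functional strategy as the paper, but the order in which you invoke the two structural constraints differs. The paper applies \eqref{important properties3-202512201235-t} \emph{before} testing: from $\nabla_k\mathbb{E}^{ij}+\mathbb{E}^{lk}\nabla_l\mathbb{E}^{ij}=\nabla_j\mathbb{E}^{ik}+\mathbb{E}^{lj}\nabla_l\mathbb{E}^{ik}$ it derives the pointwise identity $\operatorname{curl}\operatorname{div}\mathbb{E}=\Delta(\mathbb{E}-\mathbb{E}^T)+\mathbb{R}$ with $\mathbb{R}$ quadratic, so that the evolution equation for $W$ already reads $W_t-\mu\Delta W+\Delta F=\text{(nonlinear)}+\mathbb{R}$. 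Multiplying $\nabla^\ell$ of this by $\nabla^\ell F$ then produces the full dissipation $\|\nabla^{\ell+1}F\|_{L^2}^2$ in one stroke, with no Hodge-dual detour and without ever using \eqref{important properties1-202512201235-t} in this lemma. Your route instead first uses \eqref{important properties1-202512201235-t} to write $(\operatorname{div}\mathbb{E})_i=\partial_kF_{ki}$ exactly, obtains only $\|\nabla^\ell\operatorname{div}_rF\|_{L^2}^2=\|\nabla^\ell\operatorname{curl}w\|_{L^2}^2$ after testing, and then invokes \eqref{important properties3-202512201235-t} \emph{after} testing to force $\operatorname{div}w$ to be quadratic and upgrade to $\|\nabla^{\ell+1}F\|_{L^2}^2$. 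Both reach the same endpoint with the same quadratic remainders to absorb; the paper's version is one step shorter and dimension-independent, while yours makes the separate roles of the two preserved constraints more transparent.
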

\begin{proof}
By \eqref{A2-A8-two important identities-202512222140} and $\mathbb{E}=\mathbb{F}-\mathbb{I}$, we get
\begin{align*}
\nabla_k \mathbb{E}^{i j}+\mathbb{E}^{l k} \nabla_l \mathbb{E}^{i j}=\nabla_j \mathbb{E}^{i k}+\mathbb{E}^{l j} \nabla_l \mathbb{E}^{i k}, \quad \forall t \geq 0 .
\end{align*}
Hence, it follows that
\begin{align}\label{curldiv-E-202512222146}
\operatorname{curl} \operatorname{div} \mathbb{E}
\nonumber
& =
\nabla_j \nabla_k \mathbb{E}^{i k}
-\nabla_i \nabla_k \mathbb{E}^{j k}
=\nabla_k \nabla_j \mathbb{E}^{i k}
-\nabla_k \nabla_i \mathbb{E}^{j k} \\
\nonumber
& =
\nabla_k \nabla_k \mathbb{E}^{i j}
-\nabla_k \nabla_k \mathbb{E}^{j i}
+\mathbb{R} \\
& =
\Delta\left(\mathbb{E}^{i j}-\mathbb{E}^{j i}\right)
+\mathbb{R}
=\Delta\left(\mathbb{E}-\mathbb{E}^T\right)
+\mathbb{R},
\end{align}
where the antisymmetric matrix $\mathbb{R}$ is defined as
\begin{align*}
\mathbb{R}^{i j}:=\nabla_k\left(\mathbb{E}^{l k} \nabla_l \mathbb{E}^{i j}
-\mathbb{E}^{l j} \nabla_l \mathbb{E}^{i k}\right)
-\nabla_k\left(\mathbb{E}^{l k} \nabla_l \mathbb{E}^{j i}
-\mathbb{E}^{l i} \nabla_l \mathbb{E}^{j k}\right) .
\end{align*}
Taking the difference between $\eqref{incompressible viscoelastic flow-E-202512201224}_2^T$ and $\eqref{incompressible viscoelastic flow-E-202512201224}_2$, we arrive at
\begin{align}\label{difference-E-ET-2025122231602}
\left(\mathbb{E}^T-\mathbb{E}\right)_t
+\nabla u-(\nabla u)^T
=(\nabla u \mathbb{E})^T
-\nabla u \mathbb{E}
-u \cdot \nabla\left(\mathbb{E}^T-\mathbb{E}\right) .
\end{align}
Applying $\nabla$ to Eq. $\eqref{incompressible viscoelastic flow-E-202512201224}_1$, by \eqref{curldiv-E-202512222146}, we find $\nabla u-(\nabla u)^T$ satisfying

\begin{align}\label{nabla u-nabla u-2025122231606}
\nonumber
& \left(\nabla u-(\nabla u)^T\right)_t
-\mu \Delta\left(\nabla u-(\nabla u)^T\right)
+\Delta\left(\mathbb{E}^T-\mathbb{E}\right) \\
& \quad=\nabla \mathbb{G}-(\nabla \mathbb{G})^T+\mathbb{R},
\end{align}
where
\begin{align*}
\mathbb{G}:
= \left(\mathbb{G}^1, \mathbb{G}^2, \mathbb{G}^3\right)^T,
\quad \mathbb{G}^i:=\mathbb{E}^{l k} \nabla_l \mathbb{E}^{i k} -u \cdot \nabla u^i
, \quad i=1,2,3.
\end{align*}
Let $0 \leq \ell \leq N-1$. Applying $\nabla^{\ell}$ to Eq. \eqref{nabla u-nabla u-2025122231606}, multiplying the resulting identity by $\nabla^{\ell}\left(\mathbb{E}^T-\mathbb{E}\right)$ and then integrating on $\mathbb{R}^3$, by integrating by parts, we have
We apply $\nabla^{\ell}$ to \eqref{nabla u-nabla u-2025122231606}, multiply the outcome by $\nabla^{\ell}\left(\mathbb{E}^T-\mathbb{E}\right)$, integrate over $\mathbb{R}^3$, and use integration by parts to obtain
\begin{align}\label{2025122231616-1}
\nonumber
\| \nabla^{\ell+1}
& \left(\mathbb{E}^T-\mathbb{E}\right) \|_{L^2}^2 \\
=
\nonumber
& \int_{\mathbb{R}^3}
\nabla^{\ell}
\left(\nabla u-(\nabla u)^T\right)_t \cdot
\nabla^{\ell}\left(\mathbb{E}^T-\mathbb{E}\right) d x \\
\nonumber
& -\mu \int_{\mathbb{R}^3}
\nabla^{\ell}
\Delta\left(\nabla u-(\nabla u)^T\right)
\cdot \nabla^{\ell}\left(\mathbb{E}^T-\mathbb{E}\right) d x \\
\nonumber
& -\int_{\mathbb{R}^3} \nabla^{\ell} \nabla_j
\left[\mathbb{E}^{l k} \nabla_l \mathbb{E}^{i k} \right.
 \left.-u \cdot \nabla u^{i }\right] \cdot \nabla^{\ell}\left(\mathbb{E}^{j i}-\mathbb{E}^{i j}\right) d x \\
\nonumber
& +\int_{\mathbb{R}^3} \nabla^{\ell}\left\{\nabla _ { j } \left[\mathbb{E}^{l k} \nabla_l \mathbb{E}^{i k}\right.\right.
 \left.\left.-u \cdot \nabla u^i\right]\right\}^T
\cdot \nabla^{\ell}\left(\mathbb{E}^{j i}-\mathbb{E}^{i j}\right) d x \\
\nonumber
& -\int_{\mathbb{R}^3} \nabla^{\ell} \mathbb{R}
\cdot \nabla^{\ell}\left(\mathbb{E}^T-\mathbb{E}\right) d x \\
=
\nonumber
& \frac{d}{d t} \int_{\mathbb{R}^3} \nabla^{\ell}
\left(\nabla u-\left(\nabla u\right)^T\right)
\cdot \nabla^{\ell}\left(\mathbb{E}^T-\mathbb{E}\right) d x \\
\nonumber
& -\int_{\mathbb{R}^3} \nabla^{\ell}
\left(\nabla u-(\nabla u)^T\right)
\cdot
\nabla^{\ell}\left(\mathbb{E}^T-\mathbb{E}\right)_t d x \\
\nonumber
& -\mu \int_{\mathbb{R}^3} \nabla^{\ell}
\Delta\left(\nabla u-(\nabla u)^T\right)
\cdot
\nabla^{\ell}\left(\mathbb{E}^T-\mathbb{E}\right) d x \\
\nonumber
& -\int_{\mathbb{R}^3} \nabla^{\ell}
\nabla_j\left(\mathbb{E}^{l k} \nabla_l \mathbb{E}^{i k}
-u \cdot \nabla u^i\right)
\cdot
\nabla^{\ell}\left(\mathbb{E}^{j i}-\mathbb{E}^{i j}\right) d x \\
\nonumber
& +\int_{\mathbb{R}^3} \nabla^{\ell}
\left[\nabla_j\left(\mathbb{E}^{l k} \nabla_l \mathbb{E}^{i k}
-u \cdot \nabla u^i\right)\right]^T
\cdot
\nabla^{\ell}\left(\mathbb{E}^{j i}-\mathbb{E}^{i j}\right) d x\\
\nonumber
& -\int_{\mathbb{R}^3} \nabla^{\ell}
\mathbb{R} \cdot
\nabla^{\ell}\left(\mathbb{E}^T-\mathbb{E}\right) d x \\
=
&
L_1+L_2+L_3+L_4+L_5+L_6.
\end{align}
Note that the structure of system \eqref{difference-E-ET-2025122231602}-\eqref{nabla u-nabla u-2025122231606} is analogous. First, for the term $L_1$, integrating by parts gives
\begin{align}\label{202512231646-L1}
L_1
\nonumber
& =\frac{d}{d t} \int_{\mathbb{R}^3} \nabla^{\ell}
\left(\nabla u-(\nabla u)^T\right)
\cdot \nabla^{\ell}\left(\mathbb{E}^T-\mathbb{E}\right) d x \\
& =-\frac{d}{d t} \int_{\mathbb{R}^3} \nabla^{\ell-1}
\left(\nabla u-(\nabla u)^T\right)
\cdot \nabla^{\ell+1}\left(\mathbb{E}^T-\mathbb{E}\right) d x.
\end{align}
Next, for the term $L_2$, using \eqref{difference-E-ET-2025122231602}, we obtain
\begin{align}\label{202512231648-L2}
L_2
\nonumber
= &
-\int_{\mathbb{R}^3} \nabla^{\ell}
\left(\nabla u-(\nabla u)^T\right)
\cdot \nabla^{\ell}\left(\mathbb{E}^T-\mathbb{E}\right)_t d x \\
\nonumber
= &
-\int_{\mathbb{R}^3} \nabla^{\ell}
\left(\nabla u-(\nabla u)^T\right)
\cdot \nabla^{\ell}
\left[(\nabla u \mathbb{E})^T-\nabla u \mathbb{E}\right. \\
\nonumber
& \left.-u \cdot \nabla\left(\mathbb{E}^T-\mathbb{E}\right)
-\left(\nabla u-(\nabla u)^T\right)\right] d x \\
= &
L_{21}+L_{22}+L_{23}+L_{24}.
\end{align}
With the estimates in \eqref{202512222037}-\eqref{202512222044-rewrite the factor} for $K_{1}$ at hand, together with $\mathbb{R}^{i j}=\nabla_k\left(\mathbb{E}^{l k} \nabla_l \mathbb{E}^{i j}-\right. \left.\mathbb{E}^{l j} \nabla_l \mathbb{E}^{i k}\right)-\nabla_k\left(\mathbb{E}^{l k} \nabla_l \mathbb{E}^{j i}-\mathbb{E}^{l i} \nabla_l \mathbb{E}^{j k}\right)$, we get the estimates of $L_{21}, L_{22}, L_{23}$, $L_4, L_5$ and $L_6$ :
\begin{align}\label{202512231716-L21}
L_{21}
\nonumber
& =-\int_{\mathbb{R}^3} \nabla^{\ell}
\left(\nabla u-(\nabla u)^T\right)
\cdot \nabla^{\ell}(\nabla u \mathbb{E})^T d x \\
& \lesssim
\sum_{0 \leq \kappa \leq \ell}
\left\|\nabla^\kappa \mathbb{E} \nabla^{\ell+1-\kappa} u\right\|_{L^2}
\left\|\nabla^{\ell+1} u\right\|_{L^2}
\lesssim
\delta\left\|\nabla^{\ell+1}(u, \mathbb{E})\right\|_{L^2}^2.
\end{align}
\begin{align}\label{202512231718-L22}
L_{22}
& =
-\int_{\mathbb{R}^3} \nabla^{\ell}
\left(\nabla u-(\nabla u)^T\right)
\cdot \nabla^{\ell}(-\nabla u \mathbb{E}) d x
\lesssim \delta\left\|\nabla^{\ell+1}(u, \mathbb{E})\right\|_{L^2}^2.
\end{align}
\begin{align}\label{202512231719-L23}
L_{23}
=-\int_{\mathbb{R}^3} \nabla^{\ell}
\left(\nabla u-(\nabla u)^T\right)
\cdot \nabla^{\ell}
\left[-u \cdot \nabla\left(\mathbb{E}^T-\mathbb{E}\right)\right] d x
\lesssim
\delta\left\|\nabla^{\ell+1}\left(u, \mathbb{E}^T-\mathbb{E}\right)\right\|_{L^2}^2.
\end{align}
\begin{align}\label{202512231722-L4}
L_4
\nonumber
& =-\int_{\mathbb{R}^3}
\nabla^{\ell}
\nabla_j\left(\mathbb{E}^{l k} \nabla_l \mathbb{E}^{i k}
-u \cdot \nabla u^i\right)
\cdot \nabla^{\ell}\left(\mathbb{E}^{j i}-\mathbb{E}^{i j}\right) d x \\
& \lesssim \delta
\left\|\nabla^{\ell+1}
\left(u, \mathbb{E}, \mathbb{E}^T-\mathbb{E}\right)\right\|_{L^2}^2.
\end{align}
\begin{align}\label{202512231725-L5}
L_5
\nonumber
& =\int_{\mathbb{R}^3} \nabla^{\ell}
\left[\nabla_j\left(\mathbb{E}^{l k} \nabla_l \mathbb{E}^{i k}
-u \cdot \nabla u^i\right)\right]^T
\cdot \nabla^{\ell}\left(\mathbb{E}^{j i}-\mathbb{E}^{i j}\right) d x \\
& \lesssim \delta
\left\|\nabla^{\ell+1}\left(u, \mathbb{E}, \mathbb{E}^T-\mathbb{E}\right)\right\|_{L^2}^2.
\end{align}
\begin{align}\label{202512231727-L6}
L_6
& =
-\int_{\mathbb{R}^3} \nabla^{\ell} \mathbb{R} \cdot \nabla^{\ell}
\left(\mathbb{E}^T-\mathbb{E}\right) d x
\lesssim \delta
\left\|\nabla^{\ell+1}\left(\mathbb{E}, \mathbb{E}^T-\mathbb{E}\right)\right\|_{L^2}^2.
\end{align}
Regarding the term $L_{24}$, it follows directly that
\begin{align}\label{202512231729-L24}
\left.L_{24}=-\int_{\mathbb{R}^3} \nabla^{\ell}
\left(\nabla u-(\nabla u)^T\right)
\cdot \nabla^{\ell}
\left(\nabla u-(\nabla u)^T\right)\right) d x
\lesssim\left\|\nabla^{\ell+1} u\right\|_{L^2}^2.
\end{align}
Concerning $L_3$, we get
\begin{align}\label{202512231730-L3}
L_3
\nonumber
& =-\mu \int_{\mathbb{R}^3} \nabla^{\ell} \Delta
\left(\nabla u-(\nabla u)^T\right)
\cdot \nabla^{\ell}
\left(\mathbb{E}^T-\mathbb{E}\right) d x \\
& \lesssim
\left\|\nabla^{\ell+2} u\right\|_{L^2}
\left\|\nabla^{\ell+1}\left(\mathbb{E}^T-\mathbb{E}\right)\right\|_{L^2}.
\end{align}
By putting \eqref{2025122231616-1} and \eqref{202512231730-L3} together, we get
\begin{align}\label{202512231734-1}
\| \nabla^{\ell+1}
\nonumber
& \left(\mathbb{E}^T-\mathbb{E}\right) \|_{L^2}^2 \\
\lesssim
\nonumber
& -\frac{d}{d t} \int_{\mathbb{R}^3} \nabla^{\ell-1}
\left(\nabla u-(\nabla u)^T\right) \cdot \nabla^{\ell+1}
\left(\mathbb{E}^T-\mathbb{E}\right) d x \\
\nonumber
& +\left\|\nabla^{\ell+1}
\left( \mathbb{E}^T-\mathbb{E}\right)\right\|_{L^2}
\left\|\nabla^{\ell+2} u\right\|_{L^2}
+\left\|\nabla^{\ell+1} u\right\|_{L^2}^2 \\
& +\delta\left\|\nabla^{\ell+1}\left(u, \mathbb{E},\left(\mathbb{E}^T-\mathbb{E}\right)\right)\right\|_{L^2}^2
+\delta\left\|\nabla^{\ell+2} u\right\|_{L^2}^2.
\end{align}
Therefore, by Cauchy's inequality and the assumption that $\delta$ is small, \eqref{dissipation estimates-EE-202512222129} follows from the preceding estimate.
\end{proof}
We conclude by recovering the estimates concerning $\mathbb{E}$.
\begin{Lemma}\label{recover the estimates on-E-202512231753}
Assume that
\begin{align*}
\|(u,\mathbb{E})\|_{H^3} \leq \delta \ll 1,
\end{align*}
and let the assumptions of Theorem \ref{pure-energy-202512221702} hold. Then, for any integer $0 \leq \ell \leq N-1$, we have
\begin{align}\label{recover the estimates on-E-2025122231754}
\left\|\nabla^{\ell+1} \mathbb{E}\right\|_{L^2}^2
\lesssim\left\|\nabla^{\ell+1}\left(\mathbb{E}^T-\mathbb{E}\right)\right\|_{L^2}^2 .
\end{align}
\end{Lemma}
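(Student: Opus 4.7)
\textbf{Proof proposal for Lemma \ref{recover the estimates on-E-202512231753}.} The strategy is to exploit the two structural constraints \eqref{important properties1-202512201235-t} and \eqref{important properties3-202512201235-t} preserved by the flow to write the Laplacian of $\mathbb{E}$ purely in terms of derivatives of the antisymmetric combination $\mathbb{E}^T-\mathbb{E}$, modulo a cubic remainder that is small by the a priori bound $\|(u,\mathbb{E})\|_{H^3}\leq\delta$. Concretely, I aim to produce an identity of the schematic form
\begin{align*}
\Delta \mathbb{E}^{ij} = -\,\partial_j\partial_k(\mathbb{E}^T-\mathbb{E})^{ik} + \mathcal{N}^{ij}[\mathbb{E},\nabla\mathbb{E}],
\end{align*}
where $\mathcal{N}^{ij}$ is a sum of cubic terms of the type $\partial_k\bigl(\mathbb{E}^{lj}\partial_l\mathbb{E}^{ik}-\mathbb{E}^{lk}\partial_l\mathbb{E}^{ij}\bigr)$.

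To derive this identity, the first step is to rewrite constraint \eqref{important properties3-202512201235-t} in the form $\nabla_k \mathbb{E}^{ij}-\nabla_j\mathbb{E}^{ik}=\mathbb{E}^{lj}\nabla_l\mathbb{E}^{ik}-\mathbb{E}^{lk}\nabla_l\mathbb{E}^{ij}$, apply $\nabla_k$ and contract to get $\Delta \mathbb{E}^{ij}=\partial_j\partial_k\mathbb{E}^{ik}+\mathcal{N}^{ij}$. Then the divergence-free condition $\operatorname{div}\mathbb{E}^T=0$ from \eqref{important properties1-202512201235-t}, namely $\partial_k\mathbb{E}^{ki}=0$, allows me to replace $\partial_k\mathbb{E}^{ik}$ by $\partial_k(\mathbb{E}^{ik}-\mathbb{E}^{ki})=-\partial_k(\mathbb{E}^T-\mathbb{E})^{ik}$, producing the displayed identity. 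This is essentially the same mechanism that underlies \eqref{curldiv-E-202512222146} in Lemma \ref{dissipation estimates-EE-202512222128}, reorganized to solve for $\Delta \mathbb{E}$ rather than for $\Delta(\mathbb{E}-\mathbb{E}^T)$.

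Second, I would apply $\nabla^{\ell}$ to the identity, pair it in $L^2$ with $\nabla^{\ell}\mathbb{E}^{ij}$, integrate the Laplacian by parts on the left and the two derivatives $\partial_j\partial_k$ by parts on the right, and use the same substitution $\partial_j\mathbb{E}^{ij}=-\partial_j(\mathbb{E}^T-\mathbb{E})^{ij}$ to obtain
\begin{align*}
\|\nabla^{\ell+1}\mathbb{E}\|_{L^2}^2 \;=\; \|\nabla^{\ell}\operatorname{div}(\mathbb{E}^T-\mathbb{E})\|_{L^2}^2 \;-\; \int_{\mathbb{R}^3}\nabla^{\ell}\mathbb{E}^{ij}\,\nabla^{\ell}\mathcal{N}^{ij}\,dx.
\end{align*}
The first term on the right is clearly controlled by $\|\nabla^{\ell+1}(\mathbb{E}^T-\mathbb{E})\|_{L^2}^2$. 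For the nonlinear remainder I shift one derivative off $\mathcal{N}^{ij}$ by integration by parts, expand with the Leibniz rule, and estimate each product $\nabla^{\kappa}\mathbb{E}\cdot\nabla^{\ell+1-\kappa}\mathbb{E}$ by the same Gagliardo--Nirenberg splitting already carried out in the bound \eqref{202512222037}--\eqref{202512222044-rewrite the factor} for $K_1$ in Lemma \ref{another type of energy estimates-202512221839-202512231850}. This yields $|\int\nabla^{\ell}\mathbb{E}^{ij}\nabla^{\ell}\mathcal{N}^{ij}dx|\lesssim\delta\,\|\nabla^{\ell+1}\mathbb{E}\|_{L^2}^2$, which may be absorbed into the left-hand side since $\delta\ll 1$.

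The main obstacle I anticipate is the bookkeeping for the endpoint cases in the Leibniz expansion of $\nabla^{\ell}\mathcal{N}^{ij}$: for $\kappa=0$ and $\kappa=\ell+1$ one factor carries all $\ell+1$ derivatives, and the other is only in $L^{\infty}$ via the smallness assumption, so one must invoke the embedding $H^2\hookrightarrow L^{\infty}$ (using $N\geq 3$) to close the estimate. A secondary subtle point, intrinsic to this approach, is that the Fourier-side heuristic shows this inequality is sharp: the constraints $\hat{\xi}\cdot c=0$ and $\hat{\mathbb{E}}^{ij}(\xi)=c_i(\xi)\xi^j$ force $|\hat{\mathbb{E}}^T-\hat{\mathbb{E}}|^2=2|\hat{\mathbb{E}}|^2$, so no extra margin is available, and one must keep all nonlinear contributions strictly at order $\delta$ to close.
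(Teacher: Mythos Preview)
Your argument is correct and rests on the same two structural constraints \eqref{important properties1-202512201235-t} and \eqref{important properties3-202512201235-t} as the paper, but the organization differs. The paper does not derive a single identity for $\Delta\mathbb{E}$; instead it splits $\|\nabla^{\ell+1}\mathbb{E}\|_{L^2}^2\lesssim\|\nabla^{\ell}\operatorname{div}\mathbb{E}\|_{L^2}^2+\|\nabla^{\ell}\operatorname{curl}\mathbb{E}\|_{L^2}^2$, then (i) uses the third-order identity $\Delta\operatorname{div}\mathbb{E}=-\operatorname{curl}\Delta(\mathbb{E}-\mathbb{E}^T)-\operatorname{curl}\mathbb{R}$ together with $L^2$-boundedness of Riesz transforms to control $\nabla^{\ell}\operatorname{div}\mathbb{E}$, and (ii) uses the constraint \eqref{important properties3-202512201235-t} directly to show $\|\nabla^{\ell}\operatorname{curl}\mathbb{E}\|_{L^2}\lesssim\delta\|\nabla^{\ell+1}\mathbb{E}\|_{L^2}$. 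Your route---deriving $\Delta\mathbb{E}^{ij}=-\partial_j\partial_k(\mathbb{E}^T-\mathbb{E})^{ik}+\mathcal{N}^{ij}$ in one step, pairing with $\nabla^{\ell}\mathbb{E}$, and re-inserting $\partial_j\mathbb{E}^{ij}=-\partial_j(\mathbb{E}^T-\mathbb{E})^{ij}$ after one integration by parts---is more elementary: it avoids both the div--curl splitting and the appeal to Riesz transforms, and lands directly on $\|\nabla^{\ell}\operatorname{div}(\mathbb{E}^T-\mathbb{E})\|_{L^2}^2$. The nonlinear remainder $\|\nabla^{\ell}(\mathbb{E}\nabla\mathbb{E})\|_{L^2}\lesssim\delta\|\nabla^{\ell+1}\mathbb{E}\|_{L^2}$ is the same estimate the paper invokes (its \eqref{202512231828-similar estimates}), and your treatment of the endpoint $\kappa=\ell$ indeed requires placing $\nabla\mathbb{E}$ in $L^3$ (or $L^\infty$) and $\nabla^{\ell}\mathbb{E}$ in $L^6$ rather than $L^2$, which is exactly the Gagliardo--Nirenberg bookkeeping you flag. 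Both approaches close for the same reason; yours is slightly cleaner.
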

\begin{proof}
By \eqref{curldiv-E-202512222146} and \eqref{A1-A8-two important identities-202512222139} $\operatorname{div}\left( \mathbb{F}^T\right)(t)=0$, we get
\begin{align}\label{202512231813-1}
\Delta \operatorname{div} \mathbb{E}
\nonumber
& =\nabla \operatorname{div} \operatorname{div} \mathbb{E}
-\text { curl curl } \operatorname{div} \mathbb{E} \\
\nonumber
& =\nabla \operatorname{div} \operatorname{div} \mathbb{E}^T
-\text { curl curl } \operatorname{div} \mathbb{E} \\
& =
-\operatorname{curl}\left(\Delta\left(\mathbb{E}-\mathbb{E}^T\right)\right)
-\operatorname{curl} \mathbb{R}.
\end{align}
Using the properties of the Riesz transforms (see, e.g., \cite{Grafakos2004}), we obtain
\begin{align}\label{202512231821-Riesz transforms}
\left\|\nabla^{\ell} \operatorname{div} \mathbb{E}\right\|_{L^2}
\lesssim
\left\|\nabla^{\ell+1}
\left(\mathbb{E}^T-\mathbb{E}\right)\right\|_{L^2}
+\left\|\nabla^{\ell}(\mathbb{E} \nabla \mathbb{E})\right\|_{L^2}.
\end{align}
By estimates analogous to the ones already established, we obtain
\begin{align}\label{202512231828-similar estimates}
\left\|\nabla^{\ell}(\mathbb{E} \nabla \mathbb{E})\right\|_{L^2}
\lesssim \delta\left\|\nabla^{\ell+1} \mathbb{E}\right\|_{L^2}.
\end{align}
From \eqref{A2-A8-two important identities-202512222140} and the relation $\mathbb{E}=\mathbb{F}-\mathbb{I}$, we obtain for all $t \geq 0$:
\begin{align*}
\nabla_k \mathbb{E}^{i j}+\mathbb{E}^{l k} \nabla_l \mathbb{E}^{i j}
=\nabla_j \mathbb{E}^{i k}+\mathbb{E}^{l j} \nabla_l \mathbb{E}^{i k},
\end{align*}
so that $\operatorname{curl} \mathbb{E}$ can be treated as a higher-order contribution.
Thus we get
\begin{align}\label{202512231834-curl-high order term}
\left\|\nabla^{\ell} \operatorname{curl} \mathbb{E}\right\|_{L^2}^2
\lesssim
\left\|\nabla^{\ell}(\mathbb{E} \nabla \mathbb{E})\right\|_{L^2}^2
\lesssim \delta\left\|\nabla^{\ell+1} \mathbb{E}\right\|_{L^2}^2 .
\end{align}
From \eqref{202512231821-Riesz transforms}-\eqref{202512231834-curl-high order term}, we obtain
\begin{align}\label{202512231837-Consequently}
\left\|\nabla^{\ell+1} \mathbb{E}\right\|_{L^2}^2
\nonumber
& \lesssim
\left\|\nabla^{\ell} \operatorname{div} \mathbb{E}\right\|_{L^2}^2
+\left\|\nabla^{\ell} \operatorname{curl} \mathbb{E}\right\|_{L^2}^2 \\
& \lesssim
\left\|\nabla^{\ell+1}\left(\mathbb{E}^T-\mathbb{E}\right)\right\|_{L^2}^2
+\delta\left\|\nabla^{\ell+1} \mathbb{E}\right\|_{L^2}^2.
\end{align}
From the above inequality and the smallness of $\delta$, we thus obtain \eqref{recover the estimates on-E-2025122231754}.
\end{proof}

\section{Negative Sobolev/Besov estimates}\label{202512231847-Negative Sobolev/Besov estimates-202512231853}
This section is devoted to the evolution of the solution to \eqref{incompressible viscoelastic flow-E-202512201224}-\eqref{initial-condition-202512201225} in negative Sobolev and Besov norms. For homogeneous Sobolev spaces, the analysis of nonlinear terms requires the regularity index $s$ to lie in $\bigl(0, \frac{3}{2}\bigr)$. Under this restriction, we establish the following lemma.
\begin{Lemma}\label{202512231856-Lemma 4.1}
Assume that
\begin{align*}
\|(u,\mathbb{E})\|_{H^3} \leq \delta \ll 1,
\end{align*}
and let the assumptions of Theorem \ref{pure-energy-202512221702} hold. Then, we get for $s \in\left(0, \frac{1}{2}\right]$,
\begin{align}\label{202512231856-4.1}
\nonumber
& \frac{d}{d t}\|(u,\mathbb{E})\|_{\dot{H}^{-s}}^2
+C\|\nabla u\|_{\dot{H}^{-s}}^2 \\
& \quad \lesssim\left(\|\nabla(u, \mathbb{E})\|_{H^1}^2\right)
\|(u,\mathbb{E})\|_{\dot{H}^{-s}} ;
\end{align}
and for $s \in\left(\frac{1}{2}, \frac{3}{2}\right)$,
\begin{align}\label{202512231856-4.1-2}
\nonumber
& \frac{d}{d t}\|(u,\mathbb{E})\|_{\dot{H}^{-s}}^2
+C\|\nabla u\|_{\dot{H}^{-s}}^2 \\
& \quad \lesssim
\left(\|\nabla(u, \mathbb{E})\|_{H^1}\right)^{\frac{5}{2}-s}
\|(u, \mathbb{E})\|_{L^2}^{s-\frac{1}{2}}
\|(u, \mathbb{E})\|_{\dot{H}^{-s}}.
\end{align}
\end{Lemma}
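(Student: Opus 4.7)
\textbf{Proof proposal for Lemma \ref{202512231856-Lemma 4.1}.}

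My plan is to run a negative homogeneous Sobolev energy estimate on the system \eqref{incompressible viscoelastic flow-E-202512201224} by applying the Riesz potential $\Lambda^{-s}=(-\Delta)^{-s/2}$ to both equations and testing against $\Lambda^{-s}u$ and $\Lambda^{-s}\mathbb{E}$, respectively. After integration by parts, the pressure term drops out via $\operatorname{div}u=0$, while the two linear coupling terms $-\operatorname{div}\mathbb{E}$ in the $u$-equation and $-\nabla u$ in the $\mathbb{E}$-equation cancel each other (this is exactly the antisymmetric structure underlying the energy identity in Lemma \ref{one type of energy estimates including-202512221737}). The remaining inequality will read
\begin{align*}
\frac{1}{2}\frac{d}{dt}\|(u,\mathbb{E})\|_{\dot{H}^{-s}}^2+\mu\|\nabla u\|_{\dot{H}^{-s}}^2
\lesssim \bigl(\|\Lambda^{-s}g_1\|_{L^2}+\|\Lambda^{-s}g_2\|_{L^2}\bigr)\|(u,\mathbb{E})\|_{\dot{H}^{-s}},
\end{align*}
so everything reduces to controlling $\|\Lambda^{-s}g_j\|_{L^2}$ by $\|\nabla(u,\mathbb{E})\|_{H^1}^2$ (resp. the interpolated quantity in the second regime).

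For the nonlinear terms I will use the Hardy--Littlewood--Sobolev inequality in the form $\|\Lambda^{-s}f\|_{L^2}\lesssim \|f\|_{L^{p_s}}$ with $1/p_s=1/2+s/3$, which is valid precisely when $s\in(0,3/2)$ (this is the source of the restriction $s<3/2$). Each term in $g_1=\operatorname{div}(\mathbb{E}\mathbb{E}^\top)-u\cdot\nabla u$ and $g_2=(\nabla u)\mathbb{E}-u\cdot\nabla\mathbb{E}$ is a product of a factor at the level of $(u,\mathbb{E})$ and a factor at the level of $\nabla(u,\mathbb{E})$, so after H\"older $\|f\cdot g\|_{L^{p_s}}\le \|f\|_{L^{3/s}}\|g\|_{L^2}$ the game is to bound $\|(u,\mathbb{E})\|_{L^{3/s}}$.

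In the range $s\in(0,\tfrac12]$ we have $3/2-s\in[1,3/2)$, so $\dot{H}^{3/2-s}\hookrightarrow L^{3/s}$ and the Gagliardo--Nirenberg interpolation
\begin{align*}
\|(u,\mathbb{E})\|_{L^{3/s}}
\lesssim \|\nabla(u,\mathbb{E})\|_{L^2}^{1/2+s}\|\nabla^2(u,\mathbb{E})\|_{L^2}^{1/2-s}
\end{align*}
combines with the $L^2$ factor to yield $\|\Lambda^{-s}g_j\|_{L^2}\lesssim \|\nabla(u,\mathbb{E})\|_{H^1}^2$, which is \eqref{202512231856-4.1}. For $s\in(\tfrac12,\tfrac32)$ the exponent $3/s$ falls below $6$, so pure $H^1$-embedding no longer covers $L^{3/s}$; here I will interpolate $\|(u,\mathbb{E})\|_{L^{3/s}}$ between $L^2$ and a higher Lebesgue space controlled by $\nabla(u,\mathbb{E})\in H^1$, which produces the extra factor $\|(u,\mathbb{E})\|_{L^2}^{s-1/2}$ and lowers the power of $\|\nabla(u,\mathbb{E})\|_{H^1}$ to $5/2-s$, matching \eqref{202512231856-4.1-2} exactly.

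The routine part is the linear energy identity and the H\"older/HLS bookkeeping; the main obstacle will be the second regime $s\in(\tfrac12,\tfrac32)$, where one must choose the interpolation exponents precisely so that the sum of powers on the dissipative quantity $\|\nabla(u,\mathbb{E})\|_{H^1}$ and the low-regularity quantity $\|(u,\mathbb{E})\|_{L^2}$ matches the scaling of the norm $\|\Lambda^{-s}g_j\|_{L^2}$ and is consistent with the negative Sobolev norm on the right. The endpoint $s=3/2$ is inaccessible because $p_s\downarrow 1$ there, which is the natural obstruction explaining the open-interval restriction in the statement; similarly the threshold $s=1/2$ is where $3/s=6$, marking the boundary of direct $H^1$-Sobolev embedding and hence the dichotomy between \eqref{202512231856-4.1} and \eqref{202512231856-4.1-2}.
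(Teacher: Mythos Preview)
Your proposal is correct and follows essentially the same approach as the paper's proof: apply $\Lambda^{-s}$ to the system, exploit the antisymmetric cancellation of the linear coupling terms, bound the nonlinearities via the Hardy--Littlewood--Sobolev inequality $\|\Lambda^{-s}f\|_{L^2}\lesssim\|f\|_{L^{p_s}}$ with $1/p_s=1/2+s/3$, and then use the Gagliardo--Nirenberg interpolations $\|(u,\mathbb{E})\|_{L^{3/s}}\lesssim\|\nabla(u,\mathbb{E})\|_{L^2}^{1/2+s}\|\nabla^2(u,\mathbb{E})\|_{L^2}^{1/2-s}$ for $s\in(0,1/2]$ and $\|(u,\mathbb{E})\|_{L^{3/s}}\lesssim\|(u,\mathbb{E})\|_{L^2}^{s-1/2}\|\nabla(u,\mathbb{E})\|_{L^2}^{3/2-s}$ for $s\in(1/2,3/2)$. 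One minor wording slip: when $s>1/2$ the exponent $3/s$ drops \emph{below} $6$, so $H^1$ actually \emph{does} embed into $L^{3/s}$; the reason for introducing the $L^2$ factor there is not a failure of embedding but the desire to match the precise form needed later, and your actual interpolation between $L^2$ and $L^6$ is exactly what the paper does.
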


\begin{proof}
Applying $\Lambda^{-s}$ to the first two equations of \eqref{incompressible viscoelastic flow-E-202512201224}, multiplying by $\Lambda^{-s} u^i$ and $\Lambda^{-s} \mathbb{E}$ respectively, and integrating over $\mathbb{R}^3$ yields
\begin{align}\label{202512241409-4.1-1}
\nonumber
&
\frac{d}{d t}\|(u,\mathbb{E} )\|_{\dot{H}^{-s}}^2
+\mu\|\nabla u\|_{\dot{H}^{-s}}^2  \\
=
\nonumber
&
\Lambda^{-s}(\nabla u \mathbb{E}-u \cdot \nabla \mathbb{E}): \Lambda^{-s} \mathbb{E} d x \\
\nonumber
&
+\int_{\mathbb{R}^3} \Lambda^{-s}
\left[\mathbb{E}^{j k} \nabla_j \mathbb{E}^{i k}-u \cdot \nabla u^i\right.]
\cdot \Lambda^{-s} u^i d x \\
\lesssim
\nonumber
&
\|(\nabla u \mathbb{E}-u \cdot \nabla \mathbb{E})\|_{\dot{H}^{-s}}
\|\mathbb{E}\|_{\dot{H}^{-s}} \\
& +
\|[\mathbb{E}^{j k} \nabla_j \mathbb{E}^{i k}-u\cdot \nabla u^i
)]
\left\|_{\dot{H}^{-s}}\right\| u^i \|_{\dot{H}^{-s}} .
\end{align}
We estimate the right-hand side of \eqref{202512241409-4.1-1} as follows:
\begin{align}\label{202512241423-4.1-right-hand side-1}
\nonumber
& \|\nabla u \mathbb{E}-u \cdot \nabla \mathbb{E}\|_{\dot{H}^{-s}}
\lesssim
\|\nabla u \mathbb{E}-u \cdot \nabla \mathbb{E}\|_{L^{\frac{1}{2}+\frac{s}{3}}}
\lesssim\|\nabla(u, \mathbb{E})\|_{L^2}
\|(u, \mathbb{E})\|_{L^{\frac{3}{s}}} \\
&
\lesssim
\|\nabla(u, \mathbb{E})\|_{L^2}
\|\nabla(u,\mathbb{E})\|_{L^2}^{\frac{1}{2}+s}
\left\|\nabla^2(u, \mathbb{E})\right\|_{L^2}^{\frac{1}{2}-s}
\lesssim\|\nabla(u, \mathbb{E})\|_{H^1}^2 ;
\end{align}
\begin{align}\label{202512241423-4.1-right-hand side-2}
\nonumber
&
\| \mathbb{E}^{j k}  \nabla_j \mathbb{E}^{i k}-u \cdot \nabla u^i \|_{\dot{H}^{-s}} \\
\nonumber
&
\lesssim
\left\|\mathbb{E}^{j k} \nabla_j \mathbb{E}^{i k}-u \cdot \nabla u^i\right\|_{L^{\frac{1}{2}+\frac{s}{3}}} \\
\nonumber
&
\lesssim
\|\nabla(u, \mathbb{E})\|_{L^2}
\|(u,\mathbb{E})\|_{L^{\frac{3}{s}}} \\
\nonumber
&
\lesssim
\|\nabla(u, \mathbb{E})\|_{L^2}
\|\nabla(u, \mathbb{E})\|_{L^2}^{\frac{1}{2}+s}
\left\|\nabla^2(u, \mathbb{E})\right\|_{L^2}^{\frac{1}{2}-s} \\
&
\lesssim
\|\nabla(u, \mathbb{E})\|_{H^1}^2 .
\end{align}
Having established \eqref{202512231856-4.1} from \eqref{202512241409-4.1-1}-\eqref{202512241423-4.1-right-hand side-2}, we now turn to the case $s \in \bigl(\frac{1}{2}, \frac{3}{2}\bigr)$. Here, $\frac{1}{2}+\frac{s}{3}<1$ and $2 < \frac{3}{s} < 6$. Consequently, applying Holder's inequality, Sobolev's inequality, Young's inequality, and Lemma \ref{A2-HL-202512222020} to the right-hand side of \eqref{202512241409-4.1-1} yields the following estimate:
\begin{align}\label{202512241423-4.1-right-hand side-3}
\nonumber
&
\|\nabla u \mathbb{E}-u \cdot \nabla \mathbb{E}\|_{\dot{H}^{-s}} \\
\nonumber
&
\lesssim
\|\nabla u \mathbb{E}-u \cdot \nabla \mathbb{E}\|_{L^{\frac{1}{2}+\frac{s}{3}}}
\lesssim
\|\nabla(u, \mathbb{E})\|_{L^2}
\|(u, \mathbb{E})\|_{L^{\frac{3}{s}}} \\
\nonumber
&
\lesssim
\|\nabla(u, \mathbb{E})\|_{L^2}
\|(u, \mathbb{E})\|_{L^2}^{s-\frac{1}{2}}
\|\nabla(u, \mathbb{E})\|_{L^2}^{\frac{3}{2}-s} \\
&
\lesssim
\|(u, \mathbb{E})\|_{L^2}^{s-\frac{1}{2}}
\|\nabla(u, \mathbb{E})\|_{H^1}^{\frac{5}{2}-s} ;
\end{align}
\begin{align}\label{202512241423-4.1-right-hand side-4}
\nonumber
&
\| \mathbb{E}^{j k}  \nabla_j \mathbb{E}^{i k}-u \cdot \nabla u^i\|_{\dot{H}^{-s}} \\
\nonumber
&
\lesssim
\left\|\mathbb{E}^{j k} \nabla_j \mathbb{E}^{i k}-u \cdot \nabla u^i\right\|_{L^{\frac{1}{2}+\frac{3}{3}}} \\
\nonumber
&
\lesssim
\|\nabla(u, \mathbb{E})\|_{L^2}
\|(u, \mathbb{E})\|_{L^{\frac{3}{s}}} \\
\nonumber
&
\lesssim
\|\nabla(u, \mathbb{E})\|_{L^2}
\|(u, \mathbb{E})\|_{L^2}^{s-\frac{1}{2}}
\|\nabla(u, \mathbb{E})\|_{L^2}^{\frac{3}{2}-s} \\
&
\lesssim
\|(u, \mathbb{E})\|_{L^2}^{s-\frac{1}{2}}
\|\nabla(u, \mathbb{E})\|_{H^1}^{\frac{5}{2}-s} .
\end{align}
Combining \eqref{202512241409-4.1-1} with \eqref{202512241423-4.1-right-hand side-3}-\eqref{202512241423-4.1-right-hand side-4} yields \eqref{202512231856-4.1-2}, which completes the proof of Lemma \ref{202512231856-Lemma 4.1}.
\end{proof}
For homogeneous Besov spaces, the analysis of nonlinear terms requires the regularity index $s$ to lie in $\bigl(0, \frac{3}{2}\bigr]$. Under this restriction, we establish the following lemma.
\begin{Lemma}\label{202512241450-Lemma 4.2}
Assume that
\begin{align*}
\|(u,\mathbb{E})\|_{H^3} \leq \delta \ll 1,
\end{align*}
and let the assumptions of Theorem \ref{pure-energy-202512221702} hold. Then, we get for $s \in\left(0, \frac{1}{2}\right]$,
\begin{align}\label{202512241451-4.15}
\frac{d}{d t}\|(u,\mathbb{E})\|_{\dot{B}_{2, \infty}^{-s}}^2
+C\|\nabla u\|_{\dot{B}_{2, \infty}^{-s}}^2
 \lesssim
\left(\|\nabla(u, \mathbb{E})\|_{H^1}^2\right)
\|(u, \mathbb{E})\|_{\dot{B}_{2, \infty}^{-s}}^2,
\end{align}
and for $\left(\frac{1}{2}, \frac{3}{2}\right]$,
\begin{align}\label{202512241452-4.16}
\frac{d}{d t}\|(u, \mathbb{E})\|_{\dot{B}_{2, \infty}^{-s}}^2
+C\|\nabla u\|_{\dot{B}_{2, \infty}^{-s}}^2
\lesssim
\left(\|\nabla(u, \mathbb{E})\|_{H^1}\right)^{\frac{5}{2}-s}
\|(u, \mathbb{E})\|_{L^2}^{s-\frac{1}{2}}
\|(u,\mathbb{E})\|_{\dot{B}_{2, \infty}^{-s}}.
\end{align}
\end{Lemma}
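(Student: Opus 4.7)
The plan is to mirror the proof of Lemma \ref{202512231856-Lemma 4.1} in the Besov setting, relying on the Bernstein-type embedding $L^p(\mathbb{R}^3) \hookrightarrow \dot{B}_{2,\infty}^{-s}(\mathbb{R}^3)$ valid for $1/p = 1/2 + s/3$ with $p \in [1,2)$. The decisive advantage over the homogeneous Sobolev scale is that this embedding extends to the endpoint $s = 3/2$, corresponding to $p = 1$, which allows the argument to reach critical regularity.

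I would first apply the homogeneous Littlewood-Paley block $\dot{\Delta}_j$ to the first two equations of \eqref{incompressible viscoelastic flow-E-202512201224}, pair with $\dot{\Delta}_j u$ and $\dot{\Delta}_j \mathbb{E}$ respectively, and add. The pressure gradient vanishes upon pairing owing to $\operatorname{div} u = 0$, while the two coupling terms $-\operatorname{div} \mathbb{E}$ and $-\nabla u$ cancel by integration by parts, producing
\begin{equation*}
\tfrac{1}{2}\tfrac{d}{dt}\bigl(\|\dot{\Delta}_j u\|_{L^2}^2 + \|\dot{\Delta}_j \mathbb{E}\|_{L^2}^2\bigr) + \mu\|\nabla \dot{\Delta}_j u\|_{L^2}^2 = \langle \dot{\Delta}_j g_1, \dot{\Delta}_j u\rangle + \langle \dot{\Delta}_j g_2, \dot{\Delta}_j \mathbb{E}\rangle,
\end{equation*}
with $g_1 = \operatorname{div}(\mathbb{E}\mathbb{E}^\top) - u \cdot \nabla u$ and $g_2 = (\nabla u)\mathbb{E} - u \cdot \nabla \mathbb{E}$. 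Multiplying by $2^{-2js}$, applying Cauchy-Schwarz, and taking the supremum over $j \in \mathbb{Z}$, I arrive at
\begin{equation*}
\tfrac{d}{dt}\|(u,\mathbb{E})\|_{\dot{B}_{2,\infty}^{-s}}^2 + C\|\nabla u\|_{\dot{B}_{2,\infty}^{-s}}^2 \lesssim \bigl(\|g_1\|_{\dot{B}_{2,\infty}^{-s}} + \|g_2\|_{\dot{B}_{2,\infty}^{-s}}\bigr)\|(u,\mathbb{E})\|_{\dot{B}_{2,\infty}^{-s}}.
\end{equation*}

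Second, via the embedding with $1/p = 1/2 + s/3$, I would bound $\|g_i\|_{\dot{B}_{2,\infty}^{-s}} \lesssim \|g_i\|_{L^p}$ and then apply Holder's inequality and Gagliardo-Nirenberg exactly as in the estimates for the $\dot{H}^{-s}$ nonlinearity in Lemma \ref{202512231856-Lemma 4.1}. For $s \in (0, 1/2]$ one has $p \in [3/2, 2)$, and $\|g_i\|_{L^p} \lesssim \|\nabla(u,\mathbb{E})\|_{L^2}\|(u,\mathbb{E})\|_{L^{3/s}} \lesssim \|\nabla(u,\mathbb{E})\|_{H^1}^2$, yielding \eqref{202512241451-4.15}. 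For $s \in (1/2, 3/2]$ one has $p \in [1, 3/2)$, and the Gagliardo-Nirenberg interpolation $\|(u,\mathbb{E})\|_{L^{3/s}} \lesssim \|(u,\mathbb{E})\|_{L^2}^{s-1/2}\|\nabla(u,\mathbb{E})\|_{L^2}^{3/2-s}$ gives $\|g_i\|_{L^p} \lesssim \|(u,\mathbb{E})\|_{L^2}^{s-1/2}\|\nabla(u,\mathbb{E})\|_{H^1}^{5/2-s}$, producing \eqref{202512241452-4.16}.

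The main obstacle is the Besov endpoint $s = 3/2$, where the analogous Sobolev estimate of Lemma \ref{202512231856-Lemma 4.1} fails (the $\dot{H}^{-s}$ case stops strictly before $3/2$). I would verify the embedding $L^1 \hookrightarrow \dot{B}_{2,\infty}^{-3/2}$ by direct Bernstein inspection: for any dyadic block $\dot{\Delta}_j f$, one has $2^{-3j/2}\|\dot{\Delta}_j f\|_{L^2} \lesssim 2^{-3j/2} \cdot 2^{3j/2}\|\dot{\Delta}_j f\|_{L^1} \lesssim \|f\|_{L^1}$ uniformly in $j$. This is precisely why the Besov framework accommodates the borderline case and allows the argument to extend all the way to $s = 3/2$ without loss.
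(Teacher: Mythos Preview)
Your proposal is correct and follows essentially the same route as the paper: apply $\dot{\Delta}_j$ to the system, perform the $L^2$ energy estimate, scale by $2^{-2js}$ and take the supremum, then invoke the embedding $L^p \hookrightarrow \dot{B}_{2,\infty}^{-s}$ (the paper's Lemma~\ref{A3-B-202512222024}) before repeating the H\"older and Gagliardo--Nirenberg steps from Lemma~\ref{202512231856-Lemma 4.1}. Your explicit Bernstein verification of the endpoint $s=3/2$ is a nice addition that the paper simply delegates to the cited lemma.
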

\begin{proof}
Within the Besov framework, the $\dot{\Delta}_l$ energy estimate for \eqref{incompressible viscoelastic flow-E-202512201224}$_1$-$_2$, scaled by $2^{-2sl}$ and supremized over $l$, gives
\begin{align}\label{202512241501-1}
\nonumber
&
\frac{1}{2} \frac{d}{d t} \|  (u, \mathbb{E})\|_{\dot{B}_{2, \infty}^{-s}}^2
+\mu\| \nabla u\|_{\dot{B}_{2, \infty}^{-s}}^2\\
=
\nonumber
&
\sup _{l \in \mathbb{Z}} 2^{-2 s l}
\left(\int_{\mathbb{R}^3} \dot{\Delta}_l
(\nabla u \mathbb{E}-u \cdot \nabla \mathbb{E})
\cdot \dot{\Delta}_l \mathbb{E} d x\right) \\
\nonumber
&
+\sup _{l \in \mathbb{Z}} 2^{-2 s l}
\left(\int_{\mathbb{R}^3} \dot{\Delta}_l
\left(\mathbb{E}^{j k} \nabla_j \mathbb{E}^{i k}
-u \cdot \nabla u^i\right)
\cdot \dot{\Delta}_l u^i d x\right) \\
\lesssim
&
\|(\nabla u \mathbb{E}-u \cdot \nabla \mathbb{E})\|_{\dot{B}_{2, \infty}^{-s}}
\|\mathbb{E}\|_{\dot{B}_{2, \infty}^{-s}}
+\| [\mathbb{E}^{j k} \nabla_j \mathbb{E}^{i k}-u\cdot \nabla u^i)]
\left\|_{\dot{B}_{2, \infty}^{-s}}\right
\| u^i \|_{\dot{B}_{2, \infty}^{-s}}.
\end{align}
The proof follows closely that of Lemma \ref{202512231856-Lemma 4.1}, with the key modification that we now apply Lemma \ref{A3-B-202512222024} to estimate the $\dot{B}_{2,\infty}^{-s}$ norm. Since $s=\frac{3}{2}$ is allowed here, we obtain estimates \eqref{202512241451-4.15}-\eqref{202512241452-4.16}, thereby completing the proof of Lemma \ref{202512241450-Lemma 4.2}.
\end{proof}
\section{Proof of Theorem \ref{pure-energy-202512221702}
\eqref{202512221715-globalsolution-incompressible viscoelastic flows}-\eqref{pure-energy-202512221703}
} \label{202512241526-}
This section is devoted to proving the global existence and uniqueness of solutions to the Cauchy problem \eqref{incompressible viscoelastic flow-E-202512201224}-\eqref{initial-condition-202512201225} (equivalently, \eqref{incompressible viscoelastic flow-202512201217}-\eqref{initial-condition-202512201220}). For completeness, we first recall the local well-posedness result, whose proof follows from standard arguments (see, e.g., \cite{Matsumura-Nishida1980}) and is therefore omitted.
\begin{Lemma}\label{202512241532-Lemma-admits a unique global solution}
Let the initial data satisfy $\left(u_0, \mathbb{E}_0\right) \in H^3\left(\mathbb{R}^3\right)$. Then there exists a constant $T>0$ such that the Cauchy problem \eqref{incompressible viscoelastic flow-E-202512201224}-\eqref{initial-condition-202512201225} possesses a unique solution $(u,\mathbb{E} ) $ with
\begin{align*}
\mathbb{E} \in C^0\left(0, T ; H^3\left(\mathbb{R}^3\right)\right) \cap C^1\left(0, T ; H^2\left(\mathbb{R}^3\right)\right), \\
u \in C^0\left(0, T ; H^3\left(\mathbb{R}^3\right)\right) \cap C^1\left(0, T ; H^1\left(\mathbb{R}^3\right)\right) .
\end{align*}
In addition, the following holds
\begin{align*}
\sup _{0 \leq t \leq T}\|(u,\mathbb{E})(t)\|_{H^3}
\lesssim
\left\|\left(u_0,\mathbb{E}_0\right)\right\|_{H^3}.
\end{align*}
\end{Lemma}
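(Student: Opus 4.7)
The plan is to construct a solution via a Picard-type iteration that exploits the parabolic-hyperbolic splitting of the system. Setting $(u^{0},\mathbb{E}^{0})\equiv(u_{0},\mathbb{E}_{0})$, define $(u^{n+1},\mathbb{E}^{n+1})$ inductively by the \emph{linear} problem
\begin{align*}
&\partial_{t} u^{n+1}-\mu\Delta u^{n+1}+\nabla P^{n+1}=\operatorname{div}\mathbb{E}^{n}+\operatorname{div}(\mathbb{E}^{n}(\mathbb{E}^{n})^{\top})-u^{n}\cdot\nabla u^{n},\\
&\operatorname{div} u^{n+1}=0,\\
&\partial_{t}\mathbb{E}^{n+1}+u^{n}\cdot\nabla\mathbb{E}^{n+1}=\nabla u^{n+1}+(\nabla u^{n+1})\mathbb{E}^{n},
\end{align*}
with initial data $(u_{0},\mathbb{E}_{0})$. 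The pressure $P^{n+1}$ is eliminated by applying the Leray projector $\mathcal{P}=\mathbb{I}-\nabla\Delta^{-1}\operatorname{div}$ to the first equation, producing a linear inhomogeneous heat equation for $u^{n+1}$ whose solvability in $C^{0}([0,T];H^{3})\cap L^{2}(0,T;H^{4})$ is classical. With $u^{n+1}$ in hand, the tensor $\mathbb{E}^{n+1}$ is obtained from a linear transport equation with divergence-free drift $u^{n}$ and $H^{2}$ source, yielding $\mathbb{E}^{n+1}\in C^{0}([0,T];H^{3})\cap C^{1}([0,T];H^{2})$.

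\textbf{Uniform bounds.} First I would establish a closed \emph{a priori} estimate on the iterates in $H^{3}$ over a sufficiently short interval $[0,T_{*}]$. Set $A_{n}(t):=\|(u^{n},\mathbb{E}^{n})(t)\|_{H^{3}}^{2}$ and perform energy estimates of the type carried out in Lemmas \ref{one type of energy estimates including-202512221737}--\ref{another type of energy estimates-202512221839-202512231850}, applied to $\nabla^{\ell}u^{n+1}$ and $\nabla^{\ell}\mathbb{E}^{n+1}$ for $0\le\ell\le 3$, using the Leibniz rule, H\"older's inequality, and the Sobolev embedding $H^{2}(\mathbb{R}^{3})\hookrightarrow L^{\infty}$. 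The commutator estimate of Lemma \ref{A7-commutator estimates-202512222029} is invoked to control $[\nabla^{\ell},u^{n}\cdot\nabla]\mathbb{E}^{n+1}$ in the transport equation, bounding it by $\|\nabla u^{n}\|_{L^{\infty}}\|\nabla^{\ell}\mathbb{E}^{n+1}\|_{L^{2}}+\|\nabla^{\ell}u^{n}\|_{L^{6}}\|\nabla\mathbb{E}^{n+1}\|_{L^{3}}$. Combined, these yield an inequality of the form $\tfrac{d}{dt}A_{n+1}\le C A_{n+1}(1+A_{n})^{1/2}+CA_{n}^{3/2}$, from which a standard continuity argument gives $\sup_{n}\sup_{t\in[0,T_{*}]}A_{n}(t)\le 2\|(u_{0},\mathbb{E}_{0})\|_{H^{3}}^{2}$ for $T_{*}$ small enough.

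\textbf{Convergence and uniqueness.} Next I would prove that $\{(u^{n},\mathbb{E}^{n})\}$ is Cauchy in the weaker space $C^{0}([0,T_{*}];L^{2})$. Writing $\delta u^{n}:=u^{n+1}-u^{n}$ and $\delta\mathbb{E}^{n}:=\mathbb{E}^{n+1}-\mathbb{E}^{n}$, the differences satisfy a linear system with source terms that are products of the uniformly bounded iterates and the previous differences. Energy estimates in $L^{2}$, combined with the uniform $H^{3}$ bound, yield
\begin{align*}
\frac{d}{dt}\|(\delta u^{n},\delta\mathbb{E}^{n})\|_{L^{2}}^{2}+\mu\|\nabla\delta u^{n}\|_{L^{2}}^{2}\lesssim \|(\delta u^{n},\delta\mathbb{E}^{n})\|_{L^{2}}^{2}+\|(\delta u^{n-1},\delta\mathbb{E}^{n-1})\|_{L^{2}}^{2},
\end{align*}
from which Gr\"onwall's inequality yields contraction for $T_{*}$ possibly smaller, and hence convergence to a limit $(u,\mathbb{E})\in C^{0}([0,T_{*}];L^{2})$. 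Interpolation with the uniform $H^{3}$ bound upgrades the convergence to $C^{0}([0,T_{*}];H^{s})$ for every $s<3$, which suffices to pass to the limit in all the nonlinear terms in the distributional sense. The limit inherits $L^{\infty}(0,T_{*};H^{3})$ from weak-$\ast$ compactness, and the continuity in $H^{3}$ follows by a standard Bona-Smith-type argument using the parabolic smoothing in the velocity and the transport structure for $\mathbb{E}$. Uniqueness in the same class is obtained identically from the $L^{2}$ energy estimate on the difference of two solutions.

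\textbf{Main obstacle.} The principal subtlety is that $\mathbb{E}$ is governed by a transport equation, so the iteration gains no regularity on the deformation tensor; however, $\operatorname{div}\mathbb{E}^{n}$ feeds into the velocity equation, meaning the velocity's $H^{3}$ regularity requires $\mathbb{E}^{n}$ to already live in $H^{3}$. Closing the $H^{3}$ estimate for $\mathbb{E}^{n+1}$ then demands controlling $\|\nabla^{3}\mathbb{E}^{n+1}\|_{L^{2}}$ against $\|\nabla^{3}u^{n+1}\|_{L^{2}}$, without any dissipative term on the $\mathbb{E}$ side. This forces careful use of the commutator structure and of the cross term $\int\nabla^{3}(\nabla u^{n+1})\cdot\nabla^{3}\mathbb{E}^{n+1}$ being absorbed by the parabolic dissipation of $u^{n+1}$ via Young's inequality — exactly the balance exploited throughout Section \ref{Energy Estimates-202512221729}. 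The preservation of the algebraic constraints \eqref{important properties1-202512201228}--\eqref{important properties3-202512201228} along the iteration is then verified on the limit using the identities of \cite{Lei-Liu-Zhou2008,Lin-Liu-Zhang2005}.
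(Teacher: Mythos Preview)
The paper does not actually prove this lemma: it introduces it with the remark that ``the proof follows from standard arguments (see, e.g., \cite{Matsumura-Nishida1980}) and is therefore omitted.'' Your Picard iteration sketch is a correct and standard way to supply those omitted details, and the ingredients you invoke (Leray projection for the Stokes part, transport estimates with commutators for $\mathbb{E}$, uniform $H^{3}$ bounds on a short interval, $L^{2}$ contraction of differences, interpolation to pass to the limit) are exactly what such a proof requires. One small point worth tightening: in your iteration the linear coupling terms $\operatorname{div}\mathbb{E}^{n}$ and $\nabla u^{n+1}$ do \emph{not} cancel at the $H^{3}$ level the way they do for the true solution, so the term $\int\nabla^{3}(\nabla u^{n+1}):\nabla^{3}\mathbb{E}^{n+1}$ is not absorbed by cancellation but rather controlled via $\|\nabla^{4}u^{n+1}\|_{L^{2}}\in L^{2}_{t}$ from parabolic smoothing together with Gr\"onwall; your phrasing ``absorbed by the parabolic dissipation via Young's inequality'' is correct provided you estimate $(u^{n+1},\mathbb{E}^{n+1})$ jointly, which your text suggests but does not make fully explicit.
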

\begin{proof}
We begin by closing the energy estimates from Section \ref{Energy Estimates-202512221729} at each level $\ell$. Let $N \geq 3$, and choose integers $\kappa_1, \kappa_2$ such that $0 \leq \kappa_1 \leq \kappa_2-1$ and $3 \leq \kappa_2 \leq N$. Provided $\delta>0$ is sufficiently small, we now sum three families of estimates: (i) \eqref{one type of energy estimates including-202512221740} of Lemma \ref{one type of energy estimates including-202512221737} for $\ell=\kappa_1$ to $\kappa_2-1$; (ii) \eqref{another type of energy estimates-202512221839} of Lemma \ref{another type of energy estimates-202512221839-202512231850} for $\ell=\kappa_1+1$ to $\kappa_2$; and (iii) \eqref{dissipation estimates-EE-202512222129} of Lemma \ref{dissipation estimates-EE-202512222128} for $\ell=\kappa_1$ to $\kappa_2-1$. The result is
\begin{align}\label{202512241620-5-1}
\sum_{\ell=\kappa_1}^{\kappa_2-1}
\left(\frac{d}{d t}\left\|\nabla^{\ell}(u, \mathbb{E})\right\|_{L^2}^2
+C\left\|\nabla^{\ell+1} u\right\|_{L^2}^2\right)
 \lesssim
\sum_{\ell=\kappa_1}^{\kappa_2-1} \delta
\left\|\nabla^{\ell+1}(u, \mathbb{E})\right\|_{L^2}^2 ;
\end{align}
\begin{align}\label{202512241621-5-2}
\sum_{\ell=\kappa_1+1}^{\kappa_2}
\left(\frac{d}{d t}
\left\|\nabla^{\ell}(u, \mathbb{E})\right\|_{L^2}^2
+C\left\|\nabla^{\ell+1} u\right\|_{L^2}^2\right)
\lesssim
\sum_{\ell=\kappa_1+1}^{\kappa_2} \delta
\left\|\nabla^{\ell}(u, \mathbb{E})\right\|_{L^2}^2 ;
\end{align}
\begin{align}\label{202512241622-5-4}
\nonumber
&
\sum_{\ell=\kappa_1}^{\kappa_2-1}
\left(\frac{d}{d t} \int_{\mathbb{R}^3} \nabla^{\ell-1}
\left(\nabla u-(\nabla u)^T\right)
\cdot \nabla^{\ell+1}
\left(\mathbb{E}^T-\mathbb{E}\right) d x
+C\left\|\nabla^{\ell+1}\left(\mathbb{E}^T-\mathbb{E}\right)\right\|_{L^2}^2\right) \\
& \quad
\lesssim
\sum_{\ell=\kappa_1}^{\kappa_2-1}
\left(\left\|\nabla^{\ell+1}(u, \nabla u)\right\|_{L^2}^2
+\delta\left\|\nabla^{\ell+1}(u, \mathbb{E})\right\|_{L^2}^2\right).
\end{align}
Since $\delta$ is small, combining \eqref{202512241620-5-1}-\eqref{202512241621-5-2} with $\delta$ times \eqref{202512241622-5-4} yields
\begin{align*}
&
\frac{d}{d t}\left\{\sum_{\ell=\kappa_1}^{\kappa_2}
\left\|\nabla^{\ell}(u,\mathbb{E})\right\|_{L^2}^2
\right. \\
& \left.\left.
\quad
+\int_{\mathbb{R}^3} \nabla^{\ell-1}
\left(\nabla u-(\nabla u)^T\right)
\cdot
\nabla^{\ell+1}
\left(\mathbb{E}^T-\mathbb{E}\right) d x\right]\right\} \\
&
\quad
+C\left[\sum_{\ell=\kappa_1+1}^{\kappa_2+1}
\left\|\nabla^{\ell} u\right\|_{L^2}^2
+\sum_{\ell=\kappa_1+1}^{\kappa_2}
\left\|\nabla^{\ell}
\left(\mathbb{E}^T-\mathbb{E}\right)\right\|_{L^2}^2\right]
 \\
&
\lesssim
\sum_{\ell=\kappa_1+1}^{\kappa_2+1} \delta
\left\|\nabla^{\ell} u\right\|_{L^2}^2
+\sum_{\ell=\kappa_1+1}^{\kappa_2} \delta
\left\|\nabla^{\ell} \mathbb{E}\right\|_{L^2}^2
 \\
&
=
\sum_{\ell=\kappa_1+1}^{\kappa_2+1} \delta
\left\|\nabla^{\ell} u\right\|_{L^2}^2
+\sum_{\ell=\kappa_1+1}^{\kappa_2} \delta
\left\|\nabla^{\ell}(\mathbb{E})\right\|_{L^2}^2.
\end{align*}
Therefore, from \eqref{recover the estimates on-E-2025122231754} of Lemma \ref{recover the estimates on-E-202512231753}, we deduce
\begin{align}\label{202512241717-2}
\nonumber
&
\frac{d}{d t}\left\{\sum_{\ell=\kappa_1}^{\kappa_2}
\left\|\nabla^{\ell}(u, \mathbb{E})\right\|_{L^2}^2
+\sum_{\ell=\kappa_1}^{\kappa_2-1} \delta
[
\int_{\mathbb{R}^3} \nabla^{\ell-1}
\left(\nabla u-(\nabla u)^T\right) \cdot \nabla^{\ell+1}
\left(\mathbb{E}^T-\mathbb{E}\right) d x ]
\right\} \\
&
\quad
+C\left[\sum_{\ell=\kappa_1+1}^{\kappa_2+1}
\left\|\nabla^{\ell} u\right\|_{L^2}^2
+\sum_{\ell=\kappa_1+1}^{\kappa_2}\left\|\nabla^{\ell}
\left(\mathbb{E}^T-\mathbb{E}\right)\right\|_{L^2}^2\right] \leq 0.
\end{align}
For simplicity, we introduce the temporal energy functional, defined for $t \geq 0$ by
\begin{align}\label{202512241719-define the temporal energy functional}
D_{\kappa_1}^{\kappa_2}(t):
=
\sum_{\ell=\kappa_1}^{\kappa_2} C^{-1}
\left\|\nabla^{\ell}(u, \mathbb{E})\right\|_{L^2}^2
+\sum_{\ell=\kappa_1}^{\kappa_2-1} \delta C^{-1}
\times
\left[
\int_{\mathbb{R}^3} \nabla^{\ell-1}
\left(\nabla u-(\nabla u)^T\right)
\cdot \nabla^{\ell+1}
\left(\mathbb{E}^T-\mathbb{E}\right) d x\right] .
\end{align}
From Lemma \ref{recover the estimates on-E-202512231753}, we have that for $0 \leq \kappa_1 \leq \kappa_2-1$, the functional $D_{\kappa_1}^{\kappa_2}(t)$ is equivalent to $\left\|\nabla^{\kappa_1}(u, \mathbb{E})\right\|_{H^{\kappa_2-\kappa_1}}^2$. Together with estimate \eqref{recover the estimates on-E-2025122231754} from the same lemma, this yields
\begin{align}\label{202512241729-1}
\frac{d}{d t} D_{\kappa_1}^{\kappa_2}(t)
+
\left\|\nabla^{\kappa_1+1} u\right\|_{H^{\kappa_2-\kappa_1}}^2
+
\left\|\nabla^{\kappa_1+1} \mathbb{E}\right\|_{H^{\kappa_2-\kappa_1-1}}^2 \leq 0.
\end{align}
Setting $\kappa_1=0$ and $\kappa_2=3$ in the above inequality and integrating in time yields
\begin{align}\label{202512241729-1-1732-integrating it directly in time}
D_0^3(t)+\int_0^t
\left(\|\nabla u(\tau)\|_{H^3}^2
+\|\nabla \mathbb{E}(\tau)\|_{H^2}^2\right) d \tau \lesssim D_0^3(0) .
\end{align}
This closes the a priori estimate $|(u,\mathbb{E})(t)|_{H^3} \le \delta$ for small initial data. By a continuity argument, the local solution from Lemma \ref{202512241532-Lemma-admits a unique global solution} extends uniquely to a global one, proving Theorem \ref{pure-energy-202512221702}. Moreover, taking $\kappa_1=0$ and $3 \le \kappa_2 \le N$ in \eqref{202512241729-1} and integrating in time gives the energy inequality \eqref{202512221715-globalsolution-incompressible viscoelastic flows} stated in the theorem.

We now turn to proving the optimal decay rates \eqref{pure-energy-202512221703} in Theorem \ref{pure-energy-202512221702}. Under the hypotheses of the theorem, we first assert that either inequality \eqref{202512241739-1-5} or \eqref{202512241740-1-6} holds. An application of Lemma \ref{A4-special Sobolev interpolation-202512222025} or Lemma \ref{A5-special Besov interpolation-202512222027} then yields
\begin{align*}
\left\|\nabla^{\ell} f\right\|_{L^2} \leq\|f\|_{\dot{H}^{-s}}^{\frac{1+1+s}{2+s}}\left\|\nabla^{\ell+1} f\right\|_{L^2}^{\frac{\ell+s}{2+1-s}} \lesssim\left\|\nabla^{\ell+1} f\right\|_{L^2}^{\frac{\ell+s}{\ell+1+s}}
\end{align*}
or
\begin{align*}
\left\|\nabla^{\ell} f\right\|_{L^2} \leq\|f\|_{\dot{B}_{2, \infty}^{-s}}^{\frac{\ell+1+s}{-s}}\left\|\nabla^{\ell+1} f\right\|_{L^2}^{\frac{\ell+s}{1+s}} \lesssim\left\|\nabla^{\ell+1} f\right\|_{L^2}^{\frac{\ell+s}{\ell+1+s}},
\end{align*}
for $\ell=0,1, \ldots, N-1$, respectively. Combined with \eqref{202512221715-globalsolution-incompressible viscoelastic flows}, this implies that
\begin{align}\label{202512241748-1-4-implies that}
\left\|\nabla^{\ell}(u, \mathbb{E})\right\|_{H^{N-\ell}}^{2\left(1+\frac{1}{\ell+s}\right)}
\lesssim
\left\|\nabla^{\ell+1}(u, \mathbb{E})\right\|_{H^{N-\ell-1}}^2,
\end{align}
for $\ell=0,1, \ldots, N-1$.

Substituting \eqref{202512241748-1-4-implies that} into \eqref{202512241729-1} yields
\begin{align*}
\frac{d}{d t} D_{\ell}^N(t)+C\left[D_{\ell}^N(t)\right]^{1+\frac{1}{\ell+s}} \leq 0.
\end{align*}
Direct integration of this differential inequality yields
\begin{align*}
D_{\ell}^N(t) \leq C_0(1+t)^{-(\ell+s)},
\end{align*}
for $\ell=0,1, \ldots, N-1$.
Thus we prove the decay rates \eqref{pure-energy-202512221703}.

It remains to establish the auxiliary estimates \eqref{202512241739-1-5} and \eqref{202512241740-1-6} required in Theorem \ref{pure-energy-202512221702}. For the range $s \in \bigl(0, \frac{1}{2}\bigr]$, a direct time integration of \eqref{202512231856-4.1} gives
\begin{align*}
&
\|(u, \mathbb{E})(t)\|_{\dot{H}^{-s}}^2 \\
&
\quad
\lesssim
\left\|\left(u_0, \mathbb{E}_0\right)\right\|_{\dot{H}^{-s}}^2
+\int_0^t
\left(\|\nabla(u, \mathbb{E})\|_{H^1}^2\right)
\|(u, \mathbb{E})(\tau)\|_{\dot{H}^{-s}} d \tau \\
&
\quad
\leq C_0+C_0 \sup _{0 \leq \tau \leq t}
\|(u, \mathbb{E})(t)\|_{\dot{H}^{-s}}.
\end{align*}
Thus, \eqref{202512241739-1-5} holds for $s \in (0, \frac12]$. It remains to prove it for $s \in (\frac12, \frac32)$. Using the embedding $\dot{H}^{-s} \cap L^2 \subset \dot{H}^{-s'}$ ($0 \le s' \le s$), we obtain for $s=\frac12$ the decay estimates:
\begin{align}\label{202512241804-5-8}
D_{\ell}^N(t) \leq C_0(1+t)^{-\left(\ell+\frac{1}{2}\right)}, \quad \ell=0,1, \ldots, N-1.
\end{align}
Combining \eqref{202512231856-4.1-2} and \eqref{202512241804-5-8} gives, for $s \in \bigl(\frac{1}{2}, \frac{3}{2}\bigr)$,
\begin{align*}
&
\|(u, \mathbb{E})(t)\|_{\dot{H}^{-s}}^2
\\
&
\lesssim
\left\|\left(u_0, \mathbb{E}_0\right)\right\|_{\dot{H}^{-s}}^2
+\int_0^t
\left(\|\nabla(u, \mathbb{E})\|_{H^1}\right)^{\frac{5}{2}-s}
\|(u, \mathbb{E})\|_{L^2}^{s-\frac{1}{2}}
\|(u,\mathbb{E})(\tau)\|_{\dot{H}^{-s}} d \tau \\
&
\leq
C_0+C_0 \int_0^t(1+\tau)^{-\frac{3}{4}\left(\frac{5}{2}-s\right)}
\cdot
(1+\tau)^{-\frac{1}{4}\left(s-\frac{1}{2}\right)} d \tau
\cdot \sup _{0 \leq \tau \leq t}\|(u, \mathbb{E})(\tau)\|_{\dot{H}^{-s}} \\
&
\leq
C_0+C_0 \sup _{0 \leq \tau \leq t}\|(u, \mathbb{E})(\tau)\|_{\dot{H}^{-s}}.
\end{align*}
Thus, \eqref{202512241739-1-5} holds for $s \in \bigl(\frac{1}{2}, \frac{3}{2}\bigr)$. Estimate \eqref{202512241740-1-6} follows analogously. This completes the proof of Theorem \ref{pure-energy-202512221702} (namely, \eqref{202512221715-globalsolution-incompressible viscoelastic flows} and \eqref{pure-energy-202512221703}).
\end{proof}

\section{Proof of Theorem \ref{pure-energy-202512221702}
\eqref{N-pure-energy-202512221703}}\label{202512241814-Theorem-N-pure-energy-202512221703}

This section focuses on proving the optimal decay rate for the $N$-th order spatial derivatives of the global small solution to the Cauchy problem \eqref{incompressible viscoelastic flow-E-202512201224}-\eqref{initial-condition-202512201225} (equivalently, \eqref{incompressible viscoelastic flow-202512201217}-\eqref{initial-condition-202512201220}). A key ingredient is the following lemma, which establishes the time integrability of these $N$-th order derivatives.
\begin{Lemma}\label{dissipative}
Under the assumptions of Theorem \ref{pure-energy-202512221702}, for any arbitrarily fixed constant $0<\epsilon_0<1$, it holds that
\begin{align}\label{dissipative1}
&
\quad
(1+t)^{N+s-1}
\left\|\nabla^{N-1}(u,\mathbb{E})\right\|_{H^1}^2 \\ \nonumber
&
+(1+t)^{-\epsilon_0} \int_0^t(1+\tau)^{N+s+\epsilon_0-1}
\left(\left\|\nabla^{N} u \right\|_{H^1}^2
+\left\|\nabla^{N} \mathbb{E} \right\|_{L^2}^2\right) d \tau \leq C,
\end{align}
for some positive constant $C$ independent of $t$.
\end{Lemma}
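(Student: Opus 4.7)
The plan is to run a time-weighted energy argument based on the differential inequality \eqref{202512241729-1} at its top admissible level. Concretely, I would take $\kappa_1=N-1$ and $\kappa_2=N$ in \eqref{202512241729-1} to obtain
\[
\frac{d}{dt}D_{N-1}^{N}(t)+\|\nabla^{N}u\|_{H^{1}}^{2}+\|\nabla^{N}\mathbb{E}\|_{L^{2}}^{2}\le 0,
\]
recalling from the discussion following \eqref{202512241719-define the temporal energy functional} that the functional $D_{N-1}^{N}(t)$ is equivalent to $\|\nabla^{N-1}(u,\mathbb{E})\|_{H^{1}}^{2}$ (the sign-indefinite cross term is absorbed by the smallness of $\delta$). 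This is the only dissipation relation that produces exactly the norms appearing under the integral in \eqref{dissipative1}.

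Next I would multiply through by the weight $(1+t)^{N+s+\epsilon_{0}-1}$ and rewrite
\[
(1+t)^{N+s+\epsilon_{0}-1}\partial_{t}D_{N-1}^{N}
=\partial_{t}\bigl[(1+t)^{N+s+\epsilon_{0}-1}D_{N-1}^{N}\bigr]
-(N+s+\epsilon_{0}-1)(1+t)^{N+s+\epsilon_{0}-2}D_{N-1}^{N}.
\]
Integrating on $[0,t]$ yields
\[
(1+t)^{N+s+\epsilon_{0}-1}D_{N-1}^{N}(t)
+\int_{0}^{t}(1+\tau)^{N+s+\epsilon_{0}-1}\bigl(\|\nabla^{N}u\|_{H^{1}}^{2}+\|\nabla^{N}\mathbb{E}\|_{L^{2}}^{2}\bigr)d\tau
\lesssim D_{N-1}^{N}(0)+\int_{0}^{t}(1+\tau)^{N+s+\epsilon_{0}-2}D_{N-1}^{N}(\tau)d\tau.
\]

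To close the remainder integral I would invoke the decay rate \eqref{pure-energy-202512221703} at the level $\ell=N-1$, which gives $D_{N-1}^{N}(\tau)\lesssim (1+\tau)^{-(N-1+s)}$. The integrand in the remainder then collapses to $(1+\tau)^{\epsilon_{0}-1}$, whose primitive is $\le C\epsilon_{0}^{-1}(1+t)^{\epsilon_{0}}$. Dividing the resulting inequality by $(1+t)^{\epsilon_{0}}$ and using once more $D_{N-1}^{N}(t)\sim\|\nabla^{N-1}(u,\mathbb{E})\|_{H^{1}}^{2}$ delivers \eqref{dissipative1} exactly.

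The main delicate point is not a technical obstruction but the choice of the exponent in the time weight: the decay rate produced by \eqref{pure-energy-202512221703} with $\ell=N-1$ saturates the remainder integral at the borderline rate $(1+\tau)^{\epsilon_{0}-1}$, which is precisely why an extra factor $(1+t)^{-\epsilon_{0}}$ must be retained in front of the dissipation integral on the left of \eqref{dissipative1}. One should also note that this step cannot be performed at the level $\ell=N$ because \eqref{pure-energy-202512221703} only covers $\ell\le N-1$; indeed, producing the pointwise-in-time rate at $\ell=N$ is precisely the goal of the subsequent section, for which \eqref{dissipative1} serves as the key building block.
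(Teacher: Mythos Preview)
Your proposal is correct and follows essentially the same approach as the paper: the paper likewise combines the level-$\ell$ energy inequalities at $\ell=N-1$ to obtain a functional $\mathcal{E}^{N-1}(t)\sim\|\nabla^{N-1}(u,\mathbb{E})\|_{H^1}^2$ satisfying $\frac{d}{dt}\mathcal{E}^{N-1}+C\|\nabla^N u\|_{H^1}^2+\eta_3\|\nabla^N\mathbb{E}\|_{L^2}^2\le0$, multiplies by $(1+t)^{N+s+\epsilon_0-1}$, and closes the remainder integral via the decay \eqref{pure-energy-202512221703} at $\ell=N-1$, exactly as you outline. The only cosmetic difference is that you invoke the already-packaged inequality \eqref{202512241729-1} with $\kappa_1=N-1$, $\kappa_2=N$, whereas the paper re-derives it in situ; the argument is otherwise identical.
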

\begin{proof}
Combining the results of Lemmas \ref{one type of energy estimates including-202512221737}, \ref{another type of energy estimates-202512221839-202512231850}, \ref{dissipation estimates-EE-202512222128}, and \ref{recover the estimates on-E-202512231753} yields the following estimates for every $0 \leq \ell \leq N-1$:
\begin{align}\label{Tanenergy1}
\frac{d}{d t}
\left\|\nabla^{\ell} (u,\mathbb{E}) \right\|_{L^2}^2
+C\left\|\nabla^{\ell+1} u \right\|_{L^2}^2
\lesssim
\delta
\left\|\nabla^{\ell+1}(u,\mathbb{E})\right\|_{L^2}^2,
\end{align}
\begin{align}\label{Tanenergy2}
\frac{d}{d t}
\left\|\nabla^{\ell+1}(u,\mathbb{E})\right\|_{L^2}^2
+C\left\|\nabla^{\ell+2} u \right\|_{L^2}^2
\lesssim
\delta
\left\|\nabla^{\ell+1}(u, \mathbb{E})\right\|_{L^2}^2,
\end{align}
\begin{align}\label{Tanenergy4}
\nonumber
&
\frac{d}{d t} \int_{\mathbb{R}^3} \nabla^{\ell-1}
\left(\nabla u-(\nabla u)^T\right) \cdot \nabla^{\ell+1}
\left(\mathbb{E}^T-\mathbb{E}\right) d x
+C\left\|\nabla^{\ell+1}
\left(\mathbb{E}^T-\mathbb{E}\right)\right\|_{L^2}^2\\
&
\quad
\lesssim
\left\|\nabla^{\ell+1}(u, \nabla u)\right\|_{L^2}^2
+\delta\left\|\nabla^{\ell+1}(\mathbb{E})\right\|_{L^2}^2,
\end{align}
\begin{align}\label{Tanenergy5}
\left\|\nabla^{\ell+1} \mathbb{E} \right\|_{L^2}^2
\lesssim
\left\|\nabla^{\ell+1}
\left(\mathbb{E}^T-\mathbb{E}\right)\right\|_{L^2}^2,
\end{align}
where the constant $C$ is a positive constant independent of time.
From estimates \eqref{Tanenergy1}, \eqref{Tanenergy2}, \eqref{Tanenergy4}, and \eqref{Tanenergy5}, it follows that for all $0 \leq k \leq N-1$,
\begin{align}\label{energy1}
\nonumber
&
\frac{d}{d t}
\left(\left\|\nabla^\ell(u,\mathbb{E})\right\|_{H^1}^2
+\eta_2 \int_{\mathbb{R}^3} \nabla^{\ell-1}
\left(\nabla u-(\nabla u)^T\right) \cdot \nabla^{\ell+1}
\left(\mathbb{E}^T-\mathbb{E}\right) d x\right)\\
&
\quad
+C\left\|\nabla^{\ell+1} u \right\|_{H^1}^2
+\eta_3\left\|\nabla^{\ell+1} \mathbb{E} \right\|_{L^2}^2
\leq 0.
\end{align}
Here $\eta_2,\eta_3$ is a small positive constant.
Setting $\ell = N-1$ in \eqref{energy1} immediately gives
\begin{align}\label{efe}
\frac{d}{d t} \mathcal{E}^{N-1}(t)
+C\left\|\nabla^{N} u \right\|_{H^1}^2
+\eta_3 \left\|\nabla^{N} \mathbb{E} \right\|_{L^2}^2 \leq 0,
\end{align}
where the energy $\mathcal{E}^{N-1}(t)$ is defined by
\begin{align*}
\mathcal{E}^{N-1}(t) \stackrel{\text { def }}{=}
\left\|\nabla^{N-1}(u,\mathbb{E})\right\|_{H^1}^2
+\eta_2 \int_{\mathbb{R}^3} \nabla^{N-2}
\left(\nabla u-(\nabla u)^T\right) \cdot \nabla^{N}
\left(\mathbb{E}^T-\mathbb{E}\right) d x.
\end{align*}
The smallness of $\eta_2$ guarantees the existence of two time-independent constants $c_1$ and $c_2$ such that
\begin{align}\label{equi}
c_1\left\|\nabla^{N-1}(u,\mathbb{E})\right\|_{H^1}^2 \leq \mathcal{E}^{N-1}(t)
\leq
c_2\left\|\nabla^{N-1}(u,\mathbb{E})\right\|_{H^1}^2.
\end{align}
Multiplying inequality \eqref{efe} by $(1+t)^{N+s+\epsilon_0-1}$, with $\epsilon_0 \in (0,1)$ fixed, we obtain
\begin{align}\label{multiplying}
\nonumber
&
\frac{d}{d t}\left\{(1+t)^{N+s+\epsilon_0-1} \mathcal{E}^{N-1}(t)\right\}
+(1+t)^{N+s+\epsilon_0-1}
\left(\left\|\nabla^{N} u \right\|_{H^1}^2
+\left\|\nabla^{N} \mathbb{E} \right\|_{L^2}^2\right) \\
 &\quad \leq C(1+t)^{N+s+\epsilon_0-2} \mathcal{E}^{N-1}(t).
\end{align}
Using the decay estimate \eqref{pure-energy-202512221703} and the equivalence \eqref{equi}, it is straightforward to verify that for $0 < k \leq N-1$,
\begin{align*}
(1+t)^{N+s+\epsilon_0-2} \mathcal{E}^{N-1}(t)
\leq C(1+t)^{N+s+\epsilon_0-2}\left\|\nabla^{N-1}(u,\mathbb{E})\right\|_{H^1}^2
\leq C(1+t)^{-1+\epsilon_0}.
\end{align*}
Thus, the above estimate together with \eqref{multiplying} gives
\begin{align}\label{multiplying2}
 \frac{d}{d t}\left\{(1+t)^{N+s+\epsilon_0-1} \mathcal{E}^{N-1}(t)\right\}
+(1+t)^{N+s+\epsilon_0-1}
\left(\left\|\nabla^{N} u \right\|_{H^1}^2
+\left\|\nabla^{N} \mathbb{E} \right\|_{L^2}^2\right)
\leq  C(1+t)^{-1+\epsilon_0}.
\end{align}
Integration of \eqref{multiplying2} over $[0, t]$ yields
\begin{align}\label{Integrating}
\nonumber
&
(1+t)^{N+s+\epsilon_0-1} \mathcal{E}^{N-1}(t)
+\int_0^t(1+\tau)^{N+s+\epsilon_0-1}
\left(\left\|\nabla^{N} \widetilde{u}\right\|_{H^1}^2
+\left\|\nabla^{N} \mathbb{E} \right\|_{L^2}^2\right) d \tau \\
&
\quad
\leq
\mathcal{E}^{N-1}(0)
+\int_0^t(1+\tau)^{-1+\epsilon_0} d \tau
\leq C(1+t)^{\epsilon_0},
\end{align}
which, combined with the equivalence \eqref{equi}, directly yields
\begin{align}\label{Integratingtogether}
&
\quad(1+t)^{N+s+\epsilon_0-1}
\left\|\nabla^{N-1}(u,\mathbb{E})\right\|_{H^1}^2 \\
\nonumber
&
+\int_0^t(1+\tau)^{N+s+\epsilon_0-1}
\left(\left\|\nabla^{N} u \right\|_{H^1}^2
+\left\|\nabla^{N} \mathbb{E}\right\|_{L^2}^2\right)d \tau
\leq C(1+t)^{\epsilon_0}.
\end{align}
Hence, we have \eqref{dissipative1}.
\end{proof}
Finally, using the time integrability of the dissipative term for $\nabla^N (u,\mathbb{E})$ provided by Lemma \ref{dissipative}, we establish its optimal decay rate of $N-t h$ one.
\begin{Lemma}\label{dissipativecan}
The global solution $(u,\mathbb{E})$ obeys the decay estimate, provided the assumptions of Theorem \ref{pure-energy-202512221702} hold:
\begin{align}\label{dissipative1can}
(1+t)^{N+s}\left\|\nabla^N(u,\mathbb{E})\right\|_{L^2}^2
+(1+t)^{-\epsilon_0} \int_0^t(1+\tau)^{N+s+\epsilon_0}\left\|\nabla^{N+1} u\right\|_{L^2}^2 d \tau \leq C,
\end{align}
for some positive constant $C$ independent of $t$.
\end{Lemma}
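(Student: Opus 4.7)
The plan is to emulate the strategy used in Lemma \ref{dissipative}, pushing the analysis one order higher to the $N$-th level. The starting point is Lemma \ref{another type of energy estimates-202512221839-202512231850} applied with $\ell = N$, which gives directly
\begin{align*}
\frac{d}{dt}\|\nabla^N(u,\mathbb{E})\|_{L^2}^2 + C\|\nabla^{N+1} u\|_{L^2}^2 \lesssim \delta \|\nabla^N(u,\mathbb{E})\|_{L^2}^2.
\end{align*}
Crucially, no pointwise dissipation of $\nabla^{N+1}\mathbb{E}$ appears (and none can be extracted at this top order, since the curl-div recovery scheme of Lemmas \ref{dissipation estimates-EE-202512222128}-\ref{recover the estimates on-E-202512231753} only controls $\nabla^{N}\mathbb{E}$ in terms of $\nabla^{N}(\mathbb{E}^T-\mathbb{E})$). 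The key idea is to bypass this difficulty by exploiting the weighted time-integrability of $\|\nabla^N \mathbb{E}\|_{L^2}^2$ already established in Lemma \ref{dissipative}, rather than constructing new cross-type functionals.

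Concretely, I would multiply the above inequality by the time weight $(1+t)^{N+s+\epsilon_0}$ (the same weight appearing in Lemma \ref{dissipative}), absorb the $\delta$-small term on the right into the constant, and rewrite using the product rule as
\begin{align*}
&\frac{d}{dt}\bigl\{(1+t)^{N+s+\epsilon_0}\|\nabla^N(u,\mathbb{E})\|_{L^2}^2\bigr\} + C(1+t)^{N+s+\epsilon_0}\|\nabla^{N+1} u\|_{L^2}^2 \\
&\qquad \leq C(1+t)^{N+s+\epsilon_0-1}\|\nabla^N(u,\mathbb{E})\|_{L^2}^2.
\end{align*}
Integrating this over $[0,t]$, the boundary contribution at $\tau = 0$ is a finite constant depending only on the initial data, and the whole task reduces to controlling the single integral $\int_0^t (1+\tau)^{N+s+\epsilon_0-1}\|\nabla^N(u,\mathbb{E})\|_{L^2}^2 d\tau$.

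At this point, I would invoke the conclusion \eqref{dissipative1} of Lemma \ref{dissipative}, which yields
\begin{align*}
\int_0^t (1+\tau)^{N+s+\epsilon_0-1}\bigl(\|\nabla^N u\|_{H^1}^2 + \|\nabla^N \mathbb{E}\|_{L^2}^2\bigr) d\tau \leq C(1+t)^{\epsilon_0}.
\end{align*}
Since $\|\nabla^N(u,\mathbb{E})\|_{L^2}^2 \leq \|\nabla^N u\|_{H^1}^2 + \|\nabla^N \mathbb{E}\|_{L^2}^2$, combining these bounds produces
\begin{align*}
(1+t)^{N+s+\epsilon_0}\|\nabla^N(u,\mathbb{E})\|_{L^2}^2 + C\int_0^t (1+\tau)^{N+s+\epsilon_0}\|\nabla^{N+1} u\|_{L^2}^2 d\tau \leq C(1+t)^{\epsilon_0}.
\end{align*}
Multiplying through by $(1+t)^{-\epsilon_0}$ gives exactly \eqref{dissipative1can}.

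The main technical obstacle is the careful matching of time-weight exponents and the compensation for the missing top-order dissipation of $\mathbb{E}$. The parameter $\epsilon_0$ is not cosmetic: it supplies precisely the ``buffer'' needed to convert the integrated estimate (with weight $N+s+\epsilon_0-1$) from Lemma \ref{dissipative} into a pointwise-in-time bound (with weight $N+s$), through the factor $(1+t)^{-\epsilon_0}$. Without this slack, the induction step from $N-1$ to $N$ would not close, because one cannot integrate $(1+\tau)^{N+s-1}\|\nabla^N(u,\mathbb{E})\|_{L^2}^2$ directly in terms of $(1+t)^{N+s}\|\nabla^N(u,\mathbb{E})\|_{L^2}^2$. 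This interplay between ``borrowing'' $\epsilon_0$ in time weight and ``paying it back'' via the multiplication by $(1+t)^{-\epsilon_0}$ is the essence of the argument.
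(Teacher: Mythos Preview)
Your overall strategy — multiply the $N$-th order energy inequality by the weight $(1+t)^{N+s+\epsilon_0}$, integrate, and close via the weighted time-integral bound of Lemma \ref{dissipative} — is exactly the paper's approach. But there is a genuine gap in the step where you ``absorb the $\delta$-small term on the right into the constant.'' The inequality from Lemma \ref{another type of energy estimates-202512221839-202512231850} reads
\[
\frac{d}{dt}\|\nabla^N(u,\mathbb{E})\|_{L^2}^2 + C\|\nabla^{N+1}u\|_{L^2}^2 \lesssim \delta\,\|\nabla^N(u,\mathbb{E})\|_{L^2}^2,
\]
and after multiplying by $(1+t)^{N+s+\epsilon_0}$ and applying the product rule, the right-hand side carries \emph{two} contributions: the product-rule term with weight $(1+t)^{N+s+\epsilon_0-1}$ and the $\delta$-term with weight $(1+t)^{N+s+\epsilon_0}$. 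The second has one extra power of $(1+t)$, and no choice of small $\delta$ makes it dominated by the first for large $t$. After integration you would need $\int_0^t (1+\tau)^{N+s+\epsilon_0}\|\nabla^N(u,\mathbb{E})\|_{L^2}^2\,d\tau$, which Lemma \ref{dissipative} does \emph{not} provide — it only controls the integral with weight $(1+\tau)^{N+s+\epsilon_0-1}$.

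The paper closes this gap by \emph{not} quoting Lemma \ref{another type of energy estimates-202512221839-202512231850} directly. Instead it redoes the $N$-th order energy estimate from scratch, bounding the low-order factors in the nonlinear terms (such as $\|\nabla u\|_{L^\infty}$, $\|\nabla u\|_{H^1}$, $\|\nabla\mathbb{E}\|_{L^3}$) not by the static smallness $\delta$ but by the already-established decay rates \eqref{pure-energy-202512221703}. This upgrades the right-hand side from $\delta\,\|\nabla^N(u,\mathbb{E})\|_{L^2}^2$ to $C(1+t)^{-1}\|\nabla^N(u,\mathbb{E})\|_{L^2}^2$ plus an integrable remainder $C(1+t)^{-(N+1+s)}$ (see the paper's inequality \eqref{2316}). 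The extra $(1+t)^{-1}$ is exactly what restores the weight mismatch: after multiplication by $(1+t)^{N+s+\epsilon_0}$, the right-hand side carries weight $(1+t)^{N+s+\epsilon_0-1}$, and now Lemma \ref{dissipative} applies verbatim. Your discussion of the role of $\epsilon_0$ is correct, but the ``borrowing'' only works once this decay-for-smallness trade has been made.
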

\begin{proof}
We apply $\nabla^N$ to \eqref{incompressible viscoelastic flow-E-202512201224}, multiply the resulting equation by $\nabla^N(u,\mathbb{E})$, and take the $L^2$ norm to obtain

\begin{align}\label{do}
&
\quad
\frac{d}{d t}\left\|\nabla^N(u,\mathbb{E})\right\|_{L^2}^2
+\left\|\nabla^{N+1} u \right\|_{L^2}^2 \\
\nonumber
&
\leq
C \int \nabla^N g_1 \cdot \nabla^N u d x
+C \int \nabla^N g_2 \cdot \nabla^N \mathbb{E} d x.
\end{align}
We now estimate the two terms on the right-hand side of \eqref{do}.
Integration by parts followed by Holder's inequality gives
\begin{align}\label{235}
 \int \nabla^N g_1 \cdot \nabla^N u d x
\nonumber
&
\leq
C\left\|\nabla^{N-1} (g_i)\right\|_{L^2}
\left\|\nabla^{N+1} u\right\|_{L^2} \\
\nonumber
&
\leq
C\left(\left\|\nabla^{N-1}(u \cdot \nabla u)\right\|_{L^2}
\right. \\
&\quad
+\int \nabla^N (\mathbb{E} \nabla \mathbb{E}) \cdot \nabla^N u d x\\
\nonumber
&
\stackrel{\text { def }}{=}
H_1+H_4  .
\end{align}
An application of the Sobolev inequality, Holder's inequality, and Lemma \ref{A7-commutator estimates-202512222029} yields
\begin{align}\label{236}
H_1
\nonumber
&
\leq
C\left(\|u\|_{L^{\infty}}\left\|\nabla^N u\right\|_{L^2}
+\left\|\left[\nabla^{N-1}, u\right] \cdot \nabla u\right\|_{L^2}\right)
\left\|\nabla^{N+1} u\right\|_{L^2} \\
\nonumber
&
\leq C\left(\|u\|_{L^{\infty}}\left\|\nabla^N u\right\|_{L^2}
+\|\nabla u\|_{L^{\infty}}\left\|\nabla^{N-1} u\right\|_{L^2}\right)
\left\|\nabla^{N+1} u\right\|_{L^2} \\
\nonumber
&
\leq
\epsilon\left\|\nabla^{N+1} u\right\|_{L^2}^2
+C_\epsilon\|\nabla u\|_{H^1}^2
\left\|\nabla^N u\right\|_{L^2}^2
+C_\epsilon\left\|\nabla^2 u\right\|_{H^1}^2
\left\|\nabla^{N-1} u\right\|_{L^2}^2 \\
&
\leq
\epsilon\left\|\nabla^{N+1} u\right\|_{L^2}^2
+C_\epsilon(1+t)^{-(1+s)}\left\|\nabla^N u\right\|_{L^2}^2
+C_\epsilon(1+t)^{-(N+1+2 s)},
\end{align}
where the last step uses the decay estimate \eqref{pure-energy-202512221703}.
Similarly, applying Lemma \ref{A7-commutator estimates-202512222029} and the decay estimate \eqref{pure-energy-202512221703} once more yields
\begin{align}\label{2312}
H_4
&
\leq
\epsilon\left\|\nabla^{N+1} u\right\|_{L^2}^2
+C_\epsilon(1+t)^{-1}
\left\|\nabla^N(u,\mathbb{E})\right\|_{L^2}^2.
\end{align}
Substituting estimates \eqref{236} and \eqref{2312} into \eqref{235} and applying Young's inequality yields
\begin{align}\label{2313}
\int \nabla^N g_i \cdot \nabla^N u d x
\leq
\left(\epsilon+C \delta_0\right)\left\|\nabla^{N+1} u\right\|_{L^2}^2
+C_\epsilon(1+t)^{-1}
\left\|\nabla^N(u,\mathbb{E})\right\|_{L^2}^2
+C_\epsilon(1+t)^{-(N+1+s)}.
\end{align}
A further application of Lemma \ref{A7-commutator estimates-202512222029} and \eqref{pure-energy-202512221703} yields
\begin{align}\label{2314}
\int \nabla^N g_2 \cdot \nabla^N \mathbb{E} d x
\leq
\epsilon\left\|\nabla^{N+1} u\right\|_{L^2}^2
+C_\epsilon(1+t)^{-1}
\left\|\nabla^N(u,\mathbb{E})\right\|_{L^2}^2,
\end{align}
Inserting \eqref{2313} and \eqref{2314} into \eqref{do} and choosing $\epsilon$, $\delta_0$ sufficiently small yields
\begin{align}\label{2316}
\frac{d}{d t}
\left\|\nabla^N(u,\mathbb{E})\right\|_{L^2}^2
+\left\|\nabla^{N+1} u\right\|_{L^2}^2
\leq C(1+t)^{-1}
\left\|\nabla^N(u,\mathbb{E})\right\|_{L^2}^2
+C(1+t)^{-(N+1+s)}.
\end{align}
Multiplying the above inequality by $(1+t)^{N+s+\epsilon_0}$ and integrating in time, we conclude
\begin{align}\label{2317}
\nonumber
&
(1+t)^{N+s+\epsilon_0}
\left\|\nabla^N(u,\mathbb{E})\right\|_{L^2}^2
+\int_0^t(1+\tau)^{N+s+\epsilon_0}
\left\|\nabla^{N+1} u\right\|_{L^2}^2 d \tau \\
\leq
\nonumber
&
\left\|\nabla^N(u_0,\mathbb{E}_0)\right\|_{L^2}^2
+C \int_0^t(1+\tau)^{N+s-1+\epsilon_0}
\left\|\nabla^N(u,\mathbb{E})\right\|_{L^2}^2 d \tau
+C \int_0^t(1+\tau)^{-1+\epsilon_0} d \tau \\
\leq
&
C\left(1+\left\|\nabla^N(u_0,\mathbb{E}_0)\right\|_{L^2}^2\right)
+C(1+t)^{\epsilon_0},
\end{align}
where the decay estimate \eqref{pure-energy-202512221703} was used. Consequently, \eqref{2317} directly yields \eqref{dissipative1can}, completing the proof of the lemma.
\end{proof}
\noindent{\bf The proof of Theorem \ref{pure-energy-202512221702} \eqref{N-pure-energy-202512221703}}
From \eqref{dissipative1can} (Lemma \ref{dissipativecan}), we obtain
\begin{align*}
\left\|\nabla^N(\rho-\bar{\rho},u,\mathbb{F}-\mathbb{I},\nabla \Phi)(t)\right\|_{L^2}^2
\leq C(1+t)^{-(s+N)},
\end{align*}
where $C$ is a positive constant independent of time. Thus we immediately obtain estimate \eqref{N-pure-energy-202512221703}, which completes the proof of Theorem \ref{pure-energy-202512221702}.

\appendix
\renewcommand{\appendixname}{Appendix~\Alph{section}}

\section{Auxiliary lemmas}\label{appendix}
We begin by stating some auxiliary lemmas that will be used extensively in this paper. Foremost among them is the Gagliardo-Nirenberg-Sobolev inequality.
\begin{Lemma}\label{A1-N-202512221945}
Let $0 \leq \gamma, \alpha \leq \beta$. Then we have
\begin{align*}
\left\|\nabla^\gamma f\right\|_{L^p} \lesssim\left\|\nabla^\alpha f\right\|_{L^q}^{1-\sigma}\left\|\nabla^\beta f\right\|_{L^r}^\sigma,
\end{align*}
where $0 \leq \sigma \leq 1$ and $\gamma$ satisfies
\begin{align*}
\frac{\gamma}{3}-\frac{1}{p}=\left(\frac{\alpha}{3}-\frac{1}{q}\right)(1-\sigma)+\left(\frac{\beta}{3}-\frac{1}{r}\right) \sigma .
\end{align*}
Here, when $p=\infty$, we require that $0<\sigma<1$.
\end{Lemma}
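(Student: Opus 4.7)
\textbf{Proof proposal for Lemma \ref{A1-N-202512221945}.} The plan is to establish the inequality via a Littlewood--Paley dyadic decomposition combined with Bernstein's inequality, which is a standard route to general Gagliardo--Nirenberg--Sobolev bounds. Let $\{\dot{\Delta}_j\}_{j \in \mathbb{Z}}$ denote the homogeneous Littlewood--Paley projectors so that $f = \sum_{j\in \mathbb{Z}} \dot{\Delta}_j f$. First I would record the scaling identity
\begin{equation*}
\frac{\gamma}{3}-\frac{1}{p} = (1-\sigma)\left(\frac{\alpha}{3}-\frac{1}{q}\right)+\sigma\left(\frac{\beta}{3}-\frac{1}{r}\right),
\end{equation*}
which is forced by checking both sides under the dilation $f(x) \mapsto f(\lambda x)$; this ensures dimensional consistency and pins down the unique exponent $\sigma$.

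Next I would bound each dyadic block. By Bernstein's inequality, for any $s \in \mathbb{R}$ and $1 \leq a \leq b \leq \infty$,
\begin{equation*}
\|\nabla^{s} \dot{\Delta}_j f\|_{L^b} \lesssim 2^{j\left(s + 3(\frac{1}{a}-\frac{1}{b})\right)} \|\dot{\Delta}_j f\|_{L^a}.
\end{equation*}
Applying this with $s=\gamma$ and $a=q$ (respectively $a=r$) and changing $L^q, L^r$ into $L^p$ produces the two one-sided estimates
\begin{equation*}
\|\nabla^{\gamma} \dot{\Delta}_j f\|_{L^p} \lesssim 2^{j(\gamma-\alpha)} \cdot 2^{j\left(\alpha + 3(\frac{1}{q}-\frac{1}{p})\right)} \|\nabla^{\alpha}\dot{\Delta}_j f\|_{L^q} = 2^{j A} \|\nabla^{\alpha}\dot{\Delta}_j f\|_{L^q},
\end{equation*}
\begin{equation*}
\|\nabla^{\gamma} \dot{\Delta}_j f\|_{L^p} \lesssim 2^{j B} \|\nabla^{\beta}\dot{\Delta}_j f\|_{L^r},
\end{equation*}
where $A = \gamma - \alpha + 3(\tfrac{1}{q}-\tfrac{1}{p})$ and $B = \gamma - \beta + 3(\tfrac{1}{r}-\tfrac{1}{p})$. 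The scaling identity above is precisely the statement that $(1-\sigma)A + \sigma B = 0$, so raising the two inequalities to the powers $1-\sigma$ and $\sigma$ and multiplying eliminates the frequency factor $2^{jA(1-\sigma)+jB\sigma}$, giving
\begin{equation*}
\|\nabla^{\gamma}\dot{\Delta}_j f\|_{L^p} \lesssim \|\nabla^{\alpha}\dot{\Delta}_j f\|_{L^q}^{1-\sigma}\|\nabla^{\beta}\dot{\Delta}_j f\|_{L^r}^{\sigma}
\end{equation*}
uniformly in $j$.

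It then remains to assemble the dyadic estimates into the inequality for $f$. For $1 \leq p < \infty$ I would apply the Littlewood--Paley square-function characterization of $L^p$ together with Minkowski and H\"older's inequalities (with exponents $1/(1-\sigma)$ and $1/\sigma$) to pass from the $\ell^2_j$ sum to the bound $\|\nabla^{\alpha} f\|_{L^q}^{1-\sigma} \|\nabla^{\beta} f\|_{L^r}^{\sigma}$. For $p = \infty$ the square-function identity fails, which is exactly why the statement requires the strict condition $0<\sigma<1$: in that regime one instead splits the sum over $j$ at a threshold $j_0$ chosen to balance the low- and high-frequency parts, uses the triangle inequality in $L^{\infty}$ together with the two Bernstein bounds, and optimizes in $j_0$. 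This optimization is the step I expect to require the most care, since it is where the constraint $\sigma \in (0,1)$ is genuinely used (otherwise the geometric series in $j$ diverges at one end); the finite-$p$ case is essentially automatic once the per-block estimate is in hand.
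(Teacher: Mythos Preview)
The paper does not give its own proof here; it simply refers to Theorem~1.1 (p.~125) of Nirenberg's 1959 paper. Your Littlewood--Paley argument is therefore already more than the paper supplies, and it is a well-known modern route that makes the scaling relation $(1-\sigma)A+\sigma B=0$ appear as the natural cancellation condition on dyadic blocks, whereas Nirenberg's original proof proceeds by iterated one-dimensional integral inequalities and H\"older.

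One step does deserve tightening. Bernstein's inequality as you quote it requires $a\le b$, so your two per-block bounds tacitly assume $q\le p$ and $r\le p$; the lemma imposes no such ordering on $p,q,r$, and admissible parameters with $q>p$ or $r>p$ do occur. The low/high-frequency split you describe for $p=\infty$ does not by itself remove this constraint, since each half of the split still needs a Bernstein move in the forward direction. In every application within this paper one has $q=r=2$ and $p\in\{2,\infty\}$, so your sketch covers everything actually used; for the lemma in full generality one typically supplements the Littlewood--Paley argument with a Besov-space embedding or real-interpolation step, or else falls back on Nirenberg's original method.
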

\begin{proof}
We refer to Theorem 1.1 (p. 125) of \cite{Nirenberg1959}.
\end{proof}

For $s \in\left[0, \frac{3}{2}\right)$, the Hardy-Littlewood-Sobolev theorem gives rise to an inequality of $L^p$ type.

\begin{Lemma}\label{A2-HL-202512222020}
Let $0 \leq s<\frac{3}{2}$ and $\frac{1}{2}+\frac{s}{3}=\frac{1}{p}$. Then $1<p \leq 2$ and
\begin{align*}
\|f\|_{\dot{H}^{-s}} \lesssim\|f\|_{L^p}.
\end{align*}
\end{Lemma}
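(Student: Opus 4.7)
The plan is to reduce the statement to the classical Hardy-Littlewood-Sobolev theorem on fractional integration. First I would unfold the definition of the homogeneous negative Sobolev norm via Plancherel, writing
\begin{align*}
\|f\|_{\dot{H}^{-s}} = \bigl\| |\xi|^{-s}\hat{f}(\xi)\bigr\|_{L^2_\xi} = \|\Lambda^{-s} f\|_{L^2_x},
\end{align*}
where $\Lambda = (-\Delta)^{1/2}$. The operator $\Lambda^{-s}$ is precisely the Riesz potential $I_s$: on $\mathbb{R}^3$, $I_s f(x) = c_{s,3}\, |x|^{-(3-s)} \ast f(x)$ for a dimensional constant $c_{s,3}>0$ and $0<s<3$.

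The second step is to invoke the classical HLS theorem: for $0<s<3$ and $1<p<q<\infty$ satisfying $\tfrac{1}{q} = \tfrac{1}{p} - \tfrac{s}{3}$, the operator $I_s$ maps $L^p(\mathbb{R}^3)$ boundedly into $L^q(\mathbb{R}^3)$. Specializing to $q=2$ gives the exponent relation $\tfrac{1}{p} = \tfrac{1}{2} + \tfrac{s}{3}$, which is exactly the hypothesis of the lemma. Combining with the Plancherel identity above yields
\begin{align*}
\|f\|_{\dot{H}^{-s}} = \|I_s f\|_{L^2} \lesssim \|f\|_{L^p},
\end{align*}
which is the desired bound. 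The range $0<s<\tfrac{3}{2}$ corresponds to $\tfrac{1}{p}\in(\tfrac{1}{2},1)$, i.e. $p\in(1,2)$, while the borderline case $s=0$ is just the Plancherel identity $\|f\|_{\dot{H}^0}=\|f\|_{L^2}$; together these give $1<p\le 2$ as asserted.

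The main obstacle, such as there is one, is simply to verify the admissibility of the endpoints: the case $s=\tfrac{3}{2}$ (forcing $p=1$) is excluded because HLS fails at $p=1$, while the case $s=0$ needs to be handled separately by Plancherel rather than by the Riesz-potential representation (since $I_0$ is the identity only formally). Since all of this is standard, the proof is essentially a single appeal to Stein's book on singular integrals or to Grafakos' classical text, and the authors' strategy of simply citing the Hardy-Littlewood-Sobolev inequality is appropriate.
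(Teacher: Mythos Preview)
Your proposal is correct and matches the paper's approach exactly: the paper simply cites Theorem~1 (p.~119) of Stein's \emph{Singular Integrals}, and your argument is precisely the unpacking of that citation via the Riesz potential representation and the HLS inequality with $q=2$.
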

\begin{proof}
See Theorem 1 (p. 119) in \cite{Stein1970}.
\end{proof}

For homogeneous Besov spaces with regularity index $s \in\left(0, \frac{3}{2}\right]$, a similar $L^p$-type inequality admits.

\begin{Lemma}\label{A3-B-202512222024}
Let $0<s \leq \frac{3}{2}$ and $\frac{1}{2}+\frac{s}{3}=\frac{1}{p}$. Then $1 \leq p<2$ and
\begin{align*}
\|f\|_{\dot{B}_{2, \infty}^{-8}} \lesssim\|f\|_{L^p} .
\end{align*}
\end{Lemma}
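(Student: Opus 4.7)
\textbf{Proof plan for Lemma \ref{A3-B-202512222024}.}
This is a standard endpoint Besov embedding, and the plan is to obtain it directly from the Littlewood--Paley characterization of $\dot{B}_{2,\infty}^{-s}$ combined with Bernstein's inequality. First I would verify the admissible range: the relation $\tfrac{1}{p}=\tfrac{1}{2}+\tfrac{s}{3}$ with $s\in(0,\tfrac{3}{2}]$ yields $\tfrac{1}{p}\in(\tfrac{1}{2},1]$, i.e. $1\leq p<2$, which matches the statement (and in particular includes the endpoint $p=1$ corresponding to $s=\tfrac{3}{2}$).

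The core step is the frequency-localized bound
\begin{align*}
\|\dot{\Delta}_\ell f\|_{L^2}\lesssim 2^{s\ell}\,\|f\|_{L^p},\qquad \ell\in\mathbb{Z}.
\end{align*}
To obtain it, I would write $\dot{\Delta}_\ell f=\check{\varphi}_\ell\ast f$ where $\check{\varphi}_\ell(x)=2^{3\ell}\check{\varphi}(2^\ell x)$ is the rescaled Littlewood--Paley kernel (a Schwartz function), and pick an enlarged bump $\tilde{\varphi}$ with $\tilde{\varphi}\equiv 1$ on $\operatorname{supp}\varphi$ so that $\dot{\Delta}_\ell f=\check{\tilde{\varphi}}_\ell\ast\dot{\Delta}_\ell f$. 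Young's convolution inequality with Lebesgue exponents $(p,r,2)$ satisfying $\tfrac{1}{r}=1+\tfrac{1}{2}-\tfrac{1}{p}=1-\tfrac{s}{3}$ then gives
\begin{align*}
\|\dot{\Delta}_\ell f\|_{L^2}\leq \|\check{\tilde{\varphi}}_\ell\|_{L^r}\|\dot{\Delta}_\ell f\|_{L^p}=2^{3\ell(1-1/r)}\|\check{\tilde{\varphi}}\|_{L^r}\,\|\dot{\Delta}_\ell f\|_{L^p}=2^{s\ell}\|\check{\tilde{\varphi}}\|_{L^r}\|\dot{\Delta}_\ell f\|_{L^p},
\end{align*}
and the uniform bound $\|\dot{\Delta}_\ell f\|_{L^p}\lesssim\|f\|_{L^p}$ (Young again, using $\|\check{\varphi}_\ell\|_{L^1}=\|\check{\varphi}\|_{L^1}$) closes the estimate.

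Finally, multiplying by $2^{-s\ell}$ and taking the supremum over $\ell\in\mathbb{Z}$ gives
\begin{align*}
\|f\|_{\dot{B}_{2,\infty}^{-s}}=\sup_{\ell\in\mathbb{Z}}2^{-s\ell}\|\dot{\Delta}_\ell f\|_{L^2}\lesssim\|f\|_{L^p},
\end{align*}
which is the claimed embedding. There is no serious obstacle here: the only subtle point is that the argument must handle the endpoint $p=1$ (i.e. $s=\tfrac{3}{2}$), and it does so automatically because the Littlewood--Paley kernels are Schwartz and hence the $L^1\to L^p$ bound and Young's inequality are valid uniformly across the whole range $1\leq p<2$. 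Alternatively, one could simply cite \cite{Stein1970} or a standard Besov-space reference, since the lemma is the Besov analogue of the Hardy--Littlewood--Sobolev embedding Lemma \ref{A2-HL-202512222020}.
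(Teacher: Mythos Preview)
Your argument is correct. The paper's own proof is simply a one-line citation to Lemma~4.6 of \cite{Guo-Wang2012}, so you have supplied the self-contained Bernstein/Young argument that underlies that reference; in particular your treatment of the endpoint $p=1$ (i.e.\ $s=\tfrac{3}{2}$) is exactly the reason one passes from $\dot{H}^{-s}$ in Lemma~\ref{A2-HL-202512222020} to $\dot{B}_{2,\infty}^{-s}$ here. One small simplification: the enlarged bump $\tilde{\varphi}$ is unnecessary, since Young's inequality applied directly to $\dot{\Delta}_\ell f=\check{\varphi}_\ell\ast f$ already gives $\|\dot{\Delta}_\ell f\|_{L^2}\leq\|\check{\varphi}_\ell\|_{L^r}\|f\|_{L^p}=2^{s\ell}\|\check{\varphi}\|_{L^r}\|f\|_{L^p}$ in one step.
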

\begin{proof}
See Lemma 4.6 in \cite{Guo-Wang2012}.
\end{proof}

To this end, we make use of a specialized Sobolev interpolation:

\begin{Lemma}\label{A4-special Sobolev interpolation-202512222025}
Let $s \geq 0$ and $l \geq 0$. Then we have
\begin{align*}
\left\|\nabla^l f\right\|_{L^2} \leq\left\|\nabla^{l+1} f\right\|_{L^2}^{1-\theta}\|f\|_{\dot{H}^{-s}}^\theta,
\end{align*}
where $\theta=\frac{1}{l+1+s}$.
\end{Lemma}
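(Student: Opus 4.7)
The plan is to prove the inequality by a direct Fourier-side computation, reducing it to H\"older's inequality in the measure $d\xi$ on $\mathbb{R}^3$. By Plancherel's theorem we may replace each homogeneous norm by its Fourier representation:
\begin{align*}
\|\nabla^{l} f\|_{L^{2}}^{2} &= \int_{\mathbb{R}^{3}} |\xi|^{2l}\,|\hat{f}(\xi)|^{2}\,d\xi, \\
\|\nabla^{l+1} f\|_{L^{2}}^{2} &= \int_{\mathbb{R}^{3}} |\xi|^{2(l+1)}\,|\hat{f}(\xi)|^{2}\,d\xi, \\
\|f\|_{\dot{H}^{-s}}^{2} &= \int_{\mathbb{R}^{3}} |\xi|^{-2s}\,|\hat{f}(\xi)|^{2}\,d\xi.
\end{align*}

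The first key step is to choose the correct split of the integrand so that H\"older's inequality applies. With $\theta = \frac{1}{l+1+s}$, a quick algebraic check shows the balance relation
\begin{align*}
2l \;=\; (1-\theta)\cdot 2(l+1)\;+\;\theta\cdot(-2s),
\end{align*}
which is exactly the identity needed to write
\begin{align*}
|\xi|^{2l}\,|\hat{f}(\xi)|^{2}
\;=\;
\Bigl(|\xi|^{2(l+1)}\,|\hat{f}(\xi)|^{2}\Bigr)^{1-\theta}\,
\Bigl(|\xi|^{-2s}\,|\hat{f}(\xi)|^{2}\Bigr)^{\theta}.
\end{align*}

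The second step is to apply H\"older's inequality with conjugate exponents $\frac{1}{1-\theta}$ and $\frac{1}{\theta}$ (both well defined since $\theta\in(0,1]$ when $l,s\ge 0$ and not both zero; the degenerate case $l=s=0$ is trivial). This yields
\begin{align*}
\int_{\mathbb{R}^{3}} |\xi|^{2l}\,|\hat{f}(\xi)|^{2}\,d\xi
\;\le\;
\Bigl(\!\int |\xi|^{2(l+1)}\,|\hat{f}|^{2}\,d\xi\Bigr)^{1-\theta}
\Bigl(\!\int |\xi|^{-2s}\,|\hat{f}|^{2}\,d\xi\Bigr)^{\theta},
\end{align*}
i.e.\ $\|\nabla^{l}f\|_{L^{2}}^{2}\le \|\nabla^{l+1}f\|_{L^{2}}^{2(1-\theta)}\,\|f\|_{\dot{H}^{-s}}^{2\theta}$. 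Taking square roots gives the stated interpolation.

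There is no genuine obstacle here: the proof is essentially the bookkeeping for the two-point log-convexity of $t\mapsto \int |\xi|^{2t}|\hat f|^{2}d\xi$. The only mild subtlety to point out in the write-up is that one must verify $\hat f$ lies in the weighted $L^{2}$ spaces on both sides so that the two integrals on the right are finite (otherwise the inequality is vacuous), and one should briefly justify why $\theta=\frac{1}{l+1+s}$ is forced by requiring the exponents of $|\xi|$ to match, which is the computation displayed above.
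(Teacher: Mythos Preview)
Your proof is correct and follows exactly the approach indicated in the paper, which merely states that the lemma is a direct consequence of Parseval's identity and H\"older's inequality. You have simply written out the details of that one-line argument.
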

\begin{proof}
This is a direct consequence of an application of the Parseval and Holder inequalities.
\end{proof}
We will employ the following special interpolation within the Besov space framework:
\begin{Lemma}\label{A5-special Besov interpolation-202512222027}
Let $s>0$ and $l \geq 0$. Then we have
\begin{align*}
\left\|\nabla^l f\right\|_{L^2} \leq\left\|\nabla^{l+1} f\right\|_{L^2}^{1-\theta}\|f\|_{\dot{B}_{2, \infty}^{-s}}^\theta,
\end{align*}
where $\theta=\frac{1}{l+1+s}$.
\end{Lemma}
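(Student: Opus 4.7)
\textbf{Proof plan for Lemma \ref{A5-special Besov interpolation-202512222027}.} The natural approach is a Littlewood–Paley decomposition combined with an optimization over the splitting scale. Writing $f=\sum_{j\in\mathbb{Z}}\dot\Delta_j f$ with the usual dyadic blocks and applying Plancherel, one has
\begin{align*}
\|\nabla^\ell f\|_{L^2}^2 \;\sim\; \sum_{j\in\mathbb{Z}} 2^{2\ell j}\|\dot\Delta_j f\|_{L^2}^2.
\end{align*}
The idea is to split this sum at a dyadic threshold $2^J$ to be chosen, bounding the low-frequency piece by the Besov norm and the high-frequency piece by the higher-order Sobolev norm.

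For the low-frequency contribution ($j\le J$), I would write $2^{2\ell j}=2^{2(\ell+s)j}\cdot 2^{-2sj}$ and use the definition of $\dot B_{2,\infty}^{-s}$, namely $2^{-sj}\|\dot\Delta_j f\|_{L^2}\le \|f\|_{\dot B_{2,\infty}^{-s}}$, to obtain
\begin{align*}
\sum_{j\le J} 2^{2\ell j}\|\dot\Delta_j f\|_{L^2}^2 \;\lesssim\; 2^{2(\ell+s)J}\|f\|_{\dot B_{2,\infty}^{-s}}^2.
\end{align*}
For the high-frequency contribution ($j>J$), I would write $2^{2\ell j}=2^{-2j}\cdot 2^{2(\ell+1)j}$ and bound $\sum_{j>J}2^{2(\ell+1)j}\|\dot\Delta_jf\|_{L^2}^2 \lesssim \|\nabla^{\ell+1}f\|_{L^2}^2$, which yields
\begin{align*}
\sum_{j>J} 2^{2\ell j}\|\dot\Delta_j f\|_{L^2}^2 \;\lesssim\; 2^{-2J}\|\nabla^{\ell+1}f\|_{L^2}^2.
\end{align*}

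Adding the two bounds and optimizing the free parameter $J$ by equating them, one takes $2^{2(\ell+1+s)J}\sim \|\nabla^{\ell+1}f\|_{L^2}^2/\|f\|_{\dot B_{2,\infty}^{-s}}^2$, which after substitution produces
\begin{align*}
\|\nabla^\ell f\|_{L^2} \;\lesssim\; \|\nabla^{\ell+1}f\|_{L^2}^{1-\theta}\,\|f\|_{\dot B_{2,\infty}^{-s}}^{\theta}, \qquad \theta=\tfrac{1}{\ell+1+s},
\end{align*}
as claimed. The assumption $s>0$ enters precisely to make the geometric series in the low-frequency estimate (implicit in the sup-type definition of $\dot B_{2,\infty}^{-s}$) harmless and to ensure $\theta\in(0,1)$. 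No step is genuinely difficult; the only delicate point is handling the endpoint case $\ell=0$ and the homogeneous nature of the decomposition (requiring $f$ to have vanishing mean in a distributional sense, which is standard for elements of $\dot B_{2,\infty}^{-s}$ with $s<3/2$). If a clean constant $1$ rather than $\lesssim$ is desired, one can alternatively run the argument directly on the Fourier side, splitting $\mathbb{R}^3=\{|\xi|\le R\}\cup\{|\xi|>R\}$ and optimizing $R$, which reproduces the same exponents.
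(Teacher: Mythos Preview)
Your argument is correct and is precisely the standard Littlewood--Paley proof of this interpolation inequality. The paper itself does not give a proof at all: it simply refers the reader to Lemma~4.5 of Sohinger--Strain~\cite{Sohinger-Strain2014}, so there is nothing to compare against beyond noting that your sketch is essentially what one finds in that reference. One minor point: the lemma as stated carries no implicit constant, whereas your dyadic optimization naturally produces a~$\lesssim$; obtaining the sharp constant~$1$ (if indeed intended) would require the continuous Fourier-side splitting you allude to at the end, together with a careful normalization of the Besov norm, but for the purposes of this paper only the inequality up to a constant is ever used.
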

\begin{proof}
We refer the reader to Lemma 4.5 in \cite{Sohinger-Strain2014}.
\end{proof}

We recall that the commutator estimates are as follows:

\begin{Lemma}\label{A7-commutator estimates-202512222029}
Let $m \geq 1$ be an integer and define the commutator
\begin{align*}
\left[\nabla^m, f\right] g=\nabla^m(f g)-f \nabla^m g .
\end{align*}
Then we have
\begin{align*}
\left\|\left[\nabla^m, f\right] g\right\|_{L^p} \lesssim\|\nabla f\|_{L^{p_1}}\left\|\nabla^{m-1} g\right\|_{L^{p_2}}+\left\|\nabla^m f\right\|_{L^{p_3}}\|g\|_{L^{p_4}},
\end{align*}
where $p, p_1, p_2, p_3, p_4 \in[1,+\infty]$ and
\begin{align*}
\frac{1}{p}=\frac{1}{p_1}+\frac{1}{p_2}=\frac{1}{p_3}+\frac{1}{p_4} .
\end{align*}
\end{Lemma}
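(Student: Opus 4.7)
The plan is to expand $\nabla^m(fg)$ via the Leibniz rule, cancel the piece $f\nabla^m g$, and then bound each surviving term by pairing Hölder's inequality with the Gagliardo--Nirenberg interpolation from Lemma~\ref{A1-N-202512221945}. Writing out the product rule gives
\[
[\nabla^m,f]\,g \;=\; \sum_{k=1}^{m} \binom{m}{k}\, \nabla^k f \otimes \nabla^{m-k} g,
\]
so the task reduces to dominating each summand in $L^p$ by the two-term right-hand side of the lemma.

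The two boundary summands are immediate by Hölder: the $k=m$ term yields $\|\nabla^m f\|_{L^{p_3}}\|g\|_{L^{p_4}}$ using the split $\tfrac{1}{p}=\tfrac{1}{p_3}+\tfrac{1}{p_4}$, and the $k=1$ term yields $\|\nabla f\|_{L^{p_1}}\|\nabla^{m-1}g\|_{L^{p_2}}$. For each intermediate $2 \le k \le m-1$, I would choose the Hölder exponents
\[
\tfrac{1}{q_k} \;=\; \tfrac{m-k}{m-1}\tfrac{1}{p_1}+\tfrac{k-1}{m-1}\tfrac{1}{p_3},
\qquad
\tfrac{1}{r_k} \;=\; \tfrac{m-k}{m-1}\tfrac{1}{p_2}+\tfrac{k-1}{m-1}\tfrac{1}{p_4},
\]
so that the hypothesis $\tfrac{1}{p_1}+\tfrac{1}{p_2}=\tfrac{1}{p_3}+\tfrac{1}{p_4}=\tfrac{1}{p}$ forces $\tfrac{1}{q_k}+\tfrac{1}{r_k}=\tfrac{1}{p}$. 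With the interpolation parameter $\theta_k=\tfrac{k-1}{m-1}$, Lemma~\ref{A1-N-202512221945} provides
\[
\|\nabla^k f\|_{L^{q_k}} \lesssim \|\nabla f\|_{L^{p_1}}^{1-\theta_k}\|\nabla^m f\|_{L^{p_3}}^{\theta_k},
\qquad
\|\nabla^{m-k} g\|_{L^{r_k}} \lesssim \|\nabla^{m-1}g\|_{L^{p_2}}^{1-\theta_k}\|g\|_{L^{p_4}}^{\theta_k}.
\]
The product of these two bounds rearranges as $\bigl(\|\nabla f\|_{L^{p_1}}\|\nabla^{m-1}g\|_{L^{p_2}}\bigr)^{1-\theta_k}\bigl(\|\nabla^m f\|_{L^{p_3}}\|g\|_{L^{p_4}}\bigr)^{\theta_k}$, and Young's inequality with conjugate exponents $\tfrac{1}{1-\theta_k}$ and $\tfrac{1}{\theta_k}$ bounds this by $\|\nabla f\|_{L^{p_1}}\|\nabla^{m-1}g\|_{L^{p_2}} + \|\nabla^m f\|_{L^{p_3}}\|g\|_{L^{p_4}}$, exactly the claimed bound.

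The main obstacle is exponent bookkeeping. One must verify for every $k \in \{2,\ldots,m-1\}$ that $q_k,r_k \in [1,\infty]$ are admissible Hölder exponents and that the Gagliardo--Nirenberg scaling identity $\tfrac{k}{3}-\tfrac{1}{q_k} = (1-\theta_k)\bigl(\tfrac{1}{3}-\tfrac{1}{p_1}\bigr) + \theta_k\bigl(\tfrac{m}{3}-\tfrac{1}{p_3}\bigr)$ (and its analog in $r_k$) holds; this is precisely the reason for the convex-combination definition of $q_k,r_k$ above, and it follows directly by a linear check. A minor subtlety arises at the $L^{\infty}$ endpoint of Lemma~\ref{A1-N-202512221945}, which requires $0<\theta_k<1$; since $2 \le k \le m-1$ guarantees $\theta_k \in (0,1)$, the constraint is automatically met. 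Once these checks are in place, summing over $k$ from $1$ to $m$ absorbs the finite binomial constants into the implicit $\lesssim$ and concludes the proof.
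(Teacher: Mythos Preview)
Your argument is correct and is the standard proof of this calculus inequality: Leibniz expansion, H\"older on each term, and Gagliardo--Nirenberg interpolation (Lemma~\ref{A1-N-202512221945}) on the intermediate factors, followed by Young's inequality. The paper does not prove the lemma at all but simply cites Lemma~3.4 of Majda--Bertozzi~\cite{Majda-Bertozzi2002}, where exactly this Leibniz--H\"older--interpolation argument is carried out; so your approach coincides with the referenced proof.
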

\begin{proof}
We refer to Lemma 3.4 (p. 129) in \cite{Majda-Bertozzi2002}.
\end{proof}

Two important identities are noted, finally.

\begin{Lemma}\label{A8-two important identities-202512222137}
Assume that the initial conditions \eqref{202512231806-initial-data-conditions} are satisfied. Then we have for all $t \geq 0$,
\begin{align}\label{A1-A8-two important identities-202512222139}
\operatorname{div}\left( \mathbb{F}^T\right)(t)=0
\end{align}
and
\begin{align}\label{A2-A8-two important identities-202512222140}
\mathbb{F}^{l k}(t) \nabla_l \mathbb{F}^{i j}(t)=\mathbb{F}^{l j}(t) \nabla_l \mathbb{F}^{i k}(t),
\end{align}
where the Einstein's summation convention is used.
\end{Lemma}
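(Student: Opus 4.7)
Both identities are persistence-of-constraint results: the initial data satisfy them, and the evolution equation for $\mathbb{F}$ preserves them. The plan is to derive, for each of \eqref{A1-A8-two important identities-202512222139} and \eqref{A2-A8-two important identities-202512222140}, a transport-type evolution of the form $(\partial_t + u \cdot \nabla)(\mathrm{quantity}) = 0$, or at worst linear in the quantity itself, and then conclude by uniqueness (equivalently, Gronwall) that the quantity vanishes for all $t \geq 0$, given that it vanishes at $t = 0$.

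For \eqref{A1-A8-two important identities-202512222139}, I set $v^i := \nabla_j \mathbb{F}^{ji}$ and note that $v^i\big|_{t=0} = 0$ follows from the assumption $\operatorname{div}(\mathbb{E}_0^T) = 0$ in \eqref{202512231806-initial-data-conditions} together with $\operatorname{div}(\mathbb{I}^T) = 0$. Differentiating the componentwise evolution equation $\partial_t \mathbb{F}^{ji} + u^m \nabla_m \mathbb{F}^{ji} = (\nabla_m u^j)\mathbb{F}^{mi}$ in $x_j$ and summing in $j$, three kinds of contributions appear: the pressure-of-index term $\nabla_j \nabla_m u^j \cdot \mathbb{F}^{mi} = \nabla_m(\operatorname{div} u)\cdot \mathbb{F}^{mi}$, which vanishes by incompressibility; the transport pieces that assemble into $(\partial_t + u\cdot \nabla)v^i$; and the cross terms $\nabla_j u^m \nabla_m \mathbb{F}^{ji}$ and $\nabla_m u^j \nabla_j \mathbb{F}^{mi}$, which are equal after swapping the dummy indices $j \leftrightarrow m$ and therefore cancel between the two sides. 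What remains is the pure transport equation $(\partial_t + u \cdot \nabla) v^i = 0$, and uniqueness forces $v \equiv 0$.

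For \eqref{A2-A8-two important identities-202512222140}, I introduce the three-tensor
\begin{align*}
\Omega^{ijk} := \mathbb{F}^{lk}\,\nabla_l \mathbb{F}^{ij} - \mathbb{F}^{lj}\,\nabla_l \mathbb{F}^{ik},
\end{align*}
which is antisymmetric in $(j,k)$ and vanishes at $t=0$ by \eqref{important properties3-202512201228} (equivalently, by the second condition in \eqref{202512231806-initial-data-conditions} once rewritten in terms of $\mathbb{F} = \mathbb{E}+\mathbb{I}$). Using $\partial_t \mathbb{F}^{ab} = -u^m \nabla_m \mathbb{F}^{ab} + (\nabla_m u^a)\mathbb{F}^{mb}$ and the product rule, I compute $\partial_t \Omega^{ijk}$. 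The contributions coming from the transport part of the evolution combine into $-u^m \nabla_m \Omega^{ijk}$, while the $(\nabla u)\mathbb{F}$ part of the evolution generates algebraic terms of the schematic form $(\nabla u)\cdot \mathbb{F}\cdot \nabla \mathbb{F}$ and $\mathbb{F}\cdot\nabla(\nabla u \cdot \mathbb{F})$. The plan is to reorganize these algebraic terms using commutativity of second derivatives, the antisymmetry of $\Omega$ in $(j,k)$, and the already-established identity \eqref{A1-A8-two important identities-202512222139} whenever a contraction of the form $\nabla_l \mathbb{F}^{la}$ appears, so that the surviving expression is linear in $\Omega^{i'j'k'}$ with smooth coefficients depending only on $u$, $\nabla u$, and $\mathbb{F}$. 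The upshot is a closed linear system $(\partial_t + u\cdot \nabla)\Omega^{ijk} = \mathcal{A}^{ijk}_{\,i'j'k'}(t,x)\,\Omega^{i'j'k'}$, and since $\Omega|_{t=0}=0$, uniqueness of the Cauchy problem for this linear hyperbolic equation gives $\Omega \equiv 0$ on $[0,\infty)\times \mathbb{R}^3$.

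The main obstacle is the algebraic bookkeeping in the second step: the raw expansion of $\partial_t \Omega^{ijk}$ produces several terms that, at first glance, are not visibly proportional to $\Omega$. The key will be to group them in antisymmetric pairs under $j\leftrightarrow k$ and to invoke \eqref{A1-A8-two important identities-202512222139} to eliminate the divergence-type contractions that would otherwise appear as forcing. Once this reorganization is carried out, the conclusion follows immediately from the linear transport structure and the vanishing initial condition.
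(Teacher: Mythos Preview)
Your argument is correct; in fact it is the standard transport-equation proof of these constraint-propagation identities, and if you carry out the algebra for \eqref{A2-A8-two important identities-202512222140} you will find it closes even more cleanly than you anticipate. Writing $\Omega^{ijk}=\mathbb{F}^{lk}\nabla_l\mathbb{F}^{ij}-\mathbb{F}^{lj}\nabla_l\mathbb{F}^{ik}$ and using $\partial_t\mathbb{F}^{ab}=-u^m\nabla_m\mathbb{F}^{ab}+\nabla_p u^a\,\mathbb{F}^{pb}$, the transport pieces combine into $-u\cdot\nabla\Omega^{ijk}$; the two cross terms $\nabla_p u^l\,\mathbb{F}^{pk}\nabla_l\mathbb{F}^{ij}$ and $-\mathbb{F}^{lk}\nabla_l u^m\,\nabla_m\mathbb{F}^{ij}$ cancel after relabeling; the term $(\mathbb{F}^{lk}\mathbb{F}^{pj}-\mathbb{F}^{lj}\mathbb{F}^{pk})\nabla_l\nabla_p u^i$ vanishes by antisymmetry in $(l,p)$ against the symmetric Hessian; and what remains is exactly $\nabla_p u^i\,\Omega^{pjk}$. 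So $(\partial_t+u\cdot\nabla)\Omega^{ijk}=\nabla_p u^i\,\Omega^{pjk}$, which is linear in $\Omega$ with bounded coefficients, and Gronwall gives $\Omega\equiv 0$. In particular, no contraction of the form $\nabla_l\mathbb{F}^{la}$ ever appears, so you do not need to invoke \eqref{A1-A8-two important identities-202512222139} in this step. The paper itself does not reproduce a proof here at all: it simply cites \cite{Lin-Liu-Zhang2005,Liu-Walkington2001}, where precisely this transport computation is carried out. Your proposal therefore matches the content of those references and supplies what the paper omits.
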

\begin{proof}
See \cite{Lin-Liu-Zhang2005} \cite{Liu-Walkington2001}.
\end{proof}
We present two inequalities frequently employed in the main text.
\begin{Lemma}\label{ele}
The following estimate holds whenever $\max \{a, b\}>1$:
\begin{align*}
\int_0^t(1+t-\tau)^{-a}(1+\tau)^{-b} \mathrm{~d} \tau \leq C(1+t)^{-\min \{a, b\}}, \quad t \geq 0.
\end{align*}
\end{Lemma}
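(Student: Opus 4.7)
The plan is to apply the classical time-splitting technique, decomposing $[0,t] = [0, t/2] \cup [t/2, t]$ so that on each piece one of the two weights is bounded below by a constant multiple of $(1+t)$. First I would split the integral as $I_1 + I_2$, where $I_1$ denotes the integral over $[0, t/2]$ and $I_2$ the integral over $[t/2, t]$. On $I_1$ the factor $(1+t-\tau)^{-a}$ is controlled by $2^a (1+t)^{-a}$ (since $1+t-\tau \geq 1+t/2 \geq (1+t)/2$), reducing $I_1$ to a constant multiple of $(1+t)^{-a}\int_0^{t/2}(1+\tau)^{-b}\,d\tau$. On $I_2$, after the change of variables $s=t-\tau$ together with $1+\tau \geq (1+t)/2$, the factor $(1+\tau)^{-b}$ is estimated by $2^b(1+t)^{-b}$, giving $I_2 \leq C(1+t)^{-b}\int_0^{t/2}(1+s)^{-a}\,ds$.

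The next step is to evaluate the remaining one-variable integrals using the hypothesis $\max\{a,b\}>1$. Assume without loss of generality that $a \leq b$, so $\min\{a,b\} = a$ and necessarily $b > 1$. Then $\int_0^{t/2}(1+\tau)^{-b}\,d\tau$ is bounded independently of $t$, so $I_1 \lesssim (1+t)^{-a}$ at once. For $I_2$, the behavior of $\int_0^{t/2}(1+s)^{-a}\,ds$ depends on whether $a>1$, $a=1$, or $a<1$, yielding bounds of order $1$, $\log(1+t)$, or $(1+t)^{1-a}$ respectively, so that $I_2$ is bounded by $C(1+t)^{-b}$, $C(1+t)^{-b}\log(1+t)$, or $C(1+t)^{1-a-b}$.

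The main (minor) obstacle is verifying that each of these three sub-cases is dominated by $(1+t)^{-a}$. When $a > 1$ this is immediate from $b \geq a$; when $a < 1$ the required inequality $1-a-b \leq -a$ reduces to $b \geq 1$, which holds strictly by hypothesis; in the borderline case $a=1$ one must absorb the logarithm, noting that $\log(1+t)(1+t)^{-b} \lesssim (1+t)^{-1}$ precisely because $b > 1$. Combining these sub-case estimates for $I_1$ and $I_2$ yields the desired bound with constant $C$ depending only on $a$ and $b$, completing the proof.
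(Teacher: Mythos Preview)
Your argument is correct and is precisely the standard time-splitting proof of this convolution estimate. The paper itself does not give a proof but simply refers to \cite{Duan-Ukai-Yang-Zhao2007}; your write-up supplies exactly the details one finds there, so there is nothing to compare.
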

\begin{proof}
We refer the reader to \cite{Duan-Ukai-Yang-Zhao2007}.
\end{proof}
\begin{Lemma}\label{sbo}
We have the following inequalities:
\begin{align*}
(i)\|u\|_{L^p} \leq C\|u\|_{H^1},
\quad
2 \leq p \leq 6,
\quad
 u \in H^1;\\
(ii)\|u\|_{L^p} \leq C\|u\|_{H^2},
\quad
2 \leq p \leq \infty,
\quad
 u \in H^2.
\end{align*}
\end{Lemma}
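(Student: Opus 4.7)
The plan is to deduce both embeddings from the Gagliardo--Nirenberg interpolation already recorded as Lemma \ref{A1-N-202512221945}, combined only with the elementary bound $\|u\|_{L^2}\le\|u\|_{H^k}$ and a one-parameter H\"older interpolation on Lebesgue spaces. There are no new ingredients required beyond these, and the two parts are handled in completely parallel fashion.

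For part (i), I would first invoke Lemma \ref{A1-N-202512221945} with $\gamma=0$, $\alpha=0$, $\beta=1$, $q=r=2$, $p=6$, $\sigma=1$, which produces the classical three--dimensional Sobolev embedding $\|u\|_{L^{6}}\lesssim\|\nabla u\|_{L^{2}}\le\|u\|_{H^{1}}$. Combining this with the trivial inequality $\|u\|_{L^{2}}\le\|u\|_{H^{1}}$ and the standard H\"older interpolation
\[
\|u\|_{L^{p}}\le\|u\|_{L^{2}}^{\theta}\,\|u\|_{L^{6}}^{1-\theta},\qquad \tfrac{1}{p}=\tfrac{\theta}{2}+\tfrac{1-\theta}{6},\quad \theta\in[0,1],
\]
yields $\|u\|_{L^{p}}\lesssim\|u\|_{H^{1}}$ uniformly for every $p\in[2,6]$, which is exactly (i).

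For part (ii), the additional input needed is the endpoint embedding $H^{2}(\mathbb{R}^{3})\hookrightarrow L^{\infty}(\mathbb{R}^{3})$. I would extract this once more from Lemma \ref{A1-N-202512221945}, this time choosing $\gamma=0$, $p=\infty$, $\alpha=0$, $\beta=2$, $q=r=2$, and $\sigma=\tfrac{3}{4}\in(0,1)$, giving $\|u\|_{L^{\infty}}\lesssim\|u\|_{L^{2}}^{1/4}\|\nabla^{2}u\|_{L^{2}}^{3/4}\lesssim\|u\|_{H^{2}}$; the strict condition $0<\sigma<1$ demanded at the $p=\infty$ endpoint is satisfied by this choice, and this is the only subtlety worth flagging in the whole argument. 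A final H\"older interpolation
\[
\|u\|_{L^{p}}\le\|u\|_{L^{2}}^{2/p}\,\|u\|_{L^{\infty}}^{1-2/p}\lesssim\|u\|_{H^{2}},\qquad p\in[2,\infty],
\]
then closes (ii). There is no substantive obstacle: the proof is a routine bookkeeping of the interpolation exponents in Lemma \ref{A1-N-202512221945}, and the hardest part is merely verifying the admissibility of the Gagliardo--Nirenberg parameters at the $L^{\infty}$ endpoint.
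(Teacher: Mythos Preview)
Your proof is correct. The paper itself provides no proof for this lemma---it is simply stated in the appendix as a standard Sobolev embedding fact---so there is nothing to compare against; your derivation via Lemma~\ref{A1-N-202512221945} plus H\"older interpolation is a clean and complete justification, and your verification of the $0<\sigma<1$ constraint at the $L^\infty$ endpoint is exactly the right detail to check.
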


\section*{Acknowledgements}

This work was partially supported by National Key R\&D Program of China (No. 2021YFA1002900) and Guangzhou City Basic and Applied Basic Research Fund (No. 2024A04J6336).

\bigskip

{\bf Data Availability:} No data was used for the research described in the article.

\bigskip

{\bf Conflict of Interest:} The authors declare that they have no conflict of interest.



\end{document}